\documentclass[11pt,reqno]{amsproc}

\usepackage{mathtools}
\usepackage{amssymb}
\usepackage{amsthm}

\usepackage[abbrev,lite,nobysame]{amsrefs}

\usepackage[margin=1in]{geometry}
\usepackage{graphicx}
\usepackage[dvipsnames]{xcolor}
\usepackage{enumitem}
\usepackage{comment}

\usepackage{tikz}
\usetikzlibrary{calc}

\usepackage[
  colorlinks=true,
  pdfstartview=FitV,
  linkcolor=BrickRed,
  citecolor=black,
  urlcolor=black
]{hyperref}

\renewcommand{\eprint}[1]{%
  \href{https://arxiv.org/abs/#1}{\nolinkurl{#1}}}

\hypersetup{
  pdftitle={Smooth autonomous fast dynamo action on the three-torus},
  pdfauthor={Michele Coti Zelati; Massimo Sorella; David Villringer}
}

\newtheorem{proposition}{Proposition}[section]
\newtheorem{lemma}[proposition]{Lemma}
\newtheorem{corollary}[proposition]{Corollary}
\newtheorem{theorem}[proposition]{Theorem}

\theoremstyle{definition}
\newtheorem{definition}[proposition]{Definition}

\newtheorem{remark}[proposition]{Remark}

\numberwithin{equation}{section}

\newcommand{\Id}{\mathrm{Id}}
\newcommand{\xiangle}{\langle \xi \rangle}
\newcommand{\cA}{\mathcal{A}}
\newcommand{\Op}{\text{Op}}
\newcommand{\cQ}{\mathcal{Q}}
\newcommand{\cR}{\mathcal{R}}
\newcommand{\cE}{\mathcal{E}}
\newcommand{\w}{{\tilde u_{1,2}}}

\makeatletter

\renewcommand{\subsubsection}{%
  \@startsection{subsubsection}{3}%
    {\normalparindent}%
    {.5\linespacing\@plus.7\linespacing}%
    {-.5em}%
    {\normalfont\bfseries}%
}

\def\@tocline#1#2#3#4#5#6#7{%
  \relax
  \ifnum #1>\c@tocdepth
  \else
    \par
    \addpenalty\@secpenalty
    \addvspace{#2}%
    \begingroup
      \hyphenpenalty\@M
      \@ifempty{#4}{%
        \@tempdima\csname r@tocindent\number#1\endcsname\relax
      }{%
        \@tempdima#4\relax
      }%
      \parindent\z@
      \leftskip#3\relax
      \advance\leftskip\@tempdima\relax
      \rightskip\@pnumwidth plus4em
      \parfillskip-\@pnumwidth
      #5\leavevmode
      \hskip-\@tempdima
      \ifcase #1
        \or
        \or \hskip1em
        \or \hskip2em
        \else \hskip3em
      \fi
      #6\nobreak\relax
      \dotfill
      \hbox to\@pnumwidth{\@tocpagenum{#7}}%
      \par\nobreak
    \endgroup
  \fi
}

\makeatother

\newcommand{\eps}{\varepsilon}
\newcommand{\e}{\mathrm{e}}
\newcommand{\dd}{\mathop{}\!\mathrm{d}}

\newcommand{\PP}{\Pi_{\rm div}}
\newcommand{\RR}{\mathbb{R}}
\newcommand{\NN}{\mathbb{N}}
\newcommand{\CC}{\mathbb{C}}
\newcommand{\TT}{\mathbb{T}}
\newcommand{\ZZ}{\mathbb{Z}}
\newcommand{\cL}{\mathcal{L}}
\newcommand{\cG}{\mathcal{G}}
\newcommand{\cU}{\mathcal{U}}
\newcommand{\cK}{\mathcal{K}}
\newcommand{\cM}{\mathcal{M}}
\newcommand{\cS}{\mathcal{S}}
\newcommand{\Av}{\Pi_0}

\def\Re{{\rm Re}}

\begin{document}
\title[Smooth autonomous fast dynamo action on the three-torus]{\vspace*{-3cm}Smooth autonomous fast dynamo action on the three-torus} 

\author[M. Coti Zelati]{Michele Coti Zelati}
\address{Department of Mathematics, Imperial College London}
\email{m.coti-zelati@imperial.ac.uk}

\author[M. Sorella]{Massimo Sorella}
\email{m.sorella@imperial.ac.uk}

\author[D. Villringer]{David Villringer}

\email{d.villringer22@imperial.ac.uk}

\subjclass[2020]{35Q35, 37C30, 47A10, 47A55, 76E25}

\keywords{Fast dynamo action, kinematic dynamo equation, anisotropic Banach spaces, transfer operators, singular spectral perturbation}

\begin{abstract}
We construct a nonempty $C^k$-open family of smooth, autonomous, divergence-free velocity fields on $\mathbb{T}^3$ that generate fast dynamos. The proof proceeds by first establishing fast dynamo action for a large class of smooth, time-periodic velocity fields exhibiting the classical stretch--fold--shear mechanism. We then realize the associated period map within a smooth autonomous flow using carefully tuned return dynamics to a global Poincar\'e section. In both settings, the proof first isolates unstable distributional eigenmodes for the ideal dynamo operator on an anisotropic Banach space of distributions, and then shows that this spectral instability persists under the singular perturbation $\eps \Delta$. The resulting estimates are uniform as $\eps\to0$ and stable under $C^k$ perturbations of the
velocity field, yielding an open set of smooth fast dynamo vector fields. This result resolves the Fast Dynamo Conjecture of Zeldovich and Sakharov, as recorded in Arnold's book of problems.
\end{abstract}

\maketitle

\tableofcontents

\section{The fast dynamo problem}
A fundamental equation in mathematical magnetohydrodynamics (MHD) is the \emph{kinematic dynamo equation}
\begin{equation}\tag{KDE}\label{passive-vector}
\begin{cases}
\partial_t B+ (u \cdot\nabla)B-(B \cdot\nabla)u=\eps \Delta B, \\[1mm]
\nabla \cdot B =0,
\end{cases}
\qquad (t,x)\in (0,\infty)\times \mathbb T^3.
\end{equation}

Here $u\in C^\infty(\TT^3\times [0,\infty))$ is a \emph{prescribed}, divergence-free velocity field, and $\eps>0$ is the magnetic resistivity. Equation~\eqref{passive-vector} describes the evolution of a weak magnetic field when its feedback on the fluid velocity is neglected. Equivalently, it is the magnetic component of the linearisation of the three-dimensional MHD equations around a (forced) Euler or Navier-Stokes solution with vanishing magnetic field.

The \emph{dynamo mechanism} goes back to Larmor \cite{larmor1919possible}, who proposed that fluid motion within the Sun could sustain its magnetic field. Since magnetic resistivity is extremely small in astrophysical regimes---for the Sun, it is estimated to be of order $\eps\sim10^{-6}$ (see \cite{hood2011solarmagneticfields})---the central mathematical question is whether instability persists uniformly as $\eps\to0$. For a fixed $\eps>0$, a velocity field $u$ is said to generate a \emph{kinematic dynamo} if \eqref{passive-vector} admits a nontrivial solution that grows exponentially in $L^2$. If the corresponding exponential growth rate remains bounded away from zero as $\eps\to0$, then $u$ is called a \emph{fast dynamo}. 

Proving even kinematic dynamo action at fixed positive resistivity is  nontrivial. Classical \emph{anti-dynamo theorems} exclude several settings with excessive symmetry: Zeldovich's theorem \cite{Zeldovich_1992} rules out planar flows, while Cowling's theorem \cite{Cowling33} rules out the maintenance of an axisymmetric magnetic field. Moreover, the results of Vishik and Klapper--Young \cites{Vishik89,Klapper_Young_1995} show that smooth fast dynamo action requires chaotic Lagrangian dynamics; in particular, the flow generated by a smooth autonomous fast dynamo must have positive topological entropy. For broader accounts of dynamo theory, we refer to the monographs \cites{CG95,AK98}.

These results leave open a conjecture originally posed by Ya.~B.~Zeldovich and A.~D.~Sakharov in the 1970s and later included in Arnold's collection of problems \cite{Arnold04}*{Problem~1994--28}.

\medskip

\noindent\textbf{Fast Dynamo Conjecture.}
\textit{There exist a smooth, time-independent, divergence-free velocity field $u:\TT^3\to\RR^3$ and constants $\eps_0,\gamma_0>0$ such that, for every $\eps\in(0,\eps_0]$, there exists a nonzero divergence-free initial datum $B_{\mathrm{in}}^\eps$ for which the corresponding solution of \eqref{passive-vector} satisfies}
\begin{equation}\label{eq:expoGro}
\liminf_{t\to\infty}
\frac1t
\log
\|B^\eps(t)\|_{L^2}
\geq \gamma_0.
\end{equation}
\textit{Such a velocity field is called a fast dynamo.}

\medskip

Our main result resolves this conjecture and, moreover, constructs a mechanism of smooth autonomous fast dynamo action that is robust under finite-regularity perturbations of the velocity field.

\begin{theorem}[Autonomous fast dynamo]
\label{thm:autonomous}
There exist $k\in\NN$ and a nonempty set
$\mathcal D\subset C^\infty_\sigma(\TT^3)$, open in the $C^k$
topology, such that every $u\in\mathcal D$ is a fast dynamo.
\end{theorem}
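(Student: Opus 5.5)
The proof follows the two-stage strategy announced in the abstract: first a time-periodic fast dynamo, then an autonomous realization of its period map via a global Poincaré section.

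\emph{Stage one: the time-periodic model.} I take a time-periodic, divergence-free field $u(t,x)$ on $\TT^3$ whose period map $\Phi$ implements stretch--fold--shear. Over one period it runs a sequence of shear flows. A typical choice is a shear in $x_1$ depending on $x_2$, then a shear in $x_2$ depending on $x_1$, which together give a hyperbolic, cat-map-like action on $(x_1,x_2)$. This is followed by a shear in $x_3$ that produces a phase twist, so that stretched field lines fold back constructively rather than cancelling. The field transport is governed by the ideal (\(\eps=0\)) monodromy operator
\[
\mathcal L_0 B=(D\Phi\, B)\circ\Phi^{-1}.
\]
I study $\mathcal L_0$ as a transfer operator on an anisotropic Banach space $\mathcal B$ of distributions, smooth along the unstable direction and dual-smooth along the stable direction. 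On this space, Lasota--Yorke inequalities give quasi-compactness, with essential spectral radius at most $\lambda^{-r}$ times the volume-expansion factor. A Fourier computation in the $x_3$ variable then exhibits an isolated eigenvalue $\mu$ with $|\mu|>1$, strictly above the essential radius. This gives an unstable distributional eigenmode.

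\emph{Singular perturbation.} Next I show that the spectral instability survives when the resistive term is switched on. The resistive monodromy is $\mathcal L_\eps=\mathcal L_0$ composed with heat semigroups interleaved along the period. I use a Keller--Liverani type argument:
\begin{itemize}
\item prove $\|\mathcal L_\eps-\mathcal L_0\|_{\mathcal B\to\mathcal B_w}\to0$ into a weaker space $\mathcal B_w$;
\item prove Lasota--Yorke bounds for $\mathcal L_\eps$ that are uniform in $\eps$. This uses that the heat semigroup is a contraction on each anisotropic norm, since it is a Fourier multiplier commuting suitably with the cone structure.
\end{itemize}
Together these give an eigenvalue $\mu_\eps\to\mu$, hence a growth rate of at least $\tfrac12\log|\mu|>0$ uniformly for small $\eps$. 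An $L^2$ solution of \eqref{passive-vector} with that growth is then obtained by taking the initial datum to be the smoothed spectral projection, which lies in $L^2$ because $\eps>0$ regularizes. The same inequalities depend on finitely many derivatives of $u$, so they persist under $C^k$ perturbations of $u$.

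\emph{Stage two: the autonomous construction.} I build an autonomous $u$ on $\TT^3$ with a global cross-section $\Sigma$, for instance a two-torus times a point, or more likely a suspension-type structure. Its first-return map should coincide with a stretch--fold--shear map of the above type, with a return time that is constant or carefully tuned. To do this I view $\TT^3$ as the mapping torus of a hyperbolic toral automorphism. Locally I insert the shear maps as time-dependent flows reparametrized along the circulation direction, choosing them divergence-free. I then redo the spectral analysis for the transfer operator of the flow, i.e.\ the resolvent of the generator $-u\cdot\nabla+(\nabla u)\cdot$. An isolated eigenvalue with positive real part for the ideal generator is transported, via the Poincaré return, to an eigenvalue of the return operator. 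Singular perturbation stability under $\eps\Delta$ is proved as before, now for generators: I use uniform resolvent bounds on anisotropic spaces in a right half-plane, in the style of Butterley--Liverani. Openness in $C^k$ follows because every constant depends continuously on $\|u\|_{C^k}$ and on hyperbolicity cones that are robust to perturbation.

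\emph{Main obstacle.} The hardest step is the uniform-in-$\eps$ control in the autonomous setting. Along the flow direction there is no hyperbolicity, so the Laplacian mixes the neutral direction with the anisotropic directions. Moreover, the return-time tuning must avoid cancellations of the unstable mode; flows with constant return time are the dangerous case for mixing. To address this, I would first try a weighted norm in the flow direction with a discrete Fourier decomposition along the suspension circle. I would then verify that the heat kernel acts as a bounded, nearly diagonal multiplier on each mode, reducing the problem to the stage-one map estimates mode by mode.
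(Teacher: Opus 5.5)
There is a genuine gap at the foundation: the uniformly hyperbolic framework you rely on does not exist for smooth fields on $\TT^3$. The horizontal period map of smooth periodic shear flows is isotopic to the identity on $\TT^2$. So is the Poincar\'e return map to $\TT^2\times\{0\}$ of an autonomous field with $u_3>0$, since it is the time-$1$ map of the non-autonomous planar field $u_{1,2}/u_3$. Three consequences follow.
\begin{enumerate}
\item $\TT^3$ is not the mapping torus of a hyperbolic toral automorphism; that mapping torus has first Betti number $1$.
\item $\TT^3$ carries no Anosov flow (Plante--Thurston), so Butterley--Liverani resolvent bounds are not available.
\item A shear $x_1\mapsto x_1+Nf(x_2)$ with smooth periodic $f$ degenerates where $f'=0$. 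The map is therefore not uniformly hyperbolic, and there is no invariant, uniformly expanded cone field.
\end{enumerate}
Hence a space ``smooth along the unstable direction'' with Lasota--Yorke rate $\lambda^{-r}$ cannot be set up. The paper instead uses a global splitting that needs no hyperbolicity: $\PP b$ is measured in $H^{-r}$ and $(\Id-\PP)b$ in $H^{r}$. The exact Zeldovich identities $(DT\,\nabla^\perp\psi)\circ T^{-1}=\nabla^\perp(\psi\circ T^{-1})$ and $\nabla\cdot((DT\,b)\circ T^{-1})=(\nabla\cdot b)\circ T^{-1}$ then give a strong Lasota--Yorke constant $\|DT\|_{L^\infty}^{1-r}$, against the operator scale $\|DT\|_{L^\infty}$, for every area-preserving $T$.

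Your ``Fourier computation in $x_3$'' also does not by itself produce an isolated eigenvalue. The paper gets one from the strong-shear limit: $N^{-2}\mathcal M_{T_N,g}$ converges in $X\to Y$ norm to the rank-one operator $b\mapsto\ell(b)h_g$. Its eigenvalue $\int_{\TT^2}\e^{2\pi i g}f_1'(x)f_2'(y)\,\dd x\,\dd y$ is made nonzero by choosing $g$, while the normalized essential radius is $O(N^{-2r})$. Keller--Liverani then transfers the eigenvalue to large $N$.

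In stage two, the missing idea is how the vertical-shear phase $\e^{2\pi i g}$ survives without a time period. The paper prescribes the return map to be $T_N$. This requires a Moser conjugation $\kappa_u$, because the raw return map only preserves $u_3(\cdot,0)\,\dd a$, and a frame change $E_u$ that absorbs the $D\tau$ term. It then tunes the return time to $\tau\circ P^{-1}=1-g/n$ and seeks $\lambda=2\pi i n+\theta$, so that $\e^{-\lambda\,\tau\circ P^{-1}}=\e^{-\theta}\e^{\theta g/n}\e^{2\pi i g}$. A large imaginary spectral parameter thus turns an $O(1/n)$ return-time fluctuation into the required phase, and $1\in\sigma(\mathcal D_{u,\lambda})$ is solved by the implicit function theorem.

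For $\eps>0$ there is no exact splitting into $\mathcal L_0$ and heat semigroups, since advection and diffusion do not commute; the paper handles the periodic case with the stochastic Cauchy formula. In the autonomous case there is no evolution along $z$ at all. The equation $\eps\Delta B-u\cdot\nabla B+B\cdot\nabla u=\lambda B$ is an elliptic boundary problem in $s=z$, well posed only under an exponential growth condition. The paper factors $\mathcal K_{\lambda,\eps}=\mathcal E_u\mathcal D_{u,\lambda}\mathcal U_{\lambda,\eps}\mathcal E_u^{-1}$. It then proves, via a fast/slow pseudodifferential factorisation, that $\mathcal U_{\lambda,\eps}$ is close to the identity in the Hodge-anisotropic norms. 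The decisive point is that its principal part is scalar, so it couples $\PP$ and $\Id-\PP$ only at lower order.

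Finally, openness does not follow from constants depending continuously on $\|u\|_{C^k}$. The map $u\mapsto\mathcal D_{u,\lambda}$ is continuous only from $\mathcal X$ to $\mathcal Y$, so you need Keller--Liverani together with Rouch\'e's theorem applied to the holomorphic eigenvalue branch in $\lambda$.
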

\begin{remark}
Although we have made no attempt at optimising this constant, a careful reading of the proof of Theorem \ref{thm:autonomous} reveals that $k=4$ suffices for the openness statement.
\end{remark}
\subsection{The ideal spectral strategy}
\label{sec:discussion of proof}
Our proof is guided by the heuristic, informed by classical notions in the theoretical physics literature \cite{CG95}, that fast dynamo action should leave behind a spectral ``remnant'' in the ideal equation, i.e.\ \eqref{passive-vector} with $\eps=0$. More precisely, we seek an unstable eigenmode for \eqref{passive-vector} with $\eps=0$, and only afterwards ask whether this mode survives the addition of diffusion. 
However, such a limiting mode should not generally be expected to remain smooth: repeated stretching and mixing transfer the magnetic field to increasingly fine scales, suggesting that the natural ideal object is a distribution rather than an $L^2$ function \cite{Moffatt_Proctor_1985}. Hence, our goal will be to construct a space of distributions $\mathcal{X}$ that not only contains the limiting eigenmode for the ideal dynamo problem, but also provides \eqref{passive-vector} with sufficiently robust spectral structure to control the singular perturbation of adding $\eps \Delta$.

We implement this principle through two distinct but related constructions. For a time-periodic velocity field, the evolution over one complete period provides a natural discrete operator, and our first task is to isolate an expanding mode of this operator. In order to do so, we develop a general anisotropic framework to deal with time-periodic \emph{stretch--fold--shear} fast dynamos, first introduced in \cite{baylychildress}. Similarly, although an autonomous flow has no distinguished global period, a two-dimensional global transverse section $ \Sigma  \subset \TT^3$ equips it with a period-like return structure: each trajectory is decomposed into successive passages between crossings of the section, with a generally non-constant time assigned to each passage. Using this structure, for an autonomous vector field with strictly positive third component, we evaluate the dynamo evolution at the return time to the \emph{Poincar\'e section} $\Sigma = \{z=0\}$. This yields an operator that can be treated using our stretch–fold–shear framework.

\subsection{Time-periodic dynamos} 
\label{sec:intro time periodic}
As in the recent work \cite{CZSV26}, the time-periodic construction is based on the classical \emph{stretch--fold--shear} (SFS) mechanism \cites{baylychildress,ChildressLargeSFS,Gilbert1993}, which we study in a rather general setting. Let $f_1,f_2\in C^\infty(\TT;\RR)$ be nonconstant mean-free functions, let $g\in C^\infty(\TT^2;\RR)$, and let $N\geq1$ be an integer. Consider the idealized velocity field
\begin{equation}\label{eq:velocity}
u_N(t,x,y,z)=
\begin{cases}
(0,2Nf_1(x),0), & t\in[0,\frac{1}{2}),
\\[1mm]
(2Nf_2(y),0,0), & t\in[\frac{1}{2},1),
\\[1mm]
(0,0,-g(x,y)), & t\in[1,2),
\end{cases}
\end{equation}
extended periodically in time with period $2$. We remark that the field \eqref{eq:velocity} is divergence-free and smooth in space, and its switching times can be smoothed without changing the essential features of the construction.

The first two stages of \eqref{eq:velocity} are strong horizontal shears. Their time-$1$ flow map, denoted by $T_N:\TT^2\to\TT^2$, has derivative $DT_N$ with a term of order $N^2$, which provides the dominant stretching mechanism. The third stage is a vertical shear. On the first nonzero vertical Fourier mode it produces the phase factor $\e^{2\pi i g}$, so that the evolution of the first two components of the magnetic field over one period is described by \begin{equation}\label{eq:periodic-dynamo-operator} 
\mathcal M_{T_N,g}b = \e^{2\pi i g}(DT_N\,b)\circ T_N^{-1}. \end{equation} 
Thus the horizontal shears provide stretching, while the vertical shear controls the interference created by the rapidly oscillating composition with $T_N^{-1}$. This construction yields the following result, which is also of independent interest.

\begin{theorem}[Time-periodic SFS fast dynamos] \label{thm:time-periodic} 
Let $f_1,f_2\in C^\infty(\TT;\RR)$ be nonconstant mean-free functions. Then there exist $g\in C^\infty(\TT^2;\RR)$ and $N_0\geq1$ such that, for every integer $N\geq N_0$, the SFS construction \eqref{eq:velocity} admits a smooth, $2$-periodic, divergence-free realization 
$$
u_N^{\rm per} \in C^\infty_{\rm per}(\RR\times\TT^3;\RR^3) $$
which is a fast dynamo. 
\end{theorem}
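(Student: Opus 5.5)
The plan is to follow the ideal spectral strategy of Section~\ref{sec:discussion of proof}, applied to the period map. Separating variables in $z$, the vertical shear stage acts diagonally in the vertical Fourier variable $k_z$. We restrict to initial data supported on the mode $k_z=1$. Since $\nabla\cdot B=0$ determines $B_3$ from the horizontal part $b=(B_1,B_2)$, the ideal ($\eps=0$) period-$2$ evolution of $b$ is exactly the transfer operator $\mathcal M_{T_N,g}$ of \eqref{eq:periodic-dynamo-operator}. Here $T_N(x,y)=(x+2Nf_2(y+Nf_1(x)),\,y+Nf_1(x))$, up to the precise time scaling, and $DT_N$ has an entry of size $N^2f_1'f_2'$. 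The smooth realization $u^{\rm per}_N$ is obtained by inserting smooth time cutoffs at the switching times. This only replaces $T_N$ and $\e^{2\pi i g}$ by $C^k$-close objects, so it suffices to prove a result that is stable under $C^k$ perturbations of the period map.

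\textbf{Step 1 (anisotropic space).} On the region where $|f_1'f_2'|\gtrsim N^{-1/2}$, $T_N$ is uniformly hyperbolic, with unstable cones close to the direction $e_1$ and expansion of order $N^2$. The critical set where $f_1'$ or $f_2'$ vanishes has measure $O(N^{-1/2})$ after cutting off, by nondegeneracy of the zeros of generic $f_i$; this is where some care is needed. I would build a Gouëzel--Liverani type anisotropic Banach space $\mathcal X$ of vector distributions on $\TT^2$: smooth of order $p$ along stable directions, of dual order $q$ along unstable directions, with $p,q\le k$. On $\mathcal X$ I would prove a Lasota--Yorke inequality
\[
\|\mathcal M^n b\|_{\mathcal X}\le C\lambda^{-n}\|b\|_{\mathcal X}+C_n\|b\|_{\mathcal X_{\rm weak}},
\]
with $\lambda^{-1}\ll$ the spectral radius. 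This gives an essential spectral radius small compared with $N^{1}$. The stretching factor $N^2$ in $DT_N$ multiplies a Jacobian normalization, and the oscillatory composition costs roughly $N^{-1}$ from the measure of the preimage pieces.

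\textbf{Step 2 (an unstable eigenvalue).} I would choose $g$ so that the phases $\e^{2\pi i g}$ align the contributions of the $\sim N$ inverse branches of $T_N$, instead of letting them cancel. This is the Bayly--Childress mechanism. Concretely, I would test $\mathcal M^n$ against a fixed smooth test covector field $\phi$ and a smooth initial datum $b_0$. I would then show by stationary phase, using the nondegenerate critical structure, that $|\langle \mathcal M^n b_0,\phi\rangle|\ge c\,\rho^n$ with $\rho$ exceeding the essential spectral radius and $\rho>1$. A choice like $g(x,y)=a\,f_1$-adapted phase correction, tuned so that the sum over branches is coherent, should work for $N\ge N_0$. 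It follows that $\mathcal M_{T_N,g}$ has an isolated eigenvalue $\mu$ with $|\mu|>1$ and a distributional eigenmode in $\mathcal X$. This step is the main obstacle: it requires a quantitative lower bound on a highly oscillatory sum that is uniform in $N$.

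\textbf{Step 3 (singular perturbation).} With diffusion, the period map becomes $\mathcal M_\eps=\mathcal M\circ$ (heat semigroups interleaved with the stages). I would show, following Keller--Liverani, that $\mathcal M_\eps$ satisfies the same Lasota--Yorke inequality uniformly in $\eps$. The key point is that the heat semigroup is a contraction on $\mathcal X$ and on $\mathcal X_{\rm weak}$, uniformly in $\eps$, since it commutes approximately with the anisotropic norms. I would also show $\|\mathcal M_\eps-\mathcal M\|_{\mathcal X\to\mathcal X_{\rm weak}}\lesssim\eps^{\theta}$. Keller--Liverani stability then yields an eigenvalue $\mu_\eps$ of $\mathcal M_\eps$ with $|\mu_\eps|\ge(1+|\mu|)/2$ for $\eps\le\eps_0$. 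The corresponding eigenfunction is smooth because $\eps>0$, so the resulting solution of \eqref{passive-vector} grows like $|\mu_\eps|^{t/2}$ in $L^2$. This gives \eqref{eq:expoGro} with $\gamma_0=\tfrac12\log\frac{1+|\mu|}{2}$. Every constant in this argument depends only on $C^k$ norms of the period map, which also gives robustness under the time smoothing.
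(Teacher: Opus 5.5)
Your Step 1 does not go through, and the difficulty is structural rather than a matter of extra care near a small set. Write $a=f_1'(x)$ and $b=f_2'(y+Nf_1(x))$, so that $DT_N=\begin{pmatrix}1+N^2ab & Nb\\ Na & 1\end{pmatrix}$.

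\begin{itemize}
\item On the curves $\{b=0\}$ this equals $\begin{pmatrix}1&0\\ Na&1\end{pmatrix}$. It sends $e_1$, the expanding direction elsewhere, to the nearly vertical vector $(1,Na)$, i.e.\ into the cone of the contracting direction.
\item On $\{a=0\}$ it is a shear with no expansion at all.
\end{itemize}

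Gou\"ezel--Liverani norms are suprema over admissible stable leaves, and they need an invariant cone field with uniform expansion everywhere. A small-measure exceptional set therefore cannot simply be cut off. In fact $T_N$ is isotopic to the identity, so it acts trivially on $H_1(\TT^2)$ and can never be Anosov, since Anosov maps of $\TT^2$ act hyperbolically on $H_1$.

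Your bookkeeping of the essential radius also does not match the operator. There is no ``Jacobian normalization'' and there are no ``$\sim N$ inverse branches'': $T_N$ is a diffeomorphism, and $\mathcal M_{T_N,g}$ is a single composition with $T_N^{-1}$.

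The idea you are missing is that no hyperbolicity is needed. The Zeldovich identities $\mathcal M_T\nabla^\perp\psi=\nabla^\perp(\psi\circ T^{-1})$ and $\nabla\cdot(\mathcal M_Tb)=(\nabla\cdot b)\circ T^{-1}$ reduce both diagonal Hodge blocks to scalar volume-preserving composition. That composition costs only $\|DT\|_{L^\infty}^{|s|}$ on $H^s$ for $|s|\le1$. Measuring $\PP b$ in $H^{-r}$ and $(\Id-\PP)b$ in $H^{r}$ therefore gives a one-step Lasota--Yorke inequality with strong constant $\lesssim N^{2-2r}$ and weak constant $\lesssim N^2$. The phase $\e^{2\pi i g}$ enters only through lower-order commutators in the weak term. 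Hence $r_{\rm ess}(N^{-2}\mathcal M_{T_N,g})\lesssim N^{-2r}$.

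Step 2 likewise has no working mechanism. A lower bound $|\langle\mathcal M^nb_0,\phi\rangle|\ge c\rho^n$ for all $n$ is essentially the statement to be proved. There is no finite family of branches whose phases $g$ could be aligned, and you never specify $g$.

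The paper avoids iterating altogether. It shows that $N^{-2}\mathcal M_{T_N,g}\to\mathcal M_\infty$ in the $X\to Y$ operator norm, where $\mathcal M_\infty b=\ell(b)\,\e^{2\pi i g}(f_2'(y),0)$ and $\ell(b)=\int_{\TT^2}f_1'(x)b_1$. The argument has three parts:
\begin{itemize}
\item after normalization, only the $N^2f_1'f_2'$ entry survives;
\item the oscillatory composition in that entry converges weakly in $L^2$ to $f_2'(y)\int f_1'h$, because every Fourier interaction that feels the shears decays by integration by parts;
\item the compact embeddings $H^r\hookrightarrow L^2\hookrightarrow H^{-r-\delta}$ upgrade this to convergence uniform on the unit ball of $X$.
\end{itemize}

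The rank-one limit has the explicit eigenvalue $\lambda_g=\int_{\TT^2}\e^{2\pi ig}f_1'(x)f_2'(y)$. This vanishes for $g=0$, so the vertical shear is indispensable, and it is nonzero for $g=tf_1'f_2'$ with $t$ small. Keller--Liverani then produces an algebraically simple eigenvalue with $|\Lambda_N|\ge\tfrac12|\lambda_g|N^2\gg N^{2-2r}$.

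In Step 3, the resistive period map is not the ideal one interleaved with heat semigroups, because diffusion acts during the shears and does not commute with them. The paper instead writes it exactly, via the stochastic Cauchy formula, as an expectation of random operators $\bigl(\e^{2\pi i\gamma_\eps^\omega}DT^\omega_{N,\eps}b\bigr)\circ(T^\omega_{N,\eps})^{-1}$, with $\|DT^\omega_{N,\eps}\|_{L^\infty}\lesssim N^2$ almost surely. These operators obey the same Lasota--Yorke bound uniformly in $\eps$. At fixed $N$ they converge to $\mathcal M_{T_N,g}$ from $X$ to $Y$ as $\eps\to0$.
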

Several properties we derive for the operator in \eqref{eq:periodic-dynamo-operator} actually hold for a general map $T:\TT^2\to \TT^2$. The specific choice of $T_N$, arising from $u_N$ in \eqref{eq:velocity}, is particularly amenable to constructing an unstable eigenvalue in the strong-chaos limit $N\to\infty$. In principle, our methodology can be employed for a plethora of measure-preserving diffeomorphisms on $\TT^2$. See Section \ref{sec:strategy} for more details.

\subsection{From periodic to autonomous dynamos}
As hinted at before, the proof of Theorem \ref{thm:time-periodic} follows from a rather general anisotropic Banach space framework for SFS-type velocity fields. Hence, the passage to an autonomous velocity field requires identifying such ``SFS-type'' dynamics. To achieve this, we construct an autonomous velocity field whose vertical component is strictly positive. Consequently, every trajectory repeatedly crosses the Poincar\'e section $\Sigma= \{ z=0\}$, always in the same direction. Following a trajectory from one crossing to the next produces two pieces of data: the point at which it next crosses the Poincar\'e section and the time taken to reach it. These are respectively the \emph{Poincaré return map} and the \emph{return-time function}. The distinction between these two objects is the key to the autonomous construction--- the return map and return time should be thought of as, respectively, the analogues of $T_N$ and $g$ in the time-periodic construction.
This use of a return-time function is closely related to the theory of twisted transfer operators \cites{Pollicott1985,Dolgopyat1998Prevalence,Dolgopyat1998Decay,BaladiVallee2005}, but its role here is reversed. In mixing problems, a large imaginary spectral parameter is combined with variation of the roof function to produce \emph{destructive} interference. Here the roof function is deliberately chosen so that the same high-frequency mechanism produces \emph{constructive} interference and recovers the phase required by the periodic dynamo operator.
\subsection{Restoring resistivity and obtaining openness}
At this stage, both constructions provide an isolated growing distributional mode for the ideal equation. It remains to restore resistivity. This cannot be treated by ordinary $L^2$ perturbation theory: diffusion is an unbounded operator, and the limiting eigenmode is itself distributional, see also \cite{Chicone_Latushkin_Montgomery-Smith_1995}. Instead, following the approach introduced in \cite{CZSV26}, we compare the ideal and resistive evolutions in a pair of strong and weak anisotropic topologies. Uniform estimates in the strong space, together with convergence in the weak one, allow the isolated ideal spectral data to persist for all sufficiently small $\eps>0$.
An analogous construction also explains the openness statement in Theorem~\ref{thm:autonomous}. A perturbation in the underlying velocity field behaves not dissimilarly from a perturbation in the resistivity, failing to be continuous in natural strong topologies, but being controllable in the weak-strong topology. Hence, the same philosophy that allows us to control the diffusive perturbation also ensures that our construction is robust under perturbations in the underlying velocity field.

\subsection{Comparison with existing literature}

The rigorous theory of fast dynamo action has developed rapidly in recent years. Among the principal advances are our construction of an autonomous Lipschitz fast dynamo on $\RR^3$ \cite{CZSV25}, the time-periodic Lipschitz fast dynamo on $\TT^3$ constructed in our previous work \cite{CZSV26}, the autonomous Lipschitz fast dynamo on $\TT^3$ of \cite{niebel2026autonomouslipschitzfastdynamo}, and the smooth random construction on $\TT^3$ of \cite{rowan2026aidiscoveredsmoothrandomfast}. Related progress includes weaker subsequential or limsup-in-time forms of fast dynamo action \cites{RowanDynamo25,sorellaVillringer2025}, as well as their ``almost sure'' counterparts \cite{delnin2026turbulentdynamosboundeddomains} established via convex integration techniques, slow dynamo instability and its nonlinear consequences for MHD \cites{NF_DV2025spectral,NF_DV2025nonlinear}, and results concerning the growth and decay of particular solutions to the ideal dynamo equation \cite{navarrofernandez2026exponentialgrowthdecayideal}. Within the well-posed Lipschitz or smooth setting, three broad approaches have emerged: rescaling, probabilistic averaging, and singular spectral perturbation.

\subsubsection*{The rescaling method}
The rescaling method was introduced in our work \cite{CZSV25}. On $\RR^3$, if $B(t,x)$ solves \eqref{passive-vector} with resistivity $\eps=1$ and velocity field $U$, then
$B^\eps(t,x)=B(t,x/\sqrt{\eps})$ solves \eqref{passive-vector} with resistivity $\eps$ and velocity field $u^\eps(x)=\sqrt{\eps}\,
U(x/\sqrt{\eps})$. On the torus, the same identity is compatible with the periodic geometry along a discrete sequence of scales. Since the transformation $U\mapsto u^\eps$ preserves the Lipschitz seminorm, a dynamo at one positive resistivity provides building blocks at arbitrarily small resistive scales. In \cite{CZSV25}, spatially separated copies of flows exhibiting the alpha effect, related to the classical example of \cite{Roberts70}, are combined to construct an autonomous fast dynamo in $W^{1,\infty}(\RR^3)$.

The same principle underlies two subsequent constructions. In \cite{sorellaVillringer2025}, spatial separation is replaced by temporal separation, producing $u\in L^\infty_tW^{1,\infty}_x(\TT^3)$ for which 
$$
\liminf_{\eps\to0}\limsup_{t\to\infty}
\frac1t\log\|B^\eps(t)\|_{L^2(\TT^3)}>0.
$$
This gives a limsup-in-time form of fast dynamo action. More recently, \cite{niebel2026autonomouslipschitzfastdynamo} combined rescaling with a spectral gluing argument, using the velocity field of \cite{Gilbert88} as a building block, to obtain an autonomous fast dynamo in $W^{1,\infty}(\TT^3)$ in the full sense of equation \eqref{eq:expoGro}.

The currently available rescaling constructions are Lipschitz, but not smoother. Indeed, although the rescaling is bounded in $W^{1,\infty}$, its $C^{1,\alpha}$ seminorm grows like $\eps^{-\alpha/2}$ for every $\alpha>0$. Thus, the existing multiscale gluing schemes do not provide uniform control in any $C^{1,\alpha}$ space. This distinction is particularly relevant in view of \cites{Vishik89,Klapper_Young_1995}, which connect smooth fast dynamo action to dynamical complexity of the ideal flow. The example of \cite{niebel2026autonomouslipschitzfastdynamo} shows that their conclusions do not extend verbatim to the Lipschitz category.

\subsubsection*{The probabilistic approach}
A second strategy uses randomness to obtain averaged stretching estimates for which the effect of diffusion is particularly transparent in Fourier variables; see \cite{CG95} for the classical framework. Rowan \cite{RowanDynamo25} used this perspective to construct a time-dependent velocity field with smooth temporal dependence and high spatial regularity satisfying
$$
\limsup_{\eps\to0}\limsup_{t\to\infty}
\frac1t\log\|B^\eps(t)\|_{L^2(\TT^3)}>0.
$$

Compared with \cite{sorellaVillringer2025}, this comes with greater spatial and temporal regularity, but a weaker notion of limit in $\eps$.

More recently, \cite{rowan2026aidiscoveredsmoothrandomfast} constructed a random velocity field $u\in C^\infty_tC^\infty_x(\TT^3)$ such that, for every fixed sufficiently small $\eps>0$, exponential growth at an $\eps$-independent rate holds for almost every realization. The exceptional null set may depend on $\eps$, so the result does not assert that, with probability one, a single realization works simultaneously for every sufficiently small resistivity. This order of quantifiers is familiar in the theory of mixing by random velocity fields; see, for instance, \cite{BBPS21}. Nevertheless, the result provides the first smooth construction exhibiting a uniform fast-dynamo growth rate through a probabilistic mechanism.

\subsubsection*{The singular-perturbative approach}
The third approach, adopted in the present paper, is the \emph{singular-perturbative approach} introduced in our previous work \cite{CZSV26}. It is motivated by the heuristic of Moffatt and Proctor \cite{Moffatt_Proctor_1985} that growing resistive eigenmodes should, after suitable normalization, possess a zero-resistivity limit in a generalized sense. In the setting relevant here, the ideal evolution on $L^2$ does not provide the isolated unstable spectral data required by classical perturbation theory; see \cites{Moffatt_Proctor_1985,Chicone_Latushkin_Montgomery-Smith_1995}. One is therefore led to seek such spectral data on suitable spaces of distributions.

In \cite{CZSV26}, we gave the first rigorous implementation of this principle for the dynamo problem. For a time-periodic Lipschitz velocity field, anisotropic spaces related to those of Demers and Liverani \cite{demers_liverani} were used to isolate an unstable distributional eigenmode of the ideal evolution. The Keller--Liverani perturbation theorem \cite{Keller_Liverani} then showed that the associated spectral data persist after the addition of sufficiently small resistivity, yielding a time-periodic fast dynamo in $L^\infty_tW^{1,\infty}_x(\TT^3)$. The principal contribution of \cite{CZSV26} was therefore to turn the Moffatt--Proctor heuristic into a workable two-stage scheme: first establish spectral instability for the ideal dynamics on a space of distributions, and only afterwards introduce diffusion.

This spectral scheme provides the conceptual and perturbative starting point of the present paper. The conclusions obtained here, however, do not follow by simply regularizing or suspending the construction of \cite{CZSV26}. That work concerns a time-periodic Lipschitz velocity field and uses an anisotropic functional framework inspired by ideas from uniformly hyperbolic dynamics, see e.g.\ \cites{Baladi_quest,BaladiGouezel2010PiecewiseConeHyperbolic,demers_liverani,zworski_2015}. While the present paper is heavily inspired by the anisotropic Banach construction of \cite{CZSV26}, the precise mechanism we exploit is global, rather than the local expansion and contraction of hyperbolic dynamics. The current paper also addresses three features absent from \cite{CZSV26}: smoothness, autonomy, and stability under perturbations of the velocity field. Each requires a new dynamical and analytic construction, as described in the preceding Section \ref{sec:discussion of proof}. Thus, \cite{CZSV26} supplies the spectral blueprint, while its realization for a $C^k$-open family of smooth autonomous flows is the principal advance of the present work.

A recurrent feature of rigorous fast-dynamo constructions---recently termed ``bespokeness'' in \cite{rowan2026aidiscoveredsmoothrandomfast}---is their reliance on velocity fields designed around exact algebraic, geometric, or averaging identities. This applies to the rescaling constructions \cites{CZSV25,sorellaVillringer2025,niebel2026autonomouslipschitzfastdynamo}, the probabilistic constructions \cites{RowanDynamo25,rowan2026aidiscoveredsmoothrandomfast}, and also to the seed flow constructed here. Theorem~\ref{thm:autonomous}, however, shows that the resulting dynamo property is not confined to this particular example: the exact identities are needed to produce one velocity field satisfying the relevant spectral criterion, but nearby velocity fields need not preserve them.

For each fixed $\eps>0$, standard perturbation theory gives stability of isolated spectral data under sufficiently small perturbations of the velocity field \cite{K76}. The corresponding neighbourhood may, however, degenerate as $\eps\to0$. The present argument instead works with an ideal, diffusivity-independent spectral criterion which is stable under $C^k$ perturbations and from which resistive growth can be recovered uniformly as $\eps\to0$. In this sense, the openness result develops the singular-perturbative program of \cite{CZSV26} in a genuinely new direction.

\section{Main ideas and outline of the proof}\label{sec:strategy}
We now develop the quantitative framework behind the strategy summarized in Section~\ref{sec:discussion of proof}, introducing the operators, anisotropic spaces, and perturbative estimates used in the proof. We begin with the stochastic Lagrangian representation of the kinematic dynamo equation.

\subsection{The Lagrangian formulation}

As a diffusive Lie-transport equation, \eqref{passive-vector} admits a
stochastic Lagrangian representation. Given a divergence-free velocity
field $u(t,x)$ on $[0,\infty)\times\TT^d$, let
$\Phi_{u,\eps}^t$ denote the stochastic flow solving
\begin{equation}
\label{eq:stochastic flow}
\dd\Phi_{u,\eps}^t(\omega,x)
=
u\bigl(t,\Phi_{u,\eps}^t(\omega,x)\bigr)\,\dd t
+\sqrt{2\eps}\,\dd W_t(\omega),
\qquad
\Phi_{u,\eps}^0(\omega,x)=x,
\end{equation}
where $(W_t)_{t\geq0}$ is a standard $d$-dimensional Brownian motion.
It\^o's formula gives the stochastic Cauchy formula
\cite{Eyink2009}
\begin{equation}
\label{eq:flow representation}
B(t,\cdot)=\mathbb E\left[
\bigl(D\Phi_{u,\eps}^t(\omega,\cdot)\,B_0\bigr)
\circ \bigl(\Phi_{u,\eps}^t(\omega,\cdot)\bigr)^{-1}
\right].
\end{equation}
For $\eps=0$, we denote by $\Phi_u^t$ the deterministic flow. In this case,
\eqref{eq:flow representation} remains valid with the expectation omitted.

The proofs of Theorems~\ref{thm:autonomous} and
\ref{thm:time-periodic} begin by constructing unstable eigenvalues of the ideal dynamo operators whose associated eigenfunctions are distributions. We therefore first consider \eqref{eq:flow representation} with $\eps=0$.
The two-dimensional ideal dynamics will play a central role. Given a smooth volume-preserving diffeomorphism $T:\TT^2\to\TT^2$, arising as the time-$1$ flow map of a divergence-free velocity field, define
\begin{equation}
\label{eq:general 2d dynamo operator}
\mathcal M_T b:=(DT\,b)\circ T^{-1} \,, \qquad \cM_{T,g} b : =\e^{2\pi i g} \cM_T b \,,
\qquad
b\in L^2(\TT^2;\CC^2).
\end{equation}
We refer to $\mathcal M_T$ as the \emph{two-dimensional ideal dynamo operator}. The central observation is that these operators satisfy exact algebraic identities which separate the divergence-free and gradient components of a vector field. This allows the two components to be measured at different Sobolev regularities and leads to the anisotropic Banach spaces used below.

\subsection{The dynamo problem in two dimensions}
Somewhat paradoxically, the two-dimensional structure relevant to our construction is most clearly revealed by Zeldovich's anti-dynamo theorem, which rules out resistive dynamo action in dimension $2$.

\begin{theorem}[Zeldovich's anti-dynamo theorem
\cite{Zeldovich_1992}]
\label{thm:zeldovich}
Let $\eps>0$ and let $u\in C^\infty_\sigma(\TT^2)$. Consider
\begin{equation}
\label{eq:2d dynamo zeldovich}
\partial_t b+u\cdot\nabla b-b\cdot\nabla u=\eps\Delta b.
\end{equation}
Then, for every $b_0\in L^2(\TT^2;\CC^2)$, not necessarily
divergence-free, the corresponding solution remains bounded in $L^2$ as $t \to \infty$.
\end{theorem}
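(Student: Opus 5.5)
Write $u\in C^\infty_\sigma(\TT^2)$ in terms of a stream function, $u=\nabla^\perp\psi$. We then split $b$ into a divergence-free part and a gradient part, and control each part separately. The key algebraic fact is that the 2D equation commutes with the operations producing these two pieces. First we take the scalar $a:=\nabla^\perp\cdot b=\partial_x b_2-\partial_y b_1$, the curl of $b$. For general (not divergence-free) $b$, write the Lie-derivative structure: $u\cdot\nabla b-b\cdot\nabla u=\nabla\times(b\times u)+u\,(\nabla\cdot b)-b\,(\nabla\cdot u)$ in its 2D form. Since $\nabla\cdot u=0$, the equation can be rewritten so that the \emph{divergence} $\theta:=\nabla\cdot b$ satisfies a closed advection--diffusion equation
\begin{equation*}
\partial_t\theta+u\cdot\nabla\theta=\eps\Delta\theta .
\end{equation*}
This follows from $\nabla\cdot(u\cdot\nabla b-b\cdot\nabla u)=u\cdot\nabla(\nabla\cdot b)$, a direct computation using $\nabla\cdot u=0$, in which the terms $\partial_i u_j\partial_j b_i$ cancel. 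Likewise the divergence-free component behaves like transport of a potential: if $b=\nabla^\perp A$ with $A$ mean-free, then $A$ satisfies $\partial_tA+u\cdot\nabla A=\eps\Delta A$. In general the equation for $A$ picks up a forcing coming from the gradient part.

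\textbf{Step 1 (Hodge decomposition).} Write $b=\nabla^\perp A+\nabla\phi+\bar b$, where $\bar b$ is the spatial mean, $\Delta\phi=\theta$, and $\Delta A=\nabla^\perp\cdot b$.

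\textbf{Step 2 (the gradient part).} The scalar $\theta$ solves pure advection--diffusion, so $\|\theta(t)\|_{L^2}$ is nonincreasing. In fact $\theta$ is mean-free and Poincaré's inequality gives exponential decay $\|\theta(t)\|_{L^2}\le e^{-4\pi^2\eps t}\|\theta_0\|_{L^2}$. Since the initial datum is only in $L^2$, we have $\theta_0\in H^{-1}$. We must therefore instead use $\phi$, equivalently the $H^{-1}$ norm of $\theta$. The difficulty is that $\phi$ does not satisfy a closed equation. To get around this, I would use parabolic smoothing: for $t\ge1$, the standard energy estimate for advection--diffusion (Duhamel for $b$ itself over $[0,1]$, with Grönwall and constant $e^{\|\nabla u\|_\infty}$) puts $b(1)$ in $H^1$. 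Then $\theta(1)\in L^2$, and the decay of $\theta$ in $L^2$ gives $\nabla\phi(t)$ bounded in $L^2$ for all $t\ge1$.

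\textbf{Step 3 (the divergence-free part).} Applying $\nabla^\perp\cdot$, I would derive an equation for $A$ of the form
\begin{equation*}
\partial_tA+u\cdot\nabla A=\eps\Delta A+F,
\end{equation*}
where $F$ is built from $\phi$, for instance $F=\nabla^\perp\phi\cdot u$ up to a projection. If the gradient part is absent, then $F=0$ and $\|A\|_{L^2}$ is nonincreasing. The dissipation identity $\frac{\dd}{\dd t}\|A\|^2_{L^2}=-2\eps\|\nabla A\|^2_{L^2}+2\langle F,A\rangle$, combined with the exponential decay of $F$ inherited from $\theta$, gives $\sup_t\|\nabla A\|_{L^2}^2$-integrated control. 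The remaining task is to bound $\|\nabla^\perp A(t)\|_{L^2}=\|b_{\rm df}\|$ itself, and not just $A$.

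\textbf{Main obstacle.} A bound on $A$ in $L^2$ does not bound $b$ in $L^2$. I would use Zeldovich's classical argument for this: combine $\|A\|_{L^2}$ bounded with $\int_0^\infty\eps\|\nabla A\|^2_{L^2}\,\dd t<\infty$ and the energy inequality $\frac{\dd}{\dd t}\|\nabla A\|^2\le C\|\nabla u\|_\infty\|\nabla A\|^2+\dots$. A uniform Grönwall (Temam-type) lemma then shows that $\|\nabla A(t)\|^2$ stays bounded on windows of length one, since its time integral on each window is bounded by $\eps^{-1}\sup\|A\|^2$ plus the decaying forcing. This gives boundedness (with an $\eps$-dependent constant) of $\|b(t)\|_{L^2}$, which is the claimed statement.
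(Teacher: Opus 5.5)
Your overall architecture is sound and mirrors the triangular structure the paper invokes: the divergence $\theta$ solves a closed passive-scalar equation, the gradient part is slaved to it, and the stream function $A$ of the solenoidal part solves a forced passive-scalar equation, which you close with an energy estimate and a uniform Gr\"onwall lemma. The paper's sketch uses the exponential decay of mean-free passive scalars instead.

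There is, however, a genuine gap: you never control the spatial mean $\bar b$, and as a result you misidentify the forcing $F$. Integrating the equation over $\TT^2$ gives $\frac{\dd}{\dd t}\bar b=\int_{\TT^2}b\cdot\nabla u=-\int_{\TT^2}u\,\theta$. So $\bar b$ is not zero in general, and even when $\theta\equiv0$ it is a nonzero constant that does not decay. It also feeds the solenoidal block. Write $u=\bar u+\nabla^\perp\psi$; your $u=\nabla^\perp\psi$ tacitly assumes $\bar u=0$. Then $-(u\cdot\nabla\bar b-\bar b\cdot\nabla u)=\bar b\cdot\nabla u=\nabla^\perp(\bar b\cdot\nabla\psi)$. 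Hence the equation for $A$ carries the persistent forcing $\bar b\cdot\nabla\psi$, in addition to the terms coming from $\phi$ and $\theta$.

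Consequently the following three claims are false:
\begin{itemize}
\item ``if the gradient part is absent then $F=0$ and $\|A\|_{L^2}$ is nonincreasing'';
\item ``$F$ decays exponentially'';
\item $\int_0^\infty\eps\|\nabla A\|_{L^2}^2\,\dd t<\infty$.
\end{itemize}
A concrete example: take $u=(\sin 2\pi y,0)$ and $b_0=(0,1)$. Then $\theta\equiv0$, but $b(t)=\bigl(\tfrac{\cos 2\pi y}{2\pi\eps}(1-\e^{-4\pi^2\eps t}),1\bigr)$. Its solenoidal part grows from $0$ to size $\eps^{-1}$ and never decays. Your final bound on $\|b\|_{L^2}^2=|\bar b|^2+\|\nabla A\|_{L^2}^2+\|\nabla\phi\|_{L^2}^2$ also omits the first term entirely.

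The repair is short.
\begin{itemize}
\item For $t\geq1$ one has $|\frac{\dd}{\dd t}\bar b|\leq\|u\|_{L^2}\|\theta\|_{L^2}$, and $\|\theta(t)\|_{L^2}$ decays exponentially, so $\bar b$ stays bounded (and converges).
\item $F$ is then an $H^1$-bounded piece plus an exponentially decaying one.
\item The $L^2$ bound on $A$ must therefore come from the spectral gap rather than from integrability of $F$. By Poincar\'e,
\[
\frac{\dd}{\dd t}\|A\|_{L^2}^2+\eps\|\nabla A\|_{L^2}^2+2\pi^2\eps\|A\|_{L^2}^2\leq\frac{\|F\|_{L^2}^2}{2\pi^2\eps},
\]
which gives $\sup_t\|A\|_{L^2}<\infty$ and $\sup_t\int_t^{t+1}\eps\|\nabla A\|_{L^2}^2\,\dd s<\infty$.
\end{itemize}
That windowed bound, rather than a global time integral, is what the uniform Gr\"onwall lemma needs, and your final step then goes through.
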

We recall only the structural identities underlying the theorem, as these will also be central to our construction. By the Hodge decomposition, write $b=\overline b+\nabla^\perp\psi+\nabla\phi$, where $\overline b$ denotes the spatial average of $b$. The gradient
component is determined by the divergence through
$$
\phi=\Delta^{-1}\nabla\cdot b.
$$
Taking the divergence of \eqref{eq:2d dynamo zeldovich} gives
$$
\partial_t(\nabla\cdot b) +u\cdot\nabla(\nabla\cdot b) =\eps\Delta(\nabla\cdot b).
$$
Thus, $\nabla\cdot b$ evolves autonomously according to the passive
scalar equation. Moreover, a direct computation shows that, if
$\widetilde\psi$ solves
\begin{equation}
\partial_t\widetilde\psi
+u\cdot\nabla\widetilde\psi
=\eps\Delta\widetilde\psi,
\end{equation}
then $\nabla^\perp\widetilde\psi$ solves
\eqref{eq:2d dynamo zeldovich}. For general initial data, the Hodge components do not completely decouple: the divergence determines the gradient component, which in turn forces the divergence-free component. The resulting system is nevertheless triangular. Together with the exponential decay of mean-free passive scalars when
$\eps>0$, this structure yields the boundedness asserted in
Theorem~\ref{thm:zeldovich}.

At the level of the ideal dynamo operator $\mathcal M_T$, the same structure is encoded by the exact identities
\begin{equation}
\label{eq:zeldovich identities}
(DT\,\nabla^\perp\psi)\circ T^{-1}
=\nabla^\perp(\psi\circ T^{-1}),\qquad
\nabla\cdot\bigl((DT\,b)\circ T^{-1}\bigr)
=(\nabla\cdot b)\circ T^{-1}.
\end{equation}
Thus, although $\mathcal M_T$ contains the potentially large
stretching factor $DT$, its action on the two diagonal components of the Hodge decomposition is governed by scalar composition with $T^{-1}$, i.e. by solutions to the passive scalar equation
\begin{equation}
\label{eq:passive scalar}
\partial_t f+u \cdot \nabla f=\eps \Delta f.
\end{equation}
For $\eps>0$, Zeldovich's theorem exploits the diffusive decay of the corresponding passive scalars. Such decay is unavailable in the ideal limit $\eps=0$. Instead, we use \eqref{eq:zeldovich identities} to measure the divergence-free and gradient components in Sobolev spaces of opposite regularities. Indeed, this becomes particularly effective for the
strong-shear maps $T_N$ of Section \ref{sec:intro time periodic}, see Section \ref{sub:SFS_map_strategy}.

\subsection{The Lasota--Yorke framework}

As in \cite{CZSV26}, our main tool for controlling the essential
spectral radius is a \emph{Lasota--Yorke inequality}. We first recall
the abstract framework.

\begin{proposition}[\cite{Hennion}]
\label{prop:hennion}
Let $X$ and $Y$ be Banach spaces such that $X$ embeds continuously and
compactly into $Y$. Let $\cL$ be a bounded operator on $X$ which
extends to a bounded operator on $Y$. Suppose that there exist
constants $C>0$ and $0<\alpha<M$ such that, for every $m\in\NN$,
\begin{align}
\|\cL^m b\|_X
&\leq
C\alpha^m\|b\|_X+CM^m\|b\|_Y,
\qquad b\in X,
\\
\|\cL^m b\|_Y
&\leq
CM^m\|b\|_Y,
\qquad b\in Y.
\end{align}
Then the essential spectral radius of $\cL$ on $X$ satisfies
$r_{\mathrm{ess},X}(\cL)\leq\alpha$.
\end{proposition}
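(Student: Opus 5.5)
The plan is to follow Hennion's argument via the Nussbaum formula for the essential spectral radius in terms of the Kuratowski measure of noncompactness. For a bounded operator $A$ on $X$, set
$$
\|A\|_\chi:=\inf\{r>0:\ A(B_X)\text{ can be covered by finitely many balls of radius } r\}.
$$
This quantity is submultiplicative, and Nussbaum's theorem states that $r_{\mathrm{ess},X}(\cL)=\lim_{m\to\infty}\|\cL^m\|_\chi^{1/m}$. It therefore suffices to prove, for every $m$, a bound of the form $\|\cL^m\|_\chi\leq C'\alpha^m$ with $C'$ independent of $m$. Taking $m$-th roots and letting $m\to\infty$ then gives $r_{\mathrm{ess},X}(\cL)\leq\alpha$.

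To obtain this bound, fix $m$ and $\delta>0$. Because the embedding $X\hookrightarrow Y$ is compact, the unit ball $B_X$ is totally bounded in $Y$. Choose $b_1,\dots,b_K\in B_X$ such that every $b\in B_X$ satisfies $\|b-b_j\|_Y\leq\delta$ for some $j$. The difference $b-b_j$ lies in $X$ with $\|b-b_j\|_X\leq 2$. Applying the first Lasota--Yorke inequality to $b-b_j$ gives
$$
\|\cL^m b-\cL^m b_j\|_X\leq 2C\alpha^m+CM^m\delta .
$$
Hence $\cL^m(B_X)$ is covered by the $K$ balls in $X$ centred at $\cL^m b_j$ with radius $2C\alpha^m+CM^m\delta$. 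Choosing $\delta=\alpha^m/M^m$ yields $\|\cL^m\|_\chi\leq 3C\alpha^m$, as required.

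The main obstacle is recognising that the Lasota--Yorke inequality must be applied to the difference $b-b_j$ and not to $b$ alone. This is what converts the compactness of the embedding into a small measure of noncompactness. The choice $\delta=\alpha^m/M^m$ is what makes the weak-norm contribution $CM^m\delta$ comparable to the strong contraction term $2C\alpha^m$. The second hypothesis, the $Y$-bound on $\cL^m$, is not needed for the essential spectral radius estimate itself; it only ensures that the operator is well defined on the weak space. Alternatively, one could use compactness to show that every Weyl sequence for a spectral value with $|\lambda|>\alpha$ has a convergent subsequence, so that such values are isolated eigenvalues of finite multiplicity. I would nonetheless take the Nussbaum route, since it is the most direct.
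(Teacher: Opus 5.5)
Your argument is correct and is essentially Hennion's own proof; the paper only cites \cite{Hennion} and does not reproduce it. You take a finite $Y$-net of $B_X$, apply the first Lasota--Yorke inequality to the differences $b-b_j$ to get $\|\cL^m\|_\chi\leq 3C\alpha^m$, and conclude via Nussbaum's formula $r_{\mathrm{ess},X}(\cL)=\lim_m\|\cL^m\|_\chi^{1/m}$ (your ball-covering measure is equivalent to Kuratowski's up to a factor $2$, which does not affect the limit); your observation that the $Y$-bound is not needed for this conclusion is also correct.
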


Consequently, every spectral point of $\cL$ on $X$ with modulus strictly larger than $\alpha$ is an isolated eigenvalue of finite algebraic multiplicity; see
\cite{K76}*{Chapter~IV, Section~6}. Proposition~\ref{prop:hennion} therefore gives a precise target for the construction of our anisotropic spaces: the strong Lasota--Yorke scale must be strictly smaller than the scale on which we construct an eigenvalue.

\subsection{The anisotropic spaces and the horizontal Lasota--Yorke estimate}
Let $T:\TT^2\to\TT^2$ be a smooth, volume-preserving diffeomorphism
with $\det DT=1$. Since composition with $T^{-1}$ is unitary on $L^2$, one has $\|\mathcal M_T\|_{L^2\to L^2}=\|DT\|_{L^\infty}$.
Moreover, in dimension two,
$\|DT^{-1}\|_{L^\infty}=\|DT\|_{L^\infty}$, and hence, for
$|s|\leq1$,
\begin{equation}
\label{eq:composition estimate strategy}
\|f\circ T^{-1}\|_{H^s}
\lesssim_s
\|DT\|_{L^\infty}^{|s|}\|f\|_{H^s}.
\end{equation}
The identities \eqref{eq:zeldovich identities} allow us to exploit
this smaller composition scale separately on the two components of
the Hodge decomposition.
Fix
\begin{equation} \label{parameter:r-delta}
0<r<\frac12,
\qquad
0<\delta<
\min\left\{
r,\frac12\left(\frac12-r\right)
\right\}.
\end{equation}
We define the \emph{strong horizontal anisotropic space} $X$ as the
completion of $C^\infty(\TT^2;\CC^2)$ with respect to
\begin{equation}
\label{eq:horizontal strong norm}
\|b\|_X^2
=|\overline b|^2
+\|\PP(b-\overline b)\|_{H^{-r}}^2
+\|(\Id-\PP)(b-\overline b)\|_{H^r}^2,
\end{equation}
where
\begin{align} \label{eq:Leray-def}
    \overline b:=\int_{\TT^2}b(x,y)\,\dd x\,\dd y,
\qquad
\PP:=\Id-\nabla\Delta^{-1}\nabla\cdot
\end{align}
is the Leray projector. Similarly, the \emph{weak horizontal
anisotropic space} $Y$ is the completion of
$C^\infty(\TT^2;\CC^2)$ with respect to
\begin{equation}
\label{eq:horizontal weak norm}
\|b\|_Y^2
=|\overline b|^2
+\|\PP(b-\overline b)\|_{H^{-r-\delta}}^2
+\|(\Id-\PP)(b-\overline b)\|_{H^{r-\delta}}^2.
\end{equation}
By Rellich's theorem, the embedding $X\hookrightarrow Y$ is compact.

For later use with the autonomous section operators, we also introduce
three-component versions of these spaces. Given
$B\in C^\infty(\TT^2;\CC^3)$, write
\[
\overline B
:=
\int_{\TT^2}B(x,y)\,\dd x\,\dd y,
\qquad
B=\overline B+(b,b_v),
\]
where 
\[
b
=
(B_1-\overline B_1,B_2-\overline B_2),
\qquad
b_v=B_3-\overline B_3.
\]
We define
\begin{equation}
\label{eq:strong norm}
\|B\|_{\mathcal X}^2
=|\overline B|^2
+\|\PP b\|_{H^{-r}}^2
+\|(\Id-\PP)b\|_{H^r}^2
+\|b_v\|_{H^{-r-\delta}}^2
\end{equation}
and
\begin{equation}
\label{eq:weak norm}
\|B\|_{\mathcal Y}^2
=|\overline B|^2
+\|\PP b\|_{H^{-r-\delta}}^2
+\|(\Id-\PP)b\|_{H^{r-\delta}}^2
+\|b_v\|_{H^{-r-2\delta}}^2.
\end{equation}
We denote by $\mathcal X$ and $\mathcal Y$ the corresponding completions of $C^\infty(\TT^2;\CC^3)$. Again, Rellich's theorem gives the compact embedding $\mathcal X\hookrightarrow\mathcal Y$.

We now explain how the identities \eqref{eq:zeldovich identities}
lead to the required Lasota--Yorke estimate. Write
\[
b
=
\overline b+\nabla^\perp\psi+\nabla\phi,
\]
where $\psi$ and $\phi$ are mean-free. Then
\[
\PP(b-\overline b)=\nabla^\perp\psi,
\qquad
(\Id-\PP)(b-\overline b)
=
\nabla\phi
=
\nabla\Delta^{-1}\nabla\cdot b.
\]
The first identity in \eqref{eq:zeldovich identities}, together with
\eqref{eq:composition estimate strategy}, gives
\begin{align}
\|\PP\mathcal M_T\PP(b-\overline b)\|_{H^{-r}}
=
\|\nabla^\perp(\psi\circ T^{-1})\|_{H^{-r}}
\lesssim_r
\|DT\|_{L^\infty}^{1-r}
\|\PP(b-\overline b)\|_{H^{-r}}.
\end{align}
Similarly, the second identity in
\eqref{eq:zeldovich identities} yields
\begin{equation}
\|(\Id-\PP)\mathcal M_T(b-\overline b)\|_{H^r}
\lesssim_r
\|DT\|_{L^\infty}^{1-r}
\|(\Id-\PP)(b-\overline b)\|_{H^r}.
\end{equation}
Thus the diagonal components of the Hodge decomposition grow at the
strong scale $\|DT\|_{L^\infty}^{1-r}$, rather than at the full
stretching scale $\|DT\|_{L^\infty}$.

For the untwisted operator $\mathcal M_T$, there is no coupling from
the divergence-free component to the gradient component. There are,
however, contributions to the divergence-free component from the
gradient and mean components, as well as contributions to the spatial
mean. These terms may have size $\|DT\|_{L^\infty}$, but they can be
estimated in the weaker norm.

Multiplication by the phase $\e^{2\pi i g}$ introduces an additional
coupling from the divergence-free component to the gradient component.
For a divergence-free vector field $v$,
\begin{equation}
(\Id-\PP)\bigl(\e^{2\pi i g}v\bigr)
=
[\Id-\PP,\e^{2\pi i g}]v.
\end{equation}
The commutator is an operator of order $-1$. Since
$2r+\delta<1$, it satisfies
\begin{equation}
\|[\Id-\PP,\e^{2\pi i g}]v\|_{H^r}
\lesssim_{g,r,\delta}
\|v\|_{H^{-r-\delta}}.
\end{equation}
Moreover, since the phase is of modulus $1$, multiplication by
$\e^{2\pi i g}$ preserves the principal part of the relevant Sobolev
norms; its commutators with the Sobolev multipliers are of lower order
and can again be estimated in the weak norm. Consequently, all
dependence on $g$ enters the weak term of the Lasota--Yorke estimate.

We shall prove the following consequence of these observations.

\begin{proposition}
\label{prop:horizontal Lasota-yorke}
Let $T:\TT^2\to\TT^2$ be a smooth, volume-preserving diffeomorphism,
and let $g\in C^\infty(\TT^2;\RR)$. Then
$\e^{2\pi i g}\mathcal M_T$ extends to a bounded operator on $X$ and
$Y$. Moreover, there exist constants $c_r>0$ and
$C_{g,r,\delta}>0$, independent of $T$, such that
\begin{align*}
\|\e^{2\pi i g}\mathcal M_Tb\|_X
&\leq
c_r\|DT\|_{L^\infty}^{1-r}\|b\|_X
+
C_{g,r,\delta}\|DT\|_{L^\infty}\|b\|_Y,
\\
\|\e^{2\pi i g}\mathcal M_Tb\|_Y
&\leq
C_{g,r,\delta}\|DT\|_{L^\infty}\|b\|_Y.
\end{align*}
\end{proposition}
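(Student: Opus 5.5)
Write $L:=\|DT\|_{L^\infty}\ge1$ (recall $\det DT=1$). Decompose $b=\overline b+\nabla^\perp\psi+\nabla\phi$ with $\psi,\phi$ mean-free. By linearity, $\e^{2\pi i g}\mathcal M_T b$ is the sum of three pieces: one from the mean, one from $\nabla^\perp\psi$, and one from $\nabla\phi$. We measure each piece's mean, its $\PP$-part in $H^{-r}$ (resp.\ $H^{-r-\delta}$) and its $(\Id-\PP)$-part in $H^r$ (resp.\ $H^{r-\delta}$). The guiding principle is that only the two diagonal Hodge blocks of the \emph{untwisted} operator may carry the strong constant $c_rL^{1-r}$. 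Every other block must be bounded by $C_{g,r,\delta}L\|b\|_Y$; this is possible because it either lands in a weaker topology or gains at least $\delta$ derivatives.

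\textbf{Step 1 (untwisted diagonal blocks).} By \eqref{eq:zeldovich identities}, $\mathcal M_T\nabla^\perp\psi=\nabla^\perp(\psi\circ T^{-1})$. Applying \eqref{eq:composition estimate strategy} with $s=1-r$ gives
\[
\|\nabla^\perp(\psi\circ T^{-1})\|_{H^{-r}}\lesssim_r L^{1-r}\|\nabla^\perp\psi\|_{H^{-r}}.
\]
Likewise $\nabla\cdot\mathcal M_T\nabla\phi=(\Delta\phi)\circ T^{-1}$, so by \eqref{eq:composition estimate strategy} with $s=r-1$,
\[
\|(\Id-\PP)\mathcal M_T\nabla\phi\|_{H^r}\simeq\|(\Delta\phi)\circ T^{-1}\|_{H^{r-1}}\lesssim_r L^{1-r}\|\nabla\phi\|_{H^r}.
\]
Both exponents have modulus at most $1$. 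The same estimates with $r$ replaced by $r+\delta$ and $r-\delta$ give the $Y$ bounds, with constants $L^{1-r\pm\delta}\le L$.

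\textbf{Step 2 (off-diagonal untwisted blocks).} These are the $\PP$-part of $\mathcal M_T\nabla\phi$, the contributions of $\mathcal M_T\overline b=DT\circ T^{-1}\,\overline b$, and the means. Each is bounded in $L^2$ by $L$ times the $L^2$ norm of its input, because composition with $T^{-1}$ is unitary on $L^2$. The target norms are negative, e.g.\ $H^{-r}$ for the $\PP$-part, so we may pass to $L^2$ for free. The $\PP$-part of $\mathcal M_T\nabla\phi$ then has $H^{-r}$ norm at most $L\|\nabla\phi\|_{L^2}\le L\|\nabla\phi\|_{H^{r-\delta}}$, since $r-\delta>0$. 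The mean and gradient parts of $\mathcal M_T\overline b$ are controlled similarly: $DT\circ T^{-1}$ is smooth, but we only need its $H^r$ norm. Here lies a subtlety. That $H^r$ norm is not bounded by $L$ alone. The way around it is to write $\mathcal M_T\overline b=\nabla^\perp(\ell\circ T^{-1})+\dots$ with $\ell(x)=\overline b^\perp\!\cdot x$, which is affine. Using the divergence identity, $\nabla\cdot\mathcal M_T\overline b=0$, so the gradient part vanishes. The divergence-free part is bounded in $L^2$ by $L|\overline b|$, and the mean part by $L|\overline b|$.

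\textbf{Step 3 (the phase).} Write $\e^{2\pi i g}w$ with $w=\mathcal M_T b$. The blocks $\PP\e^{2\pi i g}\PP$ and $(\Id-\PP)\e^{2\pi i g}(\Id-\PP)$ are multiplication by a unimodular smooth function modulo commutators of order $-1$. Using $\|\e^{2\pi i g}f\|_{H^{s}}\le\|f\|_{H^s}+C_g\|f\|_{H^{s-1}}$ for $|s|<1$ (via an equivalent norm or commutator estimate), the principal constant is preserved up to an absolute factor, and the error is a lower-order norm of $w$. Since $1>\delta$, that error is bounded by Step 1's weak bounds times $C_g$, i.e.\ by $C_gL\|b\|_Y$. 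The coupling block $[\Id-\PP,\e^{2\pi i g}]\PP w$ is of order $-1$ and maps $H^{-r-\delta}\to H^{1-r-\delta}\subset H^r$ because $2r+\delta<1$. So it is bounded by $C_g\|\PP w\|_{H^{-r-\delta}}\le C_gL\|b\|_Y$, again by the weak version of Step 1. The reverse coupling $\PP\e^{2\pi i g}(\Id-\PP)w$ is handled in the same way.

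\textbf{Main obstacle.} The hardest point is keeping $c_r$ independent of $g$ while the phase is applied \emph{after} the stretching, so that all $g$-dependence is paid in weak norms of $\mathcal M_Tb$ at cost $L$. The tool is the commutator bounds above, which exchange one derivative for the loss $\delta$. The bound on $Y$ follows from the same computations with all regularities shifted by $\delta$. The $Y$ space lacks a weaker companion, so there the phase block is simply bounded crudely by $C_g$ using boundedness of multiplication on $H^{\pm s}$ with $s<1$.
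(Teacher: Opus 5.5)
Your proposal is correct and follows essentially the paper's route. The paper (via Proposition~\ref{prop:LY}) first treats $g\equiv0$ block by block, using the Zeldovich identities with the composition estimate on the diagonal Hodge blocks and crude $L^2$ bounds on the off-diagonal and mean terms; it then applies Lemma~\ref{lemma:multipliers anisotropic}, whose order $-1$ commutator $[\Id-\PP,a]$ sends all $g$-dependence into the weak norm, exactly as in your Steps 1--3.

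One imprecision: on $Y$, the coupling block $(\Id-\PP)\e^{2\pi i g}\PP$ still needs that commutator gain from $H^{-r-\delta}$ to $H^{r-\delta}$. Mere boundedness of multiplication on $H^{\pm s}$ does not suffice there. Your remark that the $Y$ bound follows from the same computations shifted by $\delta$ already covers this.
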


This follows from Proposition~\ref{prop:LY}. Iterating the preceding
inequalities and applying Proposition~\ref{prop:hennion} gives
\begin{equation}
r_{\mathrm{ess},X}
\bigl(\e^{2\pi i g}\mathcal M_T\bigr)
\leq
c_r\|DT\|_{L^\infty}^{1-r}.
\end{equation}

\subsection{The stretch--fold--shear construction in the strong chaos limit}
\label{sub:SFS_map_strategy}
We now quantify the stretch--fold--shear mechanism described in Section~\ref{sec:discussion of proof}, following the spectral strategy introduced in \cite{CZSV26}. Let $u_N$ be the idealized velocity field defined in \eqref{eq:velocity}. The time-$1$ horizontal flow map is
\begin{equation}
\label{eq:TN}
T_N(x,y)=\Bigl(
x+Nf_2\bigl(y+Nf_1(x)\bigr),
y+Nf_1(x)
\Bigr),
\end{equation}
and its derivative is given explicitly by
\begin{equation}
\label{eq:DTN}
DT_N(x,y)
=
\begin{pmatrix}
1+N^2f_1'(x)f_2'\bigl(y+Nf_1(x)\bigr)
&
Nf_2'\bigl(y+Nf_1(x)\bigr)
\\[1mm]
Nf_1'(x)
&
1
\end{pmatrix}.
\end{equation}
In particular, $\det DT_N=1$, so $T_N$ preserves Lebesgue measure on $\TT^2$. Since $f_1$ and
$f_2$ are nonconstant, $\|DT_N\|_{L^\infty}\sim N^2$.
On the first nonzero vertical Fourier mode, the first two components of the magnetic field evolve over one period according to
$$
\mathcal M_{T_N,g}b =\e^{2\pi i g}(DT_N\,b)\circ T_N^{-1},
$$
introduced in \eqref{eq:periodic-dynamo-operator}, and so Proposition~\ref{prop:horizontal Lasota-yorke} gives
\begin{align}
\|\mathcal M_{T_N,g}b\|_X 
\leq c_rN^{2-2r}\|b\|_X +C_{g,r,\delta}N^2\|b\|_Y,\qquad
\|\mathcal M_{T_N,g}b\|_Y 
\leq C_{g,r,\delta}N^2\|b\|_Y.
\end{align}
After normalization by $N^{-2}$, the corresponding iterated estimates and Proposition~\ref{prop:hennion} therefore imply
$r_{\mathrm{ess},X}
\bigl(N^{-2}\mathcal M_{T_N,g}\bigr)
\lesssim N^{-2r}$.
In particular, the essential spectral radius of the normalized operator converges to zero as $N\to\infty$.

It remains to identify spectral data which survive this limit. For the alternating shears in \eqref{eq:velocity}, the rapidly oscillating composition with $T_N^{-1}$ eliminates all but one leading-order contribution. Although $DT_N$ also depends on $N$, its leading $N^2$ term has a simple product structure, and the resulting normalized operators converge to an explicitly computable rank-one operator.

\begin{proposition}
\label{prop:rank 1 limit simple}
Let $u_N$ be given by \eqref{eq:velocity}, and define
$$
\ell(b)
:=
\int_{\TT^2}f_1'(x)b_1(x,y)\,\dd x\,\dd y,
\qquad
h_g(x,y)
:=
\e^{2\pi i g(x,y)}
\begin{pmatrix}
f_2'(y)\\
0
\end{pmatrix}.
$$
Then, as $N \to \infty$, $N^{-2}\mathcal{M}_{T_N,g}$ converges in the $X \to Y$ operator norm to the rank-one operator
$$
\mathcal M_\infty b:=\e^{2\pi i g}\mathcal L_\infty b=\ell(b)h_g.
$$
The only possible nonzero eigenvalue of $\mathcal M_\infty$ is
$$
\lambda_g
:=
\ell(h_g)
=
\int_{\TT^2}
\e^{2\pi i g(x,y)}
f_1'(x)f_2'(y)\,\dd x\,\dd y.
$$
If $\lambda_g\neq0$, then it is an algebraically simple eigenvalue of
$\mathcal M_\infty$. Moreover, for every pair of nonconstant functions
$f_1,f_2$, there exists $g\in C^\infty(\TT^2;\RR)$ such that
$\lambda_g\neq0$.
\end{proposition}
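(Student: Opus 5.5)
Three things need proving: (i) the $X\to Y$ operator-norm convergence $N^{-2}\mathcal M_{T_N,g}\to\mathcal M_\infty$; (ii) the spectral statement about the rank-one limit; (iii) the existence of $g$ with $\lambda_g\neq0$.

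\emph{Spectral statement and existence of $g$.} For a rank-one operator $b\mapsto \ell(b)h_g$, any eigenvector with nonzero eigenvalue $\lambda$ is a multiple of $h_g$, and hence $\lambda=\ell(h_g)$. If $\lambda_g\neq0$, the projector $b\mapsto \lambda_g^{-1}\ell(b)h_g$ commutes with $\mathcal M_\infty$. On its kernel $\ker\ell$ the operator vanishes, so $(\mathcal M_\infty-\lambda_g)^2b=0$ forces $b\in\mathrm{span}(h_g)$; this gives algebraic simplicity. For (iii) I use a small-parameter expansion $g=t\,\varphi$: $\lambda_{t\varphi}=\int f_1'f_2'+2\pi i t\int\varphi f_1'(x)f_2'(y)+O(t^2)$. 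The first term vanishes since $f_1'$ has zero mean. Choosing $\varphi(x,y)=f_1'(x)f_2'(y)$, the linear coefficient is $2\pi i\|f_1'\|_{L^2}^2\|f_2'\|_{L^2}^2\neq0$ because $f_1,f_2$ are nonconstant. So $\lambda_{t\varphi}\neq0$ for small $t>0$.

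\emph{Convergence.} Write $T_N^{-1}(x,y)=(x',\,y'-Nf_1(x'))$ with $x'=x-Nf_2(y)$, $y'=y$. Indeed $T_N$ is the shear $S_1:(x,y)\mapsto(x,y+Nf_1(x))$ followed by $S_2:(x,y)\mapsto(x+Nf_2(y),y)$. By \eqref{eq:DTN} I decompose $N^{-2}DT_N\circ T_N^{-1}=A_0+N^{-1}A_1+N^{-2}A_2$, where
\[
A_0(x,y)=\begin{pmatrix} f_1'(x-Nf_2(y))\,f_2'(y)&0\\0&0\end{pmatrix}.
\]
The terms $N^{-1}A_1b\circ T_N^{-1}$ and $N^{-2}A_2 b\circ T_N^{-1}$ have to be small from $X$ to $Y$ even though composition costs powers of $N$. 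Using \eqref{eq:composition estimate strategy} with exponent $|s|\le r+\delta<1/2$ gives $N^{2(r+\delta)}$. The crude bound is therefore $N^{-1+2(r+\delta)}\to0$, using $r+\delta<1/2$ from \eqref{parameter:r-delta}. Care is needed because $X$ only controls the divergence-free part in $H^{-r}$, and multiplication by the smooth but $N$-oscillating matrices $A_1$ costs derivatives of size $N^{O(1)}$ on negative Sobolev spaces. I would therefore split $b$ via the Hodge decomposition and use \eqref{eq:zeldovich identities}: the exact structure of $\mathcal M_{T_N}$ on $\nabla^\perp\psi$ and $\nabla\phi$ avoids multiplying distributions by oscillating coefficients.

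The main term is $e^{2\pi i g}f_2'(y)\,f_1'(x-Nf_2(y))\,b_1(T_N^{-1}(x,y))$. Using the first Zeldovich identity, I show its composite converges weakly to $\bigl(\int f_1' b_1\bigr)(f_2'(y),0)$. The mechanism is a mixing/equidistribution statement for the map $(x,y)\mapsto T_N^{-1}(x,y)$ paired against test functions. Against a smooth test function $\chi$, the change of variables $(x,y)=T_N(x',y')$ gives $\int \chi(T_N(x',y'))f_1'(x')f_2'(y'+Nf_1(x'))b_1(x',y')$. As $N\to\infty$ the fast phases $y'+Nf_1(x')$ and $x'+Nf_2(\cdot)$ average out, leaving $\int\chi_1 f_2'\,dy$ times $\int f_1'b_1$. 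This requires nondegenerate-phase (stationary phase / van der Corput) estimates, giving decay $N^{-\kappa}$, and possibly a generic assumption on $f_1,f_2$; that is exactly the rank-one structure.

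\emph{Main obstacle.} The hardest step is making this oscillatory-integral averaging quantitative in the $X\to Y$ norm, uniformly over the unit ball of $X$. I would first prove it for smooth $b$ and Fourier-mode test functions, getting an $N^{-\kappa}$ rate times a polynomial loss in frequencies. I would then interpolate using the $\delta$-gap between the $X$ and $Y$ regularities: high frequencies are cheaply controlled by the compact $X\hookrightarrow Y$ gain, low frequencies by the oscillatory decay.
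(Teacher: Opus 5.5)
Your spectral argument and your choice $g_t=tf_1'(x)f_2'(y)$ match the paper's. The convergence argument, however, has a genuine gap. The $Y$-norm contains $\|(\Id-\PP)(\cdot)\|_{H^{r-\delta}}$ with $r-\delta>0$, and nothing in your plan controls the gradient component of $N^{-2}\mathcal M_{T_N,g}b-\mathcal M_\infty b$ in this \emph{positive}-regularity norm. Weak $L^2$ convergence of the main term, quantitative or not, combined with compactness only yields convergence in $H^{-\sigma}$, $\sigma>0$. The main term is uniformly bounded only in $L^2$ (the $y$-derivatives of $b_1\circ T_N^{-1}$ are of size $N^2$), and your splitting $A_0+N^{-1}A_1+N^{-2}A_2$ says nothing about its gradient projection. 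Your step ``high frequencies are controlled by the compact gain $X\hookrightarrow Y$'' silently presupposes that the outputs are uniformly bounded in $X$, which you never establish. Moreover, for $g\neq0$ the limit $\ell(b)h_g$ generally has a nonzero gradient part. So you must show that $(\Id-\PP)$, applied to $\e^{2\pi i g}$ times an error that is small only in negative norms, is small in $H^{r-\delta}$.

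The missing ingredient is algebraic, not oscillatory. The paper first removes the phase, using that multiplication by $\e^{2\pi i g}$ is bounded on $Y$ (Lemma~\ref{lemma:multipliers anisotropic}, which rests on the smoothing commutator $[\Id-\PP,a]:H^{-1/2}\to H^{1/2}$). It then notes that $\mathcal L_\infty b=\ell(b)(f_2'(y),0)$ is divergence-free, while the second identity in \eqref{eq:zeldovich identities} gives $\|(\Id-\PP)N^{-2}\mathcal M_{T_N,0}w\|_{H^r}\lesssim N^{-2r}\|w\|_{H^r}$. The leading-term/remainder splitting is used only for gradient inputs $w$, and only to control the mean and divergence-free parts of the output in $H^{-r-\delta}$. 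Alternatively, your compactness/interpolation scheme becomes legitimate once you invoke the uniform bound $\|N^{-2}\mathcal M_{T_N,g}\|_{X\to X}\lesssim1$ of Lemma~\ref{lemma:uniform LY N}, which encodes exactly these identities. Distributional convergence on trigonometric polynomials, uniform $X$-boundedness and the compact embedding $X\hookrightarrow Y$ then give $Y$-convergence, and the $\delta$-gap handles high input frequencies.

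Separately, the oscillatory step is easier than you expect. The ``generic assumption'' you contemplate would not prove the statement, which is asserted for all nonconstant $f_1,f_2$, and no stationary phase or van der Corput is needed. After the substitution $x\mapsto x+Nf_2(y)$, pairings against Fourier modes factor into $\int_\TT f_1'(x)\e^{2\pi i(ax-\ell Nf_1(x))}\dd x$ and $\int_\TT f_2'(y)\e^{2\pi i(qy+mNf_2(y))}\dd y$. The amplitude is exactly the derivative of the phase, $f_1'\e^{-2\pi i\ell Nf_1}=-(2\pi i\ell N)^{-1}\partial_x\e^{-2\pi i\ell Nf_1}$. Hence one integration by parts gives a bound $|a|/(|\ell|N)$ for $\ell\neq0$, however degenerate the critical points of $f_1$ are, and likewise for $f_2$ when $m\neq0$. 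Only the $\ell=m=0$ terms survive, which produces the rank-one limit. The paper then obtains uniformity over the unit ball of $H^r$ by a contradiction argument, using the uniform $L^2$ bound and the compact embeddings $H^r\hookrightarrow L^2\hookrightarrow H^{-r-\delta}$. Your quantitative frequency splitting would serve equally well for this negative-regularity part.
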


\begin{proof}
We only prove the statement regarding the (nonzero) eigenvalue.
Since $\mathcal M_\infty$ has range
$\operatorname{span}\{h_g\}$, one has
$$
\mathcal M_\infty h_g
=
\ell(h_g)h_g
=
\lambda_g h_g.
$$
Thus, whenever $\lambda_g\neq0$, it is the unique nonzero eigenvalue of
$\mathcal M_\infty$ and is algebraically simple.
It remains to choose $g$ so that $\lambda_g\neq0$. For $t\in\RR$, set
$$
g_t(x,y):=t f_1'(x)f_2'(y).
$$
Since $f_1$ and $f_2$ are periodic,
$$
\lambda_{g_0}
=
\int_{\TT^2}f_1'(x)f_2'(y)\,\dd x\,\dd y
=0.
$$
On the other hand,
$$
\left.\frac{\dd}{\dd t}\right|_{t=0}\lambda_{g_t}
=
\left.\frac{\dd}{\dd t}\right|_{t=0}
\int_{\TT^2}\e^{2\pi i t f_1'(x)f_2'(y)}f_1'(x)f_2'(y)\,\dd x\,\dd y 
=2\pi i\int_{\TT^2}\bigl(f_1'(x)\bigr)^2\bigl(f_2'(y)\bigr)^2
\,\dd x\,\dd y
\neq0.
$$
Consequently, $\lambda_{g_t}\neq0$ for all sufficiently small nonzero
$t$, which completes the proof.
\end{proof}
Hence, since $\mathcal M_\infty$ has a non-zero eigenvalue for an appropriate choice of $g$, in view of the Keller--Liverani Theorem \ref{thm:keller-liverani 1} the same holds for $N^{-2} \mathcal{M}_{T_N,g}$ for all $N$ large enough. Thus, taking $N$ large, it holds that $\mathcal{M}_{T_N,g}$ must have an isolated eigenvalue of modulus strictly larger than $1$, yielding the desired growing mode.

\subsection{The autonomous framework} \label{subsec:autonomous-framework}
 Having constructed an ideal growing mode for the period map associated with the time-periodic velocity field $u_N$, we now encode the time-periodic dynamics in an autonomous flow. We restrict our attention to velocity fields
$u\in C^\infty_\sigma(\TT^3)$ satisfying
\begin{equation}
u_3(x,y,z)>0 \qquad \text{for every }(x,y,z)\in\TT^3.
\end{equation}
To distinguish successive crossings of a horizontal Poincaré section $\{ z=0\}$, we regard $u$
as a $\ZZ$-periodic vector field on $\TT^2\times [0,\infty)$ and denote its (lifted)
flow by $\Phi_u^t$. We start by studying the eigenvalue problem 
\begin{equation}
\label{eq:inviscid eigenvalue relation}
u\cdot\nabla B-B\cdot\nabla u+\lambda B=0\,.
\end{equation}
Applying the solution formula \eqref{eq:flow representation} for the dynamo equation, we obtain, for any $t \geq 0$,
\begin{align} \label{eq:formula-explicit-eigen}
    B(\Phi_u^t)=\e^{-\lambda t}D_x\Phi_u^t B \,.
\end{align} 
Write $u=(u_{1,2},u_3)$ and $a=(x,y)\in\TT^2$, and use $s\in[0,\infty)$ in place of $z$. Since $u_3>0$, dividing \eqref{eq:inviscid eigenvalue relation} by $u_3$ gives the following evolution PDE: 
$$ \partial_s B+\frac{u_{1,2}}{u_3}\cdot\nabla_a B = \frac{1}{u_3}(\nabla u-\lambda I)B \,.$$
It is now clear that we can view $z=s \in [0, \infty)$ as a ``time variable''. To apply the method of characteristics we now need to study the flow map 
\begin{align} \label{d:vector-change}
    \frac{\dd}{\dd s} \Phi_\w^s= \w(\Phi_\w^s,s)\,, \qquad  \w= \frac{u_{1,2}}{u_3} : \TT^2 \times [0,\infty) \to \RR^2 \,.
\end{align}
A key step is to ensure that the solution $B$ of
\eqref{eq:inviscid eigenvalue relation} is periodic in the
vertical variable $s=z$. In order to study this periodicity, we define the Poincaré return time 
\begin{equation}
\overline\tau_u(a) := \inf\left\{ t>0: \bigl(\Phi_u^t(a,0)\bigr)_3=1 \right\},
\end{equation}
and the Poincaré return map $\Phi_u^{\bar \tau_u}$. The preceding discussion then yields the following identities (see Lemma \ref{lemma:properties autonomous 1}):
\begin{align} \label{eq:identities}
      \overline\tau_u (a) = \int_0^1 \frac{1}{u_3(\Phi_{\w}^s(a),s)}\,\dd s \,, \qquad \Phi_u^{\overline\tau_u(a)}(a,0) = (\Phi_{\w}^1(a),1 )\,.
\end{align}
Thus, we define the operator $\cK_{\lambda,\eps}$ for the ideal evolution $\eps=0$ as\footnote{We use $D_x$ to specify that we differentiate the flow map $\Phi_u^t$ in the space variable and then we evaluate it at $t= \overline\tau_u$.} 
$$ \cK_{\lambda,0} : B_0 \to  ( \e^{- \lambda \bar \tau_u} D_x \Phi_u^{\bar \tau_u} B_0) \circ (\Phi_u^{\bar \tau_u})^{-1}$$
where $B_0 : \TT^2 \to \CC^3$ is a vector field defined on the surface $\{ z=0\}$. The preceding discussion shows that, for $\lambda\in\mathbb C$, the eigenvalue problem \eqref{eq:inviscid eigenvalue relation} admits a non-trivial solution if and only if there exists a non-trivial $B_0$ satisfying $\mathcal K_{\lambda,0}B_0=B_0$. In other words, one needs $1 \in \sigma_p (\cK_{\lambda,0})$.

It is now tempting to choose a vector field $u: \TT^3 \to \RR^3$ so that $\w$ generates the flow map $\Phi_\w^1 = T_N$ given by the time-periodic setting, and then to choose $u_3$ so that the return time $\e^{ -\lambda\bar{\tau}_u } \circ (\Phi_\w^1)^{-1}$ generates the contribution $\e^{2\pi i g}$ needed to reproduce the operator \eqref{eq:periodic-dynamo-operator} in the time-periodic setting.

However, there are two technicalities to deal with:
\begin{enumerate}
    \item The map $\Phi^1_\w : \TT^2 \to \TT^2$ is, in general, not measure preserving, unlike the map $T_N : \TT^2 \to \TT^2$ in the time-periodic setting (see Lemma \ref{lemma:properties autonomous 1}).
    \item Applying the chain rule to the identity in \eqref{eq:identities} gives
    $$ 
    D_x\Phi_u^{\overline\tau_u(a)}(a,0)
    \begin{pmatrix}
    I_2
    \\
    0
    \end{pmatrix}
    + \partial_t\Phi_u^{\overline\tau_u(a)}(a,0) \otimes D \overline\tau_u(a) =
    \begin{pmatrix}
    D\Phi_{\w}^1(a)\\ 
    0
    \end{pmatrix}
    $$
    hence in general 
    $$D_x\Phi_u^{\overline\tau_u(a)}(a,0)
    \begin{pmatrix}
    I_2
    \\
    0
    \end{pmatrix} \neq \begin{pmatrix}
    D\Phi_{\w}^1(a)\\ 
    *
    \end{pmatrix} $$
    since the derivative of the return time produces an additional term. So long as $u$ is of the form $(0,0,u_3)$ in a neighbourhood of the section $\{z=0\}$, this correction only affects the third component. Although such a condition may be imposed in a particular construction, it is not preserved throughout the open set $\mathcal D$ on which we aim to establish fast dynamo action (see Definition \ref{def:D}).
\end{enumerate}
 To deal with the first  technicality we apply Lemma \ref{lemma:quantitative-moser} to find a map $\kappa_u: \TT^2 \to \TT^2$ so that 
 $$P_u:=\kappa_u^{-1}\circ \Phi_\w^1 \circ\kappa_u:\TT^2 \to \TT^2  $$
 is measure preserving and we accordingly define the changed Poincaré return time 
 $$ \tau := \overline{\tau} \circ \kappa_u \,. $$

 The second technicality is now affected by the change of variables $\kappa_u$. To deal with it, we change frames, replacing the orthonormal basis $\{\mathbf{e}_1,\mathbf{e}_2,\mathbf{e}_3\}$ with the basis
 $$\left \{ \begin{pmatrix}
     D\kappa_u \mathbf{e}_1
     \\
     0
 \end{pmatrix}, \begin{pmatrix}
     D\kappa_u \mathbf{e}_2
     \\
     0
 \end{pmatrix}, u(\kappa_u (a),0) \right \}\,.$$

 In particular, we define the matrix $E_u :\CC^3 \to \CC^3$
 $$ E_u = \begin{pmatrix}
     D\kappa_u & u_{1,2} (\kappa_u (a))
     \\
     0 & u_3(\kappa_u (a))
 \end{pmatrix}$$

which induces an operator $\mathcal{E}_u$ on vector fields $B \in C^\infty (\TT^2; \CC^3)$ defined by
 $$\mathcal{E}_u : B (\cdot ) \mapsto E_u(\kappa_u^{-1}(\cdot )) B(\kappa^{-1}_u(\cdot)) \,. $$ 
In this way, after conjugation with the operator $\mathcal{E}_u$ we recover the structure of the time-periodic dynamo operator. More precisely,  we may factorise the operator $\cK_{\lambda, 0}$ as
$$ \cK_{\lambda, 0} = \mathcal{E}_u \circ \mathcal{D}_{u,\lambda} \circ \mathcal{E}_u^{-1}\,,$$
 where the operator $\mathcal{D}_{u, \lambda}$ is defined as
 $$ \mathcal{D}_{u,\lambda} B=\e^{-\lambda \tau_u\circ P_u^{-1}}\begin{pmatrix}
DP_u\circ P_u^{-1} & 0 \\
-D \tau_u \circ P_u^{-1} & 1
\end{pmatrix} B\circ P_u^{-1} \,,$$
see Lemma \ref{lemma:flow conjugation}.
We now note that the following ``flexibility'' result holds true (see Lemma \ref{lemma:autonomous mimicking time-periodic}).
\begin{lemma}
\label{lemma:flexibility of section maps}
Let $T_N:\TT^2\to\TT^2$ be a smooth, volume-preserving diffeomorphism
isotopic to the identity, and let $g\in C^\infty(\TT^2;\RR)$ be mean-free. Then there exists $n_0\geq1$ such that, for every integer $n\geq n_0$, there is a vector field
\begin{equation}
u^{(n)}\in C^\infty_\sigma(\TT^3; \RR^3), \qquad
\bigl(u^{(n)}\bigr)_3>0,
\end{equation}
for which
\begin{equation}
P_{u^{(n)}}=T_N,
\qquad
\tau_{u^{(n)}}\circ P_{u^{(n)}}^{-1}
=1-\frac{g}{n}.
\end{equation}
\end{lemma}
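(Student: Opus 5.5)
The plan is to build $u^{(n)}$ by stacking, in the vertical variable $s=z\in[0,1]$, two independent stages. The first realizes the horizontal map $T_N$ with a divergence-free horizontal flow and constant vertical speed. The second leaves horizontal positions unchanged but modulates the return time by $-g/n$. Throughout I will use the identities \eqref{eq:identities}: $\Phi^1_{\w}$ is the horizontal section map and $\overline\tau_u(a)=\int_0^1 u_3(\Phi^s_{\w}(a),s)^{-1}\,\dd s$. I will arrange that $\Phi^1_{\w}=T_N$ is already volume preserving, so that $\kappa_u=\Id$ and $P_u=\Phi^1_{\w}$. I will also arrange $u=(0,0,u_3)$ in a neighbourhood of $\{z=0\}$, so that the frame change $E_u$ causes no trouble.

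\emph{Stage 1 (horizontal map), $s\in[0,\tfrac12]$.} Since $T_N$ is volume preserving and isotopic to the identity, I take a smooth divergence-free time-dependent field $v(a,s)$ on $\TT^2$ whose time-$\tfrac12$ map is $T_N$, with $v\equiv0$ for $s$ near $0$ and near $\tfrac12$. For the concrete $T_N$ of \eqref{eq:TN}, concatenating the two shears with smooth cutoffs in time does this; in general one uses a volume-preserving isotopy, reparametrized to be flat at the endpoints. On this slab I set $u_3\equiv c$, a constant to be fixed at the end, and $u_{1,2}=c\,v$. Then $\operatorname{div}u=c\,\nabla_a\!\cdot v=0$ and $\w=v$. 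This stage contributes exactly $\tfrac1{2c}$ to the return time and moves $a$ to $T_N(a)$.

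\emph{Stage 2 (return-time modulation), $s\in[\tfrac12,1]$.} Let $\chi\in C_c^\infty((\tfrac12,1))$ be even about $s=\tfrac34$, and let $h\in C^\infty(\TT^2)$ be mean-free with $\|h\|_{C^0}\lesssim 1/n$. I set
$$u_3=1+\chi(s)h(a),\qquad u_{1,2}=-\chi'(s)\nabla\Delta^{-1}h(a).$$
Then $\operatorname{div}u=-\chi' h+\chi' h=0$ and $u_3>0$ for $n$ large. Since $\chi'$ is odd about $\tfrac34$, the field $\w=u_{1,2}/u_3$ satisfies $\w(a,\tfrac34+t)=-\w(a,\tfrac34-t)$. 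A time-antisymmetric ODE returns every point to its start, so the horizontal map over $[\tfrac12,1]$ is the identity. Hence $\Phi^1_{\w}=T_N$ exactly, and $u$ is smooth and $1$-periodic in $z$, since it equals $(0,0,\text{const})$ near $z\in\{0,\tfrac12,1\}$. This stage contributes $F(h)(b):=\int_{1/2}^1\bigl(1+\chi(s)h(\Psi_h^s b)\bigr)^{-1}\dd s$ to the return time, evaluated at $b=T_N(a)$, where $\Psi_h$ is the stage-2 horizontal flow. Therefore
$$\tau_u\circ P_u^{-1}=\tfrac1{2c}+F(h).$$

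\emph{Solving for $h$ and $c$: the main obstacle.} The target is $\tfrac1{2c}+F(h)=1-\tfrac gn$. To first order, $F(h)=\tfrac12-\bigl(\int\chi\bigr)h+O(\|h\|^2)$. I therefore write $h=\tfrac{1}{n\int\chi}(g+\rho)$ and seek a small mean-free $\rho$ as a fixed point of
$$\rho\mapsto \Pi_0^\perp\, n\Bigl(\int\chi\Bigr)\Bigl[F(h)-\tfrac12+\Bigl(\int\chi\Bigr)h\Bigr],$$
where $\Pi_0^\perp$ removes the mean. The mean of $g$ is zero, so the constant part of the mismatch is absorbed by choosing $c$ with $\tfrac1{2c}=\tfrac12-\text{mean}$, a quantity of order $1/n^2$. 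The point to check is that this map is a contraction on a small ball in $C^{k}$ with constant $O(1/n)$, without loss of derivatives. This should hold because $\Psi_h$ is generated by $\nabla\Delta^{-1}h$, which is one derivative smoother than $h$. In the linearization $\delta(h\circ\Psi_h)=\delta h\circ\Psi_h+\nabla h\cdot\delta\Psi$, the first term is $\delta h$ up to $O(1/n)$ in $C^k$, since $\Psi_h-\Id=O(1/n)$ in $C^{k+1}$. The second term involves only $\nabla\Delta^{-1}\delta h$. Both corrections therefore carry an extra factor $1/n$ at the same regularity level. Smoothness of the fixed point then follows by running the contraction in each $C^k$ on a common small ball (or by bootstrapping), and $n_0$ is chosen so that positivity of $u_3$ and the contraction hold. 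I expect this step, closing the nonlinear return-time equation exactly rather than to leading order, to be the main technical point. The fallback if the no-loss estimate fails is a Nash--Moser-type iteration, but the smoothing of $\nabla\Delta^{-1}$ suggests it will not be needed.
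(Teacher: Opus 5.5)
\textbf{There is a genuine gap in solving for the exact return time.} Your two-stage architecture is sound. The time-antisymmetric loop in stage 2 does return every point, so $P_u=T_N$ and $\kappa_u=\Id$. The problem is the step that is supposed to produce the exact return time: it does not close as claimed.

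\emph{Where the contraction fails.} The assertion that $\delta h\circ\Psi_h-\delta h=O(1/n)$ in $C^k$ is false. The operator $f\mapsto f\circ\Psi-f$ is not small on $C^k$ merely because $\Psi$ is close to $\Id$; test it on $f=|M|^{-k}\e^{iM\cdot a}$ with $|M|\sim n$. One only has $\|f\circ\Psi-f\|_{C^k}\lesssim\|\Psi-\Id\|_{C^k}\|f\|_{C^{k+1}}$.
\begin{itemize}
\item In your fixed-point map, the prefactor $n$ cancels the $1/n$ in $\delta h=\delta\rho/(n\int\chi)$. Hence the contribution $n\int\chi(s)(\delta h-\delta h\circ\Psi^s_h)\,\dd s$ is $O(1)\|\delta\rho\|_{C^k}$, and only $O(1/n)\|\delta\rho\|_{C^{k+1}}$.
\item Likewise, $h_2\circ\Psi_{h_1}-h_2\circ\Psi_{h_2}$ requires control of $\|\rho_2\|_{C^{k+1}}$.
\item The smoothing of $\nabla\Delta^{-1}$ only helps with $\delta\Psi$, and only partly, since $\w$ contains the factor $(1+\chi h)^{-1}$.
\end{itemize}
So there is a real loss of one derivative. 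The Nash--Moser fallback is not routine either. At $h\neq0$ the linearisation is, to leading order, the trajectory average $\delta h\mapsto\int\chi(s)\,\delta h\circ\Psi^s_h\,\dd s$ over paths of length $O(1/n)$. This average damps frequencies $\gg n$, so the tame inverse that Nash--Moser needs is not evident.

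\emph{The missing idea.} Prescribe the vertical speed along trajectories, rather than as an Eulerian unknown composed with its own flow; then no equation remains to be solved.
\begin{itemize}
\item By the pushforward identity of Lemma \ref{lemma:properties autonomous 1}, if $u_3(\cdot,\frac12)\equiv1$ then $u_3(\Psi^s(b),s)\det D\Psi^s(b)=1$. So the stage-2 return time equals $\int_{1/2}^1\det D\Psi^s(b)\,\dd s$, which is local in the loop $\Psi^s$.
\item Choose the loop directly. Take $\eta\in C_c^\infty((\frac12,1))$ with $\int\eta=1$. Apply Lemma \ref{lemma:quantitative-moser} with $\rho_0=1-\eta(s)g/n$ and $\rho_1=1$; the masses agree because $g$ is mean-free. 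This gives a smooth family with $\det D\Psi^s=1-\eta(s)g/n$ and $\Psi^s=\Id$ wherever $\eta=0$.
\item Define $u_3(\Psi^s(b),s)=1/\det D\Psi^s(b)$ and $u_{1,2}(\Psi^s(b),s)=u_3(\Psi^s(b),s)\,\partial_s\Psi^s(b)$. This field is divergence-free by the continuity equation, and it gives $\tau_u\circ P_u^{-1}=1-g/n$ exactly.
\end{itemize}

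\emph{Comparison with the paper.} This is the paper's mechanism, which does everything in one stage. It sets $v^{(n)}=(\tilde w_1,\tilde w_2,1)/\rho_n$, with $\rho_n$ transported along the flow of $\tilde w$. The return time $\int_0^1\rho_n(\Phi^z_{\tilde w}(a),z)\,\dd z=1-g(T(a))/n$ is then exact by construction. Incompressibility is restored with the fibrewise Moser map $R_n(x,z)=(\kappa_{n,z}(x),z)$. Since $R_n$ is the identity near $\{z=0\}$, the return map and return time are unchanged.

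\emph{A smaller point about $c$.} The same identity shows that $c$ is not a free parameter. Flux conservation, $\int_{\TT^2}u_3(a,z)\,\dd a$ independent of $z$, forces $c=1$; otherwise $u_3$ jumps at $z=\frac12$. This is harmless, because the stage-2 return time automatically has mean exactly $\frac12$, not $\frac12+O(n^{-2})$.
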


From here, the comparison with the time-periodic construction is rather striking: isolating the top $2 \times 2$ block of $\mathcal{D}_{u,\lambda}$ yields precisely the two-dimensional dynamo operator, and the role of the phase $\e^{2\pi ig}$ is played by $\e^{-\lambda \tau_u\circ P_u^{-1}}$. Indeed, we look for an eigenvalue $\lambda = 2\pi i n + \theta$ with $n \in \NN$ large and $\theta \in \CC$ so that $\cM_{T_N, g} $ has an eigenvalue $\e^{\theta}$ satisfying $|\e^{\theta}|>1$. Then, denoting by $\Pi_h : \CC^3 \to \CC^2$ the projection onto the first two components we have reconstructed the identity
$$ \Pi_h \mathcal{D}_{u^{(n)}, \lambda}  = \e^{\frac{\theta g}{n}} \e^{- \theta} \cM_{T_N, g}\Pi_h\,,$$
where from the previous discussion we have $1 \in \sigma (\e^{- \theta}  \cM_{T_N, g} )$. We notice that the factor $\e^{\frac{\theta g}{n}}$ is a small perturbation for $n \gg 1$, which we can control in our anisotropic Banach space.  Consequently, reintroducing the third component, we will deduce by spectral perturbation theory and the implicit function theorem that, for any $n \in \NN$ sufficiently large, there exists $\tilde \theta_n \approx \theta$ such that  
$\mathcal{D}_{u^{(n)},\,2\pi i n+\tilde \theta_n}$
has the algebraically simple eigenvalue $1$. The maps $u \in C^{k+2}\mapsto P_u \in C^k$ and $u \in C^{k+2} \mapsto \tau_u \in C^{k}$ are continuous. Moreover, the map $(P_u,\tau_u) \in C^k \mapsto \mathcal{D}_{u,\lambda}$ is continuous when $\mathcal{D}_{u,\lambda}$ is viewed as a map  $ \mathcal{D}_{u,\lambda}: \mathcal{X}\to\mathcal{Y} $. Furthermore, by Lemma \ref{lemma:D implies lasota yorke}, the operator $\mathcal{D}_{u, \lambda}$ satisfies a Lasota--Yorke inequality that is locally uniform in $u$. Therefore, the Keller--Liverani theorem implies that, upon slightly changing $\lambda$, $\mathcal{D}_{u,\lambda}$ retains the eigenvalue $1$ under small velocity perturbations. This yields the following openness result.

\begin{proposition}
\label{prop:openness 1}
There exist $k\in\NN$ and a non-empty subset
$$
\mathcal D
\subset
\left\{
u\in C^\infty_\sigma(\TT^3):u_3>0
\right\},
$$
open in the $C^k$ topology, such that the following holds. For every
$u\in\mathcal D$, there exists $\lambda_0\in\CC$, with
$\Re\lambda_0>0$, for which $\mathcal{D}_{u,\lambda_0}$ has an algebraically
simple eigenvalue $1$, and such that its essential spectral radius on $\mathcal{X}$ is strictly less than $1$. In particular, for any $u \in \mathcal{D}$ there exists a non-trivial solution to \eqref{eq:inviscid eigenvalue relation} with $\lambda = \lambda_0$ satisfying $\Re (\lambda)>0$.
\end{proposition}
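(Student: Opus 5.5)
We construct a seed field $u_\star$, find an isolated simple eigenvalue $1$ of $\mathcal D_{u_\star,\lambda_\star}$ on $\mathcal X$, and then propagate this spectral data to a $C^k$-neighbourhood of $u_\star$ with Keller--Liverani perturbation theory in the pair $(\mathcal X,\mathcal Y)$.

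\emph{Seed field.} Fix $f_1,f_2$ nonconstant and mean-free. Take $g$ and $N$ from Proposition~\ref{prop:rank 1 limit simple} and the subsequent Keller--Liverani argument, arranged so that $\mathcal M_{T_N,g}$ has an algebraically simple isolated eigenvalue $\e^{\theta}$ on $X$ with $\Re\theta>0$, strictly larger in modulus than $c_r\|DT_N\|_{L^\infty}^{1-r}$. We may assume $g$ is mean-free, since adding a constant to $g$ only multiplies the operator by a unimodular constant. $T_N$ is a composition of two shears and is therefore isotopic to the identity. Lemma~\ref{lemma:flexibility of section maps} then gives $u^{(n)}$ with $P_{u^{(n)}}=T_N$ and $\tau_{u^{(n)}}\circ T_N^{-1}=1-g/n$.

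\emph{Eigenvalue $1$ for $\mathcal D_{u^{(n)},\lambda}$.} Set $\lambda=2\pi i n+\theta'$. The operator $\mathcal D_{u^{(n)},\lambda}$ is block lower-triangular: the horizontal block is $\e^{\theta' g/n}\e^{-\theta'}\mathcal M_{T_N,g}$, and the vertical block is the scalar transfer operator $b_v\mapsto \e^{-\lambda\tau\circ P^{-1}}b_v\circ P^{-1}$, coupled through $-D\tau\circ P^{-1}$.

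On $H^{-r-\delta}$ the vertical block has norm about $\|DT_N\|^{r+\delta}\e^{-\Re\theta'}$. Since $r+\delta<1-r$ by \eqref{parameter:r-delta}, this lies below the isolated eigenvalue. The entry $D\tau$ is $O(1/n)$ because $D\tau\circ P^{-1}=-Dg/n$ up to composition.

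For these reasons, as $n\to\infty$ the operator $\mathcal D_{u^{(n)},2\pi i n+\theta'}$ converges in $\mathcal X\to\mathcal Y$ norm to $\mathrm{diag}(\e^{-\theta'}\mathcal M_{T_N,g},\ \mathcal V)$, locally uniformly in $\theta'$. The triangular structure implies that the eigenvalue $1$ at $\theta'=\theta$ is simple and isolated for the limit.

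The Lasota--Yorke estimates for the family come from Lemma~\ref{lemma:D implies lasota yorke}, uniformly in $n$ large and $\theta'$ in a small disc. Keller--Liverani then gives a simple eigenvalue $\mu_n(\theta')$ near $1$, depending analytically on $\theta'$, with $\partial_{\theta'}\mu_n\approx -1$. By the implicit function theorem (or Rouché), there is $\tilde\theta_n\approx\theta$ with $\mu_n(\tilde\theta_n)=1$. Fix such an $n$, write $u_\star=u^{(n)}$, $\lambda_\star=2\pi i n+\tilde\theta_n$, and note that $\Re\lambda_\star>0$.

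\emph{Openness.} For $u$ in a $C^k$-neighbourhood of $u_\star$ with $u_3>0$, I would use the continuity of $u\mapsto(P_u,\tau_u)$ from $C^{k+2}$ to $C^k$, together with Lemma~\ref{lemma:quantitative-moser} for $\kappa_u$. Combined with the continuity of $(P_u,\tau_u)\mapsto\mathcal D_{u,\lambda}$ in $\mathcal X\to\mathcal Y$ norm (locally uniform in $\lambda$) and the locally uniform Lasota--Yorke inequality of Lemma~\ref{lemma:D implies lasota yorke}, Keller--Liverani gives a simple eigenvalue $\mu(u,\lambda)$ near $1$, analytic in $\lambda$. The essential spectral radius stays uniformly below $1$.

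The eigenprojector perturbs continuously in $\mathcal X\to\mathcal Y$ norm, so $\mu(u,\cdot)\to\mu(u_\star,\cdot)$ uniformly on a small disc around $\lambda_\star$. Since $\lambda\mapsto\mu(u_\star,\lambda)-1$ has a simple zero at $\lambda_\star$, Hurwitz's theorem yields $\lambda_0(u)$ near $\lambda_\star$ with $\mu(u,\lambda_0)=1$ and $\Re\lambda_0>0$. Define $\mathcal D$ to be this neighbourhood intersected with $C^\infty_\sigma$.

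Finally, the nontrivial fixed point $B_0$ of $\mathcal D_{u,\lambda_0}$ gives a fixed point $\mathcal E_u B_0$ of $\mathcal K_{\lambda_0,0}$, by the conjugation of Lemma~\ref{lemma:flow conjugation}. Flowing it along the characteristics $s\in[0,1]$ then gives a distributional solution of \eqref{eq:inviscid eigenvalue relation}.

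\emph{Main obstacle.} The hardest step is the uniform control of the Lasota--Yorke constants and the $\mathcal X\to\mathcal Y$ continuity under large imaginary $\lambda=2\pi i n+\theta'$. The phase $\e^{-2\pi i n\tau}$ has derivatives of size $n$, which naively destroys uniformity. In the seed, this phase equals $\e^{2\pi i g}$ exactly because $n\tau\circ P^{-1}=n-g$. For the perturbation, $n$ is fixed once chosen, so all constants may depend on $n$, and only smallness of the $C^k$ perturbation relative to these fixed constants is needed. I would handle this by fixing $n$ first and only then choosing the neighbourhood.
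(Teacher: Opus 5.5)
Your argument is correct, but it is organised differently from the paper's.

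\textbf{How $\mathcal D$ is defined.} The paper does not take $\mathcal D$ to be a ball around one seed. It defines $\mathcal D$ intrinsically by the criterion of Definition~\ref{def:D} and proves that this criterion is $C^k$-open at \emph{every} point. At a general point nothing is known about $\zeta'(\lambda_0)$. So before applying Rouch\'e, the paper must exclude $\zeta\equiv1$ near $\lambda_0$ via the analytic Fredholm theorem (Lemma~\ref{lemma:holomorphic 1}(3)), using a $\lambda$ of large real part where $\Id-\mathcal D_{u_0,\lambda}^m$ is invertible. You only perturb around the seed, where your construction gives $\partial_{\theta'}\mu_n\approx-1$. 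Hence $\lambda_\star$ is a simple zero and Hurwitz applies directly. This is more elementary, at the cost of a smaller, non-intrinsic $\mathcal D$; the paper's version shows that the ideal spectral criterion itself is $C^k$-stable.

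\textbf{The seed eigenvalue.} The paper applies the implicit function theorem to the horizontal $2\times2$ block, with $s=1/n$ as a continuous parameter, and then recovers the vertical component by a Neumann series. You instead perturb the full triangular operator. Note that $P_{u^{(n)}}=T_N$ exactly, so the $n$-dependence enters only through the multiplier $\e^{\theta'g/n}$ and a coupling of size $O_N(1/n)$. The convergence therefore holds in $\mathcal X$-operator norm, and Keller--Liverani is unnecessary there.

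\textbf{Two citations to adjust.}
\begin{itemize}
\item The uniform Lasota--Yorke bound along $(u^{(n)},2\pi i n+\theta')$ cannot come from Lemma~\ref{lemma:D implies lasota yorke}. That lemma presupposes membership in $\mathcal D$ and is uniform only near a fixed $(\bar u,\bar\lambda)$, whereas here $\Im\lambda\to\infty$. The bound comes from Corollary~\ref{cor:factorisation1} with $m=1$ and the exact identity for the phase, as in \eqref{eq:m=1 lasota yorke inequality}. This is the point you yourself flag as the main obstacle.
\item The $\mathcal X\to\mathcal Y$ continuity in $u$ that you take for granted is Lemma~\ref{lemma:perturbing velocity}, and it is a genuine estimate. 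It loses $\delta$ derivatives. It also relies on the fact that pushforward by a volume-preserving map preserves divergence-free fields, so the $H^{-r}$ solenoidal input never feeds the $H^{r-\delta}$ gradient component of the difference.
\end{itemize}
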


It remains to prove that this spectral structure persists after the addition of magnetic resistivity.

\subsection{The diffusive perturbations} Finally,  we deal with the case of positive diffusion $\eps>0$ by means of a perturbation argument. As in \cite{CZSV26}, the central abstract tool we rely on is the Keller--Liverani perturbation theorem \cite{Keller_Liverani}. While the time-periodic case can be handled rather directly using the representation \eqref{eq:flow representation}, the autonomous construction requires substantially more care. We consider the eigenvalue problem 
\begin{align} \label{eq:elliptic evolution}
    \eps \Delta B - u\cdot\nabla B+B\cdot\nabla u = \lambda B\,.
\end{align}
As before, dividing the PDE by $u_3$ and viewing $z=s \in [0, \infty)$ as a time variable and $a = (x,y)\in \TT^2$ as a spatial variable, we obtain 
\begin{equation}
\label{eq:intro elliptic evolution}\partial_s B+\frac{u_{1,2}}{u_3}\cdot\nabla_a B = \frac{1}{u_3}(\nabla u-\lambda I)B + \frac{\eps}{u_3}  \Delta_{a} B + \frac{\eps}{u_3} \partial_{ss} B\,,\end{equation}

with ``initial datum'' at  $s=0$, given by $B_0 : \TT^2\to \CC^3$. However, at this point the interpretation of $B_0$ as an initial datum must be weakened: the addition of $\eps \partial_s^2$ makes the equation elliptic, so that specifying a boundary condition at $s=0$ no longer gives a unique solution. Hence, in order to restore the interpretation of a temporal evolution equation, we first restore uniqueness by imposing an exponential growth bound as $s \to \infty$ in Lemma \ref{lemma:notion of solution}. Hence, we may denote by 
$\cK_{\lambda, \eps}B_0$ the trace of the solution to \eqref{eq:intro elliptic evolution} with boundary condition $B(a,0)=B_0(a)$, evaluated at $s=1$. 

As before, restoring periodicity in $s$ requires searching for eigenvectors $B$ with eigenvalue $1$ for the operator $\cK_{\lambda, \eps}$. To do so, we seek to isolate the ideal contribution $\mathcal{D}_{u,\lambda}$ from the resistive effects, which may be done via the following factorisation
\begin{equation} 
\label{eq:factorisation intro}\cK_{\lambda, \eps} =  \mathcal{E}_u \circ \mathcal{D}_{u, \lambda} \cU_{\lambda, \eps} \circ \mathcal{E}_u^{-1} \,.
\end{equation}
Indeed, the resistive effects are isolated and fully encoded in the operator  $\cU_{\lambda, \eps} $, which denotes the trace at $s=1$ of the solutions of the elliptic PDE
\begin{equation}
\label{eq:abstract pseudodifferential intro}
\partial_s v-\eps\left(\alpha^{0,0}\partial_s^2+2\alpha^{0,j}\partial_s\partial_j+\alpha^{i,j}\partial_i\partial_j+H^0\partial_s+H^j\partial_j+H\right)v=0\,, 
\end{equation}
given a prescribed boundary condition at $s=0$. Here, repeated indices $i,j\in\{1,2\}$ are summed, and the coefficients depend smoothly on $u$ and $\lambda$.  We suppress the dependence of $\mathcal U_{\lambda,\eps}$ on the fixed velocity field $u$. In view of \eqref{eq:factorisation intro}, and the fact that $\mathcal{D}_{u,\lambda}$ admits precisely the spectral structure we are after, it suffices to prove that $\mathcal{U}_{\lambda, \eps}$ is a perturbation of the identity as $\eps \to 0$ in our anisotropic Banach spaces. The main technical difficulty in proving this is controlling the couplings between the Hodge components of the solution uniformly as $\eps \to 0$: if a coupling from the solenoidal component to the gradient component were to remain as $\eps \to 0$, this would destroy precisely the mechanism that our anisotropic spaces exploit in order to obtain good spectral estimates. However, this possibility may be ruled out thanks to the crucial fact that the highest-order coefficients $\alpha^{\cdot, \cdot}$ of \eqref{eq:abstract pseudodifferential intro} are \emph{scalars}. This implies that the coupling between Hodge components occurs at a lower pseudodifferential order, and hence vanishes as $\eps \to 0$. All in all, we obtain the following result, proved using parameter-dependent pseudodifferential calculus (see Proposition \ref{prop:pseudodifferential abstract} and the estimates derived in the proof of Corollary \ref{cor:uniform lasota yorke}).

\begin{proposition}
Let $K\Subset\{\lambda\in\CC:\Re\lambda>0\}$ be compact. Then there exist $\eps_0>0$ and $C>0$ such that, for every
$\lambda\in K$, $0<\eps\leq\eps_0$, and
$B_0\in\mathcal E_u\mathcal X$, the operator
$
\mathcal U_{\lambda,\eps}\mathcal E_u^{-1}
:
\mathcal E_u\mathcal X\longrightarrow\mathcal X
$
is bounded and satisfies
\begin{align}
\bigl\|
\mathcal U_{\lambda,\eps}\mathcal E_u^{-1}B_0
\bigr\|_{\mathcal X}
&\leq
\left(
1+C\eps^{\frac{1-2r-\delta}{2}}
\right)
\|B_0\|_{\mathcal E_u\mathcal X}
+
C\|B_0\|_{\mathcal E_u\mathcal Y},
\\
\bigl\|
\mathcal U_{\lambda,\eps}\mathcal E_u^{-1}B_0
\bigr\|_{\mathcal Y}
&\leq
C\|B_0\|_{\mathcal E_u\mathcal Y},
\\
\bigl\|
\mathcal U_{\lambda,\eps}\mathcal E_u^{-1}B_0
-
\mathcal E_u^{-1}B_0
\bigr\|_{\mathcal Y}
&\leq
C\left(
\eps^{\frac{1-2r}{2}}
+
\eps^{\frac{\delta}{2}}
\right)
\|B_0\|_{\mathcal E_u\mathcal X}.
\end{align}
\end{proposition}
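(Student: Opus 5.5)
The plan is to treat $\mathcal U_{\lambda,\eps}$ as the boundary-to-trace map of the elliptic problem \eqref{eq:abstract pseudodifferential intro}, and to build a parametrix for it with parameter-dependent pseudodifferential calculus in the horizontal variable $a\in\TT^2$. Freeze $s$ and view \eqref{eq:abstract pseudodifferential intro} as a second-order ODE in $s$ with operator coefficients. Factor the principal part
$$\eps\alpha^{0,0}\partial_s^2-\partial_s+2\eps\alpha^{0,j}\partial_j\partial_s+\eps\alpha^{i,j}\partial_i\partial_j$$
into $(\partial_s-A_-(s))(\partial_s-A_+(s))$ modulo smoothing and lower-order errors. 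Here the roots of the characteristic polynomial are
$$\mu_\pm(\xi)=\frac{1\pm\sqrt{1+4\eps\alpha^{0,0}(\eps\alpha^{i,j}\xi_i\xi_j-2i\alpha^{0,j}\xi_j\cdot\,\cdots)}}{2\eps\alpha^{0,0}}.$$
The root $\mu_+\sim\eps^{-1}$ corresponds to the growing branch, which the growth condition of Lemma~\ref{lemma:notion of solution} excludes. The decaying branch satisfies $\mu_-(\xi)\approx-\eps\,\alpha^{i,j}\xi_i\xi_j$ for $\eps|\xi|^2\lesssim1$ and $\mu_-\approx -c\sqrt{\eps}|\xi|/\sqrt{\eps}\sim -|\xi|$ at high frequencies. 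Consequently $\mathcal U_{\lambda,\eps}$ is, modulo errors, the evolution $\mathcal S(1,0)$ generated by a first-order parabolic-type symbol $A_-(s,a,\xi;\eps)$ with $\Re A_-\le C\eps$ uniformly. The lower-order terms $H^0,H^j,H$ are matrix valued, but they enter only at order $\le1$ with an $\eps$ prefactor.

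The key structural point is that $\alpha^{\cdot,\cdot}$ are scalars, so the principal symbol of $A_-$ is scalar. I would therefore write
$$\mathcal U_{\lambda,\eps}=\mathcal S_{\rm sc}+\mathcal R_\eps,$$
where $\mathcal S_{\rm sc}=\Op(e^{\int_0^1 a_-})$ is a scalar Fourier multiplier-type operator with symbol bounded by $1+C\eps$ and $\mathcal R_\eps$ collects the matrix corrections. A scalar operator of order $0$ commutes with $\PP$, and with conjugation by $\mathcal E_u$ (which involves $D\kappa_u$ and composition by $\kappa_u$), up to operators of order $-1$. Those commutators are bounded from the $H^{-r-\delta}$ component into the $H^{r}$ component because $2r+\delta<1$, so they land in the weak term, exactly as in the proof of Proposition~\ref{prop:horizontal Lasota-yorke}. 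The diagonal blocks then give the factor $1+C\eps$ in $H^{\pm r}$, and the vertical component in $H^{-r-\delta}$ is handled the same way.

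For $\mathcal R_\eps$, the matrix terms have symbols of size $\eps|\xi|$ (or $\sqrt\eps|\xi|$ in the transition region) multiplied by the decay factor $e^{-c\min(\eps|\xi|^2,|\xi|)}$. The off-diagonal coupling from $\PP$ to $\Id-\PP$ costs $2r+\delta$ derivatives. Since $|\xi|^{2r+\delta}\eps|\xi|\,e^{-c\eps|\xi|^2}\lesssim \eps^{(1-2r-\delta)/2}$, this yields the $C\eps^{\frac{1-2r-\delta}{2}}\|B_0\|_{\mathcal E_u\mathcal X}$ correction in the strong bound. The weak bound follows from the same estimates with all indices shifted by $\delta$.

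For the convergence estimate, write $\mathcal U_{\lambda,\eps}-\Id=(\mathcal S_{\rm sc}-\Id)+\mathcal R_\eps$. The multiplier satisfies $|e^{\int a_-}-1|\lesssim\min(1,\eps|\xi|^2)\le(\eps|\xi|^2)^{\delta/2}$, which yields $\eps^{\delta/2}$ when passing from $X$ to $Y$ (a loss of $\delta$ derivatives). The coupling term $\mathcal R_\eps$ from $H^{-r}$ into $H^{r-\delta}$ is bounded by $\eps^{(1-2r)/2}$ by the computation above.

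The main obstacle will be making the factorisation and the growth-selected solution rigorous, uniformly in $\eps$. This means constructing $A_\pm$ in an $\eps$-dependent symbol class and controlling the Riccati-type error, verifying that the growing branch is genuinely excluded, and checking that the remainders are smoothing with bounds uniform for $\lambda\in K$. I would first try semiclassical scaling $h=\sqrt\eps$ together with a decoupling transformation (diagonalisation of the first-order system for $(v,\eps\partial_s v)$) carried out to two orders, invoking Proposition~\ref{prop:pseudodifferential abstract} for the composition estimates.
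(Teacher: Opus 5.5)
\textbf{Main gap: selecting the decaying branch.} You assert that the growth condition of Lemma~\ref{lemma:notion of solution} excludes $\mu_+$, and you list this as the main obstacle, but you give no mechanism for it. The natural mechanism is to factorise $\cL_{\lambda,\eps}$ on the half-line and discard modes growing like $\e^{s/\eps}$. That is not available in the frame where $\cU_{\lambda,\eps}$ is defined. After the change of variables $\Psi_u(a,s)=(\Phi^s_{\w}(\kappa_u(a)),s)$ and the cocycle $C_{u,s,\lambda}$, the coefficients $\alpha^{i,j}$ involve $(D\Psi_u)^{-1}(D\Psi_u)^{-T}$, and the $H$'s involve derivatives of $C_{u,s,\lambda}$. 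For a stretching flow these are controlled only on compact $s$-intervals, which is why Proposition~\ref{prop:pseudodifferential abstract} is posed on $[0,T]$. The growth condition, by contrast, lives in the original periodic frame, where the near-identity structure is invisible.

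The missing idea is a localisation, carried out as follows:
\begin{enumerate}
\item Treat $\cL_{\lambda,\eps}v=0$ on $[0,T]$, $T>1$, as a two-point problem. The data at $s=T$ is the trace $B_T$ of the true solution.
\item Bound $\|B_T\|_{L^2}\lesssim\eps^{-M}\|B_0\|_{H^{-1/2}}$, using the weighted a priori bound of Lemma~\ref{lemma:notion of solution} and the rescaled interior estimate of Lemma~\ref{lemma:elliptic regularity}.
\item Use that the fast propagator decays like $\e^{-c(T-s)/\eps}$ backward in $s$. Then $B_T$ contributes only $O(\e^{-c/\eps}\eps^{-M})$ at $s=1$, which is absorbed into the weak term.
\end{enumerate}
Solving the two-point problem through the factorisation also requires a uniform inverse of $M_{SF}=\int_0^TU_S(T,t)U_F(t,T)\,\dd t\colon H^r\to H^r_{\rho_\eps}$. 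The paper obtains this by comparison with $\Op((q_f-q_s)^{-1})$ (Lemma~\ref{lemma:boundary term bounds}). Note also that to conclude $v\approx\mathcal S(s,0)v_0$, the slow factor must act first, i.e.\ $(\partial_s-A_+)(\partial_s-A_-)$, so that $(\partial_s-A_-)v$ is the fast mode. Your ordering is reversed.

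\textbf{Second gap: the factor $1+o(1)$.} This factor in front of $\|B_0\|_{\cE_u\mathcal X}$ is the substance of the first estimate. A pointwise bound $|\e^{\int a_-}|\le 1+C\eps$ on an $x$-dependent symbol does not give it. $\Op(\e^{\int a_-})$ is neither a Fourier multiplier nor the exact propagator, and Calder\'on--Vaillancourt only gives an $O(1)$ norm. The bound has to come from energy estimates, as in the paper:
\begin{itemize}
\item Apply sharp G\r{a}rding to $-q_s-c\mu_\eps+c\eps$.
\item The matrix part of $S_J$ and the commutators $[\langle D\rangle^r\Pi_i,S_J]\langle D\rangle^{-r}$ lie in $\Psi_\eps(\nu_\eps)$, precisely because $q_s$ is scalar.
\item These are absorbed via $\nu_\eps\le\eta\mu_\eps+\eta^{-1}\eps$.
\end{itemize}
The same projected energy inequality, together with the dissipation bound $\int_0^s\|\mu_\eps^{1/2}(D)v\|_{H^{r-1}}^2\,\dd t\lesssim\|v_0\|_{H^{r-1}}^2$, gives off-diagonal blocks of size $\eps^{1/2}$ and a one-derivative gain. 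Interpolation then yields exactly the exponents your symbol heuristics predict (Lemmas~\ref{lemma:slow propagator} and~\ref{lemma:weak-strong}).

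\textbf{Smaller points.}
\begin{itemize}
\item The $\cE_u$-commutator discussion is unnecessary. Since $\|\cdot\|_{\cE_u\mathcal X}=\|\cE_u^{-1}\cdot\|_{\mathcal X}$, the statement is really about $\cU_{\lambda,\eps}$ on $\mathcal X$. The claim itself is also false: composition with $\kappa_u$ transports the principal symbol.
\item The parabolic regime of $\mu_-$ is $\eps|\xi|\lesssim1$, not $\eps|\xi|^2\lesssim1$. A single scaling $h=\sqrt\eps$ therefore will not give a uniform calculus; you need the weights $\rho_\eps\approx\eps^{-1}+\langle\xi\rangle$, $\mu_\eps$, $\nu_\eps$.
\item Proposition~\ref{prop:pseudodifferential abstract} cannot serve as a composition tool, since it is essentially the estimate being proved. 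You presumably mean Theorem~\ref{thm:composition}.
\end{itemize}
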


The first two estimates preserve the Lasota--Yorke inequality, with strong constant strictly less than $1$, for all sufficiently small $\eps$. The third gives the strong-to-weak convergence required by the parameter-uniform Keller--Liverani theorem \ref{thm:keller liverani}.
Combining this result with Rouch\'e's theorem, we can deduce that for all $\eps \ll 1$, there exists $\lambda_\eps\in\CC$, with $\lambda_\eps\xrightarrow[\varepsilon\to0]{}\lambda_0$, such that  $1 \in \sigma (\cK_{\lambda_\eps, \eps})$. Hence, there exist non-trivial solutions to \eqref{eq:elliptic evolution} with $\Re(\lambda_\eps)\geq \frac{\Re (\lambda_0)}{2}>0$ for $\eps \ll 1$ and we conclude the proof of 
Theorem \ref{thm:autonomous}.

\section{A family of time-periodic SFS fast dynamos} \label{sec:periodic-fast-dynamo}

The main objects we study throughout this paper are operators acting on $\CC^3$-valued vector fields on $\TT^2$, of the following form:
\begin{equation}
\label{eq:general form map}
\cS_{T,h, g}B:=\e^{2 \pi i g}\left (\begin{pmatrix}
DT & 0 \\
h & 1
\end{pmatrix} B\right )\circ T^{-1},
\end{equation} 

where $T: \TT^2 \to \TT^2$ is a volume-preserving diffeomorphism. We shall henceforth refer to this class of operators as \emph{SFS operators associated to $T$}. Indeed, operators in this class arise naturally as time-$1$ solution maps for time-periodic velocity fields of the form $u(t,x,y)$, in which case $T$ is the time-$1$ flow map generated by the first two components of $u$. The key insight is that, owing to the two-dimensional nature of the map $T:\TT^2 \to \TT^2$, the behaviour of the map \eqref{eq:general form map} is \emph{substantially} simpler than that of solutions of the kinematic dynamo equations on $\TT^3$. Indeed, it is well known that for two-dimensional velocity fields, the divergence-free solution $b$ to the kinematic dynamo equations is given by the perpendicular gradient of the solution to the passive scalar equation, a fact which is notably exploited in the Zeldovich antidynamo theorem \cite{Zeldovich_1992}.

In much the same vein, we shall exploit the algebraic simplifications afforded by two-dimensional dynamo action to define a class of \emph{anisotropic Banach spaces} of distributions, on which the operator \eqref{eq:general form map} admits favourable spectral properties. In this sense, the present work is heavily inspired by the recent result \cite{CZSV26}, which constructed anisotropic Banach spaces adapted to a specific underlying hyperbolic velocity field. The spaces constructed here are more global in nature, and adapted to a wider class of maps. We now introduce the scale of spaces we work on.

\subsection{Sobolev scalar and multiplier estimates}
Before moving on to studying the dynamo problem with the help of our anisotropic Banach spaces, we first note the following elementary results regarding the boundedness of composition with smooth, volume-preserving diffeomorphisms, as well as multiplication by smooth functions.
\begin{lemma}
\label{lemma:volume preserving diffeo on H^r}
Let $T: \TT^2 \to \TT^2$ be a volume-preserving diffeomorphism. Fix $r \in [-1,1]$. Then, for any $f \in C^\infty(\TT^2)$, there holds the bound
$$
\|f \circ T^{-1}\|_{H^r} \leq C_r\max\{1,\|DT\|_{L^\infty}\}^{|r|} \|f\|_{H^r}.
$$
\end{lemma}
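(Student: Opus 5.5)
The plan is to prove the bound at the three endpoint values $r=0$, $r=1$, $r=-1$, and then obtain every $r\in[-1,1]$ by complex interpolation. Throughout, write $M:=\max\{1,\|DT\|_{L^\infty}\}$.

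\emph{Case $r=0$.} Since $T$ preserves Lebesgue measure, so does $T^{-1}$. Hence $\|f\circ T^{-1}\|_{L^2}=\|f\|_{L^2}$ by the change of variables formula.

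\emph{Case $r=1$.} By the chain rule, $\nabla(f\circ T^{-1})=(DT^{-1})^{\top}(\nabla f)\circ T^{-1}$. Therefore
$$\|\nabla(f\circ T^{-1})\|_{L^2}\leq \|DT^{-1}\|_{L^\infty}\|\nabla f\|_{L^2}.$$
In two dimensions with $\det DT=1$, the inverse matrix $DT^{-1}\circ T$ is the adjugate of $DT$. It therefore has the same operator norm, so $\|DT^{-1}\|_{L^\infty}=\|DT\|_{L^\infty}$. Adding the $L^2$ part from the case $r=0$ gives $\|f\circ T^{-1}\|_{H^1}\leq C M\|f\|_{H^1}$.

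\emph{Case $r=-1$.} I would argue by duality with respect to the $L^2$ pairing. Measure preservation gives
$$\langle f\circ T^{-1},\varphi\rangle=\langle f,\varphi\circ T\rangle.$$
Apply the case $r=1$ to the map $T^{-1}$, which is also volume preserving and satisfies $\|D(T^{-1})\|_{L^\infty}=\|DT\|_{L^\infty}$ by the adjugate remark. This yields $\|\varphi\circ T\|_{H^1}\leq CM\|\varphi\|_{H^1}$. Taking the supremum over $\varphi$ with $\|\varphi\|_{H^1}\leq1$ gives $\|f\circ T^{-1}\|_{H^{-1}}\leq CM\|f\|_{H^{-1}}$.

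\emph{Interpolation.} The operator $f\mapsto f\circ T^{-1}$ is linear. Its norm is at most $1$ on $L^2$ and at most $CM$ on $H^{\pm1}$, and $H^{\theta}=[L^2,H^1]_\theta$ and $H^{-\theta}=[L^2,H^{-1}]_\theta$ for $\theta\in[0,1]$. Complex interpolation therefore gives the norm bound $C^{|r|}M^{|r|}$ on $H^r$. The case $r\in[0,1]$ could alternatively be done by hand via the Gagliardo seminorm $\iint|f(x)-f(y)|^2|x-y|^{-2-2r}$, using bi-Lipschitz bounds for $T^{-1}$ on a fundamental domain. The interpolation route avoids the periodic-lift bookkeeping and delivers the exact power $|r|$.

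The only real subtleties are the identity $\|DT^{-1}\|=\|DT\|$, which is specifically two-dimensional and uses $\det DT=1$, and the identification of the $H^r$ norm on $\TT^2$ as an exact interpolation scale, which is standard via Fourier multipliers.
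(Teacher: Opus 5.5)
Your proof is correct and follows the paper's argument: the $L^2$ isometry from volume preservation, the chain rule for the $H^1$ bound, duality for negative regularity, interpolation, and the two-dimensional identity $\|DT^{-1}\|_{L^\infty}=\|DT\|_{L^\infty}$, which the paper gets from reciprocal singular values and you get from the adjugate. The only difference is cosmetic: the paper interpolates on $[0,1]$ first and then dualizes at each $r<0$, whereas you dualize at $r=-1$ and then interpolate between $L^2$ and $H^{-1}$.
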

\begin{proof}
Since $T$ is volume preserving, we have
$$
\|f \circ T^{-1}\|_{L^2} =\|f\|_{L^2}.
$$
Next, note that 
$$
\|\nabla (f\circ T^{-1})\|_{L^2} \leq \|DT^{-1}\|_{L^\infty} \|(\nabla f)\circ T^{-1}\|_{L^2} =\|DT^{-1}\|_{L^\infty}\|\nabla f \|_{L^2}.
$$
Hence, interpolating immediately yields 
$$
\|f\circ T^{-1}\|_{H^r} \leq \max\{1, \|DT^{-1}\|_{L^\infty}\}^r \|f\|_{H^r},
$$
for $r >0$.  For $r<0$, we have by duality
$$
|\langle f\circ T^{-1},g\rangle_{L^2}|=|\langle f, g\circ T\rangle_{L^2}|\leq \|f\|_{H^{-|r|}} \|g\circ T\|_{H^{|r|}}\leq \|f\|_{H^{-|r|}}\|g\|_{H^{|r|}}\max\{1,\|DT\|_{L^\infty}\}^{|r|}.
$$

Finally, note that since we are in finite dimensions, all norms are equivalent. Thus, we may take $|DT(x)|$ to be the spectral norm $\sqrt{\sigma_{max}(DT(x)DT^T(x))}$. But then, in dimension $2$, a square matrix with determinant $1$ has singular values that are reciprocal. Hence, it follows that $|DT(x)|=|DT^{-1}(T(x))|$, which completes the proof.
\end{proof}
\begin{lemma}
\label{lemma:smooth sobolev multiplier}
Let $a \in C^\infty(\TT^2)$ be a smooth function, and fix $r \in [-1,1]$, $\delta \in [0,1-|r|]$. Then, there exist $C_r, C_{r,\delta}>0$ so that for all $f \in C^\infty(\TT^2)$ we have
$$
\|a f\|_{H^{r}} \leq C_r \|a\|_{L^\infty}\|f\|_{H^r}+C_{r,\delta}\|\nabla a\|_{L^\infty}\|f\|_{H^{r-\delta}}.
$$
\end{lemma}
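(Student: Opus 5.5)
We will prove the bound by a commutator argument on the Fourier side, splitting $af=\langle D\rangle^{-r}\bigl(a\langle D\rangle^{r}f\bigr)+\langle D\rangle^{-r}[\langle D\rangle^{r},a]f$, where $\langle D\rangle^r$ denotes the Fourier multiplier with symbol $\langle\xi\rangle^r=(1+|\xi|^2)^{r/2}$. Applying $\langle D\rangle^r$ to $af$ gives
$$
\langle D\rangle^r(af)=a\,\langle D\rangle^r f+[\langle D\rangle^r,a]f .
$$
The first term has $L^2$ norm at most $\|a\|_{L^\infty}\|f\|_{H^r}$, which produces the first summand with $C_r=1$. It therefore suffices to show the commutator estimate
$$
\bigl\|[\langle D\rangle^r,a]f\bigr\|_{L^2}\leq C_{r,\delta}\|\nabla a\|_{L^\infty}\|f\|_{H^{r-\delta}},
\qquad |r|\leq 1,\ 0\leq\delta\leq 1-|r|.
$$

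For the commutator we write, on the Fourier side,
$$
\widehat{[\langle D\rangle^r,a]f}(k)=\sum_{m}\hat a(k-m)\bigl(\langle k\rangle^r-\langle m\rangle^r\bigr)\hat f(m).
$$
By the mean value theorem, $|\langle k\rangle^r-\langle m\rangle^r|\lesssim |k-m|\,\max(\langle k\rangle,\langle m\rangle)^{r-1}$ when $r\le1$. Using $\langle k\rangle\le\langle m\rangle+|k-m|$, we then want $\max(\langle k\rangle,\langle m\rangle)^{r-1}\lesssim\langle m\rangle^{r-\delta}$ up to a loss. This holds on the region $|k-m|\le\langle m\rangle/2$ because $r-1\le r-\delta$ and $r-1\leq 0$. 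Off that region, $\langle m\rangle\lesssim|k-m|$, and we must instead absorb powers of $|k-m|$.

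The cleanest route avoiding summability issues on $\hat a$ (we only control $\|\nabla a\|_{L^\infty}$, not $\sum|k||\hat a(k)|$) is a physical-space argument rather than Fourier sums. We will treat the endpoint cases and interpolate.

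\emph{Case $r=0$.} The commutator vanishes, so the bound is trivial.

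\emph{Case $r=1$ ($\delta=0$).} We have $\nabla(af)=a\nabla f+f\nabla a$, hence $\|af\|_{H^1}\le\|a\|_{L^\infty}\|f\|_{H^1}+\|\nabla a\|_{L^\infty}\|f\|_{L^2}$. Since $\|f\|_{L^2}\le\|f\|_{H^{1-\delta}}$, this covers $r=1$ for all admissible $\delta$ (namely $\delta=0$).

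\emph{Case $0<r<1$.} The map $f\mapsto af$ is bounded $L^2\to L^2$ with norm at most $\|a\|_{L^\infty}$, and bounded $H^1\to H^1$ with the bound from the case $r=1$. Interpolation of the bilinear-type estimate, or directly of the commutator map $f\mapsto[\langle D\rangle^{r},a]f$, then gives $[\langle D\rangle^r,a]:H^{r-\delta}\to L^2$ with norm $\lesssim\|\nabla a\|_{L^\infty}$ for $\delta\le 1-r$. To make this rigorous I would use the Stein complex interpolation family $z\mapsto[\langle D\rangle^{z},a]\langle D\rangle^{\delta-z}$ on the strip $0\le\Re z\le1$. Its imaginary-power factors $\langle D\rangle^{it}$ are unitary, and the bound $[\langle D\rangle^{1+it},a]\langle D\rangle^{\delta-1-it}$ reduces to the $r=1$ computation: $\langle D\rangle^{1+it}$ is $\langle D\rangle^{it}$ times an operator comparable to $(1,\nabla)$, whose commutator with $a$ is multiplication by $\nabla a$, with only polynomial growth in $t$.

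\emph{Case $-1\le r<0$.} This follows by duality: $\|af\|_{H^{r}}=\sup\langle f,\bar a g\rangle$ over $\|g\|_{H^{|r|}}\le1$, and we apply the positive case to $\bar a g$ with the same $\delta$. This gives $\|\bar a g\|_{H^{|r|}}\le\|a\|_{L^\infty}\|g\|_{H^{|r|}}+C\|\nabla a\|_{L^\infty}\|g\|_{H^{|r|-\delta}}$. The extra term is then handled by rewriting the pairing as the commutator term estimated with $f\in H^{r-\delta}$ and $g\in H^{-r}$. Concretely, we dualize the commutator estimate itself: the adjoint of $[\langle D\rangle^{|r|},\bar a]\langle D\rangle^{\delta-|r|}$ is bounded on $L^2$, which is exactly the needed commutator bound for $r<0$.

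The main obstacle I expect is the complex interpolation step. We need bounds at $\Re z=1$ that depend only on $\|\nabla a\|_{L^\infty}$ and grow at most polynomially in $\Im z$, and we must check that the $\delta$-shift is compatible with the endpoint $\Re z=0$. There the operator $[\langle D\rangle^{it},a]\langle D\rangle^{\delta}$ is not obviously controlled by $\nabla a$ alone when $\delta>0$. If this fails, I would fall back on a Littlewood--Paley paraproduct decomposition of $af$: low-frequency $a$ times high-frequency $f$ gives the main term plus a commutator gaining one derivative via $\nabla a$; the high-frequency $a$ pieces are bounded using $\|P_j a\|_{L^\infty}\lesssim 2^{-j}\|\nabla a\|_{L^\infty}$ together with the condition $\delta\le1-|r|$ to sum the dyadic series.
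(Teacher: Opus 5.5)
\textbf{Main gap: the commutator bound is never proved.} The estimate $\|[\langle D\rangle^r,a]f\|_{L^2}\lesssim\|\nabla a\|_{L^\infty}\|f\|_{H^{r-\delta}}$ is the whole content of the lemma, and your Stein interpolation does not deliver it. At $\Re z=1$ your family needs $[\langle D\rangle^{1+it},a]\colon H^{1-\delta}\to L^2$ with norm $\lesssim\|\nabla a\|_{L^\infty}$, i.e.\ a gain of $\delta$ derivatives. The justification you give is false. $\langle D\rangle$ is nonlocal, and $[\langle D\rangle^{1+it},a]=\langle D\rangle^{it}[\langle D\rangle,a]+[\langle D\rangle^{it},a]\langle D\rangle$, where neither term is multiplication by $\nabla a$. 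Gaining derivatives from such commutators with only $\|\nabla a\|_{L^\infty}$ control is a Calder\'on--Coifman--Meyer type estimate, not a Leibniz computation; for a full derivative, $L^2$-boundedness of $[\langle D\rangle,a]$ is exactly Calder\'on's commutator theorem. The Leibniz computation at $r=1$ only gives the no-gain bound $[\langle D\rangle,a]\colon H^1\to L^2$.

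Together with the $\Re z=0$ endpoint you already flag, both endpoints need a gain whenever $\delta>0$. Interpolating the elementary bounds $[\langle D\rangle^{it},a]\colon L^2\to L^2$ and $[\langle D\rangle^{1+it},a]\colon H^1\to L^2$ only returns $[\langle D\rangle^r,a]\colon H^r\to L^2$, i.e.\ $\delta=0$. So your ``Case $0<r<1$'' paragraph does not follow. The duality step for $r<0$ also slips. The adjoint of $[\langle D\rangle^{|r|},\bar a]\langle D\rangle^{\delta-|r|}$ is $\langle D\rangle^{\delta-|r|}[a,\langle D\rangle^{|r|}]$. What you actually need is $\langle D\rangle^{-|r|}[a,\langle D\rangle^{|r|}]\langle D\rangle^{\delta}$, and moving $\langle D\rangle^{\delta}$ across the commutator is a further estimate.

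\textbf{What the paper does, and how your fallback compares.} The paper imports the sharp commutator estimate (Lenzmann--Schikorra, Coifman--Meyer) with a \emph{full} derivative gain, $\|[|\nabla|^{r},a]f\|_{L^2}\le C_r\|\nabla a\|_{L^\infty}\|f\|_{H^{r-1}}$, and then uses $\|f\|_{H^{r-1}}\le\|f\|_{H^{r-\delta}}$. For $r<0$ it writes $\langle D\rangle^{-|r|}(af)=a\langle D\rangle^{-|r|}f+\langle D\rangle^{-|r|}[a,\langle D\rangle^{|r|}]\langle D\rangle^{-|r|}f$. This gives the error $\|\nabla a\|_{L^\infty}\|f\|_{H^{-1}}\le\|\nabla a\|_{L^\infty}\|f\|_{H^{r-\delta}}$, which is exactly where $\delta\le1-|r|$ enters.

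Your Littlewood--Paley fallback is a reasonable, more elementary alternative for most of the range. Bound the low--high paraproduct directly by $C\|a\|_{L^\infty}\|f\|_{H^r}$; the statement allows $C_r$, so no commutator is needed there. The high-frequency-$a$ pieces then close using only $\|P_ja\|_{L^\infty}\lesssim2^{-j}\|\nabla a\|_{L^\infty}$ and Cauchy--Schwarz. This works for all $0<r<1$, and for $-1<r<0$ with $\delta<1-|r|$.

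The fallback fails at the endpoint $\delta=1-|r|$ with $r<0$, which the lemma includes. There, the bound $\|P_ja\|_{L^\infty}\lesssim 2^{-j}\|\nabla a\|_{L^\infty}$ controls the resonant (high--high) term only by $\|\nabla a\|_{L^\infty}\sum_j\|P_jf\|_{H^{-1}}$. That is an $\ell^1$ rather than $\ell^2$ sum, so the dyadic series does not sum. This case needs a genuinely bilinear input, such as square and maximal function bounds or the $BMO$ paraproduct estimate — the kind of result the paper cites.
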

\begin{proof}
For $r=0$, we immediately have
$$
\|a f\|_{L^2} \leq \|a\|_{L^\infty} \|f\|_{L^2}.
$$
Next, applying \cite{LenzmannSchikorra2020}*{Theorem 6.1, (6.1)}, we bound
$$
\||\nabla|^r (a f)\|_{L^2}  \leq \|a |\nabla|^r f\|_{L^2}+\|[|\nabla|^r,a]f\|_{L^2} \leq \|a\|_{L^\infty}\|f\|_{H^r}+C_r \|\nabla a\|_{L^\infty}\|f\|_{H^{r-1}},
$$
yielding the desired result for positive $r$. For negative $r$, we have
$$
\|\langle D \rangle^{-|r|} a f\|_{L^2}=\|\langle D \rangle^{-|r|}a \langle D \rangle^{|r|} \langle D \rangle^{-|r|}f \|_{L^2} \leq \|a\langle D \rangle^{-|r|}f\|_{L^2}+\|\langle D \rangle^{-|r|}[a,\langle D \rangle^{|r|}] \langle D \rangle^{-|r|} f\|_{L^2}.
$$
The first of these terms yields a bound $\|a\|_{L^\infty}\|f\|_{H^{r}}$, and the second one is bounded by 
$$
\|[a,\langle D \rangle^{|r|}]\langle D \rangle^{-|r|}f \|_{L^2} \leq C\|\nabla a\|_{L^\infty}\|\langle D \rangle^{-|r|}f \|_{H^{|r|-1}}\leq C\|\nabla a\|_{L^\infty}\|f\|_{H^{-1}}.
$$
Hence, we obtain the desired result.
\end{proof}
\subsection{The SFS operator on anisotropic spaces}

The key aspect of our anisotropic spaces is that they, in some sense, spectrally encode the geometric cancellations inherent to two-dimensional anti-dynamo theorems. Indeed, they allow us to prove the following \emph{Lasota--Yorke} inequality for SFS operators, which is the main technical result of this section.
\begin{proposition}
\label{prop:LY}
Let $\cS_{T,h, g}$ be the following SFS operator:
\begin{equation}
\label{eq:general SFS operator}
\cS_{T,h,g} B=\e^{2\pi i g}\begin{pmatrix}
DT(T^{-1}) & 0\\
h(T^{-1}) & 1
\end{pmatrix}B(T^{-1}),
\end{equation}

where $h: \TT^2 \to \CC^2$ is a mean-free, smooth vector field, $g :\TT^2 \to \CC$ is a smooth function, and $T:\TT^2 \to \TT^2$ is a volume-preserving diffeomorphism isotopic to the identity. Then, there exist $C_r, C_{r,\delta}>0$ independent of $T,h$, so that there holds the following Lasota--Yorke inequality
\begin{align}
\|\cS_{T,h,g} B\|_{\mathcal X} \leq C_{LY,S}(T,h,g)\|B\|_{\mathcal X}+C_{LY,W}(T,h,g)\|B\|_{\mathcal Y}
\end{align}
where 
\begin{equation}
\label{eq:strong LY constant}
C_{LY,S}(T,h,g)=C_{r,\delta}\|\e^{2\pi i g}\|_{L^\infty}\bigl (\|DT \|_{L^\infty}^{1-r}+\|DT \|_{L^\infty}^{r+\delta} \bigr) ,
\end{equation}
\begin{align}
\label{eq:weak LY constant}
C_{LY,W}(T,h,g)& =C_{r,\delta}(\|\e^{2\pi i g}\|_{L^\infty}\bigl (\|DT\|_{L^\infty}+\|DT\|_{L^\infty}^{r+\delta}\| h\|_{C^1} \bigr )\\
&\quad +\|\nabla (\e^{2\pi i g})\|_{L^\infty} \bigl(\|DT\|_{L^\infty}^{1-r+\delta}+\|DT\|_{L^\infty}+\|DT\|_{L^\infty}^{r+2\delta}(1+\|h\|_{C^1}) \bigr )) \notag.
\end{align}
Finally, there holds the weak-to-weak bound 
$$
\|\cS_{T,h,g} B\|_{\mathcal Y} \leq C_{LY,WW}(T,h,g)\|B\|_{\mathcal Y},
$$
where 
\begin{align}
\label{eq:weak weak LY constant}
&C_{LY,WW}(T,h,g)=C_{r,\delta}\|\e^{2\pi i g}\|_{C^1}\bigl(\|DT\|_{L^\infty}^{1-r+\delta}+\|DT\|_{L^\infty}+\|DT\|_{L^\infty}^{r+2\delta}(1+\|h\|_{C^1}) \bigr ).
\end{align}
\end{proposition}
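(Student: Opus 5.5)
We will first treat the untwisted operator $\cS_{T,h,0}$ and then account for the phase $\e^{2\pi i g}$ by a multiplier argument. Write $B=\overline B+(b,b_v)$ and decompose the mean-free horizontal part as $b=\nabla^\perp\psi+\nabla\phi$, with $\psi,\phi$ mean-free. Since $T$ is volume preserving, composition with $T^{-1}$ preserves means.

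\emph{Untwisted operator.} The horizontal output is
\[
(DT\,b)\circ T^{-1}+(DT\,\overline b_h)\circ T^{-1}.
\]
\begin{itemize}[leftmargin=*]
\item \emph{Diagonal terms.} By \eqref{eq:zeldovich identities},
\[
\PP\bigl[(DT\nabla^\perp\psi)\circ T^{-1}\bigr]=\nabla^\perp(\psi\circ T^{-1}),
\qquad
\nabla\cdot\bigl[(DT\nabla\phi)\circ T^{-1}\bigr]=(\Delta\phi)\circ T^{-1}.
\]
Lemma~\ref{lemma:volume preserving diffeo on H^r}, applied at regularity $1-r$ to $\psi$ and at $r-1$ to $\nabla\cdot b$, gives the factor $\|DT\|^{1-r}$ on the diagonal blocks.
\item \emph{Gradient-to-solenoidal coupling.} The term $\PP[(DT\nabla\phi)\circ T^{-1}]$ is bounded in $H^{-r}$ by $\|DT\|\,\|\nabla\phi\|_{L^2}$, which is dominated by $\|DT\|\,\|\nabla\phi\|_{H^{r-\delta}}$ since $r-\delta>0$. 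This is a weak term.
\item \emph{Mean terms.} The horizontal mean contribution $DT\circ T^{-1}\,\overline b_h$ is controlled by $\|DT\|\,|\overline B|$. Because $T$ is isotopic to the identity, $\int DT=I$, so the new mean is explicit and bounded by $\|DT\|\,|\overline B|$; this term is also weak.
\item \emph{Vertical component.} It equals $(h\cdot b)\circ T^{-1}+b_v\circ T^{-1}$, measured in $H^{-r-\delta}$. Lemma~\ref{lemma:volume preserving diffeo on H^r} gives $\|DT\|^{r+\delta}\|b_v\|_{H^{-r-\delta}}$, which is the second strong constant. For $h\cdot b$, Lemma~\ref{lemma:smooth sobolev multiplier} and then composition give
\[
\|(h\cdot b)\circ T^{-1}\|_{H^{-r-\delta}}\lesssim \|DT\|^{r+\delta}\|h\|_{C^1}\bigl(\|\PP b\|_{H^{-r-\delta}}+\|(\Id-\PP)b\|_{H^{r-\delta}}\bigr).
\]
This is weak, since $-r-\delta\le r-\delta$ and $\PP$ is bounded. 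Here $h$ mean-free is not essential.
\end{itemize}

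\emph{Twisted operator.} Let $W=\cS_{T,h,0}B$ and multiply by $e:=\e^{2\pi i g}$. For each component we use Lemma~\ref{lemma:smooth sobolev multiplier}:
\[
\|eF\|_{H^s}\le C\|e\|_{L^\infty}\|F\|_{H^s}+C\|\nabla e\|_{L^\infty}\|F\|_{H^{s-\delta}}.
\]
So the strong bounds above carry over with a factor $\|e\|_{L^\infty}$. It remains to estimate the pieces of $W$ at one-$\delta$-lower regularity by $\|B\|_{\mathcal Y}$. This is exactly the weak-to-weak estimate for $\cS_{T,h,0}$, obtained by rerunning the untwisted analysis with $r\mapsto r+\delta$ on the solenoidal part (factor $\|DT\|^{1-r+\delta}$), $r\mapsto r-\delta$ on the gradient part (valid since $r-\delta>0$), and $-r-2\delta$ on the vertical part (factor $\|DT\|^{r+2\delta}$). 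These account for the terms of \eqref{eq:weak LY constant} multiplied by $\|\nabla e\|_{L^\infty}$.

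\emph{Main obstacle: the cross coupling created by the phase.} Multiplication by $e$ sends the solenoidal part $v=\PP W$ into the gradient component, and this must be estimated in $H^r$ with only weak input. We write
\[
(\Id-\PP)(ev)=[\Id-\PP,e]v=-\nabla\Delta^{-1}(\nabla e\cdot v),
\]
which is of order $-1$. Its $H^r$ norm is bounded by $\|\nabla e\|_{L^\infty}\|v\|_{H^{r-1}}$; for $r-1\ge-1$ this uses Lemma~\ref{lemma:smooth sobolev multiplier} or the duality form. Since $r-1\le -r-2\delta$ by \eqref{parameter:r-delta}, this is at most $\|\nabla e\|\,\|v\|_{H^{-r-2\delta}}$. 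That quantity is in turn at most $\|DT\|^{1-r+\delta}\|B\|_{\mathcal Y}$ plus the weak coupling terms, matching $C_{LY,W}$.

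The symmetric coupling $\PP(e\nabla\phi')=[\PP,e]\nabla\phi'$ is harmless. The weak-to-weak bound $C_{LY,WW}$ follows by combining the lowered-regularity untwisted estimates with the $C^1$ multiplier bound. We expect the delicate point to be bookkeeping the regularity indices so that every cross term lands in the $\mathcal Y$-norm using only $2r+2\delta<1$ and $\delta<r$.
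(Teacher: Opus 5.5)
Your proposal follows the paper's proof essentially step by step. The untwisted operator is handled through the identities \eqref{eq:zeldovich identities} and Lemma~\ref{lemma:volume preserving diffeo on H^r}, and the phase is then absorbed by an anisotropic multiplier bound in which the only dangerous coupling, $(\Id-\PP)(\e^{2\pi i g}v)=[\Id-\PP,\e^{2\pi i g}]v$, is treated as an operator of order $-1$.

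One caveat concerns that commutator step. To get a bound involving only $\|\nabla(\e^{2\pi i g})\|_{L^\infty}$, as in $C_{LY,W}$, you need the Calder\'on/Coifman--Meyer commutator estimate, which the paper invokes via $L^2\to H^1$, duality and interpolation. Applying Lemma~\ref{lemma:smooth sobolev multiplier} to the product $\nabla\e^{2\pi i g}\cdot v$ in $H^{r-1}$ would instead cost $\|\nabla^2\e^{2\pi i g}\|_{L^\infty}$.
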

In order to prove the proposition, it is convenient to first prove it in the special case where $g \equiv 0$. 
\begin{lemma}
The result of Proposition \ref{prop:LY} holds when $g \equiv 0$.
\end{lemma}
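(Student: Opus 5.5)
We work with $g\equiv0$, so that $\cS:=\cS_{T,h,0}$ has $\|\e^{2\pi i g}\|_{L^\infty}=1$ and $\nabla\e^{2\pi ig}=0$. The plan is to decompose $B=\overline B+(b,b_v)$, with $b=\nabla^\perp\psi+\nabla\phi$ where $\psi,\phi$ are mean-free. We then estimate each component of $\cS B$ in the norm \eqref{eq:strong norm} separately.

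By linearity,
\[
\cS B=\cS\overline B+\cS(\nabla^\perp\psi,0)+\cS(\nabla\phi,0)+\cS(0,b_v).
\]
The four pieces are handled as follows.

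\emph{Solenoidal piece.} By \eqref{eq:zeldovich identities}, the horizontal part of $\cS(\nabla^\perp\psi,0)$ is $\nabla^\perp(\psi\circ T^{-1})$. This is still solenoidal and mean-free. By Lemma~\ref{lemma:volume preserving diffeo on H^r} applied with exponent $1-r$ to $\psi$, its $H^{-r}$ norm is at most $C\|DT\|^{1-r}\|\nabla^\perp\psi\|_{H^{-r}}$. Its vertical part is $(h\cdot\nabla^\perp\psi)\circ T^{-1}$, which we need in $H^{-r-\delta}$. We write it as $\nabla\cdot(\psi\, h^{\perp}\text{-type})$ plus $\psi\,\nabla\!\cdot\!(\cdot)$, i.e. a zeroth-order term in $\psi$ with $C^1$ coefficient of $h$. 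This gives a bound $\|h\|_{C^1}\|\psi\|_{H^{1-r-\delta}}\lesssim\|h\|_{C^1}\|\PP b\|_{H^{-r-\delta}}$. Composition costs $\|DT\|^{r+\delta}$, so this term is a weak term. If needed, we instead bound it by $\|DT\|^{r+\delta}$ times the strong norm.

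\emph{Gradient piece.} By the second identity in \eqref{eq:zeldovich identities}, the divergence of the horizontal part is $(\Delta\phi)\circ T^{-1}$. Hence $(\Id-\PP)$ of the image equals $\nabla\Delta^{-1}((\Delta\phi)\circ T^{-1})$. Its $H^r$ norm equals the $H^{r-1}$ norm of $(\Delta\phi)\circ T^{-1}$, which is at most $C\|DT\|^{1-r}\|\nabla\phi\|_{H^r}$ by Lemma~\ref{lemma:volume preserving diffeo on H^r}. Note that $|r-1|\le1$ and $\Delta\phi$ is mean-free, so the mean is preserved. The remaining pieces are:
\begin{itemize}
\item the solenoidal projection of $(DT\nabla\phi)\circ T^{-1}$ and its mean,
\item the vertical term $(h\cdot\nabla\phi)\circ T^{-1}$.
\end{itemize}
We estimate all of these crudely: multiplication by $DT$ is bounded by $\|DT\|$ on $L^2$ and composition is unitary on $L^2$. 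Since $-r\le0$, we get a bound $\|DT\|\,\|\nabla\phi\|_{L^2}$. The factor $\|\nabla\phi\|_{L^2}$ is then bounded by $\|\nabla\phi\|_{H^{r-\delta}}$ because $r-\delta>0$. This places the term in the weak constant $C\|DT\|$. The vertical term is similarly bounded by $\|h\|_{L^\infty}\|\nabla\phi\|_{L^2}$, which is again weak.

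\emph{Mean piece.} The image of $\overline B$ is a smooth field with norm at most $C(\|DT\|+\|h\|_{L^\infty}+1)|\overline B|$, which is bounded by the weak norm.

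\emph{Vertical piece.} The image of $(0,b_v)$ is $(0,b_v\circ T^{-1})$. Lemma~\ref{lemma:volume preserving diffeo on H^r} gives an $H^{-r-\delta}$ norm at most $C\|DT\|^{r+\delta}\|b_v\|_{H^{-r-\delta}}$. This is precisely the second summand in \eqref{eq:strong LY constant}.

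\emph{Weak-to-weak bound.} We repeat the same decomposition with all exponents shifted by $-\delta$. This produces $\|DT\|^{1-r+\delta}$ from the solenoidal piece, since composition now acts at exponent $1-r+\delta\le1$ by \eqref{parameter:r-delta}. It produces $\|DT\|^{1-r+\delta}$ or $\|DT\|$ from the gradient piece. It produces $\|DT\|^{r+2\delta}(1+\|h\|_{C^1})$ from the vertical and coupling pieces. Together these give \eqref{eq:weak weak LY constant}.

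\emph{Main obstacle.} We expect the main obstacle to be bookkeeping rather than analysis. One must ensure that every off-diagonal coupling (solenoidal$\to$vertical, gradient$\to$solenoidal, gradient$\to$mean) is estimated in a norm controlled by $\mathcal Y$. In particular, the gradient$\to$solenoidal term requires the $L^2$ bound together with $r-\delta>0$, which holds because $\delta<r$ in \eqref{parameter:r-delta}.
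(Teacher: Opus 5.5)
Your proposal is correct and follows essentially the same route as the paper: the same Hodge splitting, the two identities in \eqref{eq:zeldovich identities} for the diagonal blocks (including the vanishing solenoidal-to-gradient coupling), crude $L^2$ bounds moved into the weak norm via $r-\delta>0$ for the gradient-to-solenoidal and mean couplings, and Lemma~\ref{lemma:volume preserving diffeo on H^r} for the vertical block. Your divergence-form treatment of $h\cdot\nabla^\perp\psi$ and your crude bound on the output mean, in place of the paper's exact mean computation via $T=x+\phi$, are harmless variants.

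Two small fixes are needed. First, the gradient component of $\cS\overline B$ is not controlled by smoothness, since an $H^r$ bound would then involve $D^2T$; it vanishes because $(DT\,c)\circ T^{-1}$ is divergence-free by the second identity in \eqref{eq:zeldovich identities}, exactly as the paper notes. Second, in the weak-to-weak step the solenoidal piece yields $\|DT\|^{1-r-\delta}$, not $\|DT\|^{1-r+\delta}$; the exponent $1-r+\delta$ comes instead from the gradient piece at regularity $r-\delta$, so your final constant is unchanged.
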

\begin{proof}
Throughout the proof, write $\cS:=\cS_{T,h,0}$.\\

\textbf{Step 1: the spatial average.}
We begin by discussing the top-left component. In order to apply Leray projectors, we must first study what happens to the spatial average. The full horizontal component is $\Pi_h B=\Pi_h\overline B+b$. Write $T(x)=x+\phi(x)$, where $\int_{\TT^2}D\phi(x) \dd x=0$. This decomposition follows since $T$ is isotopic to the identity, i.e.\ it arises as the time-$1$ flow map of some smooth, time-dependent velocity field. Then, changing variables and integrating by parts yields
\begin{align*}
\int_{\TT^2} DT(T^{-1}(x,y)) (\Pi_hB)(T^{-1}(x,y)) \dd x \dd y
&=\int_{\TT^2} DT(x,y) (\Pi_hB)(x,y) \dd x \dd y\\
&=\Pi_h\overline B-\int_{\TT^2} \phi(x,y) (\nabla \cdot b)(x,y) \dd x \dd y.
\end{align*}
As such, if $b=\PP b$, the spatial average is conserved, and the only non-zero average is created by the gradient part of $b$. Furthermore, we bound 
$$
\left |\int_{\TT^2} DT(x,y) ((\Id-\PP)b)(x,y)\dd x \dd y \right | \leq \|DT\|_{L^\infty}\|(\Id-\PP)b\|_{L^2}.
$$
Finally, the spatial average for the vertical component is bounded by 
\begin{align*}
\left |\int_{\TT^2} \Pi_v(\cS B)(x,y) \dd x \dd y\right | \leq |\Pi_v\overline B|+\left |\int_{\TT^2}h(T^{-1}(x,y)) \cdot (\Pi_h\overline B+b(T^{-1}(x,y)) )\dd x \dd y \right |.
\end{align*}
Writing $b=\PP b+(\Id-\PP)b$, we estimate each term separately. Firstly, since $h$ is mean-free, the contribution from the spatial average vanishes.
Next, the gradient part may be bounded by 
$$
\|h\|_{L^\infty} \|(\Id-\PP)b\|_{L^2}.
$$
The divergence-free component is dealt with via 
\begin{align*}
\left | \int_{\TT^2} h(T^{-1}(x,y))\cdot (\PP b)(T^{-1}(x,y)) \dd x \dd y \right |\leq \|h\|_{H^{r+\delta}}\|(\PP b)\|_{H^{-r-\delta}}&\leq C \|h\|_{C^1}\|\PP b\|_{H^{-r-\delta}}.
\end{align*}
Hence, we now move on to the non-average components.\\

\textbf{Step 2: The divergence-free part.}
We begin by studying 
$$
\|\PP\Pi_h\cS(b,0)\|_{H^{-r}}^2.
$$

First, let us consider initial data of the form $\PP b$. Then, $b=\nabla^\perp \psi$, for some smooth scalar field $\psi$. As in the Zeldovich anti-dynamo theorem, we have
\begin{equation}
\label{eq:zeldovich anisotropic}
\Pi_h\cS(\PP b,0)=\nabla^\perp (\psi(T^{-1}))=\PP (\nabla^\perp (\psi(T^{-1}))).
\end{equation}
Hence, we bound
\begin{equation}
\label{eq:div free part decay}
\|\nabla^\perp (\psi(T^{-1}))\|_{H^{-r}} \leq \|\psi(T^{-1})\|_{H^{1-r}} \leq C_r\|\psi\|_{H^{1-r}}\|DT\|_{L^\infty}^{1-r} \leq C_r\|\nabla^\perp \psi\|_{H^{-r}}\|DT\|_{L^\infty}^{1-r}.
\end{equation}
Here, we have employed Lemma \ref{lemma:volume preserving diffeo on H^r}. Next, we study the cross term arising from an initial datum of gradient form that generates a divergence-free component in the solution. For this, we simply bound
$$
\|\PP\Pi_h\cS((\Id-\PP)b,0)\|_{H^{-r}} \leq \|\Pi_h\cS((\Id-\PP)b,0)\|_{L^2} \leq \|DT\|_{L^\infty}\|(\Id-\PP)b\|_{L^2}.
$$
Finally, note that a constant input can create a divergence-free output. Indeed, we have 
$$
\|\PP\Pi_h(\cS \overline B)\|_{H^{-r}} \leq \|\Pi_h(\cS \overline B)\|_{L^2} \leq \|DT\|_{L^\infty}|\overline B|.
$$
Next, we move on to controlling the gradient part of the output.\\

\textbf{Step 3: The gradient part.} 
We abuse notation and write $(\Id-\PP)\Pi_h\cS((\Id-\PP)b,0)=\nabla \Delta^{-1}\nabla \cdot\Pi_h\cS((\Id-\PP)b,0)$, noting that this is the expression one would obtain upon subtracting the mean part, and then applying $(\Id-\PP)$. The key insight for this block is that the potentially problematic cross-term $(\Id-\PP)\Pi_h\cS(\PP b,0)$ vanishes. Indeed, write $\PP b=\nabla^\perp \psi$. Then, from \eqref{eq:zeldovich anisotropic}, we have
$$
\Pi_h\cS(\PP b,0)=\nabla^\perp (\psi(T^{-1})),
$$
so that $(\Id-\PP)=\nabla \Delta^{-1} \nabla \cdot$ annihilates $\Pi_h\cS(\PP b,0)$.
Now, a computation reveals that 
$$
\nabla \cdot\Pi_h\cS(b,0)=(\nabla \cdot b)\circ T^{-1}.
$$
Hence, we have 
\begin{align*}
    \|(\Id-\PP)\Pi_h\cS((\Id-\PP)b,0)\|_{H^{r}}& =\|\nabla \Delta^{-1}((\nabla \cdot (\Id-\PP)b)\circ T^{-1})\|_{H^r} 
    \\
    &\leq \|(\nabla \cdot (\Id-\PP)b)\circ T^{-1}\|_{H^{r-1}}.
\end{align*}
Applying Lemma \ref{lemma:volume preserving diffeo on H^r} once more, we bound this by
$$
C_r \|DT\|_{L^\infty}^{1-r}\|\nabla \cdot (\Id-\PP)b\|_{H^{r-1}} \leq C_r\|DT\|_{L^\infty}^{1-r}\|(\Id-\PP)b\|_{H^{r}}.
$$
Next, it holds that $(\Id-\PP)\Pi_h(\cS \overline B)=0$. Indeed, $(\Id-\PP)=\nabla \Delta^{-1}\nabla \cdot$, and $\Pi_h(\cS \overline B)$ is divergence-free.
Finally, we move on to the third component of the SFS map.\\

\textbf{Step 4: the vertical component.} Firstly, we note that 
$$
\|b_v(T^{-1})\|_{H^{-r-\delta}} \leq C_{r,\delta}\|DT\|_{L^\infty}^{r+\delta} \|b_v\|_{H^{-r-\delta}},
$$
Next, applying Lemmas \ref{lemma:volume preserving diffeo on H^r} and \ref{lemma:smooth sobolev multiplier} we bound 
$$
\|h(T^{-1})\cdot \PP b(T^{-1})\|_{H^{-r-\delta}} \leq C_{r,\delta}\|h\|_{C^1}\|DT\|_{L^\infty}^{r+\delta} \|\PP b\|_{H^{-r-\delta}}
$$
Similarly, we have 
$$
\|h(T^{-1})\cdot ((\Id-\PP) b)(T^{-1})\|_{H^{-r-\delta}} \leq \|h (\Id-\PP)b\|_{L^2}  \leq \|h\|_{L^\infty}\|(\Id-\PP) b\|_{H^{r-\delta}}
$$
Finally, the mean-free contribution arising from a constant input is
$$
\|h\circ T^{-1} \cdot \Pi_h\overline B\|_{H^{-r-\delta}} \leq |\overline B|\|h\|_{L^\infty}.
$$
Hence, we may now complete the proof:\\

\textbf{Step 5: Assembling the estimates.} 
We now write up the full estimates that we have established, and assemble them into a Lasota--Yorke inequality. Firstly, we have that 
$$
|\overline{\cS B}|\leq |\overline B|+(\|h\|_{L^\infty}+\|DT\|_{L^\infty})\|(\Id-\PP)b\|_{L^2}+C\|h\|_{C^1}\|\PP b\|_{H^{-r-\delta}}.
$$
All in all, we deduce the existence of a constant $C_{r,\delta}>0$ independent of $T, h$ so that 
\begin{equation}
\label{eq:LY-average}
|\overline{\cS B}|\leq C_{r,\delta}(\|h\|_{C^1}+\|DT\|_{L^\infty}) \|B\|_{\mathcal Y}.
\end{equation}
Next up, the gradient-part estimates yield 
$$
\|(\Id-\PP)\Pi_h(\cS B)\|_{H^r} \leq C_r \|DT \|_{L^\infty}^{1-r}\|(\Id-\PP)b \|_{H^r}.
$$
As such, we deduce the existence of a constant $C_{r,\delta}>0$ so that 
\begin{equation}
\label{eq:gradient part}
\|(\Id-\PP)\Pi_h(\cS B)\|_{H^{r}} \leq C_{r,\delta} \|DT\|_{L^\infty}^{1-r}\|B\|_{\mathcal X}, \quad \|(\Id-\PP)\Pi_h(\cS B)\|_{H^{r-\delta}} \leq C_{r,\delta} \|DT\|_{L^\infty}^{1-r+\delta}\|B\|_{\mathcal Y}.
\end{equation}
Next, we assemble the divergence-free estimates. Here, we have 
$$
\|\PP\Pi_h(\cS B)\|_{H^{-r}} \leq C_r \|DT\|_{L^\infty}^{1-r} \|\PP b\|_{H^{-r}}+C_r\|DT\|_{L^\infty}\|(\Id-\PP)b\|_{L^2}+\|DT\|_{L^\infty}|\overline B|.
$$
Hence, we deduce the existence of $C_r, C_{r,\delta}>0$ so that 
\begin{equation}
\label{eq:LY-divergence free}
\|\PP\Pi_h(\cS B)\|_{H^{-r}} \leq C_r \|DT\|_{L^\infty}^{1-r}\|B\|_{\mathcal X}+C_r \|DT\|_{L^\infty}\|B\|_{\mathcal Y}
\end{equation}
\begin{equation}
\label{eq:LY-divergence free Y}
\|\PP\Pi_h(\cS B)\|_{H^{-r-\delta}} \leq C_{r,\delta}\|DT \|_{L^\infty}\|B\|_{\mathcal Y}. 
\end{equation}
Finally, we consider the vertical component. Here, there exist constants $C_r>0$ and $C_{r,\delta}>0$ such that
$$
\|\Pi_v(\cS B)\|_{H^{-r-\delta}} \leq C_{r,\delta}\|DT\|_{L^\infty}^{r+\delta}\|b_v\|_{H^{-r-\delta}}+C_{r,\delta}\|DT\|_{L^\infty}^{r+\delta}\|h\|_{C^1}\|b\|_{Y} + C_{r,\delta}(1+\|h\|_{C^1})|\overline B|
$$
and hence 
\begin{equation}
\label{eq:LY vertical}
\|\Pi_v(\cS B)\|_{H^{-r-\delta}} \leq C_{r,\delta} \|DT\|_{L^\infty}^{r+\delta} \|B\|_{\mathcal X}+C_{r,\delta}\|h \|_{C^1}\|DT\|^{r+\delta}_{L^\infty}\|B\|_{\mathcal Y},
\end{equation}
\begin{equation}
\label{eq:LY vertical Y}
\|\Pi_v(\cS B)\|_{H^{-r-2\delta}} \leq C_{r,\delta} \|DT\|^{r+2\delta}_{L^\infty}(1+\|h\|_{C^1})\|B\|_{\mathcal Y}.
\end{equation}
Hence, the proof is complete.
\end{proof}
The proof of Proposition \ref{prop:LY} now follows from the following lemma.
\begin{lemma}
\label{lemma:multipliers anisotropic}

Let $a \in C^\infty(\TT^2)$. Then, there exist $C_r, C_{r,\delta}>0$ so that 
$$
\|a B\|_{\mathcal X} \leq C_r \|a \|_{L^\infty} \|B\|_{\mathcal X}+C_{r,\delta}\|\nabla a\|_{L^\infty}\|B\|_{\mathcal Y}.
$$
Furthermore, there holds the bound 
$$
\|aB\|_{\mathcal Y} \leq C_{r,\delta}\|a\|_{C^1}\|B\|_{\mathcal Y}.
$$
\end{lemma}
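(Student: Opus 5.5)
We will estimate $aB$ componentwise in the decomposition defining $\|\cdot\|_{\mathcal X}$. Write $B=\overline B+(b,b_v)$ and split $aB = a\overline B + (ab, ab_v)$.

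\textbf{Mean and vertical parts.} The mean of $aB$ is bounded by $\|a\|_{L^\infty}$ times an $H^{-r-\delta}$-type norm of $B$. More precisely,
\[
\Bigl|\int a b\Bigr|\le \|a\|_{H^{r+\delta}}\|\PP b\|_{H^{-r-\delta}}+\|a\|_{L^2}\|(\Id-\PP)b\|_{L^2},
\]
which is $\lesssim\|a\|_{C^1}\|B\|_{\mathcal Y}$. For the strong estimate we may also use the cruder split $\|a\|_{L^\infty}|\overline B|$ plus this weak term. The terms $a\overline B$ and the mean-free parts of $ab_v$ land in $H^{-r-\delta}$. By Lemma~\ref{lemma:smooth sobolev multiplier},
\[
\|ab_v\|_{H^{-r-\delta}}\le C\|a\|_{L^\infty}\|b_v\|_{H^{-r-\delta}}+C\|\nabla a\|_{L^\infty}\|b_v\|_{H^{-r-2\delta}},
\]
which is exactly of the required form. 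The constant $a\overline B$ contributes $\|a\|_{L^\infty}|\overline B|$ to the $L^2$ and lower norms of all components. Subtracting means changes norms only by the mean bound already obtained.

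\textbf{Horizontal part.} This is the step needing care, since multiplication mixes Hodge components. Write $b=v+w$ with $v=\PP b$ and $w=(\Id-\PP)b$.

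\emph{Diagonal terms.} For $\PP(av)$ in $H^{-r}$, $\PP$ is bounded, so Lemma~\ref{lemma:smooth sobolev multiplier} gives $C\|a\|_{L^\infty}\|v\|_{H^{-r}}+C\|\nabla a\|_{L^\infty}\|v\|_{H^{-r-\delta}}$. The term $(\Id-\PP)(aw)$ in $H^r$ is treated the same way.

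\emph{Cross term $(\Id-\PP)(av)$.} This is $[\Id-\PP,a]v$ because $(\Id-\PP)v=0$. The commutator of the order-zero multiplier $\nabla\Delta^{-1}\nabla\cdot$ with a smooth function is of order $-1$. Since $2r+\delta<1$, it maps $H^{-r-\delta}\to H^{r}$. I would quantify its norm by $\|\nabla a\|_{L^\infty}$, or, if only pseudodifferential bounds are available, by some $C^k$ norm of $a$. The statement as written only needs $\|\nabla a\|_{L^\infty}$, so I would try a Coifman--Meyer/Kato--Ponce style commutator estimate. Alternatively, write $av=\nabla^\perp(a\psi)-\psi\nabla^\perp a$, so that
\[
(\Id-\PP)(av)=-(\Id-\PP)(\psi\nabla^\perp a),
\]
where $\psi=\Delta^{-1}\nabla^\perp\cdot v\in H^{1-r-\delta}$. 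Then $\|\psi\nabla^\perp a\|_{H^r}$ requires $\nabla a$ in a multiplier class on $H^r$. This is where smoothness of $a$ enters, and it is the main obstacle for getting exactly $\|\nabla a\|_{L^\infty}$. I would accept a constant depending on $\|a\|_{C^2}$ if needed, since $a=\e^{2\pi i g}$ is smooth in all applications.

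\emph{Cross term $\PP(aw)$.} Bound it in $H^{-r}$ by the $L^2$ norm of $aw$, which is at most $\|a\|_{L^\infty}\|w\|_{L^2}\le\|a\|_{L^\infty}\|w\|_{H^{r-\delta}}$, a weak term.

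\textbf{Weak estimate.} Repeat the same computation with every exponent lowered by $\delta$. The commutator term now needs $H^{-r-2\delta}\to H^{r-\delta}$, which is fine since the order $-1$ gain exceeds $2r+\delta$. All remaining terms are bounded by $\|a\|_{C^1}\|B\|_{\mathcal Y}$. Summing the squares gives both inequalities.
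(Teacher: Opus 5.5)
Your decomposition matches the paper's. You bound the mean by duality, and the vertical and diagonal horizontal blocks via Lemma~\ref{lemma:smooth sobolev multiplier}. You treat $\PP(aw)$ as a weak term through its $L^2$ norm, using $r-\delta\ge0$, which is simpler than the paper's treatment. You also correctly identify $(\Id-\PP)(av)=[\Id-\PP,a]v$ as the crux.

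\textbf{Gap: the cross term.} You never prove the bound $\|[\Id-\PP,a]v\|_{H^r}\lesssim\|\nabla a\|_{L^\infty}\|v\|_{H^{-r-\delta}}$, and none of your justifications gives that dependence. ``Order $-1$'' from the symbolic calculus gives a constant depending on several $C^k$ norms of $a$. Kato--Ponce estimates concern commutators with $\langle D\rangle^{s}$, not with $\Id-\PP$. Your stream-function route needs $\nabla a$ to be a multiplier into $H^r$ with $r>0$, which $L^\infty$ does not give, as you noticed. Falling back on $\|a\|_{C^2}$ proves a strictly weaker inequality than the one stated. That weaker inequality would suffice for the later applications, where $a$ is smooth, but it is not the lemma.

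\textbf{How the paper closes it.} The missing ingredient is the Calder\'on--Coifman--Meyer commutator theorem. For a zeroth-order multiplier $T=\Id-\PP$ and Lipschitz $a$, it gives $\|[T,a]f\|_{H^1}\le C\|\nabla a\|_{L^\infty}\|f\|_{L^2}$.
\begin{itemize}
\item Since $T$ is $L^2$-self-adjoint, $[T,a]^\ast=-[T,\bar a]$. Duality then gives $\|[T,a]f\|_{L^2}\le C\|\nabla a\|_{L^\infty}\|f\|_{H^{-1}}$.
\item Interpolating gives $[T,a]:H^{-\theta}\to H^{1-\theta}$ for every $\theta\in[0,1]$, still with constant $C\|\nabla a\|_{L^\infty}$.
\item Any $\theta\in[r+\delta,1-r]$ gives the strong cross-term bound; this interval is nonempty exactly because $2r+\delta<1$. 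The paper takes $\theta=\tfrac12$.
\item Any $\theta\in[r+2\delta,1-r+\delta]$ gives the $H^{-r-2\delta}\to H^{r-\delta}$ bound needed for the $\mathcal Y$ estimate.
\end{itemize}

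\textbf{A smaller slip.} The gradient part of $a\overline B$ is measured in $H^r$ with $r>0$, so ``$\|a\|_{L^\infty}|\overline B|$ in $L^2$ and lower norms'' does not cover it. For a constant vector $c$, use $(\Id-\PP)(ac)=\nabla\Delta^{-1}(\nabla a\cdot c)$. This gives $\|(\Id-\PP)(ac)\|_{H^r}\lesssim\|\nabla a\|_{L^\infty}|c|$, which is a weak term since $|\overline B|\le\|B\|_{\mathcal Y}$.
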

\begin{proof}
We begin with the mean component. Write $B=\overline B+(\PP b+(\Id-\PP)b,b_v)$, and estimate each component separately. Doing so, we obtain firstly
$$
\left | \int_{\TT^2} a(x,y) \overline B \dd x \dd y\right | \leq \|a\|_{L^\infty}|\overline B|, \quad \left | \int_{\TT^2} a(x,y) (\Id-\PP)b \dd x \dd y\right | \leq \|a\|_{L^\infty}\|(\Id-\PP)b\|_{L^2}.
$$
Finally, by duality, we can estimate 
$$
\left | \int_{\TT^2} a(x,y) \PP b (x,y) \dd x \dd y\right | \leq \|a\|_{\dot H^{r+\delta}}\|\PP b\|_{H^{-r-\delta}} \leq \|\nabla a\|_{L^\infty} \|\PP b\|_{H^{-r-\delta}},
$$
and an identical estimate works for the average induced in the vertical component.

Next, we deal with the vertical component. Here, we have by Lemma \ref{lemma:smooth sobolev multiplier}
$$
\|a b_v\|_{H^{-r-\delta}} \leq C_r \|a\|_{L^\infty}\|b_v\|_{H^{-r-\delta}}+C_{r,\delta}\|\nabla a\|_{L^\infty}\|b_v\|_{H^{-r-2\delta}}.
$$
We now move on to the horizontal component. Here, we have to be somewhat careful, since a potential leakage of the distribution-valued divergence-free part into the function-valued gradient part could in principle yield an unbounded operator. Writing this two-component field as $b$, we have 
$$
\|(\Id-\PP) a \PP b\|_{H^{r}} =\|[\Id-\PP,a] \PP b\|_{H^r}.
$$

But now, note that by \cite{CoifmanMeyer1978}*{Theorem 2, and the paragraph following (4), p.~180}, if $T$ is a pseudodifferential operator of order zero (see Appendix \ref{sec:pseudodifferential intro}), and $a$ is Lipschitz, then 
$$
\|[T,a] b\|_{H^1} \leq C_T \|\nabla a\|_{L^\infty}\|b\|_{L^2}.
$$
In particular, we can take $T=(\Id-\PP)$, and use the fact that it is self-adjoint on $L^2$ to also bound 
$$
\left | \langle [T,a] b,\widetilde b \rangle_{L^2}\right |=\left |\langle b, [\overline a,T]\widetilde b\rangle \right | \leq C_T\|b\|_{H^{-1}} \|\nabla a\|_{L^\infty} \|\widetilde b\|_{L^2},
$$
so that 
$$
\|[T,a] b\|_{L^2} \leq C_T \|\nabla a\|_{L^\infty}\|b\|_{H^{-1}}.
$$
Hence, interpolation yields 
$$
\|[(\Id-\PP),a] b\|_{H^{1/2}} \leq C\|\nabla a\|_{L^\infty}\|b\|_{H^{-1/2}}.
$$
Since $r<1/2$, we thus obtain 
$$
\|[(\Id-\PP),a] \PP b\|_{H^{r}} \leq C\|\nabla a\|_{L^\infty}\|\PP b\|_{H^{-1/2}}\leq C\|\nabla a\|_{L^\infty}\|\PP b\|_{H^{-r-\delta}}.
$$
Similarly, we may bound 
$$
\|[\PP,a](\Id-\PP)b\|_{H^{-r}} \leq \|[\PP,a](\Id-\PP)b\|_{H^{1}} \leq C\|\nabla a\|_{L^\infty}\|(\Id-\PP)b\|_{L^2}.
$$
Identical arguments yield the desired bounds for the diagonal blocks $\|\PP a \PP b\|_{H^{-r}}$, and $\|(\Id-\PP) a (\Id-\PP) b\|_{H^{r}}$.
Finally, consider the contribution arising from a constant input, i.e.\ $a\overline B$. For the divergence-free part, we note that $L^2$ embeds continuously into $H^{-r}$, and simply bound 
$$
\|\PP (a-\overline a)\Pi_h\overline B\|_{H^{-r}}\leq \|a\|_{L^{\infty}}|\Pi_h\overline B|.
$$
The same argument also controls the vertical component.
For the gradient part, we bound 
$$
\|(\Id-\PP) (a-\overline a)\Pi_h\overline B\|_{H^{r}} \leq \|(a-\overline a)\Pi_h\overline B\|_{H^1} \leq C\|a\|_{L^\infty} |\Pi_h\overline B|+C\|\nabla a\|_{L^\infty}|\Pi_h\overline B|.
$$
Collecting all the estimates completes the proof of the strong norm estimate. The weak norm estimate follows immediately by repeating the proof with $r$ replaced by $r-\delta$ and $-r$ replaced by $-r-\delta$.
\end{proof}
\subsection{Unstable eigenvalues in a class of SFS maps}
A particularly appealing consequence of the Lasota--Yorke inequality of Proposition \ref{prop:LY} is the following: If one can find a family of maps $T_N$ such that $\|D T_N\|_{L^\infty} \sim N^2$, then, upon normalising the operator by dividing by $N^2$, for fixed $g, h$, the normalised strong Lasota--Yorke constant $C_{LY,S}(T_N,h,g)$ behaves like $N^{-2r}+N^{2r+2\delta-2}$, while the weak constant is of order $1$. Thus, the essential spectral radius contracts towards zero, possibly leaving behind isolated, discrete eigenvalues. Indeed, this mechanism was exhibited in the recent work \cite{CZSV26}. We hence consider the family of maps $T_N$ from \eqref{eq:TN}.
\begin{proposition}
\label{prop:first growing mode}
Let $T_N$ be the time-$1$ flow map in \eqref{eq:TN}, and let $g \in C^\infty(\TT^2)$ be any real-valued, smooth function so that 
$$
\int_{\TT^2} \e^{2 \pi i g(x,y)}f_1'(x)f_2'(y) \dd x \dd y \neq 0.
$$
Then, for all $N$ large enough, the SFS operator 
$$
\mathcal{M}_{T_N,g} b=\e^{2 \pi i g}(DT_N b)\circ T_N^{-1}
$$
admits an isolated, algebraically simple eigenvalue $\Lambda_N$ on $X$, with $|\Lambda_N|\geq c_0N^2$, for some $c_0>0$.
\end{proposition}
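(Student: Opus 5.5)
Our plan is to apply the Keller--Liverani perturbation theorem to the normalized family $\cL_N:=N^{-2}\mathcal M_{T_N,g}$ acting on the pair $X\hookrightarrow Y$, comparing it with the rank-one limit $\mathcal M_\infty b=\ell(b)h_g$ from Proposition~\ref{prop:rank 1 limit simple}. The three ingredients are a uniform Lasota--Yorke inequality, convergence in the $X\to Y$ norm, and simple isolated spectral data for the limit.

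\textbf{Step 1 (uniform Lasota--Yorke).} We apply Proposition~\ref{prop:LY} with $h=0$ and restrict to the horizontal block; equivalently we use Proposition~\ref{prop:horizontal Lasota-yorke}. Since $\|DT_N\|_{L^\infty}\le CN^2$, dividing by $N^2$ gives
\[
\|\cL_N b\|_X\le c\,N^{-2r}\|b\|_X+C_g\|b\|_Y,\qquad \|\cL_N b\|_Y\le C_g\|b\|_Y .
\]
These bounds hold for one step, with constants uniform in $N$. We need them for all iterates $\cL_N^m$. Iterating the two inequalities gives $\|\cL_N^m b\|_X\le (cN^{-2r})^m\|b\|_X+C'(C_g)^m\|b\|_Y$. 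Hence, for every $\alpha>0$ and all $N$ large, the hypotheses of Proposition~\ref{prop:hennion} hold with strong rate $\alpha$ and weak rate $M=C_g$, uniformly in $N$. Therefore $r_{\mathrm{ess},X}(\cL_N)\le\alpha$. The limit operator $\mathcal M_\infty$ is finite rank and bounded $Y\to X$, since $h_g$ is smooth and $\ell$ is continuous on $Y$. So it satisfies the same inequalities trivially.

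\textbf{Step 2 (convergence $\cL_N\to\mathcal M_\infty$ in $\|\cdot\|_{X\to Y}$).} This is the content of Proposition~\ref{prop:rank 1 limit simple}, which we may assume. If we had to prove it, this would be the main obstacle. We would first expand $N^{-2}DT_N$ using \eqref{eq:DTN}, which gives a leading term $f_1'(x)f_2'(y+Nf_1(x))\,\mathbf e_1\otimes\mathbf e_1$ plus an $O(N^{-1})$ remainder. The remainder contributes $O(N^{-1}\|DT_N\|^{\text{small}})$ on the scale $X\to Y$, after losing some of the composition growth; one has to check that the exponents $r,\delta$ make this vanish. For the leading term, composition with $T_N^{-1}$ gives $f_2'(y)f_1'(x')\,b_1(x',y')$ evaluated at $T_N^{-1}(x,y)$, where $x'$ is the first component of $T_N^{-1}(x,y)$. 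The oscillation of $x'$ in $(x,y)$, at frequency of order $N^2$ along the $x$-direction, forces weak convergence to the average $\ell(b)f_2'(y)$. We would quantify this by a non-stationary phase/Fourier estimate, pairing against $H^{r+\delta}$ test functions and gaining a negative power of $N$. The phase $\e^{2\pi i g}$ is harmless by Lemma~\ref{lemma:multipliers anisotropic}.

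\textbf{Step 3 (spectral conclusion).} By the choice of $g$, $\lambda_g\ne0$, and by Proposition~\ref{prop:rank 1 limit simple} it is an algebraically simple, isolated eigenvalue of $\mathcal M_\infty$ on $X$. We fix $\alpha<|\lambda_g|/2$ and a small circle around $\lambda_g$ that excludes $0$. The Keller--Liverani theorem (Theorem~\ref{thm:keller-liverani 1}), using the uniform bounds of Step 1 and the convergence of Step 2, gives the following for $N$ large: the spectral projector of $\cL_N$ inside the circle has rank one, and its eigenvalue $\mu_N$ satisfies $\mu_N\to\lambda_g$. We then set $\Lambda_N=N^2\mu_N$. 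This eigenvalue is isolated and algebraically simple on $X$, and $|\Lambda_N|\ge \tfrac12|\lambda_g|N^2$, so the statement holds with $c_0=|\lambda_g|/2$.
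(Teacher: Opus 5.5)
Your Steps~1 and~3 coincide with the paper's argument: Lemma~\ref{lemma:uniform LY N}, then Theorem~\ref{thm:keller-liverani 1}. Your remark that $\mathcal M_\infty$ itself satisfies the Lasota--Yorke bounds, because it is bounded $Y\to X$, is a detail the paper leaves implicit. The gap is Step~2, which is where essentially all of the work lies. Citing Proposition~\ref{prop:rank 1 limit simple} does not discharge it. Where that proposition is stated, only its eigenvalue part is proved. The $X\to Y$ convergence is established as Lemma~\ref{lem:MN_to_Minf}, inside the proof of the very statement you are proving. In the paper's logical order, your argument therefore reduces the proposition to itself plus Keller--Liverani bookkeeping.

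Your fallback sketch for Step~2 would also fail.

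\textbf{Where the split breaks.} You split $N^{-2}DT_N$ into the leading term $f_1'(x)f_2'(y+Nf_1(x))\,\mathbf e_1\otimes\mathbf e_1$ and an $O(N^{-1})$ remainder, and estimate each piece on all of $X$. This breaks down on the divergence-free component $v=\PP(b-\overline b)$, which is only in $H^{-r}$. Neither piece maps divergence-free fields to divergence-free fields, so each creates a gradient component. The $Y$-norm measures that component in $H^{r-\delta}$, and $r-\delta>-r$. From an $H^{-r}$ input this is not even bounded for fixed $N$. For instance, $v=\nabla^\perp\psi$ with $\psi=K^{r-1}\sin(2\pi Ky)$ has $\|v\|_{H^{-r}}\approx 1$. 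Yet the gradient part of the leading-term output has $H^{r-\delta}$-norm of order $K^{2r-\delta}$ as $K\to\infty$. Only the full operator satisfies $(\Id-\PP)\mathcal M_{T_N,0}\PP=0$ on mean-free fields. For the same reason, pairing against $H^{r+\delta}$ test functions only controls the $H^{-r-\delta}$ components of the $Y$-norm. It never controls the $H^{r-\delta}$ norm of the output's gradient component.

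\textbf{The missing idea.} Hodge-decompose the input first, $b=\overline b+v+w$, and treat the three pieces differently.
\begin{itemize}
\item For $v=\nabla^\perp\psi$, use \eqref{eq:zeldovich identities}. The output is $N^{-2}\nabla^\perp(\psi\circ T_N^{-1})$, of size $O(N^{-2(r+\delta)})\|v\|_{H^{-r}}$ in $Y$. Also $\ell(v)=0$, so $\mathcal L_\infty v=0$.
\item The constant mode is handled by Lemma~\ref{lem:constant_modes_inviscid}, through the periodic function $q_c=\ell_c\circ T_N^{-1}-\ell_c$.
\item Only for $w\in H^r\subset L^2$ is your leading-term/remainder split legitimate. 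There $\|\operatorname{Rem}_Nw\|_{L^2}\lesssim N^{-1}\|w\|_{L^2}$. The scalar operator $\mathcal A_Nh=f_2'(y)\,\bigl((f_1'h)\circ T_N^{-1}\bigr)$ converges to $\mathcal A_\infty$ uniformly from the unit ball of $H^r$ into $H^{-r-\delta}$. This follows from weak $L^2$ convergence on trigonometric polynomials, by integration by parts in the oscillatory integrals, together with the compact embeddings $H^r\hookrightarrow L^2\hookrightarrow H^{-r-\delta}$.
\end{itemize}
Even for $w$, the gradient component of the output is controlled by the exact divergence identity, not by oscillation. It gives $\|(\Id-\PP)N^{-2}\mathcal M_{T_N,0}w\|_{H^r}\lesssim N^{-2r}\|w\|_{H^r}$, as in \eqref{eq:gradient part}, while $\mathcal L_\infty w$ is divergence-free.
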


The proof of the proposition will be divided into multiple smaller lemmas. The first is the following elementary consequence of Proposition \ref{prop:LY}.
\begin{lemma}
\label{lemma:uniform LY N}
For any $g \in C^\infty(\TT^2)$, there exist $C_{r,g}, C_{r,\delta,g}>0$ so that 
$$
\|N^{-2}\mathcal{M}_{T_N,g} b\|_{X} \leq C_{r,g}N^{-2r}\|b\|_{X}+C_{r,\delta,g}\|b\|_{Y}, 
\quad \|N^{-2}\mathcal{M}_{T_N,g} b\|_{Y} \leq C_{r,\delta,g} \|b\|_{Y}.
$$

\end{lemma}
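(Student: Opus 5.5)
The plan is to read off Lemma \ref{lemma:uniform LY N} from Proposition \ref{prop:LY}, applied with $T=T_N$, $h\equiv0$ and the given $g$, restricted to the horizontal part. The operator $\mathcal M_{T_N,g}$ acting on $b$ coincides with $\Pi_h\cS_{T_N,0,g}(b,0)$. When $h=0$ the vertical component decouples: it never feeds back into the horizontal one. So the horizontal part of the estimates in Proposition \ref{prop:LY} gives exactly the $X$ and $Y$ bounds. Equivalently, we could embed $b\mapsto(b,0)$ into $\mathcal X$ and note that $\|(b,0)\|_{\mathcal X}=\|b\|_X$, that $\|(b,0)\|_{\mathcal Y}=\|b\|_Y$, and that $\|\Pi_h B\|_X\le\|B\|_{\mathcal X}$.

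The key quantitative input is $\|DT_N\|_{L^\infty}\le C_{f_1,f_2}N^2$ for $N\ge1$, which follows directly from \eqref{eq:DTN}. We also need that $T_N$ is volume preserving ($\det DT_N=1$) and isotopic to the identity, since it is the time-$1$ map of the shear flows. Since $g$ is real, $\|\e^{2\pi i g}\|_{L^\infty}=1$, while $\|\nabla\e^{2\pi i g}\|_{L^\infty}$ and $\|\e^{2\pi ig}\|_{C^1}$ are constants depending only on $g$. Substituting into \eqref{eq:strong LY constant} gives
\[
C_{LY,S}\lesssim_{r,\delta} N^{2-2r}+N^{2r+2\delta}.
\]
Because $2r+2\delta<2-2r$ (this uses $\delta<\tfrac12(\tfrac12-r)<1-2r$), the bound is $\lesssim N^{2-2r}$. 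Substituting into \eqref{eq:weak LY constant} with $h=0$ gives
\[
C_{LY,W}\lesssim_{r,\delta,g} N^2+N^{2-2r+2\delta}+N^{2r+4\delta}\lesssim N^2,
\]
because $\delta<r$ and $r+2\delta<1$. Likewise \eqref{eq:weak weak LY constant} gives $C_{LY,WW}\lesssim_{r,\delta,g}N^2$.

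Multiplying all three bounds by $N^{-2}$ yields the claimed inequalities, with constants independent of $N$. The only step needing care is this exponent bookkeeping, namely checking that every power of $\|DT_N\|$ in the weak constants is at most $1$ and every power in the strong constant is at most $1-r$. It follows from the constraints \eqref{parameter:r-delta} on $r$ and $\delta$, so no genuine obstacle arises.
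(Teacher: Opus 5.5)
Your proposal is correct and follows the paper's own argument. Both apply Proposition \ref{prop:LY} with $T=T_N$ and $h=0$, restrict to the horizontal block, and insert $\|DT_N\|_{L^\infty}\lesssim N^2$. Your explicit exponent check (using $2r+\delta<1$, $\delta<r$ and $r+2\delta<1$ from \eqref{parameter:r-delta}) supplies the bookkeeping that the paper leaves implicit.
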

\begin{proof}
We note that $\mathcal{M}_{T_N,g}$ is simply the first two components of an SFS operator with $h=0$. Hence, this operator acts on the two-component spaces $X$ and $Y$. Now, a computation reveals that 
$$
T_N(x,y)=\begin{pmatrix}
x+Nf_2(y+Nf_1(x))\\
y+Nf_1(x)
\end{pmatrix}.
$$
In particular, $\|DT_N\|_{L^\infty} \sim N^2$. Hence, applying Proposition \ref{prop:LY}, we deduce that 
$$
\|\mathcal{M}_{T_N,g} b\|_{X} \leq C_{r,g}N^{2-2r}\|b\|_{X}+C_{r,\delta,g}N^2\|b\|_{Y}, 
\quad \|\mathcal{M}_{T_N,g} b\|_{Y} \leq C_{r,\delta,g}N^2 \|b\|_{Y},
$$
completing the proof.
\end{proof}
Next, we identify a suitable limiting object which $N^{-2}\mathcal{M}_{T_N,g}$ approaches as $N \to \infty$. In fact, it turns out that it ``converges'' to a rank-one operator! However, while this may be good news for establishing the spectral properties of the limiting object, it makes the convergence result somewhat tricky to establish. Thus, we shall content ourselves with the weaker notion of convergence due to Keller--Liverani \cite{Keller_Liverani}, which will turn out to be rather important throughout this document.
\subsection{The rank-one limiting operators $\mathcal L_\infty$ and $\mathcal M_\infty$}

We define the linear functional
\begin{equation}\label{eq:ell_def}
\ell(b)
:=
\int_{\TT^2} f_1'(x)b_1(x,y)\,\dd x\,\dd y,
\end{equation}
and the rank-one operator
\begin{equation}\label{eq:Linf_def}
(\mathcal L_\infty b)(x,y)
:=
\begin{pmatrix}
f_2'(y)\ell(b)\\
0
\end{pmatrix}=
\ell(b)\,
\begin{pmatrix}
f_2'(y)\\
0
\end{pmatrix}.
\end{equation}
Since $\int_{\TT}f_1'(x)\,\dd x=0$, one has $\mathcal L_\infty^2=0$.
The corresponding limiting full operator is
\begin{equation}\label{eq:Minf_def}
\mathcal M_\infty b
:=
\e^{2\pi i g}\mathcal L_\infty b
=
\ell(b)h_g,
\qquad
h_g(x,y)
:=
\begin{pmatrix}
\e^{2\pi i g(x,y)}f_2'(y)\\
0
\end{pmatrix}.
\end{equation}
The operator $\mathcal M_\infty$ is rank one. Its only possible nonzero
eigenvalue is
\begin{equation}\label{eq:lambda_g_def}
\lambda_g
:=
\ell(h_g)
=
\int_{\TT^2}
\e^{2\pi i g(x,y)}f_1'(x)f_2'(y)
\,\dd x\,\dd y.
\end{equation}
Indeed,
$$
\mathcal M_\infty h_g
=
\ell(h_g)h_g
=
\lambda_g h_g.
$$
Moreover,
$
\mathcal M_\infty^2=\lambda_g\mathcal M_\infty
$.
Thus, as an operator on either $X$ or $Y$, if $\lambda_g\neq0$, then
$$
\sigma(\mathcal M_\infty)=\{0,\lambda_g\},
$$
and $\lambda_g$ is an algebraically simple isolated eigenvalue of
$\mathcal M_\infty$. Indeed, the associated Riesz projector is the rank-one operator
$$
\Pi_g=\lambda_g^{-1}\mathcal M_\infty.
$$

\subsection{Weak--strong convergence}

We next prove the convergence of the finite-$N$ operators to the
rank-one limiting operator. The relevant convergence is from the strong
space $X$ to the weak space $Y$.

\begin{lemma}\label{lem:MN_to_Minf}
There exists a sequence $\omega_N\to0$ such that
\begin{equation}\label{eq:MN0_to_Minf}
\|N^{-2}\mathcal M_{T_N,g}b-\mathcal M_\infty b\|_Y
\leq
\omega_N\|b\|_X.
\end{equation}
\end{lemma}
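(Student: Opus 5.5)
The plan is to reduce the operator-norm statement to pointwise convergence on smooth vector fields, using compactness, and then to prove the pointwise statement by separating the Hodge components.

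\textbf{Step 1: reduction to smooth $b$ via compactness.} By Lemma~\ref{lemma:uniform LY N}, the operators $N^{-2}\mathcal M_{T_N,g}$ are uniformly bounded $Y\to Y$. The limit $\mathcal M_\infty$ is also bounded $Y\to Y$: $\ell(b)$ only pairs $b_1$ with the smooth function $f_1'$, so it is controlled by the $H^{-r-\delta}$ and $H^{r-\delta}$ parts together with the mean. Since $X\hookrightarrow Y$ is compact, the unit ball of $X$ is precompact in $Y$. A uniformly $Y$-equicontinuous family that converges pointwise on a dense set therefore converges uniformly on this ball, which yields a sequence $\omega_N\to0$ with \eqref{eq:MN0_to_Minf}. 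Moreover, multiplication by $\e^{2\pi i g}$ is bounded on $Y$ by Lemma~\ref{lemma:multipliers anisotropic}. So it suffices to show, for each fixed smooth $b$, that $N^{-2}(DT_Nb)\circ T_N^{-1}\to\mathcal L_\infty b$ in $Y$.

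\textbf{Step 2: the gradient part and the subleading terms.} By \eqref{eq:zeldovich identities}, $\nabla\cdot\mathcal M_{T_N}b=(\nabla\cdot b)\circ T_N^{-1}$. Lemma~\ref{lemma:volume preserving diffeo on H^r} then gives
\[
N^{-2}\|(\Id-\PP)\mathcal M_{T_N}b\|_{H^{r-\delta}}\lesssim N^{-2}N^{2(1-r+\delta)}\|b\|_X\to0 .
\]
This is consistent with the limit, because $\mathcal L_\infty b=\ell(b)(f_2'(y),0)$ is divergence-free and mean-free. Next, split $DT_N=N^2A_N+NB_N+I$ using \eqref{eq:DTN}. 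The terms $N^{-1}B_N$ and $N^{-2}I$ contribute $O(N^{-1}\|b\|_{L^2})$ in $L^2$, hence in $Y$, for smooth $b$.

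\textbf{Step 3: the leading term.} Write $(X,Y)=T_N(x,y)$, so that $Y=y+Nf_1(x)$ and $f_2'(y+Nf_1(x))=f_2'(Y)$. Set $F:=f_1'b_1$. Then the leading term is
\[
(A_Nb)\circ T_N^{-1}=f_2'(Y)\,\bigl(F\circ T_N^{-1}\bigr)(X,Y)\,\mathbf e_1 .
\]
The target is $f_2'(Y)\bigl(\int F\bigr)\mathbf e_1=\mathcal L_\infty b$. The mean is handled by the change of variables from Step~1 of the proof of Proposition~\ref{prop:LY}. It therefore remains to show that $G_N:=F\circ T_N^{-1}-\int F$ satisfies
\[
\|f_2'(Y)G_N\|_{H^{-r-\delta}}\to0 .
\]
Since $\|G_N\|_{L^2}\le 2\|F\|_{L^2}$ and multiplication by $f_2'$ is bounded on every $H^s$ with $|s|\le1$, interpolation reduces this to weak convergence $G_N\rightharpoonup0$ in $L^2$. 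By density it is enough to test against trigonometric polynomials $\varphi$, that is, to show $\int F\,\varphi\circ T_N\to\int F\int\varphi$.

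Write $T_N=S_2\circ S_1$, where $S_1(x,y)=(x,y+Nf_1(x))$ and $S_2(x,y)=(x+Nf_2(y),y)$. For a Fourier mode $\varphi=\e^{2\pi i k\cdot(X,Y)}$ we have
\[
\varphi\circ T_N=\e^{2\pi i[k_1x+k_1Nf_2(y')+k_2y']},\qquad y'=y+Nf_1(x).
\]
For $k_1\neq0$, undo the measure-preserving shear $S_1$ and apply Riemann--Lebesgue (or van der Corput) to $\int\e^{2\pi i k_1Nf_2(y')}(\cdots)\,\dd y'$. For $k_1=0$, $k_2\neq0$, the phase $\e^{2\pi i k_2Nf_1(x)}$ is integrated in $x$ in the same way.

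\textbf{Main obstacle.} The hardest step is exactly this oscillatory decay, $\int\e^{iNkf_j(s)}\Psi(s)\,\dd s\to0$ for $k\neq0$. It holds when the level sets of $f_j'=0$ have measure zero, by a stationary-phase or Riemann--Lebesgue argument on the pushforward of $\Psi\,\dd s$ under $f_j$. If $f_j$ has flat intervals, the decay can fail on those intervals. In that case I would first try to exploit the second shear to generate the oscillation; failing that, I would impose this mild nondegeneracy on $f_1,f_2$. Only qualitative convergence is needed, since Step~1 upgrades pointwise convergence to the uniform rate $\omega_N$.
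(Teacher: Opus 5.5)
Your Steps 1 and 2 are sound. The uniform $Y\to Y$ bound of Lemma~\ref{lemma:uniform LY N}, together with precompactness of the unit ball of $X$ in $Y$, is a legitimate and somewhat more economical substitute for the paper's splitting $b=\overline b+v+w$ via Lemma~\ref{lem:constant_modes_inviscid}. One small correction: $L^2$ does not control the $H^{r-\delta}$ gradient slot of $Y$. The subleading terms are harmless there only because Step~2 already treats $(\Id-\PP)$ of the whole output.

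The gap is in Step~3. You pull $f_2'(Y)$ out as a bounded multiplier and ask for $G_N\rightharpoonup0$ in $L^2$. This discards exactly the factor that produces the cancellation, and the reduced claim is false for admissible $f_2$. Take $b=(f_1'(x)\e^{-2\pi i x},0)$ and $\varphi=\e^{2\pi i X}$. After the shear $y'=y+Nf_1(x)$,
\[
\int G_N\varphi=\|f_1'\|_{L^2}^2\int_{\TT}\e^{2\pi i Nf_2(y')}\,\dd y',
\]
whose modulus tends to $|I|\neq0$ if $f_2$ is constant on an interval $I$ and $\{f_2'=0\}\setminus I$ is null. The same computation with $f_2'(y')$ inserted gives exactly $0$. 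So the lemma holds in this example while your sufficient condition fails. As written, your fallback of imposing nondegeneracy on $f_1,f_2$ proves a weaker statement than the one needed, where $f_1,f_2$ are only smooth, nonconstant and mean-free.

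The missing idea is to keep both derivative factors of the leading entry $N^2f_1'(x)f_2'(y+Nf_1(x))$ inside the oscillatory integral. That is, prove $f_2'(Y)\,(F\circ T_N^{-1})\rightharpoonup f_2'(Y)\int F$ rather than $F\circ T_N^{-1}\rightharpoonup\int F$. This is what the paper does. With $\mathcal A_Nh=f_2'(y)\,(f_1'h)\circ T_N^{-1}$, the pairing \eqref{eq:SN_pairing} becomes, for trigonometric polynomials, a finite sum of products of $\int f_1'(x)\e^{2\pi i(ax-\ell Nf_1(x))}\dd x$ and $\int f_2'(y)\e^{2\pi i(qy+mNf_2(y))}\dd y$.

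For $c\neq0$, $f_j'\e^{2\pi i cNf_j}=(2\pi i cN)^{-1}\partial\,\e^{2\pi i cNf_j}$ is an exact derivative. One integration by parts therefore gives $O(1/N)$ whenever $\ell\neq0$ (resp.\ $m\neq0$), with no condition on the critical sets of $f_1,f_2$. Only the $\ell=m=0$ terms survive, and they yield $\mathcal A_\infty$. Your worry about $f_1$ was already unnecessary, since $F=f_1'b_1$ carries the factor $f_1'$. With this replacement, the rest of your argument goes through.
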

Before proving this result, we first need the following preparatory lemma.
\begin{lemma}\label{lem:constant_modes_inviscid}
For every constant vector $c\in\mathbb C^2$ and every
$\sigma\in[0,1]$, one has
\begin{equation}\label{eq:constant_modes_inviscid}
\|N^{-2} \mathcal{M}_{T_N,0} c-N^{-2}c\|_{H^{-\sigma}}
\lesssim
N^{-\sigma}|c|.
\end{equation}
Consequently, if $\sigma=r+\delta$, then
\begin{equation}\label{eq:constant_modes_Y}
\|N^{-2} \mathcal{M}_{T_N,0} c\|_Y
\lesssim
N^{-(r+\delta)}|c|.
\end{equation}
\end{lemma}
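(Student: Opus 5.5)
The plan is to compute $\mathcal M_{T_N,0}c-c=\bigl((DT_N-\Id)c\bigr)\circ T_N^{-1}$ explicitly, show that after dividing by $N^2$ every entry is either $O(N^{-1})$ in $L^\infty$ or an exact $\partial_Y$-derivative with an $N^{-1}$ prefactor, and then interpolate between $L^2$ and $H^{-1}$.

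\textbf{Step 1: invert $T_N$.} By \eqref{eq:TN}, $T_N(x,y)=(X,Y)$ means $Y=y+Nf_1(x)$ and $X=x+Nf_2(Y)$. Hence
\[
T_N^{-1}(X,Y)=\bigl(X-Nf_2(Y),\,Y-Nf_1(X-Nf_2(Y))\bigr).
\]

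\textbf{Step 2: compose $DT_N-\Id$ with $T_N^{-1}$.} By \eqref{eq:DTN}, the argument $y+Nf_1(x)$ appearing in $f_2'$ becomes exactly $Y$. Writing $\zeta:=X-Nf_2(Y)$, we get
\[
N^{-2}(DT_N-\Id)\circ T_N^{-1}
=\begin{pmatrix} f_1'(\zeta)f_2'(Y) & N^{-1}f_2'(Y)\\ N^{-1}f_1'(\zeta) & 0\end{pmatrix}.
\]

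\textbf{Step 3: bound the off-diagonal entries.} These two entries are bounded in $L^\infty$ by $N^{-1}\max(\|f_1'\|_{L^\infty},\|f_2'\|_{L^\infty})$. Since $\sigma\le 1$ and $\|\cdot\|_{H^{-\sigma}}\le\|\cdot\|_{L^2}$, their contribution to the $H^{-\sigma}$ norm is $\lesssim N^{-1}|c|\le N^{-\sigma}|c|$.

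\textbf{Step 4: bound the $(1,1)$ entry.} This is the only delicate term, since it is $O(1)$ in $L^\infty$. The key observation is the exact identity
\[
f_1'(\zeta)f_2'(Y)=-N^{-1}\partial_Y\bigl[f_1(X-Nf_2(Y))\bigr].
\]
The bracket is a periodic function bounded by $\|f_1\|_{L^\infty}$ uniformly in $N$. Therefore the entry is $\lesssim 1$ in $L^2$ and $\lesssim N^{-1}$ in $H^{-1}$, because $\partial_Y:L^2\to H^{-1}$ is bounded. Interpolating between $H^0$ and $H^{-1}$ gives an $H^{-\sigma}$ bound of order $N^{-\sigma}$. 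This proves \eqref{eq:constant_modes_inviscid}.

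\textbf{Step 5: the $Y$-norm consequence \eqref{eq:constant_modes_Y}.} Write $v:=N^{-2}\mathcal M_{T_N,0}c$ and split it into its three pieces.
\begin{itemize}
\item \emph{Mean.} By Step 1 of the proof of Proposition \ref{prop:LY} (with $b=0$, so $\nabla\cdot b=0$), $\overline v=N^{-2}c$.
\item \emph{Gradient part.} By Step 3 of that proof, $(\Id-\PP)(v-\overline v)=0$.
\item \emph{Divergence-free part.} It equals $\PP(v-N^{-2}c)$, which is mean-free. Since $\PP$ is bounded on $H^{-r-\delta}$, it has $H^{-r-\delta}$ norm $\lesssim N^{-(r+\delta)}|c|$ by \eqref{eq:constant_modes_inviscid} with $\sigma=r+\delta\in[0,1]$.
\end{itemize}
Summing these contributions and using $N^{-2}\le N^{-(r+\delta)}$ yields \eqref{eq:constant_modes_Y}.

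The only real obstacle is Step 4. The leading $N^2$ term does not decay pointwise, and its smallness in negative Sobolev norms comes entirely from the fast oscillation in $Y$. The clean way to capture this is the antiderivative identity above, rather than a Fourier-side stationary-phase computation.
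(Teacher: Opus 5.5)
Your proof is correct and is essentially the paper's argument. The paper writes the whole difference as $\mathcal{M}_{T_N,0}c-c=\nabla^\perp q_c$ with $q_c=\ell_c\circ T_N^{-1}-\ell_c=Nc_1f_1(x-Nf_2(y))-Nc_2f_2(y)$ and interpolates $q_c$ between $L^2$ and $H^1$; your antiderivative $f_1(X-Nf_2(Y))$ for the $(1,1)$ entry is, after normalization, exactly the $f_1$-part of $q_c$, with the remaining $O(N^{-1})$ entries bounded pointwise instead of as derivatives, and your $Y$-norm deduction (mean preservation, vanishing gradient part, boundedness of $\PP$ on $H^{-r-\delta}$) matches the paper's.
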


\begin{proof}
Since $c$ is divergence-free, the mean-preservation identity gives
$$
\overline{N^{-2} \mathcal{M}_{T_N,0} c}=N^{-2}c.
$$
Moreover, $N^{-2} \mathcal{M}_{T_N,0} c$ is divergence-free, and hence
$$
(\Id-\PP)(N^{-2} \mathcal{M}_{T_N,0} c-N^{-2}c)=0.
$$
It remains to estimate the mean-free divergence-free part. Let
$$
\ell_c(x,y):=c_2x-c_1y,
$$
so that, on the lift to $\mathbb R^2$,
$
c=\nabla_{\rm h}^\perp \ell_c
$.
Although $\ell_c$ is not periodic, the difference
$$
q_c:=\ell_c\circ T_N^{-1}-\ell_c
$$
is periodic. 
Indeed, using
$$
T_N^{-1}(x,y)=
\bigl(x-Nf_2(y),\,y-Nf_1(x-Nf_2(y))\bigr),
$$
we obtain
$$
q_c(x,y)
=
Nc_1 f_1(x-Nf_2(y))
-
Nc_2 f_2(y).
$$
Therefore
$$
\|q_c\|_{L^2}\lesssim N|c|,
\qquad
\|q_c\|_{H^1}\lesssim N^2|c|.
$$
Interpolating between these two bounds gives, for $\sigma\in[0,1]$,
$$
\|q_c\|_{H^{1-\sigma}}
\lesssim
N^{2-\sigma}|c|.
$$
A direct computation gives
$$ \mathcal{M}_{T_N,0} c-c=\nabla_{\rm h}^\perp q_c.
$$
Using this identity,
we deduce
$$
\|N^{-2} \mathcal{M}_{T_N,0} c-N^{-2}c\|_{H^{-\sigma}}
\leq
N^{-2}\|\nabla_{\rm h}^\perp q_c\|_{H^{-\sigma}}
\lesssim
N^{-2}\|q_c\|_{H^{1-\sigma}}
\lesssim
N^{-\sigma}|c|.
$$
This proves \eqref{eq:constant_modes_inviscid}.
Finally, taking $\sigma=r+\delta$, and using
$$
\overline{N^{-2} \mathcal{M}_{T_N,0} c}=N^{-2}c,
\qquad
(\Id-\PP)(N^{-2} \mathcal{M}_{T_N,0} c-N^{-2}c)=0,
$$
we get
\begin{align*}
\|N^{-2} \mathcal{M}_{T_N,0} c\|_Y
&\leq
|\overline{N^{-2} \mathcal{M}_{T_N,0} c}|
+
\|\PP(N^{-2} \mathcal{M}_{T_N,0} c-\overline{N^{-2} \mathcal{M}_{T_N,0} c})\|_{H^{-r-\delta}}\\
&\quad+
\|(\Id-\PP)(N^{-2} \mathcal{M}_{T_N,0} c-\overline{N^{-2} \mathcal{M}_{T_N,0} c})\|_{H^{r-\delta}}
\\
&=
N^{-2}|c|
+
\|N^{-2} \mathcal{M}_{T_N,0} c-N^{-2}c\|_{H^{-r-\delta}}
\\
&\lesssim
N^{-2}|c|
+
N^{-(r+\delta)}|c|
\\
&\lesssim
N^{-(r+\delta)}|c|,
\end{align*}
since $N\geq1$ and $r+\delta<1$. This proves
\eqref{eq:constant_modes_Y}.
\end{proof}
Hence, we may now prove Lemma \ref{lem:MN_to_Minf}.
\begin{proof}[Proof of Lemma \ref{lem:MN_to_Minf}]
Since
$$
\mathcal M_{T_N,g}=\e^{2\pi i g} \mathcal{M}_{T_N,0},
\qquad
\mathcal M_\infty=\e^{2\pi i g}\mathcal L_\infty,
$$
and multiplication by $\e^{2\pi i g}$ is bounded on $Y$ by Lemma~\ref{lemma:multipliers anisotropic}, it suffices to
prove
$$
\|N^{-2}\mathcal M_{T_N,0}b-\mathcal L_\infty b\|_Y
\leq
\omega_N\|b\|_X.
$$
We first isolate the scalar oscillatory operator appearing in the leading
entry of $N^{-2} \mathcal{M}_{T_N,0}$. Using
$$
T_N^{-1}(x,y)
=
\bigl(x-Nf_2(y),\,y-Nf_1(x-Nf_2(y))\bigr),
$$
we define
$$
\mathcal A_Nh(x,y)
:=
f_2'(y)f_1'(x-Nf_2(y))
h\bigl(x-Nf_2(y),\,y-Nf_1(x-Nf_2(y))\bigr),
$$
and
$$
\mathcal A_\infty h(x,y)
:=
f_2'(y)\int_{\TT^2}f_1'(x')h(x',y')\,\dd x'\,\dd y'.
$$
The key claim is
\begin{equation}\label{eq:SN_compact_convergence}
\alpha_N:=
\sup_{\|h\|_{H^r}\leq1}
\|\mathcal A_Nh-\mathcal A_\infty h\|_{H^{-r-\delta}}
\to0.
\end{equation}
We prove the claim. First, $\mathcal A_N$ is uniformly bounded on $L^2$. Indeed,
the map
$$
(x,y)\mapsto
\bigl(x-Nf_2(y),\,y-Nf_1(x-Nf_2(y))\bigr)
$$
is measure-preserving, and therefore
$$
\|\mathcal A_Nh\|_{L^2}\lesssim \|h\|_{L^2}.
$$
The operator $\mathcal A_\infty:L^2\to L^2$ is also bounded.

We next show that $\mathcal A_Nh\rightharpoonup \mathcal A_\infty h$ weakly in $L^2$ for
every $h\in L^2$. By density and the uniform $L^2$-bounds, it is enough
to check this for trigonometric polynomials. Let $h,\varphi$ be
trigonometric polynomials. A change of variables gives
\begin{equation}\label{eq:SN_pairing}
\int_{\TT^2}\mathcal A_Nh\,\varphi
=
\int_{\TT^2}
f_1'(x)f_2'(y)
h(x,y-Nf_1(x))
\varphi(x+Nf_2(y),y)
\,\dd x\,\dd y.
\end{equation}
We write
$$
h(x,y)=\sum_{k,\ell}h_{k,\ell}\e^{2\pi i(kx+\ell y)},
\qquad
\varphi(x,y)=\sum_{m,n}\varphi_{m,n}\e^{2\pi i(mx+ny)}.
$$
The right-hand side of \eqref{eq:SN_pairing} is a finite sum of terms of
the form
$$
h_{k,\ell}\varphi_{m,n}
I_{k+m,\ell}(N)J_{\ell+n,m}(N),
$$
where
$$
I_{a,\ell}(N)
:=
\int_{\TT}f_1'(x)\e^{2\pi i(ax-\ell Nf_1(x))}\,\dd x,
\qquad
J_{q,m}(N)
:=
\int_{\TT}f_2'(y)\e^{2\pi i(qy+mNf_2(y))}\,\dd y.
$$
If $\ell\neq0$, then
$$
f_1'(x)\e^{-2\pi i\ell Nf_1(x)}
=
-\frac{1}{2\pi i\ell N}
\partial_x\left(\e^{-2\pi i\ell Nf_1(x)}\right),
$$
and integration by parts gives
$$
I_{a,\ell}(N)\to0, \qquad\text{as } N\to\infty. 
$$
Similarly, if $m\neq0$, then
$$
J_{q,m}(N)\to0, \qquad\text{as } N\to\infty. 
$$
Thus only the terms with $\ell=0$ and $m=0$ survive in the limit.
Consequently,
$$
\int_{\TT^2}\mathcal A_Nh\,\varphi
\to
\left(
\int_{\TT^2}f_1'(x)h(x,y)\,\dd x\,\dd y
\right)
\left(
\int_{\TT^2}f_2'(y)\varphi(x,y)\,\dd x\,\dd y
\right)=
\int_{\TT^2}\mathcal A_\infty h\,\varphi.
$$
Therefore $\mathcal A_Nh\rightharpoonup \mathcal A_\infty h$ weakly in $L^2$ for every
$h\in L^2$.
Since the embedding $L^2\hookrightarrow H^{-r-\delta}$ is compact, this weak $L^2$ convergence implies
$$
\mathcal A_Nh\to \mathcal A_\infty h
\qquad
\text{strongly in }H^{-r-\delta}
$$
for every fixed $h\in L^2$.

We now upgrade this pointwise convergence to convergence uniformly on the
unit ball of $H^r$. Suppose \eqref{eq:SN_compact_convergence} failed.
Then there would exist $\eps_0>0$, a subsequence $N_j\to\infty$,
and $h_j$ satisfying
$$
\|h_j\|_{H^r}\leq1,
\qquad
\|\mathcal A_{N_j}h_j-\mathcal A_\infty h_j\|_{H^{-r-\delta}}\geq\eps_0.
$$
Since $H^r\hookrightarrow L^2$ is compact, after passing to a subsequence
we may assume
$$
h_j\to h
\qquad
\text{strongly in }L^2.
$$
Using the uniform boundedness of $\mathcal A_N-\mathcal A_\infty:L^2\to H^{-r-\delta}$, we
get
$$
\|(\mathcal A_{N_j}-\mathcal A_\infty)(h_j-h)\|_{H^{-r-\delta}}
\lesssim
\|h_j-h\|_{L^2}
\to0.
$$
On the other hand,
$$
(\mathcal A_{N_j}-\mathcal A_\infty)h\to0
\qquad
\text{in }H^{-r-\delta}.
$$
This contradicts the lower bound above and proves
\eqref{eq:SN_compact_convergence}.

We now return to the vector-valued operator. Decompose
$$
b=\overline b+v+w,
\qquad
v:=\PP(b-\overline b),
\qquad
w:=(\Id-\PP)(b-\overline b).
$$
First, since $\overline b$ is constant,
$\mathcal L_\infty\overline b=0$,
because $\int_{\TT}f_1'(x)\,\dd x=0$. 
Moreover, by Lemma~\ref{lem:constant_modes_inviscid} with
$\sigma=r+\delta$,
$$
\|N^{-2} \mathcal{M}_{T_N,0}\overline b\|_Y
\lesssim
N^{-r-\delta}|\overline b|.
$$
Hence
\begin{equation}\label{eq:constant_convergence}
\|(N^{-2} \mathcal{M}_{T_N,0}-\mathcal L_\infty)\overline b\|_Y
\lesssim
N^{-r-\delta}|\overline b|.
\end{equation}
Second, $v$ is divergence-free and mean-free. Hence
$
\mathcal L_\infty v=0
$.
Indeed, writing $v=\nabla_{\rm h}^\perp\psi$, we have
$$
\ell(v)
=
\int_{\TT^2}f_1'(x)v_1(x,y)\,\dd x\,\dd y
=
-\int_{\TT^2}f_1'(x)\partial_y\psi(x,y)\,\dd x\,\dd y
=
0.
$$
Furthermore, by \eqref{eq:div free part decay}, we have
$$
\|N^{-2} \mathcal{M}_{T_N,0}v\|_Y
\lesssim
N^{-2(r+\delta)}\|v\|_{H^{-r}}.
$$
Thus
\begin{equation}\label{eq:divfree_convergence}
\|(N^{-2} \mathcal{M}_{T_N,0}-\mathcal L_\infty)v\|_Y
\lesssim
N^{-2(r+\delta)}\|v\|_{H^{-r}}.
\end{equation}
It remains to treat the curl-free component
$
w:=(\Id-\PP)(b-\overline b)
$.
We first control the mean and the divergence-free component of $(N^{-2} \mathcal{M}_{T_N,0}-\mathcal L_\infty)w$.
The explicit expression
for $N^{-2} \mathcal{M}_{T_N,0}$ gives
$$
N^{-2} \mathcal{M}_{T_N,0} w
=
\begin{pmatrix}
\mathcal A_Nw_1\\
0
\end{pmatrix}
+
\operatorname{Rem}_Nw,
$$
where
$$
\|\operatorname{Rem}_Nw\|_{L^2}
\lesssim
N^{-1}\|w\|_{L^2}
\lesssim
N^{-1}\|w\|_{H^r}.
$$
Since
$$
\mathcal L_\infty w
=
\begin{pmatrix}
\mathcal A_\infty w_1\\
0
\end{pmatrix},
$$
we obtain, using \eqref{eq:SN_compact_convergence},
$$
\|(N^{-2} \mathcal{M}_{T_N,0}-\mathcal L_\infty)w\|_{H^{-r-\delta}}
\lesssim
(\alpha_N+N^{-1})\|w\|_{H^r}.
$$
Indeed, the $\operatorname{Rem}_Nw$ term is estimated in $H^{-r-\delta}$ by its $L^2$
norm.

This estimate controls both the mean and the divergence-free component in
the $Y$-norm. Namely,
$$
|\overline{(N^{-2} \mathcal{M}_{T_N,0}-\mathcal L_\infty)w}|
\lesssim
\|(N^{-2} \mathcal{M}_{T_N,0}-\mathcal L_\infty)w\|_{H^{-r-\delta}},
$$
and, since $\PP$ is bounded on $H^{-r-\delta}$,
\begin{align*}
&\|\PP((N^{-2} \mathcal{M}_{T_N,0}-\mathcal L_\infty)w-\overline{(N^{-2} \mathcal{M}_{T_N,0}-\mathcal L_\infty)w})\|_{H^{-r-\delta}}\\
&
\lesssim
\|(N^{-2} \mathcal{M}_{T_N,0}-\mathcal L_\infty)w-\overline{(N^{-2} \mathcal{M}_{T_N,0}-\mathcal L_\infty)w}\|_{H^{-r-\delta}}\\
&
\lesssim
\|(N^{-2} \mathcal{M}_{T_N,0}-\mathcal L_\infty)w\|_{H^{-r-\delta}}.
\end{align*}
Therefore
\begin{equation}\label{eq:curlfree_mean_P_convergence}
|\overline{(N^{-2} \mathcal{M}_{T_N,0}-\mathcal L_\infty)w}|
+
\|\PP((N^{-2} \mathcal{M}_{T_N,0}-\mathcal L_\infty)w-\overline{(N^{-2} \mathcal{M}_{T_N,0}-\mathcal L_\infty)w})\|_{H^{-r-\delta}}
\lesssim
(\alpha_N+N^{-1})\|w\|_{H^r}.
\end{equation}
It remains to estimate the curl-free component of $(N^{-2} \mathcal{M}_{T_N,0}-\mathcal L_\infty)w$. Since
$$
\mathcal L_\infty w
=
\begin{pmatrix}
\ell(w)f_2'(y)\\
0
\end{pmatrix},
$$
it is divergence-free. Hence
$
(\Id-\PP)\mathcal L_\infty w=0
$.
Moreover, $\Id-\PP$ annihilates constants. Consequently,
$$
(\Id-\PP)((N^{-2} \mathcal{M}_{T_N,0}-\mathcal L_\infty)w-\overline{(N^{-2} \mathcal{M}_{T_N,0}-\mathcal L_\infty)w})
=
(\Id-\PP)N^{-2} \mathcal{M}_{T_N,0} w.
$$
By \eqref{eq:gradient part}, we have
$$
\|(\Id-\PP)N^{-2} \mathcal{M}_{T_N,0} w\|_{H^r}
\lesssim
N^{-2r}\|w\|_{H^r}.
$$
Thus
\begin{equation}\label{eq:curlfree_Q_convergence}
\|(\Id-\PP)((N^{-2} \mathcal{M}_{T_N,0}-\mathcal L_\infty)w-\overline{(N^{-2} \mathcal{M}_{T_N,0}-\mathcal L_\infty)w})\|_{H^r}
\lesssim
N^{-2r}\|w\|_{H^r}.
\end{equation}
Putting together \eqref{eq:curlfree_mean_P_convergence} and
\eqref{eq:curlfree_Q_convergence}, we obtain
\begin{equation}\label{eq:curlfree_convergence}
\|(N^{-2} \mathcal{M}_{T_N,0}-\mathcal L_\infty)w\|_Y
\lesssim
(\alpha_N+N^{-1}+N^{-2r})\|w\|_{H^r}.
\end{equation}
Combining
\eqref{eq:constant_convergence},
\eqref{eq:divfree_convergence}, and
\eqref{eq:curlfree_convergence}, we get
$$
\|N^{-2} \mathcal{M}_{T_N,0}b-\mathcal L_\infty b\|_Y
\leq
\omega_N
\left(
|\overline b|+\|v\|_{H^{-r}}+\|w\|_{H^r}
\right)
\leq C
\omega_N\|b\|_X,
$$
where
$$
\omega_N
=
C\left(
N^{-r-\delta}
+
N^{-2(r+\delta)}
+
\alpha_N
+
N^{-1}
+
N^{-2r}
\right)
\to0.
$$
This proves the lemma.
\end{proof}
Finally, we may conclude the proof of Proposition \ref{prop:first growing mode}.
\begin{proof}[Proof of Proposition \ref{prop:first growing mode}]
In view of Lemma \ref{lemma:uniform LY N}, the weak--strong convergence result of Lemma \ref{lem:MN_to_Minf} ensures we may apply the Keller--Liverani perturbation theorem \ref{thm:keller liverani}. Thus, if $\mathcal M_\infty$ has the non-zero eigenvalue $\lambda_g$, then for all $N$ large enough, $N^{-2}\mathcal M_{T_N,g}$ has an eigenvalue $\lambda_N\in\sigma(N^{-2}\mathcal M_{T_N,g})$ with $\lambda_N \to \lambda_g$. Setting $\Lambda_N:=N^2\lambda_N\in\sigma(\mathcal M_{T_N,g})$, we obtain an algebraically simple eigenvalue of $\mathcal M_{T_N,g}$, and for $N$ large enough, $|\Lambda_N|\geq \frac{N^2}{2}|\lambda_g|$, completing the proof.
\end{proof}
\subsection{The proof of time-periodic fast dynamo action}
\label{sec:time periodic proof}
We now have all the ingredients to prove Theorem \ref{thm:time-periodic}. Indeed, fix $f_1, f_2$ non-constant and real-valued, and pick $g \in C^\infty(\TT^2;\RR)$ so that 
$$
\int_{\TT^2}\e^{2 \pi i g(x,y)}f'_1(x)f'_2(y) \dd x \dd y \neq 0.
$$
Let $\chi:[0,\frac{1}{2}] \to [0,3]$ be a compactly supported, smooth function, with integral equal to $1$. Extend it by zero outside $[0,\frac{1}{2}]$. Then, let 
$$
u_N(t,x,y)=\chi(t)\begin{pmatrix}
0\\
Nf_1(x)\\
0
\end{pmatrix}+\chi(t-1/2)\begin{pmatrix}
N f_2(y)\\
0\\
0
\end{pmatrix}
+\chi(t-1)\begin{pmatrix}
0\\
0\\
-g(x,y)
\end{pmatrix},
$$
continued periodically in time, with period $2$. The time-$2$ flow map of $u_N$ is then exactly
$$
\begin{pmatrix}
T_N(x,y)\\
z-g(T_N(x,y))
\end{pmatrix}.
$$
Furthermore, we now introduce the noisy flow map $\Phi_{u_N,\eps}^t(\omega,\cdot)$, given by the solution to the SDE
\begin{equation}
\label{eq:noisy flow map}
\dd \Phi_{u_N,\eps}^t(\omega,x)=u_N\bigl(t,\Phi_{u_N,\eps}^t(\omega,x)\bigr)\dd t+\sqrt{2 \eps}\dd W_t(\omega), \quad \Phi_{u_N,\eps}^0(\omega,x)=x,
\end{equation}
where $(W_t)_{t \geq 0}$ is a standard $3$-dimensional Brownian motion. We now compute this flow map explicitly. Define first $F_{1,\eps}^\omega,F_{2,\eps}^\omega,g_{\eps}^\omega$ as follows:
\begin{align*}
&F_{1,\eps}^\omega(x)=\int_0^{1/2}\chi(s)f_1(x+\sqrt{2\eps}W_s^1) \dd s,\\
&F_{2,\eps}^\omega(y)=\int_{1/2}^1 \chi(s-\frac{1}{2})f_2(y+\sqrt{2\eps}(W_s^{2}-W_{1/2}^2)) \dd s,\\
&g_{\eps}^\omega(x,y)=\int_{1}^{3/2}\chi(s-1)g(x+\sqrt{2\eps}(W_s^1-W_1^1),y+\sqrt{2 \eps}(W_s^2-W_1^2)) \dd s.
\end{align*}
Then, we have that $\Phi_{u_N,\eps}^2(\omega,x,y,z)$ is equal to
$$
\begin{pmatrix}
x+\sqrt{2 \eps}W_2^1+NF_{2,\eps}^\omega(y+\sqrt{2 \eps}W_{\frac{1}{2}}^2+NF_{1,\eps}^\omega(x))\\
y+\sqrt{2\eps}W_{2}^2+NF_{1,\eps}^\omega(x)\\
z+\sqrt{2 \eps}W_{2}^3-g_\eps^\omega(x+\sqrt{2 \eps}W_{1}^1 +NF_{2,\eps}^\omega(y+\sqrt{2 \eps}W_{\frac{1}{2}}^2+NF_{1,\eps}^\omega(x)),y+\sqrt{2 \eps}W_{1}^2+NF_{1,\eps}^\omega(x))
\end{pmatrix}.
$$
Write this in block form as 
$$
\Phi_{u_N,\eps}^2(\omega,x,y,z)=\begin{pmatrix}
T_{N,\eps}^\omega(x,y)\\
z+\sqrt{2 \eps}W_2^3-\gamma_{\eps}^\omega(x,y)
\end{pmatrix}.
$$
As such, for an initial datum of the form $B(x,y,z)=\e^{2 \pi i z}B_0(x,y)$, the time-$2$ dynamo operator becomes 
\begin{equation}
\label{eq:time periodic noisy dynamo operator}
\e^{2 \pi i z}\e^{-8 \pi^2 \eps}\mathbb{E}_{W^1,W^2}\left [\left (\e^{2 \pi i \gamma_\eps^\omega(x,y)}\begin{pmatrix}
DT_{N,\eps}^\omega & 0\\
-\dd \gamma_\eps^\omega & 1
\end{pmatrix} B_0 \right )\circ ((T_{N,\eps}^\omega)^{-1}(x,y)) \right ].
\end{equation}
Notably, since $f_1,f_2$ are smooth, from the explicit form of $T_{N,\eps}^\omega$, we deduce that there exists a deterministic constant $C_{f_1,f_2}>0$ so that $\|DT_{N,\eps}^{\omega}\|_{L^\infty} \leq C_{f_1,f_2}N^2$ almost surely. Furthermore, since the equation admits a block-triangular structure, we shall focus only on the top $2 \times 2$ block, namely the operator 
$$
\mathcal{M}_{T_N,g}^\eps b_0:=\e^{-8\pi^2\eps}\mathbb{E}_{W^1,W^2}\left [\left (\e^{2 \pi i \gamma_\eps^\omega(x,y)}DT_{N,\eps}^\omega b_0   \right )\circ (T_{N,\eps}^\omega)^{-1}\right ].
$$
We then have the following result.
\begin{lemma}
\label{lemma:time-periodic perturbation}
Let $N$ be large enough. Then, for all $\eps>0$ small enough, $\mathcal{M}_{T_N,g}^\eps$ admits an unstable, isolated eigenvalue of modulus at least $\frac{c_0}{2}N^2$, where $c_0$ is the same as in Proposition \ref{prop:first growing mode}.
\end{lemma}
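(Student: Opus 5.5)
Fix $N$ large enough that Proposition~\ref{prop:first growing mode} applies, so $\mathcal M_{T_N,g}$ has an isolated, algebraically simple eigenvalue $\Lambda_N$ on $X$ with $|\Lambda_N|\geq c_0N^2$. We treat $\eps\mapsto\mathcal M^\eps_{T_N,g}$ as a perturbation of $\mathcal M^0_{T_N,g}=\mathcal M_{T_N,g}$ and apply the Keller--Liverani theorem~\ref{thm:keller liverani} on the pair $X\hookrightarrow Y$, with $N$ fixed and $\eps\to0$. Two inputs are needed: a Lasota--Yorke inequality uniform in $\eps$, with strong constant strictly below $c_0N^2/2$, and convergence $\|\mathcal M^\eps_{T_N,g}-\mathcal M_{T_N,g}\|_{X\to Y}\to0$.

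\textbf{Uniform Lasota--Yorke.} For each fixed $\omega$, the integrand is an SFS operator $\cS_{T,h,g}$ restricted to the horizontal block, with $T=T^\omega_{N,\eps}$ and phase $\gamma^\omega_\eps$. The map $T^\omega_{N,\eps}$ is the composition of two smooth shears, $(x,y)\mapsto(x,y+NF_{1,\eps}^\omega(x))$ and $(x,y)\mapsto(x+NF^\omega_{2,\eps}(y),y)$, together with deterministic-in-space translations by Brownian increments. Hence it is volume preserving and isotopic to the identity, and $\|DT^\omega_{N,\eps}\|_{L^\infty}\leq C_{f_1,f_2}N^2$ almost surely. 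Similarly, $\|\gamma^\omega_\eps\|_{C^1}\lesssim_{g,f_i}N^2$ almost surely, uniformly in $\eps$.

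The constants of Proposition~\ref{prop:LY} are explicit in these quantities, with the $g$-dependence entering only the weak constants. Applying it pathwise and then using Minkowski's inequality for the expectation (norms of a Bochner integral) gives
$$\|\mathcal M^\eps_{T_N,g}b\|_X\leq C_rN^{2-2r}\|b\|_X+C_{N}\|b\|_Y,\qquad \|\mathcal M^\eps_{T_N,g}b\|_Y\leq C_N\|b\|_Y,$$
uniformly in $\eps\in[0,1]$. Iterating these bounds gives a strong rate $C_rN^{2-2r}<c_0N^2/2$ for $N$ large, which is the Lasota--Yorke input Keller--Liverani requires.

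\textbf{Weak--strong convergence.} Introduce the intermediate operator with $\eps$ frozen inside the functions: $T_{N,\eps}^\omega\to T_N$ as $\eps\to0$. The maps $T^\omega_{N,\eps}$ and $\gamma^\omega_\eps$ are $O(\sqrt{\eps}\,\sup_{s\le2}|W_s|\,N^{2})$-close in $C^1$ to $T_N$ and $g\circ T_N$; the closeness in $C^0$ needs a somewhat different bookkeeping but is of the same order. It then suffices to show that, for two such SFS data $(T,\gamma)$ and $(T',\gamma')$,
$$\|\cS_{T,\gamma}b-\cS_{T',\gamma'}b\|_Y\lesssim_N\bigl(\|T-T'\|_{C^1}+\|\gamma-\gamma'\|_{C^1}\bigr)^{\theta}\|b\|_X$$
for some $\theta>0$. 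Taking expectations, with Gaussian moments of $\sup|W|$ absorbing the random factor, yields a bound of order $\eps^{\theta/2}$. The factor $\e^{-8\pi^2\eps}\to1$ is harmless.

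\textbf{Main obstacle.} The difficult step is the continuity estimate just displayed: composition operators are not norm-continuous in $T$ on a fixed Sobolev space, so the loss of derivatives has to be absorbed by the $\delta$-gap between $X$ and $Y$. I would proceed by Hodge component.
\begin{itemize}
\item On the divergence-free part $b=\nabla^\perp\psi$ with $\psi\in H^{1-r}$, the difference is $\nabla^\perp(\psi\circ T^{-1}-\psi\circ T'^{-1})$, up to multiplier terms. I would bound $\|\psi\circ T^{-1}-\psi\circ T'^{-1}\|_{H^{1-r-\delta}}$ by interpolating between $H^{1-r}$ boundedness (Lemma~\ref{lemma:volume preserving diffeo on H^r}) and an $L^2$ estimate of the form $\|\psi\circ A-\psi\circ A'\|_{L^{2}}\lesssim\|A-A'\|_{C^0}^{\min(1-r,1)}\|\psi\|_{H^{1-r}}$, obtained via translation in Fourier.
\item On the gradient part, use the divergence identity $\nabla\cdot\mathcal M b=(\nabla\cdot b)\circ T^{-1}$ together with the same interpolation.
\item On the mean part, use Lemma~\ref{lem:constant_modes_inviscid}-type formulas.
\item Differences of phases and of the Jacobian factors are handled by Lemma~\ref{lemma:multipliers anisotropic} and $L^2$ bounds.
\end{itemize}
Keller--Liverani then yields, for $\eps$ small, an isolated simple eigenvalue $\Lambda_N^\eps\to\Lambda_N$, so in particular $|\Lambda^\eps_N|\geq\frac{c_0}{2}N^2$.
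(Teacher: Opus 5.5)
Your proposal is correct. The Lasota--Yorke half matches the paper: it applies Proposition~\ref{prop:LY} pathwise, uses $\|DT^\omega_{N,\eps}\|_{L^\infty}\leq C_{f_1,f_2}N^2$, and lets the phase enter only the weak constant. The paper uses the sharper observation that the relevant phase $\gamma^\omega_\eps\circ(T^\omega_{N,\eps})^{-1}$ has $C^1$ norm $O(1)$, but for fixed $N$ your $N$-dependent bound serves equally well.

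Where you genuinely differ is the weak--strong convergence. The paper argues softly:
\begin{itemize}
\item for smooth $b_0$ it shows that the expected $H^1$ norm of the difference tends to zero, with constants depending on $\|b_0\|_{C^2}$ and $N$, by splitting on the event $\{\sup_{t\le2}|W_t|\le\eps^{-1/4}\}$;
\item uniform boundedness on $Y$ plus density then gives strong convergence on $Y$;
\item the compact embedding $X\hookrightarrow Y$ upgrades this to $X\to Y$ operator-norm convergence, exactly as in Lemma~\ref{lem:MN_to_Minf}.
\end{itemize}
You instead prove a quantitative H\"older continuity estimate for SFS operators in $(T,\gamma)$ from $X$ to $Y$, Hodge component by component. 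This is essentially the content of the paper's later Lemma~\ref{lemma:perturbing velocity}, via \eqref{eq:perturbation-positive-composition}--\eqref{eq:perturbation-negative-composition}.

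The paper's route is cheaper, since it never analyses how composition interacts with the Hodge splitting, but it yields no rate. Yours costs more bookkeeping but gives an explicit rate $O(\eps^{\delta/2})$, and it unifies the resistive and velocity-perturbation arguments.

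Three details need tidying.
\begin{itemize}
\item The bound $\|\psi\circ A-\psi\circ A'\|_{L^2}\lesssim\|A-A'\|_{C^0}^{s}\|\psi\|_{H^s}$ is not a Fourier translation estimate, because the displacement varies in $x$. Obtain it by interpolating the trivial $L^2$ bound with the mean-value $H^1\to L^2$ bound. That step needs $A$ and $A'$ to be $C^1$-close, so that the interpolating maps have Jacobian bounded below.
\item Your continuity estimate therefore only holds on the event where $\sqrt{\eps}\sup_{s\le2}|W_s|$ is small. Off that event, fall back on the uniform bound $\|\cS_{T,\gamma}b\|_Y\lesssim_N\|b\|_Y$. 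This still gives $\lesssim_N(\sqrt{\eps}\sup|W|)^{\delta}$ globally, so the Gaussian-moment step goes through.
\item The negative-regularity pieces, such as $\nabla\cdot b\in H^{r-1}$ in your gradient-part argument, must be handled by duality. This uses $\delta<r$.
\end{itemize}
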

Applying Proposition \ref{prop:LY}, we obtain the following estimate with probability $1$:
$$
\left \|\left (\e^{2 \pi i \gamma_\eps^\omega(x,y)}DT_{N,\eps}^\omega b_0   \right )\circ (T_{N,\eps}^\omega)^{-1} \right \|_{X} \leq C_{f_1,f_2}N^{2-2r}\|b_0\|_{X}+C_W \|b_0\|_{Y},
$$
where 
$$
C_W=C_{f_1,f_2,r,\delta}(1+\|\nabla(\gamma_\eps^\omega\circ (T_{N,\eps}^{\omega})^{-1})\|_{L^\infty})N^2.
$$
From the explicit expression of $\gamma_{\eps}^\omega$, it follows that 
$$
\|\nabla(\gamma_\eps^\omega\circ (T_{N,\eps}^{\omega})^{-1})\|_{L^\infty} \leq \|\nabla g_{\eps}^\omega\|_{L^\infty} \leq C(g).
$$
Thus, the operator $N^{-2}\mathcal{M}_{T_N,g}^\eps$ satisfies the Lasota--Yorke inequality
$$
\|N^{-2}\mathcal{M}_{T_N,g}^\eps b_0\|_{X} \leq C N^{-2r}\|b_0\|_{X}+C\|b_0\|_{Y},
$$
uniformly for $\eps \in [0,1)$. Similarly, Proposition \ref{prop:LY} implies that it is uniformly bounded on $Y$ for $\eps \in [0,1)$. Finally, it remains to estimate
\begin{align*}
\|\mathcal{M}_{T_N,g}^\eps b-\mathcal{M}_{T_N,g}b\|_{Y}.
\end{align*}
The contribution of the scalar factor is harmless because $|\e^{-8\pi^2\eps}-1|\to0$ and the expectation in the definition of $\mathcal M_{T_N,g}^\eps$ is uniformly bounded. Let first $b_0 \in C^\infty(\TT^2;\CC^2)$. Then, via the triangle inequality it remains to bound
\begin{align*}
&\mathbb{E}\|\e^{2 \pi i \gamma_\eps^\omega \circ (T_{N,\eps}^\omega)^{-1}}\left ((DT_{N,\eps}^\omega b_0)\circ (T_{N,\eps}^\omega)^{-1}- (DT_N b_0)\circ T_N^{-1} \right )\|_{H^{1}}\\
&\qquad\qquad+\mathbb{E} \|(\e^{2 \pi i \gamma_\eps^\omega \circ (T_{N,\eps}^\omega)^{-1}}-\e^{2 \pi i g})((DT_N b_0)\circ T_N^{-1})\|_{H^1}.
\end{align*}
We proceed term by term. The first of these terms picks up a factor uniformly bounded by a deterministic constant depending only on $g$, since $\|\nabla(\gamma_\eps^\omega\circ (T_{N,\eps}^{\omega})^{-1})\|_{L^\infty}$ is uniformly bounded by $C(g)$. Hence, we need only bound 
\begin{align*}
\|(DT_{N,\eps}^\omega b_0)\circ (T_{N,\eps}^\omega)^{-1}- (DT_N b_0)\circ T_N^{-1} \|_{H^1}& \leq \|(DT_{N,\eps}^\omega b_0-DT_N b_0)\circ T_N^{-1}\|_{H^1}\\
&\quad +\|(DT_{N,\eps}^\omega b_0)\circ (T_{N,\eps}^\omega)^{-1}-(DT_{N,\eps}^\omega b_0)\circ T_N^{-1}\|_{H^1}.
\end{align*}
Applying Lemma \ref{lemma:smooth sobolev multiplier}, we bound the first of these terms by 
$$
C_{T_N}\|b_0\|_{C^1}\|DT_{N,\eps}^\omega-DT_{N}\|_{C^1(\TT^2)}.
$$
Similarly, the second is bounded by 
\begin{align*}
    \|(DT_{N,\eps}^\omega b_0) &\circ (T_{N,\eps}^\omega)^{-1}- (DT_{N,\eps}^\omega b_0)\circ T_N^{-1}\|_{H^1}  
    \\
    & \leq C_{f_1,f_2} \left \|\int_{0}^1 \nabla (DT_{N,\eps}^\omega b_0)((1-s)T_N^{-1}+s(T_{N,\eps}^\omega)^{-1}) \cdot ((T_{N,\eps}^\omega)^{-1}-T_N^{-1}) \dd s \right \|_{H^1} 
    \\
    & \leq C_{f_1,f_2,N} \|b_0\|_{C^2}\|(T_{N,\eps}^{\omega})^{-1}-T_N^{-1}\|_{C^1}.
\end{align*}
Finally, we bound
$$
\|(\e^{2 \pi i \gamma_\eps^\omega \circ (T_{N,\eps}^\omega)^{-1}}-\e^{2 \pi i g})((DT_N b_0)\circ T_N^{-1})\|_{H^1}\leq C_{T_N}\|b_0\|_{H^1}\|\e^{2 \pi i \gamma_\eps^\omega \circ (T_{N,\eps}^\omega)^{-1}}-\e^{2 \pi i g}\|_{C^1}.
$$
If we can show that all of these quantities tend to zero in expectation as $\eps \to 0$, we then deduce the desired weak-strong convergence: indeed, since $\|\mathcal{M}_{T_N,g}^\eps b-\mathcal{M}_{T_N,g}b\|_{Y}$ is uniformly bounded for $b \in Y$, converges to zero for all $b$ in a dense subset of $Y$, and $X$ embeds compactly into $Y$, it follows as in the proof of Lemma \ref{lem:MN_to_Minf} that 
$$
\|N^{-2}\mathcal{M}_{T_N,g}^\eps -N^{-2} \mathcal{M}_{T_N,g}\|_{X \to Y} \to 0,
$$
as $\eps \to 0$. From here, the Keller--Liverani theorem \ref{thm:keller-liverani 1} combined with Proposition \ref{prop:first growing mode} implies that for all $N>0$ large enough, and all $\eps>0$ small enough, $\mathcal{M}_{T_N,g}^\eps$ has an unstable eigenvalue of modulus at least $\frac{c_0}{2}N^2$. The factor $\e^{-8 \pi^2 \eps}$ is already included in the definition of $\mathcal M_{T_N,g}^\eps$ and tends to $1$ as $\eps \to 0$. Hence, it remains to prove the following lemma.
\begin{lemma}
There holds 
$$
\mathbb{E}\|(T_{N,\eps}^\omega)^{-1}-T_N^{-1}\|_{C^2} \to 0,
$$
as $\eps \to 0$. Similarly, 
$$
\mathbb{E}\|\e^{2 \pi i \gamma_\eps^\omega \circ (T_{N,\eps}^\omega)^{-1}}-\e^{2 \pi i g}\|_{C^1} \to 0,
$$ 
as $\eps \to 0$.
\end{lemma}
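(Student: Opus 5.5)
The plan is to exploit that every object here is completely explicit. The noise enters only through translations and through the averaged profiles $F^\omega_{1,\eps}$, $F^\omega_{2,\eps}$, $g^\omega_\eps$. Throughout, set
$$
M_\omega:=\sup_{s\in[0,2]}|W_s|,
$$
which has finite moments of all orders by Doob's inequality.

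\textbf{Step 1: the averaged profiles converge in $C^k$.} Since $\int\chi=1$, for each $k$ one has
$$
\|F^\omega_{1,\eps}-f_1\|_{C^k}\le \|f_1\|_{C^{k+1}}\sqrt{2\eps}\,M_\omega\int_0^{1/2}\chi(s)\,\dd s=\|f_1\|_{C^{k+1}}\sqrt{2\eps}\,M_\omega .
$$
This follows by applying the mean value theorem to $f_1^{(j)}(x+\sqrt{2\eps}W^1_s)-f_1^{(j)}(x)$. The same argument gives the analogous bound for $F^\omega_{2,\eps}$ (with increments $W_s-W_{1/2}$, bounded by $2M_\omega$) and for $g^\omega_\eps$, controlled by $\|g\|_{C^{k+1}}$. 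Moreover, $\|F^\omega_{i,\eps}\|_{C^k}\le 3\|f_i\|_{C^k}$ and $\|g^\omega_\eps\|_{C^k}\le 3\|g\|_{C^k}$ deterministically.

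\textbf{Step 2: explicit inverse.} Read off from the formula for $\Phi^2_{u_N,\eps}$ that $T^\omega_{N,\eps}$ is a composition of translations and shears:
$$
T^\omega_{N,\eps}=\tau_{a_3}\circ S^{(2)}_{NF^\omega_{2,\eps}}\circ\tau_{a_2}\circ S^{(1)}_{NF^\omega_{1,\eps}} ,
$$
where $S^{(1)}_F(x,y)=(x,y+F(x))$ and $S^{(2)}_F(x,y)=(x+F(y),y)$. The translation vectors are
$$
a_2=(0,\sqrt{2\eps}W^2_{1/2}),\qquad a_3=(\sqrt{2\eps}W^1_2,\ \sqrt{2\eps}(W^2_2-W^2_{1/2})).
$$
One should double-check the bookkeeping of which increments occur inside the arguments against the displayed formula. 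Hence
$$
(T^\omega_{N,\eps})^{-1}=S^{(1)}_{-NF^\omega_{1,\eps}}\circ\tau_{-a_2}\circ S^{(2)}_{-NF^\omega_{2,\eps}}\circ\tau_{-a_3},
$$
to be compared with $T_N^{-1}=S^{(1)}_{-Nf_1}\circ S^{(2)}_{-Nf_2}$. All maps are handled on the lift to $\RR^2$; their differences with the identity are $\ZZ^2$-periodic, so the $C^2$ norm of the difference is meaningful.

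\textbf{Step 3: telescoping the inverse.} Telescope through the four factors, using the chain rule and the deterministic $C^3$ bounds of Step 1. This gives
$$
\|(T^\omega_{N,\eps})^{-1}-T_N^{-1}\|_{C^2}\le C_{N,f_1,f_2}\,(1+M_\omega)^{p}\,\sqrt{\eps}\,M_\omega
$$
for some fixed power $p$. Since the bound is polynomial in $M_\omega$, taking expectations yields $O(\sqrt\eps)\to0$. The same telescoping gives the bounds on $\|DT^\omega_{N,\eps}-DT_N\|_{C^1}$ used earlier.

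\textbf{Step 4: the phase.} By construction, $\gamma^\omega_\eps(x,y)=g^\omega_\eps(P^\omega(x,y))$, where $P^\omega$ is the intermediate point in the displayed formula. One checks that $P^\omega\circ(T^\omega_{N,\eps})^{-1}$ equals the identity shifted by $-\sqrt{2\eps}(W^1_2-W^1_1,\,W^2_2-W^2_1)$, again to be verified against the formula. Therefore
$$
\|\gamma^\omega_\eps\circ(T^\omega_{N,\eps})^{-1}-g\|_{C^1}\lesssim\|g\|_{C^2}\sqrt{\eps}\,M_\omega ,
$$
combining Step 1 with the effect of the translation. Since $\theta\mapsto\e^{2\pi i\theta}$ is smooth and the $C^1$ norms involved are uniformly bounded, the exponentials satisfy the same bound up to a constant depending on $g$. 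Taking expectations concludes the proof.

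The main obstacle is purely bookkeeping: correctly identifying the translation increments in the factorisation and in the argument of $g^\omega_\eps$. One must also ensure that every constant is polynomial in $M_\omega$, with the $N$-dependence harmless since $N$ is fixed, so that the expectation is finite.
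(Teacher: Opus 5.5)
Your proof is correct and follows the paper's route: the explicit inverse built from the shear structure, mean-value bounds on the averaged profiles $F^\omega_{i,\eps}$ and $g^\omega_\eps$, and the identity $\gamma^\omega_\eps\circ(T^\omega_{N,\eps})^{-1}=g^\omega_\eps\bigl(\cdot-\sqrt{2\eps}(W_2-W_1)\bigr)$. The bookkeeping you flagged is right: the translations $a_2,a_3$ and the shift in the phase match the displayed formulas. The one difference is how you take the expectation. You integrate a pathwise bound that is linear in $\sup_{s\in[0,2]}|W_s|$ against that quantity's finite moments, whereas the paper splits on the event $A_\eps$, uses a crude global bound off it, and applies Cauchy--Schwarz; your version is slightly cleaner and gives the rate $O(\sqrt{\eps})$ instead of $O(\eps^{1/4})$.
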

\begin{proof}
We compute explicitly
\begin{align*}
\left((T_{N,\eps}^{\omega})^{-1}(x,y)\right)_1
&=
x-\sqrt{2\eps}\,W_2^1
-NF_{2,\eps}^{\omega}\left(
y-\sqrt{2\eps}\left(W_2^2-W_{1/2}^2\right)
\right),\\
\left((T_{N,\eps}^{\omega})^{-1}(x,y)\right)_2
&=
y-\sqrt{2\eps}\,W_2^2\\
&\quad
-NF_{1,\eps}^{\omega}\left(
x-\sqrt{2\eps}\,W_2^1
-NF_{2,\eps}^{\omega}\left(
y-\sqrt{2\eps}\left(W_2^2-W_{1/2}^2\right)
\right)
\right).
\end{align*}
We now work on the event $A_\eps=\{\sup_{t \in [0,2]}|W_t|\leq \eps^{-1/4}\}$. For any $k\in\NN$, the mean value theorem gives, on this event,
\begin{equation}
\label{eq:good event}
\|F_{1,\eps}^\omega-f_1\|_{C^k}+\|F_{2,\eps}^\omega-f_2\|_{C^k}+\|g_{\eps}^{\omega}-g\|_{C^k}\leq C_{k,f_1,f_2,g}\eps^{1/4}.
\end{equation}
Indeed, this follows immediately from the explicit representations of $F_{1,\eps}^\omega, F_{2,\eps}^\omega,g_{\eps}^{\omega}$. Hence, we now compute the first component of $(T_{N,\eps}^{\omega})^{-1}-T_N^{-1}$, which is given by
$$
-\sqrt{2 \eps}W_{2}^1-NF_{2,\eps}^\omega(y-\sqrt{2\eps}(W_2^2-W_{1/2}^2))+Nf_2(y).
$$
Taking $C^2$ norms and employing \eqref{eq:good event}, we bound this by $C\eps^{1/4}$ on $A_\eps$. Similarly, the same holds for the second component of $(T_{N,\eps}^{\omega})^{-1}-T_N^{-1}$. Finally, since 
$$
\gamma_\eps^\omega \circ (T_{N,\eps}^\omega)^{-1}(x,y)=g_{\eps}^{\omega}(x-\sqrt{2\eps}(W_2^1-W_1^1),y-\sqrt{2 \eps}(W_2^2-W_1^2)),
$$
another application of the mean-value theorem, combined with \eqref{eq:good event}, shows that, on $A_\eps$, all quantities we care about are bounded by $C\eps^{1/4}$.
The explicit inverse-map formulas also give the global bound
$$
\|(T_{N,\eps}^\omega)^{-1}-T_N^{-1}\|_{C^2}
\leq C_{N,f_1,f_2}(1+\sqrt{\eps}|W_2|),
$$
since the shear terms and their first two derivatives are uniformly
bounded. Thus, splitting the expectation over $A_\eps$ and
$A_\eps^c$, and applying Cauchy--Schwarz, we obtain
\begin{align*}
\mathbb{E}\|(T_{N,\eps}^\omega)^{-1}-T_N^{-1}\|_{C^2}
&\leq C\eps^{1/4}\mathbb{P}(A_\eps)
+C\mathbb{P}(A_\eps^c)
+C\sqrt{\eps}\,
\mathbb{E}\bigl[|W_2|\mathbf{1}_{A_\eps^c}\bigr]\\
&\leq C\left(
\eps^{1/4}+\mathbb{P}(A_\eps^c)
+\sqrt{\eps}\,\mathbb{P}(A_\eps^c)^{1/2}
\right),
\end{align*}
where we used $\mathbb{E}|W_2|^2<\infty$.
Here the constants may depend on the fixed $N,f_1,f_2,g$,
but are independent of $\eps$.
Since $\mathbb{P}(A_\eps^c)\leq\eps^{1/4}$ for all sufficiently
small $\eps>0$,\footnote{Of course, this probability is actually
exponentially small in $\eps^{-1/2}$, but we do not need this
stronger statement.}
the right-hand side tends to zero.
Finally, an identical argument controls
$$
\mathbb{E}\|\e^{2 \pi i \gamma_\eps^\omega \circ (T_{N,\eps}^\omega)^{-1}}-\e^{2 \pi i g}\|_{C^1},$$
and so we complete the proof.
\end{proof}
Hence, we may now provide a proof of Theorem \ref{thm:time-periodic}.
\begin{proof}[Proof of Theorem \ref{thm:time-periodic}]
In view of Lemma \ref{lemma:time-periodic perturbation}, it remains to construct the third component of \eqref{eq:time periodic noisy dynamo operator}. Write $B_0=(b_0,b_{0,v})$. Here, we note that the map
$$
b_{0,v} \mapsto \e^{-8\pi^2\eps}\mathbb{E}\left[\left(\e^{2 \pi i \gamma_\eps^\omega}b_{0,v}\right)\circ (T_{N,\eps}^\omega)^{-1}\right]
$$
has operator norm bounded by $1$ on $L^2(\TT^2)$. Let $\Lambda_{N,\eps}$ be the unstable eigenvalue of $\mathcal{M}_{T_N,g}^\eps$. Note that by standard parabolic smoothing, $b_{0}$ is $C^\infty$, and thus, since
$$
b_{0,v}\mapsto \Lambda_{N,\eps}b_{0,v}-\e^{-8\pi^2\eps}\mathbb{E}\left[\left(\e^{2 \pi i \gamma_\eps^\omega}b_{0,v}\right)\circ (T_{N,\eps}^\omega)^{-1}\right]
$$
is invertible on $L^2$, we may find $b_{0,v} \in L^2(\TT^2)$ solving
$$
\Lambda_{N,\eps}b_{0,v}-\e^{-8\pi^2\eps}\mathbb{E}\left[\left(\e^{2 \pi i \gamma_\eps^\omega}b_{0,v}\right)\circ (T_{N,\eps}^\omega)^{-1}\right]
=-\e^{-8\pi^2\eps}\mathbb{E}\left[\left(\e^{2 \pi i \gamma_\eps^\omega}\dd \gamma_\eps^\omega b_0\right)\circ (T_{N,\eps}^\omega)^{-1}\right],
$$
hence finding an unstable eigenmode for the full dynamo operator. Finally, taking the divergence of \eqref{passive-vector}, we see that $\nabla \cdot B$ solves
$$
\partial_t (\nabla \cdot B)+u \cdot \nabla (\nabla \cdot B)=\eps \Delta (\nabla \cdot B).
$$
Hence, the $L^2$ norm of the divergence is non-increasing in time. Finally, by the eigenfunction condition, we have $(\nabla \cdot B)(2)=\Lambda_{N,\eps} (\nabla \cdot B)(0)$. Since $|\Lambda_{N,\eps}|>1$, this implies $(\nabla \cdot B)(0)=0$, and so the same is true for all positive times.
\end{proof}

\section{The Poincar\'e section framework}
\label{sec:framework autonomous}
In this section, we detail the framework necessary to prove autonomous, smooth fast dynamo action. The framework is motivated and introduced  in Subsection \ref{subsec:autonomous-framework} and it is based on the idea of the so-called Poincar\'e return map; see for instance \cite{KH95} for a classical discussion of this concept.
We treat the $z$-coordinate as an evolution coordinate $s$ and study the Poincaré return time and return map on the section $\{z=0\}$.
More precisely, we extend a vector field periodically from $\TT^3$ to $\TT^2\times[0,\infty)$, using coordinates $(x,y,z)$ on the former and $(a,s)$ on the latter, and study the eigenvalue problem \eqref{eq:elliptic evolution}
$$ \eps \Delta B - u\cdot\nabla B+B\cdot\nabla u = \lambda B\,.$$
Since $u_3>0$, dividing by $u_3$ allows us to view this PDE as an evolution equation in $s\in[0,\infty)$, perturbed by the $\eps \partial_s^2 B$ term
$$ 
\partial_s B +\frac{u_{1,2}}{u_3}\cdot\nabla_a B - \frac{1}{u_3}  (\nabla u - \lambda \Id) B =  \frac{\varepsilon}{u_3}\Delta_a B +\frac{\varepsilon}{u_3}\partial_s^2 B \,.
$$
Due to the presence of a second derivative in $s$, we require a suitable growth bound as $s\to \infty$ to ensure existence and uniqueness of solutions to this PDE when we impose an initial condition $B_0 : \TT^2 \to \CC^3$ at $s=0$\footnote{We remark that this corresponds to the Poincaré section $\{z=0\}$.}, see Lemma \ref{lemma:notion of solution}.  We denote by $\cK_{\lambda, \eps} B_0$  this unique  solution evaluated at the section $\{s=1\}$, namely
\begin{align} \label{d:K-lambda-eps}
    \cK_{\lambda, \eps } B_0 = {\rm tr} (B)|_{s=1} \,.
\end{align}
The aim of this section is to construct a map $\mathcal{E}_u$ such that conjugating $\cK_{\lambda,0}$ by $\mathcal{E}_u$ yields an operator
$$ \mathcal{D}_{u,\lambda}= \mathcal{E}_u^{-1}\circ\cK_{\lambda,0}\circ\mathcal{E}_u 
$$
which is of the ``SFS'' form studied in Section \ref{sec:periodic-fast-dynamo}. We then derive the corresponding conjugation formula for $\eps>0$, introducing the operator 
$\cU_{\lambda, \eps}$ so that 
$$ \cK_{\lambda, \eps} = \mathcal{E}_u \circ (\mathcal{D}_{u,\lambda} \mathcal{U}_{\lambda,\eps}) \circ \mathcal{E}_u^{-1}\,. $$

\subsection{The decomposition of $\cK_{\lambda, 0}$}
We consider vector fields $u : \TT^3 \to \RR^3$ such that one component, which we take to be  $u_3$ for convenience, is \emph{strictly} positive. The key property this implies is that the flow generated by $u$, which we denote by $\Phi_u^t$, intersects the surface $\{z=1\}$ transversally, so that we may study the Poincar\'e return map to the section $\{z=1\}$ of such vector fields. To do so, we first extend $u$ to the domain $(a,s)\in   \TT^2 \times [0,\infty)$ by periodicity and study the flow map $\Phi_u^t$ on this domain. 
We define the $m^{\mathrm{th}}$ return time $\overline\tau^m_u(a)$ of $\Phi_u^t$ to the section $\{s=m\}$ as 
$$
\overline\tau_u^m(a)=\inf\{t \geq 0:(\Phi_u^t(a,0))^{(3)}=m\}\,.
$$
We note that, since $\Phi_u^t$ intersects the sections $\{s=m\}$ transversally, the return time $\overline\tau^m_u$ is well-defined and smooth (see \cite{LermanYakovlev2019}).

We now move to a systematic study of $\Phi_u^t$ and $\overline \tau^m_u$.
We begin by defining the two-dimensional velocity field $\w: \TT^2 \times [0,\infty) \to \RR^2$ as 
$$ \w (a,s)=\frac{u_{1,2}}{u_3} (a,s) \,,$$
where we use $s\in [0,\infty)$ as the time variable and $a \in \TT^2$ as the base point.
We denote by $\Phi_{\tilde u_{1,2}}^s : \TT^2 \to \TT^2$ its flow map
$$\partial_s \Phi_{\w}^s=\w (\Phi_\w^s,s)\,, \qquad \Phi_{\w}^0(a)=a\,.$$
For any $s \in [0,\infty)$, the flow map is a smooth diffeomorphism thanks to the smoothness of $u$ and the fact that $u_3$ is strictly positive.
We then have the following elementary lemma.

\begin{lemma}
\label{lemma:properties autonomous 1}
Let $u : \TT^2 \times [0,\infty) \to \RR^3$ be a divergence-free, smooth vector field, $1$-periodic in $s\in[0,\infty)$, such that $u_3 >0$. Let  $\w: \TT^2 \times [0, \infty) \to \RR^2$ be defined as $\w(a,s)= \frac{u_{1,2}}{u_3} (a,s)$. Then for any $m \in \NN$ we have 
\begin{align} \label{eq:identity-return-mt}
    \overline\tau_u^m(a) = \int_0^m \frac{1}{u_3(\Phi_{\w}^s(a),s)}\,\dd s \,, \qquad \Phi_u^{\overline\tau_u^m(a)}(a,0) = (\Phi_{\w}^m(a),m )\,,
\end{align}
and the pushforward identity $(\Phi_\w^m)_\# (u_3 (\cdot, 0) \dd a)  = u_3 (\cdot, 0) \dd a$.
\end{lemma}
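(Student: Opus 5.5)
The plan is to reparametrise the flow of $u$ by the vertical coordinate, use uniqueness of ODE solutions, and then obtain the measure identity from Liouville's formula applied to the non-autonomous planar field $\w$.

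\textbf{Step 1: the return map and return time.} Fix $a\in\TT^2$ and write $\Phi_u^t(a,0)=(A(t),S(t))$. Since $S'(t)=u_3(A(t),S(t))>0$ with $u_3$ bounded below by a positive constant (by periodicity and compactness), $S$ is a strictly increasing diffeomorphism of $[0,\infty)$ onto $[0,\infty)$. Let $t(s)$ denote its inverse and set $\tilde A(s):=A(t(s))$. By the chain rule,
\[
\frac{\dd \tilde A}{\dd s}=\frac{u_{1,2}(\tilde A(s),s)}{u_3(\tilde A(s),s)}=\w(\tilde A(s),s),\qquad \tilde A(0)=a .
\]
Uniqueness of ODE solutions then gives $\tilde A(s)=\Phi_\w^s(a)$. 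Moreover $\frac{\dd t}{\dd s}=1/u_3(\Phi_\w^s(a),s)$, and by monotonicity $\overline\tau_u^m(a)=t(m)$. This yields both identities in \eqref{eq:identity-return-mt}, since $\Phi_u^{t(m)}(a,0)=(\tilde A(m),m)$.

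\textbf{Step 2: invariance of the density $\rho(a,s):=u_3(a,s)$.} I will show that $\rho(\cdot,s)\,\dd a$ is transported by $\Phi_\w^s$, that is, $(\Phi_\w^s)_\#(\rho(\cdot,0)\,\dd a)=\rho(\cdot,s)\,\dd a$. This amounts to the continuity equation
\[
\partial_s\rho+\nabla_a\cdot(\rho\,\w)=0 .
\]
Since $\rho\,\w=u_{1,2}$, this reduces to $\partial_s u_3+\nabla_a\cdot u_{1,2}=\nabla\cdot u=0$, which holds because $u$ is divergence-free. For smooth $\rho$ and $\w$, the continuity equation implies the pushforward identity; one way to see this is Liouville's formula
\[
\frac{\dd}{\dd s}\Bigl(\rho(\Phi_\w^s(a),s)\det D\Phi_\w^s(a)\Bigr)=0,
\]
which follows directly from $\partial_s\det D\Phi=(\nabla\cdot\w)\det D\Phi$. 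Evaluating at $s=m$ and using $1$-periodicity in $s$, so that $\rho(\cdot,m)=\rho(\cdot,0)$, gives $(\Phi_\w^m)_\#(u_3(\cdot,0)\,\dd a)=u_3(\cdot,0)\,\dd a$.

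\textbf{Main obstacle.} The only step needing care is justifying that $\Phi_\w^s$ is a global diffeomorphism of $\TT^2$, so that the pushforward is computed by the Jacobian formula. This holds because $\w$ is smooth and bounded on the compact set $\TT^2\times[0,1]$ and extends periodically in $s$; hence its flow is globally defined and invertible.
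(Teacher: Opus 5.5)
Your proposal is correct and is essentially the paper's proof. Both establish the return identities by reparametrising the trajectory and invoking ODE uniqueness, then apply Liouville's formula with $\nabla\cdot u=0$ to show $u_3(\Phi_\w^s(a),s)\det D\Phi_\w^s(a)$ is constant and conclude by periodicity. The only difference is cosmetic: you reparametrise the $u$-trajectory by its height and invoke uniqueness for the $\w$-flow, whereas the paper reparametrises the $\w$-trajectory by $\theta_a(s)=\int_0^s u_3^{-1}$ and invokes uniqueness for the $u$-flow.
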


\begin{proof}
    For fixed $a\in\TT^2$, we define
$$
Y_a(s)=\bigl(\Phi_{\w}^s(a),s\bigr) \,, \qquad \theta_a(s) = \int_0^s \frac{1}{u_3(\Phi_{\w}^\sigma(a),\sigma)}\,\dd\sigma.
$$
Since $u_3>0$, the map $\theta_a$ is strictly increasing and we can define $s_a= \theta_a^{-1}$ and  $X_a(t)=Y_a(s_a(t))$. Then, differentiating the identity $\theta_a(s_a(t))=t$, we have 
$$ s_a'(t) = u_3(\Phi_{\w}^{s_a(t)}(a),s_a(t)), $$
and therefore
\begin{align*}
\frac{\dd X_a}{\dd t} &=
\begin{pmatrix}
\w(\Phi_{\w}^{s_a(t)}(a),s_a(t))
\\
1
\end{pmatrix}
s_a'(t) = u(X_a(t))\,.
\end{align*}
Since $X_a(0)=(a,0)$, uniqueness for the flow of $u$ gives
$ \Phi_u^{\theta_a(s)}(a,0) = \bigl(\Phi_{\w}^s(a),s\bigr)\,, $
which directly implies \eqref{eq:identity-return-mt} by taking $s=m$. 
It remains to prove the pushforward identity. We denote the Jacobian by $ J_s(a)=\det D_a\Phi_{\w}^s(a)$ and by  Liouville's formula we have 
$$
\partial_s J_s(a) = (\nabla_a\cdot\w)(\Phi_{\w}^s(a),s)J_s(a).
$$
Therefore, we deduce
\begin{align*}
\frac{\dd}{\dd s}\left ( u_3(\Phi_{\w}^s(a),s)J_s(a) \right)
&= \left ( \partial_s u_3 +\w\cdot\nabla_a u_3 +u_3\nabla_a\cdot\w \right )(\Phi_{\w}^s(a),s)J_s(a)\\
&= \left ( \partial_s u_3+\nabla_a\cdot u_{1,2} \right ) (\Phi_{\w}^s(a),s)J_s(a) =0 \,,
\end{align*}
where we have used $\nabla\cdot u=0$. Hence, we have $u_3(\Phi_{\w}^s(a),s) \det D_a\Phi_{\w}^s (a) = u_3(a,0)$ which gives the pushforward identity $(\Phi_{\w}^s)_\# (u_3(\cdot,0)\,\dd a) = u_3(\cdot,s)\,\dd a$. Taking $s=m\in\NN$ and using the periodicity of $u_3$ in $s$ gives the desired identity.
\end{proof}
We observe that in general the return map $\Phi^1_\w : \TT^2\to \TT^2$ is not Lebesgue-measure-preserving: indeed by Lemma \ref{lemma:properties autonomous 1} it preserves the weighted area measure $ u_3(a,0)\,\dd a.$ Consequently, the Lasota--Yorke estimates developed in the time-periodic setting for Lebesgue-area-preserving maps cannot be applied directly to $\Phi_\w^1$. We therefore conjugate $\Phi_\w^1$ to an area-preserving map. We denote the average by 
$$ \overline u_3=\int_{\TT^2}u_3(a,0)\,\dd a \,,$$
and since  the densities $u_3(\cdot,0) $ and $\overline u_3$ are smooth, strictly positive, and have the same total mass we can apply Moser's lemma \ref{lemma:quantitative-moser} to find  a smooth diffeomorphism $\kappa_u\colon\TT^2\to\TT^2$ such that
\begin{equation} \label{eq:moser relation}
(\kappa_u)_\#
\left (\overline u_3 \dd a\right ) = u_3(\cdot,0) \dd a\,.
\end{equation}
We now define
\begin{align} \label{d:P-u-tau-u}
    P_u = \kappa_u^{-1}\circ \Phi_\w^1 \circ\kappa_u, \qquad \tau_u^m = \overline\tau_u^m\circ\kappa_u\,.
\end{align}
Using Lemma \ref{lemma:properties autonomous 1} and the pushforward rule we obtain
\begin{align*}
(P_u)_\# \left (\overline u_3\,\dd a \right) = (\kappa_u^{-1})_\# (\Phi_\w^1)_\# (\kappa_u)_\# \left (\overline u_3 \dd a\right ) = \overline u_3\,\dd a\,.
\end{align*}
Since $\overline u_3$ is a positive constant, $P_u: \TT^2 \to \TT^2$ preserves the Lebesgue measure. Moreover, since $P_u$ is conjugate to the orientation-preserving map $\Phi_\w^1$, it is itself orientation-preserving.
We observe that for any $m \in \NN$ it holds that 
\begin{align} \label{eq:tau-property}
    \tau_u^m(a)=\sum_{j=0}^{m-1} \tau_u \circ P_u^{j}(a) \,.
\end{align} 
We now consider $u, v \in C^\infty_\sigma$ with $u_3,v_3>0$; hence, we can find $c_0>0$ such that $\min\{ v_3, u_3\} \geq c_0>0$. Furthermore, we assume that $\| u \|_{C^{k+2}}+ \| v \|_{C^{k+2}} \leq M$.
Then, from Lemma \ref{lemma:quantitative-moser} for any $k , m \in \NN$ there exists $C_{k,c_0,M, m }>0$ such that the following holds true
\begin{equation}
\label{eq:continuity in u}
\|P_{u}^m -P_{v}^m\|_{C^k}+\|\tau^m_{u}-\tau^m_{v}\|_{C^k}+\|\kappa_{u}-\kappa_{v}\|_{C^k} \leq C_{k,c_0,M,m}\|u- v\|_{C^{k+2}}.
\end{equation}
Finally, we also introduce a change of frame with the matrix  $E_u (a): \CC^3 \to \CC^3$ defined as
$$ E_u (a) = \begin{pmatrix}
     D\kappa_u(a) & u_{1,2} (\kappa_u (a))
     \\
     0 & u_3(\kappa_u (a))
 \end{pmatrix}$$
 that induces the operator $\mathcal{E}_u$ on vector fields $B \in C^\infty (\TT^2; \CC^3)$ defined as
 $$\mathcal{E}_u : B (\cdot ) \mapsto E_u(\kappa_u^{-1}(\cdot )) B(\kappa^{-1}_u(\cdot)) \,. $$

 \begin{lemma} \label{lemma:flow conjugation}
 Let $u \in C^\infty_\sigma( \TT^3;  \RR^3)$ with $u_3 >0$ on $\TT^3$. We recall the operator $\cK_{\lambda, 0}$ defined in \eqref{d:K-lambda-eps}. Then, the following  factorization holds true for any $m \in \NN$
      $$\cK_{\lambda,0}^m = \mathcal{E}_u \circ \mathcal{D}_{u,\lambda}^m \circ \mathcal{E}_u^{-1} \,,$$
      where the operator $\mathcal{D}_{u, \lambda}$ acts on vector fields $B_0 : \TT^2 \to \CC^3$ and takes   the form 
\begin{equation} \label{eq:inviscid section map}
    \mathcal{D}^m_{u,\lambda} B_0= \e^{-\lambda \tau_u^m \circ P_u^{-m}} 
    \begin{pmatrix} DP_u^m \circ P_u^{-m} & 0 
    \\
    -D \tau_u^m \circ P_u^{-m} & 1
    \end{pmatrix}
    B_0 \circ P_u^{-m},
\end{equation}
 \end{lemma}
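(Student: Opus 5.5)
We first treat $m=1$ by writing $\cK_{\lambda,0}$ explicitly through the Cauchy formula \eqref{eq:flow representation} with $\eps=0$. We then show that the linear map $D_x\Phi_u^{\overline\tau_u}$, written in the frame $\{(\mathbf e_1,0),(\mathbf e_2,0),u\}$, is block lower-triangular. Finally we conjugate by $\kappa_u$. The case of general $m$ will follow by iteration together with a cocycle identity.

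\textbf{Step 1 (formula for $\cK_{\lambda,0}$).} For $\eps=0$, a solution of \eqref{eq:inviscid eigenvalue relation} with trace $B_0$ on $\{s=0\}$ satisfies \eqref{eq:formula-explicit-eigen}. We evaluate this at $t=\overline\tau_u(a)$ and use the second identity in \eqref{eq:identity-return-mt}. This gives
\[
(\cK_{\lambda,0}B_0)\bigl(\Phi_\w^1(a)\bigr)=\e^{-\lambda\overline\tau_u(a)}\,D_x\Phi_u^{\overline\tau_u(a)}(a,0)\,B_0(a).
\]
Since $\Phi_\w^1$ is a diffeomorphism, this determines $\cK_{\lambda,0}B_0$ everywhere.

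\textbf{Step 2 (frame identity).} Set $F(a):=\begin{pmatrix} I_2 & u_{1,2}(a,0)\\ 0 & u_3(a,0)\end{pmatrix}$. We use two facts.

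First, the flow transports its own generator: $D_x\Phi_u^t(p)\,u(p)=u(\Phi_u^t(p))$. This follows by differentiating $\Phi_u^t(\Phi_u^\sigma p)$ in $\sigma$ at $\sigma=0$.

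Second, by the chain rule applied to $a\mapsto\Phi_u^{\overline\tau_u(a)}(a,0)=(\Phi_\w^1(a),1)$, as already displayed in Subsection~\ref{subsec:autonomous-framework},
\[
D_x\Phi_u^{\overline\tau_u}\begin{pmatrix}I_2\\0\end{pmatrix}=\begin{pmatrix}D\Phi_\w^1\\0\end{pmatrix}-u\bigl(\Phi_\w^1(a),1\bigr)\otimes D\overline\tau_u(a).
\]
We combine the two facts and use the $1$-periodicity of $u$ in $s$, so that $u(\cdot,1)=u(\cdot,0)$. A direct block multiplication then gives the key identity
\[
D_x\Phi_u^{\overline\tau_u(a)}(a,0)\,F(a)=F\bigl(\Phi_\w^1(a)\bigr)\begin{pmatrix}D\Phi_\w^1(a)&0\\-D\overline\tau_u(a)&1\end{pmatrix}.
\]

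\textbf{Step 3 (conjugation by $\kappa_u$).} Note that $E_u(a)=F(\kappa_u(a))\,\mathrm{diag}(D\kappa_u(a),1)$. Substitute $a=\kappa_u(a')$ into Step 2 and use $\Phi_\w^1\circ\kappa_u=\kappa_u\circ P_u$. By the chain rule,
\[
DP_u(a')=D\kappa_u(P_ua')^{-1}\,D\Phi_\w^1(\kappa_ua')\,D\kappa_u(a'),\qquad D\tau_u(a')=D\overline\tau_u(\kappa_ua')\,D\kappa_u(a').
\]
These give
\[
D_x\Phi_u^{\overline\tau_u}\bigl(\kappa_u(a')\bigr)\,E_u(a')=E_u\bigl(P_u(a')\bigr)\begin{pmatrix}DP_u(a')&0\\-D\tau_u(a')&1\end{pmatrix}.
\]
Now insert $B_0=\mathcal E_u C$ into Step 1. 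Writing everything at the point $\kappa_u(P_u a')$ and composing with the inverses yields exactly
\[
\cK_{\lambda,0}\,\mathcal E_u C=\mathcal E_u\,\mathcal D_{u,\lambda}C,
\]
with $\mathcal D_{u,\lambda}$ given by \eqref{eq:inviscid section map} for $m=1$.

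\textbf{Step 4 ($m\geq 2$).} Iterating Step 3 gives $\cK_{\lambda,0}^m=\mathcal E_u\,\mathcal D_{u,\lambda}^m\,\mathcal E_u^{-1}$. It remains to check that $\mathcal D_{u,\lambda}^m$ has the stated closed form, by induction on $m$.
\begin{itemize}
\item The phases multiply to $\e^{-\lambda\tau_u^m}$ by \eqref{eq:tau-property}.
\item The upper-left blocks multiply to $DP_u^m$ by the chain rule.
\item The lower-left entry of the product of the lower-triangular matrices, each evaluated along the orbit, is $-\sum_{j}D\tau_u(P_u^j)\,DP_u^j=-D\tau_u^m$, by differentiating \eqref{eq:tau-property}.
\end{itemize}
All factors are evaluated at $P_u^{-m}$ after composition, as in \eqref{eq:inviscid section map}.

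We expect the main obstacle to be Step 2, namely getting the frame identity with the correct signs and base points. Specifically, this requires using flow-invariance of $u$ together with the periodicity $u(\cdot,1)=u(\cdot,0)$, and correctly placing the return-time correction $-D\overline\tau_u$ in the lower-left block. Once that identity is in hand, Steps 3 and 4 are chain-rule bookkeeping.
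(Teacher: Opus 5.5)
Your proposal is correct and follows essentially the paper's argument. Both differentiate the return identity, use flow-invariance $D_x\Phi_u^t u=u\circ\Phi_u^t$ together with $s$-periodicity of $u$, and rearrange to obtain the lower-triangular matrix in the frame $E_u$. The only differences are organizational: the paper applies this directly to the $m$-th return identity $\Phi_u^{\tau_u^m(a)}(\kappa_u(a),0)=(\kappa_u,m)\circ P_u^m(a)$ in conjugated coordinates, so the closed form for general $m$ comes for free, whereas you treat $m=1$ first, conjugate by $\kappa_u$ afterwards, and recover general $m$ through the cocycle identity derived from \eqref{eq:tau-property}.
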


 \begin{proof}
    Recalling that $\Phi_u^{\overline\tau_u^m(a)}(a,0) = (\Phi_{\w}^m(a),m )$ from \eqref{eq:identity-return-mt}, we deduce the identity
$$
\Phi_u^{\tau^m_u(a)}(\kappa_u(a),0) = (\kappa_u,m)\circ P^m_u(a).
$$
Differentiating with respect to $a\in\TT^2$ yields
$$
\partial_t\Phi_u^{\tau_u^m(a)}(\kappa_u(a),0)
\otimes D\tau^m_u(a) + D_x\Phi_u^{\tau^m_u(a)}(\kappa_u(a),0)
\begin{pmatrix}
D\kappa_u(a)
\\
0
\end{pmatrix}
= \begin{pmatrix}
D\kappa_u(P^m_u(a))
\\
0
\end{pmatrix}
DP^m_u(a).
$$
Using the definition of the flow and the periodicity of $u$ in its third variable gives
$$
u(\kappa_u(P^m_u(a)),0)\otimes D\tau^m_u(a)
+
D_x\Phi_u^{\tau^m_u(a)}(\kappa_u(a),0)
\begin{pmatrix}
D\kappa_u(a)\\
0
\end{pmatrix} = \begin{pmatrix}
D\kappa_u(P^m_u(a))\\
0
\end{pmatrix}
DP^m_u(a)\,.
$$
Rearranging, we obtain
$$
E_u(P^m_u(a)) \begin{pmatrix}
DP^m_u(a)\\
-D\tau^m_u(a)
\end{pmatrix}
= D_x \Phi_u^{\tau^m_u(a)}(\kappa_u(a),0)
\begin{pmatrix}
D\kappa_u(a)\\
0
\end{pmatrix}\,.
$$
To identify the remaining column, note that the autonomous nature
of $u$ gives the flow identity $ \Phi_u^{t+s}=\Phi_u^t\circ\Phi_u^s$ and 
differentiating with respect to $s$ and evaluating at $s=0$ yields
$$
u(\Phi_u^t(x))=D_x \Phi_u^t(x)u(x).
$$
Evaluating this identity at the space point $(\kappa_u(a),0)$ and at
$t=\tau_u^m(a)$, and again using periodicity, gives
$$
D_x\Phi_u^{\tau_u^m(a)}(\kappa_u(a),0)u(\kappa_u(a),0) = u(\kappa_u(P_u^m(a)),0).
$$
Combining these identities and recalling the definition of $E_u$, we obtain
$$
E_u(P_u^m(a))^{-1}
D\Phi_u^{\tau_u^m(a)}(\kappa_u(a),0)E_u(a) =
\begin{pmatrix}
DP_u^m(a) & 0\\
-D\tau_u^m(a) & 1
\end{pmatrix}\,,
$$
concluding the proof.
 \end{proof}

\subsection{The decomposition of $\cK_{\lambda, \eps}$}
We now aim to get the decomposition for $\eps >0$. We may define $\cU_{\lambda, \eps} $ simply by $\cU_{\lambda, \eps}= \mathcal{D}_{u, \lambda}^{-1} \mathcal{E}_u^{-1} \cK_{\lambda, \eps} \mathcal{E}_u$, using the well-posedness result given in Lemma \ref{lemma:notion of solution}. We may now give a strong form of the definition of the operator $\cU_{\lambda, \eps}$ as the solution operator of an elliptic PDE. We start by  considering the eigenvalue problem \eqref{eq:elliptic evolution}. We use the change of variables $\Psi_u(a,s)=(\Phi_\w^s(\kappa_u(a)),s)$, which is a smooth diffeomorphism of $\TT^2\times[0,\infty)$ onto itself. Then $\tilde B=B\circ\Psi_u$ satisfies 
$$
\partial_s \tilde B-\frac{1}{u_3\circ \Psi_u}(\nabla u \circ \Psi_u-\lambda )\tilde B-\frac{\eps}{\overline u_3}\partial_{i} \left (\det (D_a ( \Phi_\w^s \circ \kappa_u))((D\Psi_u)^{-1} (D\Psi)^{-T}_u)^{i,j} \partial_j \tilde B\right )=0\,.
$$
Here, $\bar u_3=\int_{\TT^2}u_3(x,y,s) \dd x \dd y$ is constant by incompressibility, and the indices $i,j$ range over $a_1,a_2,s$.
Furthermore, we may undertake another change of variables to get rid of the principal (in $\eps$) non-scalar part of this equation. Indeed, we define the matrix cocycle $C_{u,s,\lambda}(a)$ as the solution to the family of ODEs
\begin{equation}
\label{eq:matrix cocycle}
\partial_s C_{u,s,\lambda}=\frac{1}{u_3\circ \Psi_u}(\nabla u \circ \Psi_u-\lambda ) C_{u,s,\lambda}, \quad C_{u,0,\lambda}(a)=E_u(a).
\end{equation}
Thus, setting $\tilde B (a,s)=C_{u,s,\lambda}(a) v(a,s)$, we finally deduce the following equation for $v$:
\begin{equation}
\label{eq:full conjugated elliptic equation}
\partial_s v-\frac{\eps}{\overline u_3}C^{-1}_{u,s,\lambda}\partial_i \left (\det (D_a (\Phi_\w^s(\kappa_u) ))((D\Psi_u)^{-1} (D\Psi)^{-T}_u)^{i,j} \partial_j (C_{u,s,\lambda}v) \right )=0.
\end{equation}
Expanding this expression, we obtain an operator of the form $\cL_{\lambda,\eps}v=0$,  where 
\begin{equation}
\label{eq:abstract elliptic equation}
\cL_{\lambda,\eps}= \partial_s-\eps\left (\alpha^{0,0}\partial_s^2+2\alpha^{0,j}\partial_s \partial_j +\alpha^{i,j}\partial_i \partial_j+H^0 \partial_s +H^j \partial_j +H \right ),
\end{equation}
with coefficients $\alpha^{i,j}(a,s)$ that are real, smooth, symmetric, scalar functions and form a uniformly elliptic operator on any compact subset of $\TT^2 \times [0,\infty)$. Furthermore, $H^0,H^j, H$ are matrix-valued functions depending smoothly on $a,s$, and holomorphically on $\lambda$. We now recall the operator $\mathcal{E}_u B(a)=E_u(\kappa_u^{-1}(a)) B(\kappa^{-1}_u(a))$. We impose an initial condition $B_0 : \TT^2 \to \CC^3$ at $s=0$, which, after the change of variables and frame, corresponds to 
\begin{align} \label{d:initial-elliptic-pseudo}
    v(a,0)= \mathcal{E}_u^{-1} B_0 (a)=  E^{-1}_u(a)B_{0}(\kappa_u(a))\,.
\end{align}
Similarly, we observe that 
\begin{align*}
B(a,m)&=\tilde B (P_u^{-m}(\kappa^{-1}_u(a)),m)
=C_{u,m,\lambda}(P_u^{-m}(\kappa^{-1}_u(a)))v(P_u^{-m}(\kappa^{-1}_u(a)),m)\\
&=\left(C_{u,m,\lambda}\mathcal{U}^{[m]}_{\lambda,\eps}(\mathcal{E}_u^{-1}B_{0})\right)\left(P_u^{-m}(\kappa_u^{-1}(a))\right)\,.
\end{align*}
Finally, an explicit computation shows that the solution to \eqref{eq:matrix cocycle} is given by 
$$
C_{u,s,\lambda}(a)=\e^{-\lambda \int_0^s \frac{1}{u_3(\Psi_u(a,r))} \dd r} D_x \Phi_u^{\int_0^s \frac{1}{u_3(\Psi_u(a,r))} \dd r}(\kappa_u(a),0)E_u(a).
$$
In particular, at $s=m$, we observe 
$$
C_{u,m,\lambda}(a)=\e^{-\lambda \tau^m_u(a)}D_x \Phi_u^{\tau^m_u(a)}(\kappa_u(a),0)E_u(a)=\e^{-\lambda \tau^m_u(a)}E_u(P^m_u(a)) \begin{pmatrix}
DP^m_u(a) & 0 \\
-D \tau^m_u(a) & 1
\end{pmatrix},
$$
by Lemma \ref{lemma:flow conjugation}. Hence, we are now ready to deduce the factorisation we need for $\cK_{\lambda, \eps}$. 

We consider the solution to the equation \eqref{eq:elliptic evolution}, subject to the boundary condition ${\rm tr} (B)|_{s=0} =B_{0}$, and the growth condition of Lemma \ref{lemma:notion of solution}. Let $\mathcal{K}_{\lambda,\eps}B_{0}$ denote the trace of the solution at $s=1$ defined in \eqref{d:K-lambda-eps}. 
We define $\mathcal{U}^{[m]}_{\lambda,\eps}v_0 = {\rm tr (v)|_{s=m}}$, where $v$ is the unique solution (see Lemma \ref{lemma:full decomposition}) to
\begin{align} \label{d:PDE-U-lambda-eps}
    \begin{cases}
        \cL_{\lambda, \eps } v=0\,,
        \\
        {\rm tr} (v)|_{s=0} = v_0 \in \mathcal{X}\,,
        \\
        {\rm tr} (v)|_{s=T} = C_{u,T,\lambda}^{-1}(a) B( \Phi_\w^T (\kappa_u(a)),T) \,,
    \end{cases}
\end{align}
where $B: \TT^2 \times [0,\infty) \to \CC^3$ is the unique  solution to \eqref{eq:elliptic evolution} with ${\rm tr} (B)|_{s=0} =B_{0}=\mathcal{E}_u v_0$\, satisfying the growth condition of Lemma \ref{lemma:notion of solution}, and $T>m$. 

From the previous computations and Proposition \ref{prop:LY} we deduce the following result.

\begin{corollary} \label{cor:factorisation1}
    Let $\cK_{\lambda, \eps}$ be defined in \eqref{d:K-lambda-eps}. Then, for any $m \in \NN$ it admits the following decomposition 
    $$\mathcal{K}^m_{\lambda,\eps}=\mathcal{E}_u \circ \left ( \mathcal{D}^m_{u,\lambda}\mathcal{U}^{[m]}_{\lambda,\eps} \right ) \circ\mathcal{E}_u^{-1}\,.$$
    Furthermore, by Proposition \ref{prop:LY}  the operator $\mathcal{D}_{u,\lambda}$ satisfies the following quantitative \emph{Lasota--Yorke} inequality. More precisely, there exists $C_0 \geq 1$  independent of $u, \lambda$ such that if we define
    $$
    C_{LY}=C_0\e^{-\Re(\lambda)\inf_{a \in \TT^2}\tau^m_u(a)}\left (\|DP^m_u\|_{L^\infty}^{1-r}+\|DP_u^{m}\|_{L^\infty}^{r+\delta} \right ) \,,
    $$
    then,  there exists a constant  $C=C(\|DP^m_u\|_{L^\infty},\|\tau^m_u\|_{C^2} , \lambda) >0$ which depends continuously on its arguments, so that if $\Re(\lambda) \geq 0$  we have
\begin{align*}
\|\mathcal{D}^m_{u,\lambda} B\|_{\mathcal X} &\leq C_{LY}\|B\|_{\mathcal X} +C  \|B\|_{\mathcal Y} \,, \qquad \|\mathcal{D}^m_{u,\lambda} B\|_{\mathcal Y} \leq  C\|B\|_{\mathcal Y} \,.
\end{align*}
\end{corollary}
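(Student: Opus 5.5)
The corollary has two parts: the factorisation and the Lasota--Yorke bound. I will prove them in that order.

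\textbf{Factorisation.} I would use the computation preceding the statement. Let $B$ be the unique solution of \eqref{eq:elliptic evolution} with trace $B_0$ and the growth condition of Lemma \ref{lemma:notion of solution}. By uniqueness, the solution started from $\cK_{\lambda,\eps}B_0$ at $s=0$ is the shift $B(\cdot,\cdot+1)$, using the $1$-periodicity of $u$ in $s$. Iterating gives $\cK^m_{\lambda,\eps}B_0=B(\cdot,m)$.

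On the other hand, set $v=C_{u,s,\lambda}^{-1}\tilde B$ with $\tilde B=B\circ\Psi_u$. This $v$ solves $\cL_{\lambda,\eps}v=0$ with $v(\cdot,0)=E_u^{-1}B_0\circ\kappa_u=\mathcal E_u^{-1}B_0$. Its trace at $s=T$ is the prescribed one, so by the uniqueness in \eqref{d:PDE-U-lambda-eps} we get $v(\cdot,m)=\mathcal U^{[m]}_{\lambda,\eps}\mathcal E_u^{-1}B_0$. The displayed identity for $B(a,m)$ then holds. Substitute the explicit formula for $C_{u,m,\lambda}$ obtained from Lemma \ref{lemma:flow conjugation}, namely $\e^{-\lambda\tau_u^m}E_u(P_u^m)\bigl(\begin{smallmatrix}DP_u^m&0\\-D\tau_u^m&1\end{smallmatrix}\bigr)$, evaluated at $P_u^{-m}\kappa_u^{-1}(a)$. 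The factor $E_u(\kappa_u^{-1}(a))$ pulls out as $\mathcal E_u$, and what remains is precisely $\mathcal D^m_{u,\lambda}$ from \eqref{eq:inviscid section map} applied to $\mathcal U^{[m]}_{\lambda,\eps}\mathcal E_u^{-1}B_0$. This is mostly bookkeeping of compositions.

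\textbf{Lasota--Yorke bound.} I would write $\mathcal D^m_{u,\lambda}=\cS_{T,h,g}$ in the form \eqref{eq:general SFS operator}, with
\[
T=P_u^m,\qquad h=-D\tau_u^m,\qquad 2\pi i g=-\lambda\,\tau_u^m\circ P_u^{-m}.
\]
The hypotheses of Proposition \ref{prop:LY} are checked as follows.
\begin{itemize}
\item $P_u^m$ is volume preserving, as shown above.
\item $P_u^m$ is isotopic to the identity. Indeed, $\Phi_\w^1$ is a time-one flow map, so it is isotopic to the identity, and conjugating by $\kappa_u$ preserves this (take $\kappa_u$ isotopic to the identity, as the Moser construction gives).
\item $h=-D\tau^m_u$ is mean-free, since it is the derivative of a periodic function.
\end{itemize}
Proposition \ref{prop:LY} allows complex $g$, so the constants apply directly with $\|\e^{2\pi i g}\|_{L^\infty}=\e^{-\Re\lambda\inf\tau_u^m}$. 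This yields $C_{LY}$ with $C_0=C_{r,\delta}$, independent of $u$ and $\lambda$.

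The weak constants involve $\|\nabla \e^{2\pi i g}\|_{L^\infty}$, $\|\e^{2\pi i g}\|_{C^1}$ and $\|h\|_{C^1}$. These are bounded by continuous functions of $|\lambda|$, $\|\tau_u^m\|_{C^2}$ and $\|DP_u^m\|_{L^\infty}$, using $\nabla(\tau\circ P^{-m})=(DP^{-m})^T\nabla\tau\circ P^{-m}$ and $|DP^{-m}|=|DP^m|$ in two dimensions. Here $\Re\lambda\ge0$ is used to bound $\e^{-\Re\lambda\,\tau}\le1$, since $\tau>0$. The powers of $\|DP^m_u\|$ in \eqref{eq:weak LY constant} and \eqref{eq:weak weak LY constant} are continuous as well.

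\textbf{Main obstacle.} I expect the main difficulty to be verifying the uniqueness and well-posedness needed to identify $v(\cdot,m)$ with $\mathcal U^{[m]}$, that is, that the two-point boundary problem \eqref{d:PDE-U-lambda-eps} is consistent with the growth-condition solution. I would defer this to Lemma \ref{lemma:full decomposition} and Lemma \ref{lemma:notion of solution}.
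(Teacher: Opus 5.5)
Your proposal is correct and follows the paper's own brief argument. The factorisation is read off from the change of variables $\Psi_u$, the cocycle $C_{u,s,\lambda}$ and the formula for $C_{u,m,\lambda}$ from Lemma~\ref{lemma:flow conjugation}, and the Lasota--Yorke bounds come from applying Proposition~\ref{prop:LY} to $\mathcal D^m_{u,\lambda}=\cS_{P_u^m,-D\tau_u^m,g}$ with $\e^{2\pi i g}=\e^{-\lambda\tau_u^m\circ P_u^{-m}}$, controlling the weak constants via $\Re\lambda\geq0$ and $\|DP_u^{-m}\|_{L^\infty}=\|DP_u^m\|_{L^\infty}$. One minor point: the isotopy hypothesis holds simply because conjugating $t\mapsto\Phi^t_{\w}$ by $\kappa_u$ gives an isotopy from the identity to $P_u$, so no property of $\kappa_u$ itself is needed.
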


\section{A family of ideal dynamo vector fields}

Having constructed the framework in which we analyse autonomous vector fields, we now move on to defining a set of vector fields $\mathcal{D}$ for which we aim to establish fast dynamo action.

The first condition on $\mathcal{D}$ we require is that  $1 \in \sigma(\mathcal{D}_{u,\lambda})$ with $\Re (\lambda) >0$. 
This is analogous to the Birman–Schwinger principle: we consider a family of operators depending on the spectral parameter $\lambda$ and seek values of $\lambda$ for which $\mathcal{D}_{u,\lambda}\colon \mathcal X \to \mathcal X$ defined in \eqref{eq:inviscid section map} has $1$ as an eigenvalue. This condition is natural in view of the decomposition $\cK_{\lambda, 0} = \mathcal{E}_u \circ  \mathcal{D}_{u,\lambda}  \circ \mathcal{E}_u^{-1}$ given in Corollary \ref{cor:factorisation1}. Indeed, we aim to prove that solutions to the eigenvalue problem \eqref{eq:elliptic evolution} on the extended domain $(a,s)\in \TT^2 \times [0, \infty)$ with initial datum at the Poincaré  section $\{s=0\}$ given by $B_0 : \TT^2 \to \CC^3$ are $1$-periodic in $s$. Indeed, this is a necessary condition so that the solution on $\TT^2 \times [0, \infty)$ is well defined on $\TT^3$. More precisely, we impose
$$
\mathcal{K}_{\lambda,0}B_{0}=B_{0}\,.
$$
The second condition on $\mathcal{D}$ we impose is needed to treat the case of positive diffusivities $0< \eps \ll 1$.  Indeed, we shall prove a Lasota--Yorke inequality that is uniform in $\eps$, so that  we can apply the Keller--Liverani theorem \ref{thm:keller liverani} in combination with Rouché's theorem to establish fast dynamo action. The key technical result needed in order to undertake this program is Proposition \ref{prop:pseudodifferential abstract}, proved in Section \ref{sec:pseudodifferential result}.

Therefore, we define  the set of vector fields $\mathcal{D}$ as follows.

\begin{definition}
\label{def:D}
We define $\mathcal D$ as the set of all vector fields $u\in C^\infty_\sigma(\TT^3)$ with $u_3>0$ so that there exists
$\lambda\in\CC$ with $\Re(\lambda)>0$ for which the following hold:
\begin{enumerate}
    \item $1$ is an algebraically simple eigenvalue of $\mathcal{D}_{u,\lambda}:\mathcal{X}\to \mathcal{X}$ 
\item $C_0 \inf_{m\geq1}
\exp\left(
    -\Re(\lambda)\inf_{a\in\TT^2}\tau_u^m(a)
\right)
\left\|DP_u^m\right\|_{L^\infty}^{\,1-r}
<1,$ with $C_0$ defined in Corollary \ref{cor:factorisation1}.
\end{enumerate}
\end{definition}
Theorem \ref{thm:autonomous} then follows from the following proposition.
\begin{proposition}
\label{prop:abstract proposition fast dynamo}
The set $\mathcal{D}$ satisfies the following properties:
\begin{enumerate}
    \item $\mathcal{D}$ is non-empty.
    \item There exists $k\in\NN$ such that the set $\mathcal{D}$ is open in the $C^k$ topology on $C^\infty_\sigma(\TT^3)$.
    \item Any $u \in \mathcal{D}$ generates a fast dynamo on $\TT^3$.
\end{enumerate}
\end{proposition}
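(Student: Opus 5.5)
I would prove the three items separately, in the order (1), (2), (3), each time reducing to a Keller--Liverani perturbation argument in the pair $(\mathcal X,\mathcal Y)$.

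\emph{Non-emptiness.} Fix nonconstant mean-free $f_1,f_2$ and choose $g$ as in Proposition \ref{prop:rank 1 limit simple}, so that $\lambda_g\neq0$. Normalise $g$ to be mean-free; this only multiplies $\lambda_g$ by a unimodular constant. Proposition \ref{prop:first growing mode} then gives, for $N$ large, an algebraically simple eigenvalue $\Lambda_N=\e^{\theta}$ of $\mathcal M_{T_N,g}$ with $|\Lambda_N|\geq c_0N^2$, so $\Re\theta\approx 2\log N$. Lemma \ref{lemma:flexibility of section maps} produces $u^{(n)}$ with $P_{u^{(n)}}=T_N$ and $\tau_{u^{(n)}}\circ T_N^{-1}=1-g/n$. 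For $\lambda=2\pi i n+\theta'$ we have $\e^{-\lambda(1-g/n)}=\e^{-\theta'}\e^{2\pi i g}\e^{\theta' g/n}$. Hence $\mathcal D_{u^{(n)},\lambda}$ is an SFS operator in the sense of Proposition \ref{prop:LY}, with $T=T_N$, $h=-D\tau\circ T_N^{-1}=O(\|g\|_{C^1}/n)$, and an extra multiplier $\e^{\theta' g/n}\to1$ in $C^1$.

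By Lemma \ref{lemma:multipliers anisotropic}, the family $\mathcal D_{u^{(n)},2\pi in+\theta'}$ converges as $n\to\infty$ in $\mathcal X\to\mathcal Y$ norm to $\e^{-\theta'}\mathcal S_{T_N,0,g}$. This convergence is locally uniform in $\theta'$, and the Lasota--Yorke constants are uniform. The limit operator is block triangular, and its vertical block is composition with a unimodular factor; on $\mathcal X$ this block has spectral radius at most $\e^{-\Re\theta'}C\|DT_N\|^{r+\delta}<1$. So $1$ is an algebraically simple eigenvalue of the limit exactly when $\e^{\theta'}$ is a simple eigenvalue of $\mathcal M_{T_N,g}$, which holds at $\theta'=\theta$.

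The eigenvalue $\mu_n(\theta')$ near $1$ given by Keller--Liverani is holomorphic in $\theta'$ and converges to $\e^{\theta-\theta'}$ uniformly on a small disc. Rouché's theorem then yields $\theta_n\to\theta$ with $\mu_n(\theta_n)=1$. Condition (2) of Definition \ref{def:D} holds with $m=1$, because
\[
\e^{-\Re\lambda\inf\tau}\|DT_N\|^{1-r}\lesssim N^{-2}\cdot N^{2-2r}\to0 .
\]

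\emph{Openness.} Fix $u\in\mathcal D$ with its $\lambda$ and an $m$ realising condition (2). For $v$ that is $C^{k+2}$-close to $u$, \eqref{eq:continuity in u} gives $C^k$-closeness of $P^m_v$ and $\tau^m_v$, so by continuity condition (2) persists. Corollary \ref{cor:factorisation1} then yields a Lasota--Yorke inequality for $\mathcal D^m_{v,\lambda'}$ that is uniform in $v$ near $u$ and $\lambda'$ near $\lambda$, with strong constant less than $1$. Since $\mathcal D_{v,\lambda'}$ is bounded on $\mathcal X$, this also controls the iterates. Next I would show that
\[
\|\mathcal D_{v,\lambda'}-\mathcal D_{u,\lambda'}\|_{\mathcal X\to\mathcal Y}\lesssim\|u-v\|_{C^{k+2}}^{\beta}
\]
for some $\beta>0$. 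To do this I would split the difference into three pieces: the multiplier and matrix differences, handled by Lemma \ref{lemma:multipliers anisotropic}; and the composition difference $B\circ P_v^{-1}-B\circ P_u^{-1}$. For the composition term I would interpolate: a mean value argument costs one derivative, and I would pay for it by the loss $\mathcal X\to\mathcal Y$ combined with uniform boundedness. This is the same device as in Lemma \ref{lem:MN_to_Minf}: density plus compactness of $\mathcal X\hookrightarrow\mathcal Y$. Keller--Liverani applied to $\mathcal D^m$, together with Rouché in $\lambda'$ as above, then gives $\lambda_v$ near $\lambda$ with $\Re\lambda_v>0$ and $1$ simple. I expect the composition term, made uniform in $\lambda'$, to be the main technical step here.

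\emph{Fast dynamo.} Fix $u\in\mathcal D$ and write $\lambda_0$ for its $\lambda$. By Corollary \ref{cor:factorisation1}, $\mathcal K^m_{\lambda,\eps}$ is conjugate to $\mathcal D^m_{u,\lambda}\mathcal U^{[m]}_{\lambda,\eps}$. The estimates on $\mathcal U_{\lambda,\eps}$ from the proposition in Section \ref{sec:strategy}, which come from Proposition \ref{prop:pseudodifferential abstract}, are the first-power case; I would assume the analogous estimates for $\mathcal U^{[m]}$, uniform for $\lambda$ in a compact $K\ni\lambda_0$. Composing them with Corollary \ref{cor:factorisation1} gives a uniform Lasota--Yorke inequality with strong constant $C_{LY}(1+C\eps^{(1-2r-\delta)/2})<1$, together with $\mathcal X\to\mathcal Y$ convergence to $\mathcal D^m_{u,\lambda}$ as $\eps\to0$.

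The parameter-uniform Keller--Liverani theorem \ref{thm:keller liverani} and Rouché in $\lambda$ then give $\lambda_\eps\to\lambda_0$ with $1\in\sigma(\mathcal K^m_{\lambda_\eps,\eps})$. The main obstacle is that this gives $1$ in the spectrum of the $m$-th power rather than of $\mathcal K_{\lambda_\eps,\eps}$ itself. I would resolve it by noting that $1\in\sigma(\mathcal K^m_\lambda)$ means some $m$-th root of unity $\zeta$ is an eigenvalue of $\mathcal K_\lambda$. Writing $\zeta=\e^{2\pi i j/m}$ and using the $\e^{-\lambda\tau}$ dependence, one has the approximate relation $\mathcal K_{\lambda}\approx \e^{(\lambda'-\lambda)\tau}\mathcal K_{\lambda'}$. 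Via the simple-eigenvalue branch I would select $\lambda'_\eps$ with eigenvalue exactly $1$, still with $\Re\lambda'_\eps\geq\Re\lambda_0/2$.

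The resulting eigenfunction $B$ is smooth by elliptic regularity and $1$-periodic in $s$, so $\e^{\lambda t}B$ solves \eqref{passive-vector}. Its divergence solves the advection--diffusion equation with growth rate $\Re\lambda>0$ and must therefore vanish, exactly as at the end of Section \ref{sec:time periodic proof}. This gives \eqref{eq:expoGro} with $\gamma_0=\Re\lambda_0/2$.
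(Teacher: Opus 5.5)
Your non-emptiness argument is sound and close to the paper's; you use Rouch\'e in $\theta'$ where the paper uses the implicit function theorem on the horizontal block. One small correction: $h=-D\tau_u=\frac1n(\nabla g\circ T_N)DT_N$ has size about $\|DT_N\|_{L^\infty}/n$, not $1/n$, which is harmless for fixed $N$.

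\textbf{Part (3): applying Keller--Liverani to $\mathcal K^m$.} This is the genuine gap. Running Keller--Liverani on $\mathcal K^m_{\lambda,\eps}$ only gives $1\in\sigma(\mathcal K^m_{\lambda_\eps,\eps})$, and your repair does not work. The relation $\mathcal D_{u,\lambda}=\e^{-(\lambda-\lambda')\tau_u\circ P_u^{-1}}\mathcal D_{u,\lambda'}$ is multiplication by a \emph{non-constant} function. That does not rotate the spectrum by a root of unity. Moreover, for $\eps>0$ the parameter $\lambda$ enters $\mathcal K_{\lambda,\eps}$ through the elliptic problem, not as a multiplier. There is also a simplicity problem: $\zeta(\lambda_0)^m=1$ need not be a simple eigenvalue of $\mathcal D^m_{u,\lambda_0}$, since other eigenvalues of $\mathcal D_{u,\lambda_0}$ may be $m$-th roots of unity. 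So the rank-one Keller--Liverani statement is not available for the $m$-th power.

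The obstacle is avoidable. Proposition \ref{prop:pseudodifferential abstract} holds for every fixed $m$, so you also have $\eps$-uniform bounds on $\mathcal K^\ell_{\lambda,\eps}$ on $\mathcal E_u\mathcal X$ for $\ell<m$, and on $\mathcal K_{\lambda,\eps}$ on $\mathcal E_u\mathcal Y$. Iterating your $m$-step inequality then gives a Lasota--Yorke inequality for \emph{every} iterate of $\mathcal K_{\lambda,\eps}$, with rate below $1$ uniformly in $\eps$ and $\lambda\in K$ (Corollary \ref{cor:uniform lasota yorke}). Keller--Liverani then applies to $\mathcal K_{\lambda,\eps}$ itself. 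Rouch\'e applied to $\zeta_\eps-1$, where $\zeta_\eps$ is the holomorphic simple eigenvalue branch (Lemma \ref{lemma:holomorphic 2}), gives eigenvalue exactly $1$. The same correction applies in your openness step: use Keller--Liverani on $\mathcal D_{v,\lambda'}$, not on $\mathcal D^m_{v,\lambda'}$.

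In both (2) and (3), Rouch\'e also needs $\lambda_0$ to be an isolated zero of $\zeta-1$ for an arbitrary $u\in\mathcal D$. In (1) this was automatic because the limiting branch is the explicit $\e^{\theta-\theta'}$, but here it is not. The paper obtains it from the analytic Fredholm theorem, using $\|\mathcal D^m_{u,\lambda}\|_{\mathcal X\to\mathcal X}<1$ for $\Re\lambda$ large (Lemma \ref{lemma:holomorphic 1}).

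\textbf{Part (2): splitting $\mathcal D_{v,\lambda}-\mathcal D_{u,\lambda}$.} This step fails as stated. Lemma \ref{lemma:multipliers anisotropic} covers scalar multipliers only. Multiplying by the matrix $DP_v-DP_u$, or composing $DP_u\,b$ with $P_v^{-1}$ instead of $P_u^{-1}$, breaks the identity $(DP\,\nabla^\perp\psi)\circ P^{-1}=\nabla^\perp(\psi\circ P^{-1})$. Each piece then sends $\PP b$, which is controlled only in $H^{-r}$, to a gradient component that $\mathcal Y$ measures in $H^{r-\delta}$. That would be a gain of $2r-\delta>0$ derivatives, which these zeroth-order operations cannot provide. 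Neither piece is even bounded on $\mathcal Y$, so the density-plus-compactness device cannot be applied to them separately.

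The paper first uses $(\Id-\PP)\bigl((DP\,\PP b)\circ P^{-1}\bigr)=0$ for each of $P=P_u$ and $P=P_v$ separately. The gradient part of the difference then sees only $(\Id-\PP)b\in H^r$. Only after that does it split and interpolate, with the phase $\e^{-\lambda\tau\circ P^{-1}}$ factored out as a genuine scalar multiplier; this gives the rate in Lemma \ref{lemma:perturbing velocity}. Alternatively, your compactness argument does work on the \emph{unsplit} difference: each $\mathcal D_{v,\lambda}$ is uniformly bounded on $\mathcal Y$ by Proposition \ref{prop:LY}, and the difference tends to zero on smooth data.
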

The remainder of this paper will be devoted to proving Proposition \ref{prop:abstract proposition fast dynamo}. In this section we prove the first two points of Proposition \ref{prop:abstract proposition fast dynamo}.  We begin by noting a simple consequence of Definition \ref{def:D} and Corollary \ref{cor:factorisation1}.
\begin{lemma}
\label{lemma:D implies lasota yorke}
Let $\bar u \in \mathcal{D}$ with $\bar \lambda \in \CC$ satisfying the assumption of Definition \ref{def:D}. Then, there exists a neighbourhood $\mathcal{V}$ of $(\bar u , \bar \lambda) \in C^\infty(\TT^3) \times \CC$, where $C^\infty_\sigma(\TT^3)$ is equipped with the $C^k$ topology, for $k$ large enough, such that the following holds true.  There exist constants $C, M, \eta >0$, such that the following Lasota--Yorke inequality holds true
$$
\|\mathcal{D}_{ u, \lambda}^m B\|_{\mathcal{X}} \leq C (1-\eta)^m \|B\|_{\mathcal{X}}+CM^m \|B\|_{\mathcal{Y}},
$$
for all $m \in \NN$ and for all $(u, \lambda)\in \mathcal{V}$. Consequently, by Proposition \ref{prop:hennion} it holds that    $r_{\mathrm{ess}}(\mathcal{D}_{u,\lambda})\leq 1-\eta<1$ for all $(u,\lambda) \in \mathcal{V}$.
\end{lemma}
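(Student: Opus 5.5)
Condition (2) of Definition~\ref{def:D} gives an integer $m_0\geq1$ and a margin. We need a bound with $C_{LY}(m_0)<1$. The statement of Condition (2) only controls the $\|DP^m_{\bar u}\|^{1-r}$ term. The full $C_{LY}$ in Corollary~\ref{cor:factorisation1} also contains $\|DP^{m}_{\bar u}\|^{r+\delta}$. Since $r+\delta<1-r$ (because $\delta<\frac12(\frac12-r)$ gives $2r+\delta<1$), and $\|DP^m\|_{L^\infty}\geq1$ for an area-preserving map, one has $\|DP^m\|^{r+\delta}\leq\|DP^m\|^{1-r}$. So, up to replacing $C_0$ by $2C_0$ (which I would absorb into the normalisation of $C_0$ in Corollary~\ref{cor:factorisation1}), Condition (2) yields
\[
q:=C_0\,\e^{-\Re(\bar\lambda)\inf_a\tau^{m_0}_{\bar u}(a)}\bigl(\|DP^{m_0}_{\bar u}\|_{L^\infty}^{1-r}+\|DP^{m_0}_{\bar u}\|_{L^\infty}^{r+\delta}\bigr)<1 .
\]
Fix $m_0$ and set $\eta_0:=(1-q)/2$.

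Next I would use continuity in $(u,\lambda)$ with $m_0$ fixed. By \eqref{eq:continuity in u}, the quantities $\|DP_u^{m_0}\|_{L^\infty}$, $\inf_a\tau_u^{m_0}$ and $\|\tau^{m_0}_u\|_{C^2}$ depend continuously on $u$ in the $C^{k+2}$ topology, for $k\geq2$. Clearly $\Re\lambda$ is continuous in $\lambda$, and positivity of $u_3$ and $\Re\lambda$ is an open condition. Hence there is a neighbourhood $\mathcal V$ of $(\bar u,\bar\lambda)$ on which $C_{LY}(u,\lambda,m_0)\leq 1-\eta_0$, $\Re\lambda>0$, and $u_3\geq c_0/2$. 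Also, the continuous constant $C(\|DP^{m_0}_u\|,\|\tau^{m_0}_u\|_{C^2},\lambda)$ from Corollary~\ref{cor:factorisation1} is bounded by some $C_1$ there, after shrinking to a compact closure. The same continuity argument bounds the one-step constant $C(\|DP_u\|,\|\tau_u\|_{C^2},\lambda)$, and hence bounds $\|\mathcal D_{u,\lambda}\|_{\mathcal X\to\mathcal X}$ and $\|\mathcal D_{u,\lambda}\|_{\mathcal Y\to\mathcal Y}$ uniformly by some $M_1$. For the strong norm, take the $m=1$ Lasota--Yorke inequality together with $\|\cdot\|_{\mathcal Y}\lesssim\|\cdot\|_{\mathcal X}$.

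The final step iterates. Write $m=jm_0+\ell$ with $0\leq\ell<m_0$, and let $L=\mathcal D^{m_0}_{u,\lambda}$, which satisfies $\|LB\|_{\mathcal X}\leq(1-\eta_0)\|B\|_{\mathcal X}+C_1\|B\|_{\mathcal Y}$ and $\|LB\|_{\mathcal Y}\leq M_1^{m_0}\|B\|_{\mathcal Y}$. The standard induction gives
\[
\|L^jB\|_{\mathcal X}\leq(1-\eta_0)^j\|B\|_{\mathcal X}+C_1\sum_{i<j}(1-\eta_0)^{i}M_1^{m_0(j-1-i)}\|B\|_{\mathcal Y}\leq(1-\eta_0)^j\|B\|_{\mathcal X}+C'M^{m}\|B\|_{\mathcal Y},
\]
with $M:=\max(M_1,2)$. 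Composing with $\mathcal D^\ell$ and using the uniform bounds $M_1^\ell$ on both spaces gives the claim with $1-\eta:=(1-\eta_0)^{1/m_0}$ and $C$ absorbing $M_1^{m_0}(1-\eta)^{-m_0}$. Proposition~\ref{prop:hennion} then applies, because $\mathcal X\hookrightarrow\mathcal Y$ is compact and $\|\mathcal D^m B\|_{\mathcal Y}\leq M^m\|B\|_{\mathcal Y}$. This gives $r_{\mathrm{ess}}\leq1-\eta$.

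The main obstacle is the bookkeeping between Condition (2) and the full constant $C_{LY}$, namely the extra $\|DP^m\|^{r+\delta}$ term and the normalisation of $C_0$. I would handle it with the dominance argument above, and if needed by fixing $C_0$ in Definition~\ref{def:D} to be the constant already accounting for both terms. A second point to check is that $\mathcal D_{u,\lambda}$ is indeed bounded on $\mathcal X$ and $\mathcal Y$ uniformly. This follows from Proposition~\ref{prop:LY}, applied with $T=P_u$, $h=-D\tau_u\circ P_u^{-1}$ (mean-free, being the gradient of a periodic function composed with a volume-preserving map) and $\e^{2\pi i g}=\e^{-\lambda\tau_u\circ P_u^{-1}}$. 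All constants there depend continuously on $C^2$ norms, hence on the $C^{k}$ norm of $u$ for $k\geq4$.
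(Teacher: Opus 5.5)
Your overall structure matches the paper's: fix a block length, make the block Lasota--Yorke constant uniform on a neighbourhood via \eqref{eq:continuity in u}, iterate, and apply Proposition~\ref{prop:hennion}. The continuity, iteration and Hennion steps are fine. The gap is in the first step.

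Write $a_m(u,\lambda):=\e^{-\Re(\lambda)\inf_a\tau_u^m(a)}\|DP_u^m\|_{L^\infty}^{1-r}$. Condition (2) of Definition~\ref{def:D} only gives some $m_0$ with $C_0a_{m_0}(\bar u,\bar\lambda)<1$. Your dominance argument gives only $C_{LY}\leq 2C_0a_{m_0}$ at block length $m_0$, and this need not be $<1$. You cannot absorb the factor $2$ into the normalisation of $C_0$. That constant is the one fixed in Corollary~\ref{cor:factorisation1} and used in Definition~\ref{def:D}. Replacing it by $2C_0$ shrinks $\mathcal D$, so your argument proves the lemma only under a stronger hypothesis than the one stated.

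The missing idea is that $a_m$ is submultiplicative when $\Re\lambda>0$.
\begin{itemize}
\item By \eqref{eq:tau-property}, $\tau_u^{m+n}=\tau_u^m+\tau_u^n\circ P_u^m$, so $\inf\tau_u^{m+n}\geq\inf\tau_u^m+\inf\tau_u^n$.
\item By the chain rule, $\|DP_u^{m+n}\|_{L^\infty}\leq\|DP_u^m\|_{L^\infty}\|DP_u^n\|_{L^\infty}$.
\item Together these give $a_{m+n}\leq a_ma_n$.
\end{itemize}
Since $C_0\geq1$, condition (2) forces $a_{m_0}(\bar u,\bar\lambda)<1$. Hence $a_{jm_0}\leq a_{m_0}^j\to0$, and you can replace $m_0$ by $N=jm_0$ with $2C_0a_N(\bar u,\bar\lambda)<\theta<1$. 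Your bound $\|DP^N\|_{L^\infty}^{r+\delta}\leq\|DP^N\|_{L^\infty}^{1-r}$ then gives $C_{LY}<\theta$ at block length $N$. The rest of your argument goes through unchanged, and this is exactly how the paper proceeds.
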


\begin{proof}
We note that by \eqref{eq:tau-property} and the chain rule we have that the quantity
$$
a_m(u,\lambda)
:= \e^{-\Re(\lambda)\inf_a\tau_u^m(a)}
   \|DP_u^m\|_{L^\infty}^{1-r}
$$
is submultiplicative when $\Re (\lambda)>0$, namely $a_{m+n} (u, \lambda)\leq a_m (u, \lambda)a_n (u, \lambda)$. 
Thus, by the assumption, there exist $N\geq1$ and $\theta\in(0,1)$
such that $2C_0 a_N(\bar u, \bar \lambda)<\theta$.

Using \eqref{eq:continuity in u}, Corollary \ref{cor:factorisation1}, and the inequality $r+\delta<1-r$, we restrict to a sufficiently small $C^k\times\CC$ neighbourhood $\mathcal V$ of $(\bar u,\bar\lambda)$. Then
there exist constants $K,M>1$ such that, uniformly for
$(u,\lambda)\in\mathcal V$, the operator $\mathcal{D}_{u,\lambda}$ satisfies
$$
\|\mathcal{D}_{u, \lambda}^N B\|_{\mathcal X} \leq \theta\|B\|_{\mathcal X}+K\|B\|_{\mathcal Y},
\qquad
\|\mathcal{D}_{u, \lambda}^N\|_{\mathcal Y\to\mathcal Y} \leq M,
\qquad 
\max_{0\leq\ell<N}\|\mathcal{D}_{u, \lambda}^\ell\|_{\mathcal X\to\mathcal X}\leq K.
$$
Iterating the first estimate, we obtain, for all $j\in\NN$,
$$
\|\mathcal{D}_{u, \lambda}^{jN}B\|_{\mathcal X}
\leq \theta^j\|B\|_{\mathcal X}
 +  \frac{K}{M-1}M^j \|B\|_{\mathcal Y}.
$$
Writing $m=jN+\ell$, with $0\leq\ell<N$, and using the bound on $\mathcal{D}_{u, \lambda}^\ell$ to reabsorb finitely many terms in a constant $C$, we obtain the claimed Lasota--Yorke inequality.
Finally, Proposition \ref{prop:hennion} yields, uniformly on $\mathcal V$, $ r_{\mathrm{ess}}(\mathcal{D}_{u,\lambda}) \leq 1-\eta <1. $
\end{proof}
\subsection{$\mathcal{D}$ is non-empty.}
We begin by constructing an explicit family of vector fields that live in the set $\mathcal{D}$.
We first recall that by Proposition \ref{prop:first growing mode}, upon picking a suitable mean-free $g$, for any $N \in \NN$ sufficiently large there exists a map $T_N:\TT^2\to\TT^2$ isotopic to the identity such that the operator
$$
\mathcal{M}_{T_N,g}:b \mapsto \e^{2 \pi i g} (D T_N b)\circ T_N^{-1}(x,y)
$$
admits an algebraically simple, isolated eigenvalue $\Lambda_N \in \CC$ with $|\Lambda_N|\geq c_0 N^2$ for some $c_0 >0$ independent of $N$.
We now aim to construct a vector field $u \in C^\infty(\TT^3)$ whose Poincaré return map is $P_u= T_N$ and whose return time $\tau_u$ is tuned to generate the phase $\e^{2\pi ig }$. Fix now $N$ large enough so that $c_0N^2 \gg N^{2-2r}$. Then, we have the following result. 

\begin{lemma}
\label{lemma:autonomous mimicking time-periodic}
Let $T: \TT^2 \to \TT^2$ be a volume-preserving diffeomorphism that is isotopic to the identity, and let $g \in C^\infty(\TT^2;\RR)$ be mean-free. Then there exists $n_0 \in \NN$ such that for any   $n \geq n_0$, there exists $u^{(n)} \in C_\sigma^\infty(\TT^3)$ with $u^{(n)}_3>0$, so that 
\begin{enumerate}
    \item $P_{u^{(n)}}=T$,
    \item $\tau_{u^{(n)}}(P_{u^{(n)}}^{-1}(x,y))=1-\frac{g(x,y)}{n}$.
\end{enumerate}
\end{lemma}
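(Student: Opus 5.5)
We build $u^{(n)}$ explicitly, splitting the vertical period $s\in[0,1]$ into two disjoint time windows. In the first window a divergence-free horizontal field realizes the isotopy to $T$. In the second window the horizontal velocity vanishes and $u_3$ is modulated to tune the return time. Since $T$ is volume-preserving and isotopic to the identity, we may write $T=\Phi^1_w$ for a smooth time-dependent divergence-free field $w(a,t)$ on $\TT^2$. Reparametrizing time by a cutoff, we may assume $w$ is supported in $t\in(0,\tfrac12)$ and smooth in $t$. If the isotopy is only through diffeomorphisms, we first invoke Moser's lemma (Lemma \ref{lemma:quantitative-moser}) along the path to make it area-preserving. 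There is one flux issue: an area-preserving isotopy on $\TT^2$ may require harmonic (constant) components. These are fine here, since constants are divergence-free.

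We then set
\[
u^{(n)}(a,s)=\bigl(w(a,s),\,1\bigr)\quad\text{for } s\in[0,\tfrac12],\qquad
u^{(n)}(a,s)=\bigl(0,0,\rho_n(a,s)\bigr)\quad\text{for } s\in[\tfrac12,1],
\]
extended $1$-periodically in $s$.

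On the first window $u_3\equiv1$, so $\tilde u_{1,2}=w$, and the horizontal flow over this window is exactly $T$ by Lemma \ref{lemma:properties autonomous 1}. Divergence-freeness holds because $\nabla_a\cdot w=0$ and $\partial_s u_3=0$.

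On the second window $\tilde u_{1,2}=0$, so points do not move horizontally. Divergence-freeness there forces $\partial_s\rho_n=0$, i.e.\ $\rho_n=\rho_n(a)$ independent of $s$. This clashes with $u_3\equiv1$ at the junctions $s=\tfrac12,1$. We fix this by requiring the vertical component to depend only on $a$ on each window, with a jump between windows. The jump is removed by a thin transition layer in which $u_3$ varies in $s$ and the horizontal part compensates the divergence via $\nabla_a\cdot u_{1,2}=-\partial_s u_3$. The main obstacle is to do this while keeping the return map exactly $T$. I would handle it by absorbing the transition-layer horizontal flow into the isotopy. Concretely, in the first window we realize $T\circ S^{-1}$ instead of $T$, where $S$ is the transition flow. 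This is possible because the resulting map remains isotopic to the identity.

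A cleaner alternative avoids transitions altogether, using the measure-preservation structure of Lemma \ref{lemma:properties autonomous 1}. Take $u_3=\rho(a)$ globally, independent of $s$, with $\rho>0$, $\bar\rho=1$. Then $\nabla_a\cdot u_{1,2}=0$ is required, $\tilde u_{1,2}=u_{1,2}/\rho$, and the return map preserves $\rho\,\dd a$. We then conjugate with $\kappa_u$, since $P_u=\kappa_u^{-1}\Phi^1\kappa_u$ is what must equal $T$. I would choose $\rho=1+\frac{g\circ T^{-1}\text{-type correction}}{n}$ only on the second window, and track $\kappa_u$ via \eqref{eq:moser relation}: because $u_3(\cdot,0)=1$ at the section, $\kappa_u=\Id$, and then $P_u=\Phi^1_{\tilde u_{1,2}}=T$ exactly.

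For the return time, Lemma \ref{lemma:properties autonomous 1} gives
\[
\tau_u(a)=\int_0^{1/2}\dd s+\int_{1/2}^1\frac{\dd s}{\rho_n(T(a),s)}+(\text{transition contributions}).
\]
We choose $\rho_n$ so that these terms sum to $1-g(T(a))/n$, which is exactly condition (2). This requires the prescribed integral of $1/\rho_n$ over the second window to be $\tfrac12-g/n$ pointwise, adjusted for the transition layers. That is solvable with $\rho_n>0$ once $n\ge n_0$ with $n_0>2\|g\|_{L^\infty}$-dependent. Mean-freeness of $g$ enters through incompressibility: $\bar u_3$ must be constant in $s$ and $\kappa_u=\Id$ needs $u_3(\cdot,0)\equiv\bar u_3$. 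So the $s$-average of the vertical flux must be consistent, and $\int(1/\rho_n)$ having the right mean is compatible precisely because $\int g=0$ (to first order in $1/n$, with higher-order corrections absorbed by a $1/n^2$ adjustment of the constant part). I expect this flux and compatibility bookkeeping, together with smooth gluing at the junctions, to be the delicate step.
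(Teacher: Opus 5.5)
There is a genuine gap. You never produce a field that is at once divergence-free, has return map exactly $T$, and has return time exactly $1-g/n$ along $T^{-1}$. The step you flag as ``delicate'' is the whole content of the lemma.

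In your two-window scheme, the jump of $u_3$ forces transition layers. Their horizontal flows $S_1,S_2$ carry $\dd a$ to $\rho_n\,\dd a$ and back, and their transit times depend on the base point. With these layers present, three things change:
\begin{enumerate}
\item the first-window map must be $F=S_1^{-1}S_2^{-1}T$;
\item the second-window density is sampled at $S_1(F(a))=S_2^{-1}(T(a))$, not at $T(a)$ as in your formula for $\tau_u$;
\item $S_1$, $S_2$, $F$ and the transit times all depend on $\rho_n$.
\end{enumerate}
So choosing $\rho_n$ is a nonlinear fixed-point problem, which you neither set up nor solve. At best it might be handled by an implicit-function argument for large $n$.

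Your compatibility bookkeeping is also wrong. For divergence-free $u$ one has $\int_{\TT^2}\overline\tau_u\,u_3(\cdot,0)\,\dd a=|\TT^3|$: the map $(a,t)\mapsto\Phi_u^t(a,0)$, $0\le t<\overline\tau_u(a)$, covers $\TT^3$ once with Jacobian $u_3(a,0)$. Together with $(\kappa_u)_\#(\bar u_3\,\dd a)=u_3(\cdot,0)\,\dd a$, this gives $\int\tau_u\,\dd a=1/\bar u_3$. Hence the target forces $\bar u_3=1$, the mean return time is then automatically $1$, and $\int g=0$ is the exact (not first-order) compatibility condition.

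The $O(n^{-2})$ mismatch you see comes from dropping the redistribution by $S_1$. Without it, $\int\tau\,\dd a=\tfrac12+\tfrac12\int\rho_n^{-1}\,\dd a>1$ by Jensen whenever $\int\rho_n\,\dd a=1$ and $\rho_n\not\equiv1$. No ``$1/n^2$ adjustment of the constant part'' is available: the constant in $1-g/n$ is prescribed, and $\bar u_3=1$ is forced. Finally, the ``cleaner alternative'' is inconsistent: an $s$-independent $u_3$ with $u_3(\cdot,0)=1$ is identically $1$.

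The missing idea is to give up incompressibility at first and restore it by a fibred conjugation that is trivial near the section. Let $\tilde w$ be the divergence-free isotopy field, vanishing near $z\in\{0,1\}$. Take $\eta\in C^\infty_c(0,1)$ with $\eta\ge0$ and $\int\eta=1$. Set $\rho_n(a,z)=1-\frac{\eta(z)}{n}\,g\bigl(T((\Phi^z_{\tilde w})^{-1}(a))\bigr)$ and $v^{(n)}=\rho_n^{-1}(\tilde w,1)$. Then:
\begin{enumerate}
\item the horizontal part of $v^{(n)}$ divided by $v^{(n)}_3$ is $\tilde w$, so the return map is $T$;
\item the return time $\int_0^1\rho_n(\Phi^z_{\tilde w}(a),z)\,\dd z=1-g(T(a))/n$ is exact, because it is linear in the time density $\rho_n=1/v^{(n)}_3$;
\item $\nabla\cdot(\rho_nv^{(n)})=\nabla_a\cdot\tilde w=0$;
\item every slice satisfies $\int_{\TT^2}\rho_n(\cdot,z)\,\dd a=1$ exactly, since $g$ is mean-free and $T\circ(\Phi^z_{\tilde w})^{-1}$ preserves area.
\end{enumerate}
The foliated Moser map $R_n(a,z)=(\kappa_{n,z}(a),z)$ of Corollary \ref{cor:foliated-moser} is the identity near $z\in\{0,1\}$, where $\rho_n=1$, and pushes $\rho_n\,\dd a\,\dd z$ to Lebesgue measure. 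So $u^{(n)}=(R_n)_\star v^{(n)}$ is divergence-free, with $u^{(n)}_3=\rho_n^{-1}\circ R_n^{-1}>0$ and $u^{(n)}_3(\cdot,0)=1$, whence $\kappa_{u^{(n)}}=\Id$. The fibred conjugacy leaves the return map and the return time unchanged. The $\partial_z\kappa_{n,z}$ term in $u^{(n)}$ is exactly the compensating horizontal flow your transition layers were trying to build by hand.
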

\begin{proof}
Let $w(\cdot, t)$ be a smooth divergence-free velocity field on $\TT^2$ whose flow $\Phi^t_w$ satisfies $\Phi_w^1=T$. Choose a smooth nondecreasing function $\theta:[0,1]\to[0,1]$, equal to $0$ near $0$ and to $1$
near $1$, and reparametrise the time variable so that 
$$
\tilde w(t,x):=\theta'(t)w(\theta(t),x), \qquad \Phi_{\tilde w}^t:=\Phi^{\theta(t)}_w\,.
$$
Thus $\tilde w$ is divergence-free, vanishes near $t=0,1$,
and has flow $\Phi_{\tilde w}$ with $\Phi_{\tilde w}^0 =\Id$ and $\Phi_{\tilde w}^1=T$.

Choose $\eta\in C_c^\infty(0,1)$ with $\eta\geq0$ and
$\int_0^1\eta(z)\,\dd z=1$. For $n$ sufficiently large, we define $\rho_n : \TT^3 \to \RR$ and $v^{(n)}: \TT^3 \to \RR^3$ as follows
\begin{align*}
    \rho_n(x,y, z) & :=1-\frac{\eta(z)}{n}g \left (T((\Phi^z_{\tilde w})^{-1}(x,y))\right )>0 \,,
    \\
    v^{(n)}(x,y, z)& := \frac{1}{\rho_n(x,y, z)} \begin{pmatrix}
        \tilde w_1 (x,y, z)
        \\
        \tilde w_2 (x,y, z)
        \\
        1
    \end{pmatrix}\,.
\end{align*}
Following the computations given in Section \ref{subsec:autonomous-framework} we deduce that the Poincaré return time of $v^{(n)}$ is 
$$ \tau_{v^{(n)}} (x,y)= \int_0^1 \rho_n(\Phi^z_{\tilde w}(x,y),z)\,\dd z
=1-\frac{g(T(x,y))}{n},
$$
and its return map satisfies $\Phi_{v^{(n)}}^{\tau_{v^{(n)}}}(x,y,0)=(T(x,y),1)$.

Since $\tilde w$ is divergence-free, $\Phi^z_{\tilde w}$ and $T$ preserve Lebesgue measure, and since $g$ is mean-free, we have
$$
\nabla\cdot(\rho_n v^{(n)})=0, \qquad \int_{\TT^2}\rho_n(x,y, z)\,\dd x \, \dd y=1 \,.
$$
Thus $v^{(n)}$ preserves $\rho_n\,\dd x \, \dd y\,\dd z$, and
Corollary \ref{cor:foliated-moser} provides a smooth diffeomorphism $R_n(x,z)=(\kappa_{n,z}(x),z)$, equal to the identity in a neighbourhood of  $z=0$ and $z=1$,
such that
$$
(R_n)_\star(\rho_n\,\dd x\,\dd y\, \dd z)=\dd x\,\dd y\, \dd z.
$$
Consequently, the pushforward
$$
u^{(n)}:=(R_n)_\star v^{(n)}=(DR_n\,v^{(n)})\circ R_n^{-1}
$$
is smooth and divergence-free, with $u^{(n)}_3>0$. We now notice that the flows satisfy the following conjugation  property
$$ \Phi_{u^{(n)}}^t = R_n\circ \Phi_{v^{(n)}}^t \circ R_n^{-1} \,.
$$
Since $R_n$  is the identity in a neighbourhood of the section $\{z=0\}$, the conjugacy preserves both the section return map and the return time, concluding the proof. 
\end{proof}

With this construction established, we prove that the set $\mathcal{D}$ from Definition \ref{def:D} is non-empty.
\begin{lemma}
$\mathcal{D}$ is non-empty.
\end{lemma}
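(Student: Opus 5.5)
We take $T=T_N$ and $g$ from Proposition~\ref{prop:first growing mode}, with $N$ fixed and large, and let $u=u^{(n)}$ be the field given by Lemma~\ref{lemma:autonomous mimicking time-periodic}. The spectral parameter will be $\lambda=2\pi i n+\theta$, with $n$ large and $\theta$ close to $\theta_0:=\log\Lambda_N$ (any branch of the logarithm). Since $P_u=T_N$ and $\tau_u\circ P_u^{-1}=1-g/n$, the phase in \eqref{eq:inviscid section map} with $m=1$ is
$$
\e^{-\lambda\tau_u\circ P_u^{-1}}=\e^{-\theta}\,\e^{\theta g/n}\,\e^{2\pi i g},
$$
because $\e^{-2\pi i n}=1$. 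Hence $\mathcal D_{u,\lambda}$ is block lower triangular. Its horizontal block is $A_n(\theta):=\e^{-\theta}\e^{\theta g/n}\mathcal M_{T_N,g}$, acting on $X$. Its vertical diagonal block is the scalar operator $S_{n,\theta}b_v:=\e^{-\lambda\tau_u\circ P_u^{-1}}\,b_v\circ T_N^{-1}$. The off-diagonal entry is $-\e^{-\lambda \tau_u\circ P_u^{-1}}(D\tau_u\circ P_u^{-1})\cdot$, with $D\tau_u = O(1/n)$.

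\textbf{Step 1: the vertical block is harmless.} Since $\Re\theta\approx\log|\Lambda_N|\geq\log(c_0N^2)$, Lemmas~\ref{lemma:volume preserving diffeo on H^r} and~\ref{lemma:smooth sobolev multiplier} give
$$
\|S_{n,\theta}\|_{H^{-r-\delta}\to H^{-r-\delta}}\lesssim (c_0N^2)^{-1+o(1)}N^{2(r+\delta)}<1
$$
for $N$ large. This also holds for the constant mode, where composition is isometric. Therefore $1-S_{n,\theta}$ is invertible on the vertical part of $\mathcal X$. By the triangular structure, $1$ is an eigenvalue of $\mathcal D_{u,\lambda}$ if and only if it is an eigenvalue of $A_n(\theta)$, and the algebraic multiplicities coincide. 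Given a horizontal eigenvector $b$, the vertical component is $b_v=(1-S_{n,\theta})^{-1}(\text{coupling applied to }b)$. Note that the coupling is a bounded map $X\to H^{-r-\delta}$, since $D\tau_u$ is a smooth multiplier.

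\textbf{Step 2: eigenvalue $1$ for $A_n(\theta)$.} By Lemma~\ref{lemma:multipliers anisotropic} applied with $a=\e^{\theta g/n}-1$, we have $\|a\|_{C^1}=O(|\theta|/n)$, so $\e^{\theta g/n}\mathcal M_{T_N,g}\to\mathcal M_{T_N,g}$ in $X\to X$ operator norm as $n\to\infty$, locally uniformly in $\theta$. Since $\Lambda_N$ is an isolated, algebraically simple eigenvalue of $\mathcal M_{T_N,g}$, Kato's analytic perturbation theory applies. It gives, for $n$ large and $\theta$ in a fixed disc $|\theta-\theta_0|\le\rho$, a simple eigenvalue $\mu_n(\theta)$ of $\e^{\theta g/n}\mathcal M_{T_N,g}$, holomorphic in $\theta$, with $\mu_n\to\Lambda_N$ uniformly on the disc. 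We then solve $F_n(\theta):=\e^{\theta}-\mu_n(\theta)=0$. Since $F_n\to \e^\theta-\Lambda_N$ uniformly on the disc, and the limit has a simple zero at $\theta_0$, Rouch\'e's theorem yields a zero $\theta_n\to\theta_0$. For $\lambda=2\pi i n+\theta_n$, the operator $A_n(\theta_n)$ then has $1$ as an algebraically simple eigenvalue. This holds because simplicity of $\mu_n(\theta_n)$ for $\e^{\theta g/n}\mathcal M_{T_N,g}$ transfers to simplicity of $1$ after multiplying by the scalar $\e^{-\theta_n}$. Moreover $\Re\lambda=\Re\theta_n>0$. This gives condition (1) of Definition~\ref{def:D}.

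\textbf{Step 3: condition (2).} It suffices to check $m=1$. Using $\inf_a\tau_u\geq 1-\|g\|_\infty/n$ and $\|DT_N\|_{L^\infty}\lesssim N^2$, we get
$$
C_0\,\e^{-\Re\theta_n(1-\|g\|_\infty/n)}\|DT_N\|_{L^\infty}^{1-r}\lesssim C_0\,(c_0N^2)^{-1+o_n(1)}N^{2-2r}\lesssim N^{-2r+o(1)},
$$
which is $<1$ once $N$ is large, with $C_0$ independent of $u$ and $\lambda$. The main obstacle is the order of quantifiers. $N$ must be fixed first, large enough for both Step 1 and Step 3, and only afterwards may $n\to\infty$ be taken in Step 2. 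We must also check that the $o(1)$ losses (from $\e^{\theta g/n}$ and from $\theta_n\neq\theta_0$) are uniform on the disc, which they are because $g$ is fixed and bounded.
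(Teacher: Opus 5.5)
Your proof is correct, and its architecture matches the paper's. You use the same field $u=u^{(n)}$ with $P_u=T_N$ and $\tau_u\circ P_u^{-1}=1-g/n$, and the same ansatz $\lambda=2\pi i n+\theta$, which makes the horizontal block $\e^{-\theta}\e^{\theta g/n}\mathcal M_{T_N,g}$. As in the paper, the vertical block is a contraction on $H^{-r-\delta}$ and condition (2) is checked at $m=1$.

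The one real difference is how you produce $\theta_n$. The paper applies the Banach-space implicit function theorem to $F(b,\widetilde\theta,s)$ on $X\oplus\CC$, using a normalising functional $\ell_0$ and letting $s$ stand in for $1/n$. It checks by hand that $D_{b,\widetilde\theta}F$ is an isomorphism, and then obtains simplicity separately from convergence of Riesz projectors. You instead take Kato's holomorphic simple eigenvalue branch $\mu_n(\theta)$ of $\e^{\theta g/n}\mathcal M_{T_N,g}$ and apply Rouch\'e to the scalar equation $\e^{\theta}=\mu_n(\theta)$.

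Your route gives algebraic simplicity for free, and it is the same mechanism the paper later uses for openness (Lemma~\ref{lemma:holomorphic 1} and the Rouch\'e argument that follows). The implicit function theorem, in turn, hands you the eigenvector $b_n$ directly, with $C^1$ dependence on $1/n$. The nondegeneracy input is identical in both: $\theta_0$ is a simple zero of $\e^{\theta}-\Lambda_N$.

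One small hypothesis you should state: Lemma~\ref{lemma:autonomous mimicking time-periodic} requires $g$ to be mean-free. This holds for the choice $g_t=t f_1'(x)f_2'(y)$ in Proposition~\ref{prop:rank 1 limit simple}. Alternatively, you can subtract the mean, which only multiplies $\lambda_g$ by a unimodular constant.
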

\begin{proof}
We choose $u=u^{(n)}$ as in Lemma \ref{lemma:autonomous mimicking time-periodic}, with $T=T_N$ and $g \in C^\infty(\TT^2;\RR)$ mean-free given by Proposition \ref{prop:first growing mode}. Let $\Lambda_N=\e^{\theta_N}$ denote the eigenvalue of the operator
$$
\mathcal{M}_{T_N,g}:b \mapsto \e^{2 \pi i g} (DT_N b)\circ T_N^{-1}
$$
provided by Proposition \ref{prop:first growing mode}. Writing $\lambda=2 \pi i n+\theta_N+\widetilde\theta$, the exponential factor in \eqref{eq:inviscid section map} becomes
$$
\e^{-\theta_N} \e^{-\widetilde\theta(1-\frac{g}{n})+\theta_N\frac{g}{n}} \e^{2 \pi i g}.
$$
Thus, for all sufficiently large $n$, Corollary \ref{cor:factorisation1} yields the Lasota--Yorke inequalities
\begin{equation}
\label{eq:m=1 lasota yorke inequality}
\|\mathcal{D}_{u,\lambda} B\|_{\mathcal X} \leq C (N^{-2r}+N^{2r+2\delta-2})\|B\|_{\mathcal X}+C\|B\|_{\mathcal Y}, \quad \|\mathcal{D}_{u,\lambda} B\|_{\mathcal Y} \leq C \|B\|_{\mathcal Y},
\end{equation}
uniformly for $\widetilde\theta$ in a sufficiently small neighbourhood of $0$, where we have used the bound $|\Lambda_N^{-1}|=|\e^{-\theta_N}|\leq \frac{1}{c_0}N^{-2}$.
We next consider the first two components of $\mathcal{D}_{u^{(n)},\lambda}$, which yield the reduced operator
\begin{equation}
\label{eq:reduced map D}
b \mapsto \e^{-\theta_N}\e^{-\widetilde\theta(1-\frac{g}{n})+\theta_N\frac{g}{n}}\mathcal{M}_{T_N,g}b.
\end{equation}
Our aim is to show that, for every sufficiently large $n$, there exists $\widetilde\theta$ close to zero such that this operator has $1$ as an eigenvalue. To this end, we apply the implicit function theorem, which we recall below in its Banach-space formulation for convenience.

\begin{theorem}[\cite{Deimling1985}*{Theorem~15.1 and Corollary~15.1}]
\label{thm:inverse function}
Let $W,Y,Z$ be Banach spaces, 
and let $ F:W\times Y\to Z $
be a $C^1$ map. Suppose that $(w_0,y_0) \in W\times Y$ satisfies $ F(w_0,y_0)=0, $
and that the Fr\'echet  derivative with respect to $w$,
$ D_w F(w_0,y_0):W\to Z \,, $ is a bounded linear isomorphism.
Then there exist open neighbourhoods $U \subset W$ of $w_0$ and
$V \subset Y$ of $y_0$, and a unique $C^1$ map
$$
\varphi:V \to U
$$
such that $ \varphi (y_0)=  w_0$
and
$$
F(\varphi(y), y)=0
\qquad
\forall y\in V \,.
$$
\end{theorem}
Let $b_0 \in X$ be the eigenfunction with eigenvalue $1$ of $\e^{-\theta_N}\cM_{T_N ,g}$ guaranteed by Proposition \ref{prop:first growing mode}. Let furthermore $\Pi_N$ be the Riesz projector onto the eigenspace corresponding to $1$ of $\e^{-\theta_N}\cM_{T_N ,g}$. Then, by standard functional calculus (see e.g. \cite{K76}), it follows that $\Id-\e^{-\theta_N}\cM_{T_N ,g}:(\Id-\Pi_N)X \to (\Id-\Pi_N)X$ is a bounded linear isomorphism. Furthermore by Hahn--Banach there exists $\ell_0 \in X^\star$ so that $\ell_0(b_0)=1$.

We now set, in the notation of the implicit function theorem, $Z=W=X \oplus \CC$, $Y=\RR$, and let 
$$
F(b,\widetilde\theta, s)=\begin{pmatrix}
\e^{-\theta_N}\e^{-\widetilde\theta(1-sg(x,y))+\theta_Nsg(x,y)}\cM_{T_N,g}b-b 
\\
\ell_0(b)-1
\end{pmatrix}.
$$
By construction, $F(b_0,0,0)=0$. Furthermore, it is clear that $F$ is $C^1$ and its Fr\'echet derivative in $w$ at $(b_0,0,0)$ is given by 
$$
D_{b,\widetilde\theta} F(b_0,0,0) \begin{pmatrix}
b\\
z
\end{pmatrix}
=\begin{pmatrix}
\e^{-\theta_N} \cM_{T_N,g} b-b-zb_0
\\
\ell_0(b)
\end{pmatrix}\,.
$$
Let us verify this defines a bounded linear isomorphism. Boundedness is immediate, and so by the open mapping theorem it suffices to check bijectivity. We first prove injectivity. Suppose that $D_{b,\widetilde\theta} F(b_0,0,0) \begin{pmatrix}
b\\
z
\end{pmatrix}=(0,0)$. Then, write $b=\Pi_N b+(\Id-\Pi_N)b=ab_0+(\Id-\Pi_N)b$. Hence, we see that 
$$
(\e^{-\theta_N}\cM_{T_N,g}-\Id)(\Id-\Pi_N)b-zb_0=0.
$$
Since $(\e^{-\theta_N}\cM_{T_N,g}-\Id)$ maps $(\Id-\Pi_N)X$ injectively into itself, it follows that $(\Id-\Pi_N)b=0$, $z=0$. Hence, $b=ab_0$. But then, since furthermore $\ell_0(b)=0$, it follows that $a=0$, proving injectivity. For surjectivity, let $(\widetilde b,\omega) \in X \oplus \CC$, and write $\widetilde b=(\Id-\Pi_N)\widetilde b+a_{\widetilde b}b_0$. Then, we see that for $b=(\Id-\Pi_N)b+a_b b_0$
$$
(\e^{-\theta_N}\cM_{T_N,g}-\Id)(\Id-\Pi_N)b-zb_0=(\Id-\Pi_N)\widetilde b+a_{\widetilde b} b_0.
$$
Hence, set $(\Id-\Pi_N)b=(\e^{-\theta_N}\cM_{T_N,g}-\Id)^{-1}(\Id-\Pi_N)\widetilde b$, and $z=-a_{\widetilde b}$. Finally, to determine $a_b$, we rearrange 
$$
\ell_0((\Id-\Pi_N)b)+a_b=\omega,
$$
to get $a_b=\omega-\ell_0((\Id-\Pi_N)b)$. 
Thus, the conditions of the implicit function theorem are satisfied. For all sufficiently large $n$, there exist $\widetilde\theta_n$ and $b_n\in X$, with $\widetilde\theta_n\to0$ and $b_n\to b_0$ as $n\to\infty$, such that 
$$
\e^{-\theta_N} \e^{-\widetilde\theta_n(1-\frac{g}{n})+\theta_N\frac{g}{n}}\cM_{T_N,g} b_n=b_n.
$$ 
Furthermore, since $\e^{-\theta_N} \e^{-\widetilde\theta_n(1-\frac{g}{n})+\theta_N\frac{g}{n}}\cM_{T_N,g} \to \e^{-\theta_N}\cM_{T_N,g}$ in operator norm as $n \to \infty$, it follows that the respective Riesz projectors corresponding to a contour surrounding the point $1 \in \CC$ are isomorphic for $n$ large enough, implying that the eigenvalue $1$ is simple for the operator $\e^{-\theta_N} \e^{-\widetilde\theta_n(1-\frac{g}{n})+\theta_N\frac{g}{n}}\cM_{T_N,g}$, for all $n$ sufficiently large.
Hence, we have proven the existence of the eigenvalue $1$ for the first two components of $\mathcal{D}_{u,\lambda}$. Thus, it remains to deal with the third component. Writing the full eigenvector as $B_n=(b_n,b_{n,v})$, we simply need to solve 
$$
\e^{-\theta_N} \e^{-\widetilde\theta_n(1-\frac{g}{n})+\theta_N\frac{g}{n}} \e^{2\pi i g } \left(\frac{1}{n}\nabla g \cdot DT_N(T_N^{-1})b_n\circ T_N^{-1}+b_{n,v}\circ T_N^{-1}\right)=b_{n,v}.
$$
But noting that for $n$ large, the proof of Proposition \ref{prop:LY} implies that the operator norm of $b_{n,v} \mapsto \e^{-\theta_N} \e^{-\widetilde\theta_n(1-\frac{g}{n})+\theta_N\frac{g}{n}} \e^{2\pi i g}(b_{n,v} \circ T_N^{-1})$ on $H^{-r-\delta}(\TT^2)$ is strictly less than $1$, we may recover $b_{n,v}$ via a Neumann series. Invertibility of the vertical block also shows that the algebraic simplicity of the eigenvalue is retained after introducing the vertical component. Together with the Lasota--Yorke inequality \eqref{eq:m=1 lasota yorke inequality}, this completes the proof.
\end{proof}

\subsection{$\mathcal{D}$ is open}
\label{sec:openness}
We next show that $\mathcal{D} \subset C^\infty_\sigma (\TT^3)$ is an open set where $C^\infty$ is equipped with the $C^k$ topology for $k$ large enough. This property turns out to be  delicate to establish, since the map $u \to \mathcal{D}_{u,\lambda}$ is in general not continuous in the operator norm topology on $\mathcal X$, but only weak-strong continuous. We start by proving the following key lemma.

\begin{lemma}
\label{lemma:perturbing velocity}
Let $u_1 \in \mathcal{D}$ and $u_2 \in C^\infty_\sigma$, and let $K \subset \CC$ be a compact set. Then, there exist $\eta >0$,  $k \in \NN$, and $C>0$ depending only on $u_1$ and $K$, such that, if $\|u_1-u_2\|_{C^k}<\eta$, the following estimates hold uniformly for $\lambda\in K$:
$$
\|(\mathcal{D}_{u_1,\lambda}-\mathcal{D}_{u_2,\lambda})B \|_{\mathcal Y} \leq C\|u_1-u_2\|_{C^{k}}^{\delta}\|B\|_{\mathcal X}.
$$
\end{lemma}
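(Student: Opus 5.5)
The plan is to write $\mathcal{D}_{u,\lambda}=\cS_{P_u,h_u,g_u}$ in the SFS form of Proposition~\ref{prop:LY}, with $h_u=-D\tau_u$ and phase $\e^{-\lambda\tau_u\circ P_u^{-1}}$. The difference $\mathcal{D}_{u_1,\lambda}-\mathcal{D}_{u_2,\lambda}$ will then be split into three pieces: a phase difference, a matrix-coefficient difference, and a composition difference. Throughout, by \eqref{eq:continuity in u}, $\|P_{u_1}-P_{u_2}\|_{C^{k'}}+\|\tau_{u_1}-\tau_{u_2}\|_{C^{k'}}\lesssim\|u_1-u_2\|_{C^{k'+2}}$, and the $C^{k'}$ norms of $P_{u_i}^{\pm1},\tau_{u_i}$ are bounded uniformly for $u_2$ in a $C^k$-ball around $u_1$ and $\lambda\in K$. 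I will only use $\|\cdot\|_{\mathcal X}$-control of $B$ and aim for a $\mathcal Y$ bound.

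The phase and coefficient terms are the easy part. Write $A_i=\e^{-\lambda\tau_{u_i}\circ P_{u_i}^{-1}}M_i\circ P_{u_i}^{-1}$, with $M_i$ the block matrix. Then $(A_1-A_2)(B\circ P_{u_1}^{-1})$ is a smooth matrix multiplier of $C^1$ size $\lesssim\|u_1-u_2\|_{C^{k}}$ applied to $B\circ P_{u_1}^{-1}$. The field $B\circ P_{u_1}^{-1}$ is controlled in $\mathcal X$ by the $g=0$, $h=0$ Lasota--Yorke estimate; more precisely, $\mathcal{M}_{P_{u_1}}$ is bounded on $\mathcal X$, and $DP$ is invertible and smooth. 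Lemma~\ref{lemma:multipliers anisotropic}, applied to the difference multiplier in its weak form, gives $\lesssim\|u_1-u_2\|_{C^k}\|B\|_{\mathcal X}$. Here I use that a multiplier small in $C^1$ is small on $\mathcal Y$, and I will have to handle non-scalar matrices by componentwise commutator arguments as in that lemma. This piece even gives exponent $1$.

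The main obstacle is the composition difference $\mathcal{M}(B\circ P_{u_1}^{-1}-B\circ P_{u_2}^{-1})$, because composition is not norm-continuous on distributions. I will interpolate. On the smooth scale, for $f\in H^{\sigma}$ with $\sigma\in[0,1]$,
\[
\|f\circ P_1^{-1}-f\circ P_2^{-1}\|_{H^{\sigma-1}}\lesssim\|P_1^{-1}-P_2^{-1}\|_{C^1}\|f\|_{H^{\sigma}},
\]
by duality and the mean value formula $f\circ P_1^{-1}-f\circ P_2^{-1}=\int_0^1\nabla f(\gamma_t)\cdot(P_1^{-1}-P_2^{-1})\,\dd t$ along the interpolating maps $\gamma_t$, which are diffeomorphisms for small perturbations. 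Interpolating this with the trivial bound (loss $0$, no smallness) gives a loss of $\delta$ derivatives with a gain of $\|u_1-u_2\|_{C^k}^{\delta}$. This is exactly the gap between the $\mathcal X$ and $\mathcal Y$ norms in each Hodge component. I will apply this componentwise after the Hodge split of $B$:
\begin{itemize}
\item for the divergence-free part, write $\PP b=\nabla^\perp\psi$ and use the identity \eqref{eq:zeldovich identities} to reduce to the scalar $\psi\in H^{1-r}$, comparing in $H^{1-r-\delta}$;
\item for the gradient part, use the divergence identity on $\nabla\cdot b\in H^{r-1}$;
\item the vertical and mean parts are handled directly.
\end{itemize}
Cross-couplings between Hodge components are handled as in Proposition~\ref{prop:LY}, where they are always estimated with weak norms.
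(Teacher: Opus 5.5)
There is a genuine gap, and it sits in the step you call easy. Your telescoping
\[
\mathcal D_{u_1,\lambda}B-\mathcal D_{u_2,\lambda}B=(A_1-A_2)\bigl(B\circ P_{u_1}^{-1}\bigr)+A_2\bigl(B\circ P_{u_1}^{-1}-B\circ P_{u_2}^{-1}\bigr)
\]
detaches the matrix $DP_{u_i}\circ P_{u_i}^{-1}$ from the composition with $P_{u_i}^{-1}$. This destroys the structural fact on which the anisotropic spaces rest, namely $(\Id-\PP)\Pi_h\mathcal D_{u_i,0}\PP=0$, i.e.\ $(DP\,\nabla^\perp\psi)\circ P^{-1}=\nabla^\perp(\psi\circ P^{-1})$. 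The space $\mathcal Y$ measures a divergence-free input in $H^{-r-\delta}$ but the gradient part of the output in $H^{r-\delta}$. A coupling from $\PP$ to $\Id-\PP$ therefore can never be ``estimated with weak norms''. In Proposition~\ref{prop:LY} it vanishes identically, and the phase produces only a commutator with a \emph{scalar}, which gains a derivative. A non-scalar matrix multiplier $M$ does neither: $(\Id-\PP)M\PP$ has order zero, with principal symbol $|\xi|^{-4}\,\xi\xi^{T}M\,\xi^{\perp}(\xi^{\perp})^{T}$, which vanishes only if $M$ is a multiple of the identity. Consequently:
\begin{itemize}
\item matrix multipliers are bounded on neither $\mathcal X$ nor $\mathcal Y$;
\item the commutator argument of Lemma~\ref{lemma:multipliers anisotropic} does not extend to them;
\item $B\circ P^{-1}$ is \emph{not} controlled in $\mathcal X$, since the divergence of $(\nabla^\perp\psi)\circ P^{-1}$ contains second derivatives of $\psi$ unless $P$ is a translation.
\end{itemize}
In a model case, $P_{u_1}=\Id$, $P_{u_2}(x,y)=(x+c\sin 2\pi y,\,y)$ with $c$ small, $\lambda=0$ and $B=\nabla^\perp\e^{2\pi i N x}$, your first piece has a gradient part of $H^{r-\delta}$-size $\sim cN^{1+r-\delta}$, while $\|B\|_{\mathcal X}\sim N^{1-r}$. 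The ratio $cN^{2r-\delta}$ is unbounded. These gradient contributions of your two pieces cancel in the sum (exactly if $\lambda=0$, and up to a smoothing scalar commutator otherwise), but each piece alone is unbounded from $\mathcal X$ to $\mathcal Y$. For the same reason your bullet points cannot be applied to the second piece. There $A_2(b\circ P_{u_1}^{-1})$ is not of the form $(DP\,b)\circ P^{-1}$ for a single volume-preserving $P$, so neither identity in \eqref{eq:zeldovich identities} holds.

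The repair, which is essentially how the paper argues, has three steps.

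First, split off only the \emph{scalar} phase,
\[
\mathcal D_{u_1,\lambda}-\mathcal D_{u_2,\lambda}=\e^{-\lambda\tau_{u_1}\circ P_{u_1}^{-1}}\bigl(\mathcal D_{u_1,0}-\mathcal D_{u_2,0}\bigr)+\bigl(\e^{-\lambda\tau_{u_1}\circ P_{u_1}^{-1}}-\e^{-\lambda\tau_{u_2}\circ P_{u_2}^{-1}}\bigr)\mathcal D_{u_2,0}.
\]
Lemma~\ref{lemma:multipliers anisotropic} handles this, and here your exponent $1$ is correct. From then on, keep $DP_{u_i}$ attached to $\circ P_{u_i}^{-1}$.

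Second, treat the output components measured in negative norms (mean, $\PP$-part, vertical part). Here you may split freely, e.g.\ as $\bigl((M_1B)\circ P_{u_1}^{-1}-(M_1B)\circ P_{u_2}^{-1}\bigr)+\bigl((M_1-M_2)B\bigr)\circ P_{u_2}^{-1}$. Smooth matrix multipliers are harmless on $H^{-r}$, and your interpolated composition bound applies.

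Third, for the gradient component the cancellation must be used for each $i$ \emph{before} forming the difference. Since $(\Id-\PP)\Pi_h\mathcal D_{u_i,0}\PP=0$ (and constants are pushed forward to divergence-free fields), you may replace the horizontal input by $(\Id-\PP)b\in H^r$ in both operators. Only then split, using the composition bound from $H^r$ to $H^{r-\delta}$, which needs $\delta<r$. Equivalently, by the divergence identity,
\[
(\Id-\PP)\Pi_h\bigl(\mathcal D_{u_1,0}-\mathcal D_{u_2,0}\bigr)B=\nabla\Delta^{-1}\bigl[(\nabla\cdot b)\circ P_{u_1}^{-1}-(\nabla\cdot b)\circ P_{u_2}^{-1}\bigr],
\]
which is a purely scalar composition difference at regularity $H^{r-1}$.

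Your two bullet points are correct precisely when applied in this way to the full operators $\Pi_h\mathcal D_{u_i,0}$, rather than to the pieces of your split.
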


\begin{proof}
We first notice that 
$$
\mathcal{D}_{u_1,\lambda}-\mathcal{D}_{u_2,\lambda}=\e^{-\lambda \tau_{u_1}\circ P_{u_1}^{-1}}(\mathcal{D}_{u_1,0}-\mathcal{D}_{u_2,0})+(\e^{-\lambda \tau_{u_1}\circ P_{u_1}^{-1}}-\e^{-\lambda \tau_{u_2}\circ P_{u_2}^{-1}})\mathcal{D}_{u_2,0} \,.
$$
From Lemma \ref{lemma:multipliers anisotropic}, followed by \eqref{eq:continuity in u} it suffices to estimate $\mathcal{D}_{u_1, 0} - \mathcal{D}_{u_2, 0}$.

For a volume-preserving diffeomorphism $Q :\TT^2 \to \TT^2$, we define $\mathcal{T}_Q f=f\circ Q$. Furthermore, we write $\dd_\tau (\tau_1, \tau_2) = \|\tau_{u_1}-\tau_{u_2}\|_{C^k}$ and $\dd_P (Q_1, Q_2)= \|Q_1 -Q_2\|_{C^k}+ \|Q_1^{-1}-Q_2^{-1}\|_{C^k}$. We define $Q_s= Q_1 + s (Q_2 - Q_1)$ for $s\in [0,1]$ and if $\eta >0$ is sufficiently small we have that $\det (D Q_s)\geq 1/2$ for any $s\in [0,1]$. Therefore, we  can compute 
$$ f(Q_2 (x)) - f(Q_1 (x)) = \int_0^1 \nabla f (Q_s (x)) \cdot (Q_2(x) - Q_1(x)) ds\,.$$
Hence, if $\eta >0$ is sufficiently small we deduce by a change of variables that  
$$ \| \mathcal{T}_{Q_1} f - \mathcal{T}_{Q_2} f\|_{L^2} \leq 2 \| f\|_{L^2}\,, \qquad  \| \mathcal{T}_{Q_1} f - \mathcal{T}_{Q_2} f\|_{L^2} \leq 2 \dd_P (Q_1, Q_2) \| f\|_{H^1} \,.$$
Interpolating in the domain yields
$\|\mathcal{T}_{Q_1} f - \mathcal{T}_{Q_2} f\|_{L^2}\leq C \dd_P (Q_1,Q_2)^q\|f\|_{H^q}$ for $0\leq q\leq1$.
Furthermore, interpolating this with the uniform bound
$\|\mathcal{T}_{Q_1} f - \mathcal{T}_{Q_2} f\|_{H^q}\leq C\|f\|_{H^q}$ gives
\begin{equation}
\label{eq:perturbation-positive-composition}
\|\mathcal{T}_{Q_1} f - \mathcal{T}_{Q_2} f\|_{H^{q-\delta}}
\leq C \dd_P(Q_1,Q_2)^\delta\|f\|_{H^q},
\qquad \delta\leq q\leq1.
\end{equation}
For negative regularities, volume preservation yields
$$
\langle \mathcal{T}_{Q_1} f - \mathcal{T}_{Q_2} f,\varphi\rangle =
\langle f,\varphi\circ Q_1^{-1}-\varphi\circ Q_2^{-1}\rangle.
$$
Applying \eqref{eq:perturbation-positive-composition} with
$q=s+\delta$ and arguing by duality, we obtain
\begin{equation}
\label{eq:perturbation-negative-composition}
\|\mathcal{T}_{Q_1} f - \mathcal{T}_{Q_2} f\|_{H^{-s - \delta}} \leq C d_P^\delta (Q_1, Q_2)\|f\|_{H^{-s}},
\qquad s\in\{r,r+\delta\}.
\end{equation}
For a diffeomorphism $Q:\TT^2\to\TT^2$, we define $\cA_{Q,\tau}B=\begin{pmatrix}DQ&0\\-D\tau&1\end{pmatrix}B$
and $\cA_Qb=DQb$.
We set $P_i=P_{u_i}$ and $\tau_i=\tau_{u_i}$.
Then we have the uniform Sobolev multiplier estimates for any $s \in [-r- \delta, r+\delta]$
\begin{equation}\label{eq:estimate-A-0}
     \| \cA_{P_1} b \|_{H^s} \leq C \| b \|_{H^s}\,, \qquad \| (\cA_{P_1}- \cA_{P_2})  b \|_{H^s} \leq C \dd_P (P_1, P_2)   \| b \|_{H^s} \,,
\end{equation}
and 
\begin{equation} \label{eq:estimate-A}
    \| \cA_{P_1, \tau_1} B \|_{H^s} \leq C \| B \|_{H^s}\,, \qquad \| (\cA_{P_1, \tau_1}- \cA_{P_2, \tau_2})  B \|_{H^s} \leq C (\dd_P (P_1, P_2) + \dd_{\tau} (\tau_1, \tau_2) ) \| B \|_{H^s} \,.
\end{equation}
Therefore,  we have 
\begin{align*}
    \mathcal{D}_{u_1, 0} - \mathcal{D}_{u_2, 0} & = \mathcal{T}_{P_1^{-1}}\circ  \cA_{P_1, \tau_1} - \mathcal{T}_{P_2^{-1}}\circ  \cA_{P_2, \tau_2}
    \\
    &= (\mathcal{T}_{P_1^{-1}} - \mathcal{T}_{P_2^{-1}})\circ  \cA_{P_1, \tau_1}   + \mathcal{T}_{P_2^{-1}}\circ  ( \cA_{P_1 , \tau_1} - \cA_{P_2, \tau_2}) \,.
\end{align*}
Let $\Pi_h:\CC^3\to\CC^2$ denote the projection onto the first two components, and write $B=(b,B_v)\in\CC^3$, with $b\in\CC^2$.
We start by analysing the first two components. We notice that $\Pi_h\cA_{P,\tau}=\cA_P\Pi_h$. Hence, we combine \eqref{eq:perturbation-negative-composition} and \eqref{eq:estimate-A-0} to deduce that 
$$\|  \Pi_h (\mathcal{D}_{u_1,0} - \mathcal{D}_{u_2, 0}) B\|_{H^{-r - \delta} } \leq C \dd_P^\delta (P_1, P_2)   \| b \|_{H^{-r}} \,.$$
For the positive-regularity component, we use the crucial identity $(\Id-\PP)\mathcal T_{P_i^{-1}}\circ\cA_{P_i}\PP=0$,
since pushforward by a volume-preserving diffeomorphism preserves
divergence-free fields. Consequently, for any $i =1,2$ we have
$$(\Id-\PP)\mathcal T_{P_i^{-1}}\circ\cA_{P_i}b
=(\Id-\PP)\mathcal T_{P_i^{-1}}\circ\cA_{P_i}(\Id-\PP)b \,.$$
Hence, applying \eqref{eq:perturbation-positive-composition} and  \eqref{eq:estimate-A-0} we obtain
$$\|(\Id-\PP)\Pi_h(\mathcal D_{u_1,0}-\mathcal D_{u_2,0})B\|_{H^{r-\delta}}
\leq C d_P^\delta(P_1,P_2)\|(\Id-\PP)b\|_{H^r}\,.$$

For the vertical projection $\Pi_v (\mathcal{D}_{u_1, 0} - \mathcal{D}_{u_2,0}) B$ we simply use \eqref{eq:estimate-A} instead of \eqref{eq:estimate-A-0}
and we have that 
$$\| \Pi_v (\mathcal{D}_{u_1, 0} - \mathcal{D}_{u_2,0}) B\|_{H^{-r-2\delta}}
\leq C\left(
d_P^\delta (P_1, P_2)\|b_v\|_{H^{-r-\delta}}
+(d_P^\delta (P_1, P_2)+d_\tau (\tau_1, \tau_2))\|b\|_{H^{-r}}
\right)\,,$$
which concludes the proof.
\end{proof}

The final ingredient needed for proving openness of $\mathcal{D}$ is the following statement about the spectral properties of $\lambda \mapsto \mathcal{D}_{u,\lambda}$.

\begin{lemma}
\label{lemma:holomorphic 1}
Let $u_0 \in \mathcal{D}$. Then the following hold:
\begin{enumerate}
    \item the map $\lambda\mapsto\mathcal{D}_{u_0,\lambda}$
is holomorphic with values in the space of bounded linear
operators on $\mathcal{X}$\,,
    \item there exists some $\lambda_0 \in \CC$ with $\Re (\lambda_0) >0$ for which $\mathcal{D}_{u_0,\lambda_0}$ admits $1 \in \CC$ as an isolated, algebraically simple eigenvalue\,,
    \item let $\zeta(\lambda)$ denote the isolated, algebraically simple eigenvalue of $\mathcal{D}_{u_0,\lambda}$. The map $\lambda\mapsto\zeta(\lambda)$ is well defined and holomorphic in a neighbourhood of $\lambda_0$.
\end{enumerate}
In fact, the first assertion holds more generally for every
$u_0\in C^\infty_\sigma(\TT^3;\RR^3)$ with $(u_0)_3>0$.
\end{lemma}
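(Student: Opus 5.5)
For (1), I would write $\mathcal{D}_{u_0,\lambda}=\mathcal{M}_\lambda\circ\mathcal{D}_{u_0,0}$. Here $\mathcal{M}_\lambda$ denotes multiplication by the smooth scalar $\e^{-\lambda\tau_{u_0}\circ P_{u_0}^{-1}}$, and $\mathcal{D}_{u_0,0}$ is bounded on $\mathcal X$ by Corollary \ref{cor:factorisation1} with $\lambda=0$. It therefore suffices to prove that $\lambda\mapsto\mathcal M_\lambda$ is holomorphic into $\mathcal L(\mathcal X)$. Set $\varphi:=\tau_{u_0}\circ P_{u_0}^{-1}\in C^\infty(\TT^2;\RR)$ and expand
$$
\e^{-\lambda\varphi}=\sum_{j\geq0}\frac{(-\lambda)^j}{j!}\varphi^j .
$$
Lemma \ref{lemma:multipliers anisotropic} gives
$$
\|\varphi^jB\|_{\mathcal X}\leq C\bigl(\|\varphi\|_{L^\infty}^j+j\|\varphi\|_{L^\infty}^{j-1}\|\nabla\varphi\|_{L^\infty}\bigr)\|B\|_{\mathcal X},
$$
where I use $\|\cdot\|_{\mathcal Y}\lesssim\|\cdot\|_{\mathcal X}$. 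Hence the series converges in operator norm on $\mathcal X$ locally uniformly in $\lambda\in\CC$, which yields an entire operator-valued function. Nothing in this argument uses $u_0\in\mathcal D$, only that $u_0$ is smooth with $(u_0)_3>0$. This gives the final remark of the lemma.

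For (2), I would simply take $\lambda_0$ to be the $\lambda$ provided by Definition \ref{def:D}. Lemma \ref{lemma:D implies lasota yorke} gives $r_{\mathrm{ess}}(\mathcal{D}_{u_0,\lambda_0})\leq 1-\eta<1$. So $1$, being an eigenvalue, lies outside the essential spectrum, and by Proposition \ref{prop:hennion} and \cite{K76} it is isolated with finite algebraic multiplicity. That multiplicity is one by condition (1) of Definition \ref{def:D}.

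For (3), I would use analytic perturbation theory of an isolated simple eigenvalue. Lemma \ref{lemma:D implies lasota yorke} also makes $r_{\mathrm{ess}}(\mathcal{D}_{u_0,\lambda})\leq1-\eta$ uniformly for $\lambda$ in a neighbourhood of $\lambda_0$. Fix a small circle $\gamma$ around $1$ that lies in $\{|z|>1-\eta\}$ and meets no other spectrum of $\mathcal{D}_{u_0,\lambda_0}$. By (1) the operators $\mathcal{D}_{u_0,\lambda}$ depend continuously on $\lambda$ in norm, so for $\lambda$ near $\lambda_0$ the contour $\gamma$ stays in the resolvent set. The resolvent $(z-\mathcal{D}_{u_0,\lambda})^{-1}$ is then jointly holomorphic on $\gamma\times$(neighbourhood), because inversion is analytic on invertible operators. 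It follows that the Riesz projector
$$
\Pi(\lambda)=\frac{1}{2\pi i}\oint_\gamma(z-\mathcal{D}_{u_0,\lambda})^{-1}\,\dd z
$$
is holomorphic in $\lambda$. Its rank is locally constant, being equal to $1$ at $\lambda_0$, since projectors close in norm have equal rank. So $\mathcal{D}_{u_0,\lambda}$ has exactly one eigenvalue $\zeta(\lambda)$ inside $\gamma$, and it is algebraically simple.

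For holomorphy of $\zeta$, I would fix $e_0\in\operatorname{Ran}\Pi(\lambda_0)$ and $\ell\in\mathcal X^\ast$ with $\ell(e_0)=1$. Set $e(\lambda)=\Pi(\lambda)e_0$, which is nonzero near $\lambda_0$, and write
$$
\zeta(\lambda)=\frac{\ell(\mathcal{D}_{u_0,\lambda}e(\lambda))}{\ell(e(\lambda))}.
$$
This is a quotient of holomorphic scalar functions with nonvanishing denominator near $\lambda_0$. The only point needing care is the uniform essential spectral bound near $\lambda_0$, which keeps $\gamma$ isolated from the essential spectrum; Lemma \ref{lemma:D implies lasota yorke} already supplies this. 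Everything else is standard Kato theory (\cite{K76}, Chapter VII).
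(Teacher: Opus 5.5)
Your proposal is correct and follows the paper's route: the power-series expansion of $\e^{-\lambda\tau_{u_0}\circ P_{u_0}^{-1}}$ combined with Lemma \ref{lemma:multipliers anisotropic} for (1), and Lemma \ref{lemma:D implies lasota yorke} for (2). For (3) you use the standard Kato perturbation theory that the paper simply cites, though you spell out the Riesz-projector argument; there the uniform essential-spectral bound near $\lambda_0$ is not actually needed, since norm continuity together with the rank-one projector already suffices.

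The paper's proof also does something extra. It applies the analytic Fredholm theorem, using a point $\lambda_\star$ of large real part where $\|\mathcal{D}^m_{u_0,\lambda_\star}\|<1$, to show that $\lambda_0$ is an isolated zero of $\zeta-1$. This is not part of the statement as written, but it is what the later Rouch\'e arguments rely on.
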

\begin{proof}
We start by proving the first point. We notice that
$$
\mathcal{D}_{u_0,\lambda} B=\sum_{n \geq 0}\frac{(-1)^n\lambda^n(\tau_{u_0}\circ P_{u_0}^{-1})^n}{n!}  \begin{pmatrix}
DP_{u_0} \circ P_{u_0}^{-1} & 0 \\
-D \tau_{u_0} \circ P_{u_0}^{-1} & 1
\end{pmatrix} B\circ P_{u_0}^{-1}\,,
$$
and using the fact that multiplication by a smooth function is a bounded operator on our anisotropic space, with operator norm bounded by the function's $C^1$ norm (see Lemma \ref{lemma:multipliers anisotropic}), we deduce the desired holomorphicity immediately from the power series representation.
Hence, since the eigenvalue $1$ is algebraically simple, the existence of the holomorphic function $\zeta(\lambda)$ follows immediately by standard perturbation theory \cite{K76}*{Chapter VII, Theorem 1.8}. The second point follows immediately by Lemma \ref{lemma:D implies lasota yorke}. It remains to show the third point. Let $u_0 \in \mathcal{D}$, and pick  $m \in \NN$ so that $C_0 \e^{-\Re(\lambda_0)\inf_{a \in \TT^2} \tau_{u_0}^m(a)}\|DP_{u_0}^m\|^{1-r}_{L^\infty}<1$ by Definition \ref{def:D}. Then, there exists a neighbourhood $\mathcal{V}$ of $\lambda_0$ such that for all $\lambda \in \mathcal{V}$ we have  $\Re(\lambda) \geq \frac{\Re(\lambda_0)}{2} >0$ and  $C_0\e^{-\Re(\lambda)\inf_{a \in \TT^2} \tau_{u_0}^m(a)}\|DP_{u_0}^m\|^{1-r}_{L^\infty}<1$, where $C_0$ is the constant in Definition \ref{def:D} related to $\lambda_0$. Therefore, by Lemma \ref{lemma:D implies lasota yorke} and Proposition \ref{prop:hennion}, $\Id-\mathcal{D}^m_{u_0,\lambda}$ is  Fredholm for all $\lambda \in \mathcal{V}$. Furthermore, we observe that by Corollary \ref{cor:factorisation1} we can take a connected open set $\mathcal{V}$ containing $\{\lambda \in \CC: \Re (\lambda )\geq \Re (\lambda_0)\,, \Im (\lambda) = \Im (\lambda_0) \}$ and satisfying $C_0\e^{-\Re(\lambda)\inf_{a \in \TT^2} \tau_{u_0}^m(a)}\|DP_{u_0}^m\|^{1-r}_{L^\infty}<1 $ for all $\lambda \in \mathcal{V}$. It follows that there exists $\lambda_\star \in \mathcal{V}$ with $\Re (\lambda_\star)$ sufficiently large such that $\|\mathcal{D}^m_{u_0,\lambda_\star}\|_{\mathcal X \to \mathcal X} <1$.
Therefore, $\Id-\mathcal D_{u_0,\lambda}^m$ is a holomorphic family of Fredholm operators on a connected open set containing $\lambda_0$, and also containing a point $\lambda_\star$ where it is bijective. Its Fredholm index is zero throughout $\mathcal V$, by local constancy of the index. As such, by the analytic Fredholm theorem, it follows that the set of values $\lambda\in\mathcal V$ where $\Id-\mathcal D_{u_0,\lambda}^m$ is non-invertible is discrete, see \cite{K76}*{Chapter VII, proof of Theorem 1.9}. In particular, in the neighbourhood of $\lambda_0$ where
$\zeta$ is defined, $\zeta(\lambda)=1$ implies that
$\Id-\mathcal D_{u_0,\lambda}^m$ is non-invertible.
Consequently, $\lambda_0$ is an isolated zero of
$\zeta(\lambda)-1$, proving the third point.
\end{proof}
We now have all the ingredients needed to complete the proof of openness of $\mathcal{D}$.
\begin{proposition}
There exists $k \in \NN$ so that $\mathcal{D}$ is open in the $C^k$ topology on $C^\infty_\sigma(\TT^3)$.
\end{proposition}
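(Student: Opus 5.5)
Fix $u_0\in\mathcal D$ with associated $\lambda_0$, $\Re\lambda_0>0$, as in Definition \ref{def:D}. We will show that every $u\in C^\infty_\sigma$ with $\|u-u_0\|_{C^k}$ small lies in $\mathcal D$. The strategy combines three ingredients. The uniform Lasota--Yorke inequality of Lemma \ref{lemma:D implies lasota yorke} holds on a neighbourhood $\mathcal V$ of $(u_0,\lambda_0)$. The weak--strong continuity of Lemma \ref{lemma:perturbing velocity} controls $u\mapsto\mathcal D_{u,\lambda}$. Finally, holomorphy in $\lambda$ (Lemma \ref{lemma:holomorphic 1}) lets us recover the exact eigenvalue $1$ by a Rouch\'e argument.

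\textbf{Step 1: uniform spectral control.} Choose a small closed disc $\overline{B}_\rho(\lambda_0)$ with $\Re\lambda>0$ on it, and $\eta_0>0$ such that $\{u:\|u-u_0\|_{C^k}<\eta_0\}\times\overline B_\rho(\lambda_0)\subset\mathcal V$. Lemma \ref{lemma:D implies lasota yorke} then gives constants $C,M$ and $1-\eta$, independent of the pair, such that
\[
\|\mathcal D^m_{u,\lambda}B\|_{\mathcal X}\le C(1-\eta)^m\|B\|_{\mathcal X}+CM^m\|B\|_{\mathcal Y}.
\]
The corresponding $\mathcal Y$ bound comes from Corollary \ref{cor:factorisation1} together with \eqref{eq:continuity in u}. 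Condition (2) of Definition \ref{def:D} is an open condition: for a fixed $m$ it involves only $\inf\tau^m_u$ and $\|DP^m_u\|_{L^\infty}$, and both depend continuously on $u$ in $C^{k}$ by \eqref{eq:continuity in u}. It therefore persists for $(u,\lambda)$ near $(u_0,\lambda_0)$.

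\textbf{Step 2: Keller--Liverani in $(u,\lambda)$.} By Lemma \ref{lemma:holomorphic 1}, $\zeta(\lambda)-1$ has an isolated zero at $\lambda_0$. Shrinking $\rho$, we may assume $\zeta(\lambda)\neq1$ on $\partial B_\rho(\lambda_0)$, with $\zeta$ defined and holomorphic on $\overline B_\rho(\lambda_0)$. Lemma \ref{lemma:perturbing velocity} gives
\[
\|\mathcal D_{u,\lambda}-\mathcal D_{u_0,\lambda}\|_{\mathcal X\to\mathcal Y}\le C\|u-u_0\|_{C^k}^\delta,
\]
uniformly in $\lambda\in\overline B_\rho(\lambda_0)$. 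Together with Step 1, the parameter-uniform Keller--Liverani theorem \ref{thm:keller liverani} applies. For a small circle $\Gamma$ around the simple eigenvalue $\zeta(\lambda)$ (chosen uniformly by compactness in $\lambda$), the spectral projector
\[
\Pi_{u,\lambda}=\frac1{2\pi i}\oint_\Gamma(z-\mathcal D_{u,\lambda})^{-1}\dd z
\]
then has rank one for $\|u-u_0\|_{C^k}$ small. This yields a unique eigenvalue $\zeta_u(\lambda)$ of $\mathcal D_{u,\lambda}$ inside $\Gamma$, which is algebraically simple. Keller--Liverani also gives $\sup_\lambda|\zeta_u(\lambda)-\zeta(\lambda)|\to0$. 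Moreover, $\zeta_u$ is holomorphic in $\lambda$, since $\lambda\mapsto\mathcal D_{u,\lambda}$ is holomorphic on $\mathcal X$ for each fixed $u$ (the general statement in Lemma \ref{lemma:holomorphic 1}), and the eigenvalue is isolated and simple.

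\textbf{Step 3: Rouch\'e and conclusion.} On $\partial B_\rho(\lambda_0)$ we have $|\zeta_u-\zeta|<\min|\zeta-1|$ once $u$ is close enough to $u_0$. Rouch\'e's theorem then produces $\lambda_u\in B_\rho(\lambda_0)$ with $\zeta_u(\lambda_u)=1$. Hence $1$ is an algebraically simple eigenvalue of $\mathcal D_{u,\lambda_u}$ with $\Re\lambda_u>0$, and condition (2) holds by Step 1, so $u\in\mathcal D$.

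\textbf{Main obstacle.} The main obstacle is Step 2. It requires that the Keller--Liverani estimates be \emph{uniform in $\lambda$} and that the perturbed eigenvalue vary holomorphically, while $u\mapsto\mathcal D_{u,\lambda}$ is only continuous from $\mathcal X$ to $\mathcal Y$. I would handle this by applying Keller--Liverani pointwise with uniform constants on the compact disc, and deducing holomorphy of $\zeta_u(\lambda)=\operatorname{tr}(\mathcal D_{u,\lambda}\Pi_{u,\lambda})$ from the holomorphic resolvent on $\mathcal X$ at fixed $u$.
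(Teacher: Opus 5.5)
Your proposal is correct and follows essentially the same route as the paper. Both arguments combine four ingredients:
\begin{enumerate}
\item a uniform Lasota--Yorke bound on a $C^k$-ball times a closed disc around $\lambda_0$ (Lemma~\ref{lemma:D implies lasota yorke}), together with persistence of condition (2) via \eqref{eq:continuity in u};
\item weak--strong continuity in $u$ from Lemma~\ref{lemma:perturbing velocity};
\item the parameter-uniform Keller--Liverani theorem, which yields an algebraically simple eigenvalue branch $\zeta_u(\lambda)$ converging uniformly to $\zeta(\lambda)$, holomorphic in $\lambda$ via the Riesz-projector trace formula;
\item Rouch\'e's theorem on a circle around $\lambda_0$, which produces $\lambda_u$ with $\zeta_u(\lambda_u)=1$.
\end{enumerate}
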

\begin{proof}
We begin by noting that the condition of having $u_3>0$ is evidently open in the $C^k$ topology. Pick now $ u_0 \in \mathcal{D}$, and $\lambda_0 \in \CC$ so that $\mathcal{D}_{ u_0,  \lambda_0}$ has an algebraically simple eigenvalue $1$. By Lemma \ref{lemma:holomorphic 1}, we may find a holomorphic function $\zeta(\lambda)$, so that $\zeta(\lambda_0)=1$ and such that there exists a radius $R>0$ so that $\lambda_0$ is the only zero of $\zeta(\lambda)-1$ in $\overline{D(\lambda_0;2R)}$. Upon further shrinking $R$, we can ensure by Lemma \ref{lemma:D implies lasota yorke} that uniformly for $\lambda \in \overline{D(\lambda_0;2R)}$, and $u$ in a $C^k$ ball around $ u_0 $, there exist $\eta,C,M>0$ so that for all $m \in \NN$,
$$
\|\mathcal{D}^m_{u,\lambda} B\|_{\mathcal{X}} \leq C(1-\eta)^m\|B\|_{\mathcal{X}}+CM^m\|B\|_{\mathcal{Y}}.
$$
Combining this with Lemma \ref{lemma:perturbing velocity}, we obtain, for all $u\in C^\infty_\sigma$ sufficiently close to $u_0$ in $C^k$, the following estimates \emph{uniformly for $\lambda \in \overline{D(\lambda_0;2R)}$}
\begin{align*}
    \|\mathcal{D}^m_{u,\lambda} B\|_{\mathcal X} & \leq C(1-\eta)^m\|B\|_{\mathcal X}+CM^m\|B\|_{\mathcal Y}, 
    \\
    \|\mathcal{D}^m_{u,\lambda} B\|_{\mathcal Y} & \leq CM^m\|B\|_{\mathcal Y},
    \\
    \|\mathcal{D}_{u,\lambda} B-\mathcal{D}_{u_0,\lambda}B\|_{\mathcal Y} & \leq C \|u-u_0\|_{C^k}^{\delta/2}\|B\|_{\mathcal X}\,.
\end{align*}
Furthermore, in view of \eqref{eq:continuity in u} and Corollary \ref{cor:factorisation1}, uniformly for $u$ sufficiently close  to $u_0$ in $C^k$ and $\lambda \in \overline{D(\lambda_0;2R)}$, there holds $ C_0 \inf_{m \in \NN}\e^{-\Re(\lambda)\inf_{a \in \TT^2}\tau_{u}^m(a)}\|DP_{u}^m\|^{1-r}_{L^\infty}<1$.
In fact, by Theorem \ref{thm:keller liverani} there exists a holomorphic function $\zeta^{u}(\lambda)$, tracking an algebraically simple, isolated eigenvalue of $\mathcal{D}_{u,\lambda}$ in a neighbourhood of $\lambda_0$ and satisfying $\zeta^{u}(\lambda) \to \zeta(\lambda)$ as $u \to u_0$ in $C^k$, uniformly for $\lambda \in \overline{D(\lambda_0;2R)}$. To show that $\zeta^{u}(\lambda)$ is holomorphic, fix $\lambda_\star \in \overline{D(\lambda_0;2R)}$, and let $\Sigma \subset \rho(\mathcal{D}_{u,\lambda_\star})$ be a simple contour containing $\zeta^{u}(\lambda_\star)$ in its interior, and otherwise containing no spectral points of $\mathcal{D}_{u,\lambda_\star}$ in its interior. Then, by the continuity of $\lambda \mapsto \mathcal{D}_{u,\lambda}$, it follows that 
$$
\zeta^{u}(\lambda)=\mathrm{tr} \left  (\frac{1}{2 \pi i}\int_{\Sigma}z(z-\mathcal{D}_{u,\lambda})^{-1} \dd z \right),
$$
for all $\lambda$ close to $\lambda_\star$. Since the map $\lambda \mapsto \mathcal{D}_{u,\lambda}$ is holomorphic by Lemma \ref{lemma:holomorphic 1}, it follows that $\zeta^{u}(\lambda)$ is holomorphic.
We finally note that we can write 
$$\zeta^{u}(\lambda)-1=\zeta(\lambda)-1+(\zeta^{u}(\lambda)-\zeta(\lambda))$$
and the latter of these terms converges to zero uniformly on $\Gamma=\partial D(\lambda_0;R)$ as $u \to u_0$ in $C^k$, whereas the first term satisfies $\inf_{\lambda \in \Gamma}|\zeta(\lambda)-1|>0$.
 Hence, Rouch\'e's theorem implies that, for all $u$ sufficiently close to $u_0$ in $C^k$, $\zeta(\lambda)-1$ and $\zeta^{u}(\lambda)-1$ have the same number of zeros inside $\Gamma$. Since $\zeta(\lambda)-1$ has at least one zero, this implies that there exists $\lambda_u$ close to $\lambda_0$, so that $1 \in \sigma_{\mathcal X}(\mathcal{D}_{u,\lambda_u})$. Furthermore, by the Keller--Liverani theorem \ref{thm:keller liverani}, it follows that the eigenvalue must be algebraically simple. Hence, the proof of openness is complete.
\end{proof}

\section{The proof of fast dynamo action}
Having established non-emptiness and openness of $\mathcal{D}$, we now prove the third point of Proposition \ref{prop:abstract proposition fast dynamo}, namely that any autonomous vector field $u \in \mathcal{D}$ generates a fast dynamo. We first make the framework in Section \ref{sec:framework autonomous} precise.

\subsection{The trace operator at $s=1$}

In this section, we shall study the eigenvalue problem \eqref{eq:elliptic evolution} for the kinematic dynamo on the half cylinder  $s \in [0, \infty)$. After dividing by $u_3>0$ and using the variables $(a,s)\in \TT^2\times [0,\infty)$ we obtain the following PDE
\begin{align} \label{eq:elliptic evolution 2}
\begin{cases}
    \partial_s B +\frac{u_{1,2}}{u_3}\cdot\nabla_a B - \frac{1}{u_3}  (\nabla u - \lambda \Id) B =  \frac{\varepsilon}{u_3}\Delta_a B +\frac{\varepsilon}{u_3}\partial_s^2 B \,, \qquad (a,s) \in \TT^2 \times [0,\infty), 
    \\
    B(\cdot, 0)=B_0 
    \\
    \e^{-\beta s} B \in H^1(\TT^2 \times [0,\infty)) \,.
\end{cases}
\end{align}
 In particular, we aim to define the operator $\mathcal{K}_{\lambda,\eps}B_0 = \mathrm{tr} (B)|_{s=1} $, where $\mathrm{tr} (\cdot)|_{s=s_0}$ is the trace at $\{s=s_0\} \subset \TT^2 \times [0,\infty)$ and $B$ is the unique solution to \eqref{eq:elliptic evolution 2} with $\mathrm{tr} (B)|_{s=0}=B_0$.
However, doing so first entails specifying a suitable notion of solution to these equations.  
A standard approach, used extensively in Section \ref{sec:pseudodifferential result}, is to freeze the coefficients of the equation and solve the resulting constant-coefficient PDE. Taking the Fourier transform in the horizontal variables yields
$$
\left(-\eps\partial_s^2 + u_3\partial_s + \lambda + \eps|\xi|^2
+ i u_{1,2}\cdot\xi - \nabla u\right)\hat B(\xi,s)=0.
$$
We seek solutions of the form $\hat B(\xi,s)=\e^{qs}v$, with $v\in\CC^3$. Neglecting the lower-order matrix term $\nabla u$, we obtain
a quadratic polynomial in $q$ whose two roots correspond to
a fast wave $\e^{q_+s}=\e^{(u_3/\eps+O(1))s}$, and a slow
wave $\e^{q_-s}=\e^{O(1)s}$ as $\eps\to0$ at fixed Fourier
frequency. Thus, requiring solutions to grow at most like $\e^{\beta s}$, with $\beta\gg1$ \emph{independent} of $\eps\ll1$, should allow us to recover uniqueness. Indeed, we have the following result.

\begin{lemma}
\label{lemma:notion of solution}
We consider the PDE  \eqref{eq:elliptic evolution 2} with a vector field  $u\in C^\infty(\TT^3)$  such that $u_3 \geq c_0>0$, and $\nabla \cdot u=0$.
Then, for any compact set $K \subset \CC$, there exist $\beta>0$  independent of $\eps$ and $\eps_0>0$ so that for all $\eps \in (0, \eps_0)$, $\lambda \in K$, and  $B_0 \in H^{1/2}(\TT^2)$, there exists a unique solution $B$ to \eqref{eq:elliptic evolution 2}. Furthermore, the solution operator $\mathcal{C}_{\lambda,\eps}:B_0 \to B$ is continuous from $H^{1/2}(\TT^2)$ to $\e^{\beta s} H^1(\TT^2 \times [0,\infty))$, and extends to a continuous linear map 
$$
\mathcal{C}_{\lambda,\eps}:H^{-1/2}(\TT^2) \to \e^{\beta s}L^2(\TT^2 \times [0,\infty)),
$$
satisfying 
$$
\|\e^{-\beta s}\mathcal{C}_{\lambda,\eps}\|_{H^{-1/2}(\TT^2) \to L^2(\TT^2 \times [0,\infty))} \leq  C_K\eps^{-1/2},
$$
uniformly for all $\lambda \in K$, and all $\eps$ sufficiently small.
\end{lemma}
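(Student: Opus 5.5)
We prove Lemma \ref{lemma:notion of solution} by conjugating away the exponential weight and applying Lax--Milgram on the half-cylinder $Q:=\TT^2\times[0,\infty)$. Write $B=\e^{\beta s}W$ and, to handle the boundary datum, first lift it: for $B_0\in H^{1/2}(\TT^2)$ let $\widetilde B_0=\chi(s)\,\e^{-s\langle D_a\rangle}B_0\in H^1(Q)$, where $\chi$ is a cutoff equal to $1$ near $s=0$. We then seek $W-\widetilde B_0=:V\in H^1_0(Q)$, which vanishes at $s=0$. Multiplying \eqref{eq:elliptic evolution 2} by $u_3$ (which is bounded above and below), the equation for $W$ reads
\[
-\eps\Delta W+u_3\partial_sW+u_{1,2}\cdot\nabla_aW-2\eps\beta\partial_sW+(\beta u_3-\eps\beta^2+\lambda)W-(\nabla u)W=0 .
\]

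The key step is coercivity of the associated sesquilinear form $\mathfrak a(V,V)$ on $H^1_0(Q)$. Taking real parts, the transport terms integrate by parts. Since $\nabla\cdot u=0$ and $V|_{s=0}=0$, we have $\Re\int (u_3\partial_s+u_{1,2}\cdot\nabla_a)V\cdot\overline V=-\tfrac12\int(\nabla\cdot u)|V|^2=0$. Likewise $\Re\int\partial_sV\cdot\overline V=0$, so the term $-2\eps\beta\partial_s$ contributes nothing. Hence
\[
\Re\,\mathfrak a(V,V)\ge \eps\|\nabla V\|_{L^2}^2+\bigl(\beta c_0-\eps\beta^2-|K|-\|\nabla u\|_{L^\infty}\bigr)\|V\|_{L^2}^2 .
\]
Choose $\beta$ with $\beta c_0\ge 2(|K|+\|\nabla u\|_{L^\infty}+1)$; this depends only on $u$ and $K$, not on $\eps$. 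Then take $\eps_0$ so small that $\eps_0\beta^2\le\beta c_0/4$. With these choices $\Re\mathfrak a(V,V)\gtrsim\eps\|\nabla V\|^2+\|V\|^2$, so the form is coercive on $H^1_0(Q)$ with constant of order $\eps$. The right-hand side $-\mathfrak a(\widetilde B_0,\cdot)$ is bounded on $H^1_0$, so Lax--Milgram gives existence and uniqueness of $V$. Continuity of $H^{1/2}\to\e^{\beta s}H^1$ follows, with $\eps$-dependent constants, which is allowed. Uniqueness in the class $\e^{-\beta s}B\in H^1$ is exactly uniqueness of $V$. The freedom to send the fast wave away is encoded in the fact that $\beta\ll u_3/\eps$, so the fast growing mode $\e^{u_3s/\eps}$ is excluded by the weight.

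The extension to $H^{-1/2}$ data with the bound $C_K\eps^{-1/2}$ is the delicate part, and we argue by duality (transposition). Let $\mathcal L_\beta^*$ denote the formal adjoint operator with Dirichlet condition at $s=0$. It has the same structure, with the transport sign reversed and $\lambda\to\bar\lambda$, and its energy estimate is identical because divergence-freeness again kills the first-order terms. For $F\in L^2(Q)$, solve $\mathcal L_\beta^*Z=F$ with $Z\in H^1_0$. The energy bound gives $\|Z\|_{L^2}\lesssim\|F\|$ and $\eps\|\nabla Z\|^2\lesssim\|F\|\|Z\|$, hence $\|\nabla Z\|\lesssim\eps^{-1/2}\|F\|$. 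Green's identity then yields
\[
\langle W,F\rangle=\eps\int_{\TT^2}B_0\cdot\overline{\partial_\nu Z}\,\dd a ,
\]
so it suffices to show $\eps\|\partial_sZ|_{s=0}\|_{H^{1/2}}\lesssim\eps^{-1/2}\|F\|$. This conormal trace bound is the main obstacle, since it needs $H^2$-type boundary regularity with the sharp $\eps$-dependence. We would first attempt tangential regularity: apply $\langle D_a\rangle^{1/2}$ with commutator estimates to get $\eps\|\langle D_a\rangle^{1/2}\nabla Z\|^2\lesssim\eps^{-1}\|F\|^2$-type bounds. We would then recover $\partial_s^2Z$ from the equation, $\eps\partial_s^2Z=u_3\partial_sZ+\dots$, and use the trace inequality $\|\partial_sZ(0)\|_{H^{1/2}}^2\lesssim\|\partial_sZ\|_{H^1}\|\partial_sZ\|_{L^2}$-style interpolation, tracking powers of $\eps$. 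If the powers do not close, the fallback is the frozen-coefficient Fourier computation sketched before the lemma. For the adjoint problem, the decaying root $q\approx-u_3/\eps$ gives a boundary layer of width $\eps$, and the explicit symbol yields $\eps|\partial_s\hat Z(0)|\lesssim\eps^{-1/2}\langle\xi\rangle^{-1/2}\|\hat F\|$. We would glue these local estimates by a partition of unity, absorbing the commutators using the large $\beta$.
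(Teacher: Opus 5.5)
Your first half matches the paper's proof: the conjugation by $\e^{\beta s}$, coercivity on $H^1_0$ from $\nabla\cdot u=0$ and the Dirichlet condition at $s=0$, the choice of $\beta$ and then $\eps_0$, and Lax--Milgram after lifting $B_0$. Your reduction of the $H^{-1/2}$ extension is also the paper's: via the adjoint Dirichlet problem and Green's identity, it comes down to the bound $\eps\|\partial_sZ|_{s=0}\|_{H^{1/2}}\lesssim\eps^{-1/2}\|F\|_{L^2}$.

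The gap is that you never establish this bound. You call it the main obstacle, sketch a tangential-commutator attempt, and fall back on a frozen-coefficient and partition-of-unity gluing ``if the powers do not close'', none of which is carried out. As written, the $H^{-1/2}$ extension and the constant $C_K\eps^{-1/2}$ are not proved. The interpolation you propose, $\|f|_{s=0}\|_{H^{1/2}}^2\lesssim\|f\|_{H^1}\|f\|_{L^2}$, is also false as stated. Take $f=\e^{-\mu s}\e^{2\pi i k\cdot a}$ with $\mu\gg|k|\gg1$: the left side is of size $|k|$ and the right side is of order one.

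The missing observation is that no sharp tracking of $\eps$ is needed: the crude global elliptic estimate already gives exactly the claimed power.
\begin{itemize}
\item Read $\eps\Delta Z$ off the adjoint equation. Your energy bounds $\|Z\|_{L^2}\lesssim\|F\|$ and $\|\nabla Z\|_{L^2}\lesssim\eps^{-1/2}\|F\|$ control the first- and zeroth-order terms, so $\|\Delta Z\|_{L^2}\lesssim\eps^{-3/2}\|F\|$.
\item The boundary $\{s=0\}$ is flat and $Z|_{s=0}=0$, so integrating by parts gives $\|Z\|_{H^2}\lesssim\|\Delta Z\|_{L^2}+\|Z\|_{L^2}\lesssim\eps^{-3/2}\|F\|$.
\item The standard trace theorem then gives $\|\partial_sZ|_{s=0}\|_{H^{1/2}}\lesssim\|\partial_sZ\|_{H^1}\lesssim\eps^{-3/2}\|F\|$.
\end{itemize}
The prefactor $\eps$ from Green's identity turns this into $\eps^{-1/2}$. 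This is precisely the paper's argument, and the boundary-layer analysis you anticipate is unnecessary.
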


\begin{proof}[Proof of Lemma \ref{lemma:notion of solution}]
We first prove existence and uniqueness of  solutions to \eqref{eq:elliptic evolution 2}. Let $B=\e^{\beta s}v$. Then, inserting this into \eqref{eq:elliptic evolution 2}, there holds $\cA_{\lambda, \eps, \beta} v=0$, where 
\begin{align} \label{d:op-beta}
    \cA_{\lambda, \eps, \beta} v=\eps \Delta v-(u-2 \eps \beta \mathbf{e}_3) \cdot \nabla v+v\cdot \nabla u-\lambda v-\beta u_3 v+\eps \beta^2 v.
\end{align}
Taking the $L^2$ inner product with $v$ and integrating by parts thus yields, upon noting that $v \mapsto (u-2 \eps \beta \mathbf{e}_3)\cdot \nabla v$ is skew-symmetric for any real $\beta$,
$$
\Re \langle -\cA_{\lambda, \eps, \beta} v, v\rangle \geq \eps \|\nabla v\|_{L^2}^2+ (\beta c_0-\eps \beta^2 -\|\nabla u\|_{L^\infty}-|\lambda|)\|v\|_{L^2}^2\,,
$$
for any $v \in H^1_0(\TT^2 \times [0,\infty))$.
Hence, there exists $\beta>0$ large enough such that for all $\eps \in (0, \eps_0)$  the following coercivity holds true
$$
\Re \langle -\cA_{\lambda, \eps, \beta} v, v\rangle \geq c\|v\|_{L^2}^2+ \eps \|\nabla v\|_{L^2}^2\,.
$$
Fix $B_0\in H^{1/2}(\TT^2)$. By the trace extension theorem, there exists $\tilde B\in H^1(\TT^2\times[0,\infty))$ such that
$$
\mathrm{tr}(\tilde B)|_{s=0}=B_0, \qquad \|\tilde B\|_{H^1(\TT^2\times[0,\infty))} \leq C\|B_0\|_{H^{1/2}(\TT^2)}.
$$
We seek a solution $v_{B_0}\in H^1(\TT^2\times[0,\infty))$ satisfying $\mathrm{tr}(v_{B_0})|_{s=0}=B_0$. We write $v_{B_0}=v_{0}+\tilde B$, where $v_{0}\in H_0^1(\TT^2\times[0,\infty))$. The equation $\cA_{\lambda, \eps, \beta} v_{B_0}=0$ is then equivalent to finding $v_{0}\in H_0^1(\TT^2\times[0,\infty))$ such that
$$ \cA_{\lambda, \eps, \beta} v_{0}= - \cA_{\lambda, \eps, \beta} \tilde B.
$$
Since
$ \|\cA_{\lambda, \eps, \beta} \tilde B\|_{H^{-1}} \leq C\|\tilde B\|_{H^1}$\footnote{Here we use $H^{-1}$ as the dual of $ H^1_0$.},
the  coercivity estimate and the Lax--Milgram theorem yield a unique solution $v_0\in H_0^1(\TT^2\times[0,\infty))$.  Furthermore, testing the equation against $-v_0$ and using coercivity gives
$$
\|v_0\|_{L^2}+\sqrt{\eps}\|\nabla v_0\|_{L^2} \leq C\eps^{-1/2} \|\cA_{\lambda, \eps, \beta} \tilde B\|_{H^{-1}} \leq C\eps^{-1/2}\|B_0\|_{H^{1/2}},
$$
uniformly for $\lambda\in K$ and all sufficiently small $\eps$.  

We now prove the estimate  $ \|\e^{-\beta s}\mathcal{C}_{\lambda,\eps}\|_{H^{-1/2}(\TT^2) \to L^2(\TT^2 \times [0,\infty))} \leq  C_K\eps^{-1/2}.$
We first observe that the $L^2$-adjoint operator 
$$
\cA_{\lambda, \eps, \beta}^\star w=\eps \Delta w+(u-2\eps \beta \mathbf{e}_3)\cdot \nabla w+\nabla u^T w-\overline{\lambda} w-\beta u_3 w+\eps \beta^2 w
$$
satisfies the same coercivity estimate as $\cA^\beta_{\lambda, \eps}$; hence for any given  $F \in L^2(\TT^2 \times [0,\infty))$, there exists a unique solution $w_F\in H_0^1(\TT^2\times[0,\infty))$ to 
$$
\cA_{\lambda, \eps, \beta}^\star w_F=F,
$$
satisfying 
$$
\|w_F\|_{L^2}+\sqrt{\eps}\|\nabla w_F\|_{L^2(\TT^2 \times [0,\infty))} \leq C\|F\|_{L^2(\TT^2 \times [0,\infty))}\,.
$$
Taking the $L^2$ norm of the equation $\cA_{\lambda, \eps, \beta}^\star w_F=F$ and using the above inequality, we in fact deduce 
$$
\|w_F\|_{H^2(\TT^2 \times [0,\infty))} \leq C \eps^{-3/2}\|F\|_{L^2(\TT^2 \times [0,\infty))}\,,
$$
so that  $\partial_s w_F\in H^1(\TT^2 \times [0,\infty))$. As such, the trace theorem implies that 
\begin{align} \label{eq:trace-pzw}
    \|\mathrm{tr} (\partial_s w_F)|_{s=0}\|_{H^{1/2}(\TT^2)} \leq C\|\partial_s w_F\|_{H^1(\TT^2 \times [0,\infty))} \leq C\eps^{-3/2} \|F\|_{L^2(\TT^2\times [0,\infty))}.
\end{align}
Using the definition of $w_F$ and integrating by parts, with $\mathrm{tr}(w_F)|_{s=0}=0$ and $\mathrm{tr}(v_{B_0})|_{s=0}=B_0$, we obtain 
$$ \langle v_{B_0},F\rangle_{L^2}
=\langle v_{B_0},\cA_{\lambda, \eps, \beta}^\star w_F\rangle_{L^2} =-\eps \langle B_0, \mathrm{tr} (\partial_s w_F)|_{s=0}
 \rangle_{H^{-1/2},H^{1/2}}.$$
Consequently, the trace estimate \eqref{eq:trace-pzw} implies
$$
|\langle v_{B_0},F\rangle_{L^2 }| \leq C\eps^{-1/2} \|B_0\|_{H^{-1/2}}\|F\|_{L^2}.
$$
Taking the supremum over all $F$ with $\|F\|_{L^2(\TT^2 \times [0,\infty))}=1$, we obtain
$$ \|v_{B_0}\|_{L^2 (\TT^2 \times [0,\infty))} \leq C\eps^{-1/2}\|B_0\|_{H^{-1/2}(\TT^2)}\,.$$
Since $H^{1/2}(\TT^2)$ is dense in $H^{-1/2}(\TT^2)$, the map $B_0\mapsto v_{B_0}$ extends uniquely to a bounded linear map from $H^{-1/2}(\TT^2)$ to $L^2(\TT^2\times [0,\infty))$. 
\end{proof}
We finally prove the following quantitative interior regularity estimate.
\begin{lemma}
\label{lemma:elliptic regularity}
We consider the operator $\mathcal{C}_{\lambda,\eps}:H^{-1/2}(\TT^2) \to \e^{\beta s}L^2(\TT^2 \times [0,\infty))$ with $\beta >0$ and $\eps \in (0, \eps_0)$ defined in Lemma \ref{lemma:notion of solution}. Then, for any $B_0 \in H^{-1/2}(\TT^2)$, $\mathcal{C}_{\lambda,\eps} B_0 \in C^\infty(\TT^2 \times (0,\infty))$, and for any compact $ \tilde K \Subset \TT^2 \times (1/2,\infty)$, and any $M \in \NN$, there exists $C_{M, \tilde K} >0$ so that there holds the quantitative elliptic regularisation estimate for all $\eps \in (0,\eps_0)$
$$
\|\mathcal{C}_{\lambda,\eps} B_0\|_{H^M( \tilde K)} \leq C_{M, \tilde K} \eps^{-(2M+1)/2}\|B_0\|_{H^{-1/2}}.
$$
\end{lemma}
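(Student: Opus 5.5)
Starting from the $L^2$ bound of Lemma \ref{lemma:notion of solution}, we will bootstrap interior regularity one derivative at a time, paying a factor $\eps^{-1}$ per derivative. Set $B=\mathcal C_{\lambda,\eps}B_0$ and $v=\e^{-\beta s}B$, so that $\cA_{\lambda,\eps,\beta}v=0$ in the interior. By Lemma \ref{lemma:notion of solution},
\[
\|v\|_{L^2(\TT^2\times[0,\infty))}\le C\eps^{-1/2}\|B_0\|_{H^{-1/2}}.
\]
Since the operator $\cA_{\lambda,\eps,\beta}$ of \eqref{d:op-beta} is elliptic with smooth coefficients for each fixed $\eps>0$, standard interior elliptic regularity already gives $B\in C^\infty(\TT^2\times(0,\infty))$. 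The real work is tracking the powers of $\eps$. Because $\e^{\pm\beta s}$ is smooth and bounded with all derivatives on compact sets, it suffices to prove the estimate for $v$ on $\tilde K$.

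We fix nested domains $\tilde K\Subset \Omega_M\Subset\Omega_{M-1}\Subset\cdots\Subset\Omega_0\Subset\TT^2\times(1/2,\infty)$, chosen so that $\Omega_0$ is relatively compact (we may take $\tilde K$ inside a bounded slab). We then prove by induction on $j$ that
\[
\|v\|_{H^j(\Omega_j)}\le C_j\eps^{-(2j+1)/2}\|B_0\|_{H^{-1/2}}.
\]
The base case $j=0$ is the $L^2$ bound above. For the inductive step we use a Caccioppoli-type energy estimate. Take a cutoff $\chi$ equal to $1$ on $\Omega_{j+1}$ and supported in $\Omega_j$, let $w=\partial^\alpha v$ with $|\alpha|=j$, and test the differentiated equation against $\chi^2 w$. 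Then:
\begin{itemize}
\item the Laplacian term contributes $\eps\|\chi\nabla w\|_{L^2}^2$;
\item the transport term $(u-2\eps\beta\mathbf e_3)\cdot\nabla w$ is skew up to a commutator with $\chi$, bounded by $C\|w\|_{L^2(\Omega_j)}^2$;
\item the commutators $[\partial^\alpha,u\cdot\nabla]$ and $[\partial^\alpha,\nabla u]$ are of order $j$, so they are bounded by $C\|v\|_{H^j(\Omega_j)}\|\chi w\|_{L^2}$;
\item the zeroth-order terms are likewise bounded by the same quantity.
\end{itemize}
Collecting these, we get
\[
\eps\|\nabla w\|_{L^2(\Omega_{j+1})}^2\le C\bigl(1+\eps\bigr)\|v\|_{H^j(\Omega_j)}^2 ,
\]
hence $\|v\|_{H^{j+1}(\Omega_{j+1})}\lesssim\eps^{-1/2}\|v\|_{H^j(\Omega_j)}$. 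This loses only $\eps^{-1/2}$ per step, better than claimed. Since the statement only asks for $\eps^{-(2M+1)/2}$, we can afford the cruder loss of $\eps^{-1}$ per derivative. That slack leaves room, if convenient, to argue instead by dividing the equation by $\eps$ and invoking standard interior $H^{j+2}$ estimates for $\Delta$, which would give $\|v\|_{H^{j+2}}\lesssim\eps^{-1}\|v\|_{H^{j+1}}+\dots$.

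The main obstacle is handling the first-order transport term uniformly in $\eps$. A naive elliptic estimate treats $\eps^{-1}u\cdot\nabla v$ as a source and loses $\eps^{-1}$ per derivative, which still matches the target. The energy method avoids even this loss by exploiting skew-symmetry of the divergence-free transport: $\nabla\cdot u=0$ and the $\mathbf e_3$ shift is constant, so the only leftover contributions come from derivatives of $\chi$. One subtlety is that the base $L^2$ bound is global on the half-cylinder while the induction is purely interior. Since $\tilde K$ stays away from $s=0$, no boundary regularity of $B_0\in H^{-1/2}$ is ever needed. Finally, we combine with the bounded factor $\e^{\beta s}$ on $\tilde K$ to conclude.
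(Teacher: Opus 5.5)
Your argument is correct, but it takes a different and sharper route than the paper's.

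\textbf{The paper's route.} The paper does not run an energy induction on fixed nested domains. It covers $\tilde K$ by a maximal $\eps/2$-separated family of points $X_j$ and rescales to $\tilde v_j(Y)=v(X_j+\eps Y)$ on $B_1(0)$. After multiplying the equation by $\eps$, the rescaled operator $\Delta_Y-(u(X_j+\eps Y)-2\eps\beta\mathbf e_3)\cdot\nabla_Y+O(\eps)$ is uniformly elliptic, with coefficients bounded uniformly in $\eps$ and $\lambda\in K$. The paper then applies the textbook interior estimate $\|\tilde v_j\|_{H^M(B_{1/2})}\le C_M\|\tilde v_j\|_{L^2(B_1)}$ and sums over the bounded-overlap cover. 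Undoing the scaling costs exactly $\eps^{-1}$ per derivative, and combined with the $\eps^{-1/2}$ from Lemma~\ref{lemma:notion of solution} this gives $\eps^{-(2M+1)/2}$. This approach uses elliptic theory as a black box and is insensitive to the structure of the first-order terms, but it is lossy.

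\textbf{Your route.} Your Caccioppoli induction uses that the drift $U\cdot\nabla$, with $U=u-2\eps\beta\mathbf e_3$, acts componentwise as a scalar operator with real coefficients. Hence $\Re\langle U\cdot\nabla w,\chi^2 w\rangle=-\tfrac12\int|w|^2\,U\cdot\nabla(\chi^2)$ is of zeroth order. This yields $\|v\|_{H^M(\Omega_M)}\lesssim\eps^{-(M+1)/2}\|B_0\|_{H^{-1/2}}$, which implies the stated bound for $\eps\le 1$.

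\textbf{Two small remarks.}
\begin{itemize}
\item Divergence-freeness of $U$ is not what makes your step work: for any smooth real $U$, the quantity $\nabla\cdot(\chi^2U)$ is bounded. The essential point is the scalar structure. A genuinely matrix-valued first-order term would only give $\eps\|\chi\nabla w\|^2\lesssim\|\chi\nabla w\|\,\|w\|$, which pushes you back to an $\eps^{-1}$ loss per derivative, exactly as in the paper.
\item The improved exponent is not needed downstream. In Corollary~\ref{cor:uniform lasota yorke}, any polynomial loss in $\eps$ is absorbed by the exponentially small factor multiplying $\|B_T\|_{L^2}$ in Proposition~\ref{prop:pseudodifferential abstract}.
\end{itemize}
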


\begin{proof}
Since $\mathcal{C}_{\lambda,\eps} B_0$ defines a distributional solution of \eqref{eq:elliptic evolution 2}, and $u \in C^\infty(\TT^3)$, interior smoothness follows immediately by standard elliptic regularity theory.  We now choose a maximal $\eps/2$-separated family of points $\{X_j\}_{j=1}^{N_\eps}\subset\tilde K$. By maximality,
$ \tilde K\subset\bigcup_{j=1}^{N_\eps}B_{\eps/2}(X_j)$.
For each $j$, introduce the rescaled function
$ \tilde v_j(Y):=v(X_j+\eps Y),$ with $ Y\in B_1(0). $
In these variables, the equation $\cA_{\lambda, \eps, \beta} v=0$ becomes
$$ \Delta_Y\tilde v_j -\bigl(u(X_j+\eps Y)-2\eps\beta\mathbf e_3\bigr)
 \cdot\nabla_Y\tilde v_j
+\eps\,\tilde v_j\cdot\nabla u(X_j+\eps Y)-\eps\bigl(\lambda+\beta u_3(X_j+\eps Y)-\eps\beta^2\bigr)
 \tilde v_j=0 \,.$$
 The rescaled operators are uniformly elliptic, and their coefficients are bounded uniformly in $\eps$ and $\lambda\in K$, together with all derivatives. Standard interior elliptic estimates therefore give
$$
\|\tilde v_j\|_{H^M(B_{1/2}(0))} \leq C_M\|\tilde v_j\|_{L^2(B_1(0))},
$$
where $C_M$ is independent of $\eps$ and $j$; see, for example, \cite{GilbargTrudinger2001}*{Chapter~9, \S~9.5, Theorem~9.11}. Hence, returning to the original variables, we obtain, for every $0\leq m\leq M$,
$$
\|D^m v\|_{L^2(B_{\eps/2}(X_j))}
\leq C_M\eps^{-m}
\|v\|_{L^2(B_\eps(X_j))}.
$$
Summing over the covering and using the bounded overlap of the enlarged balls $\{B_\eps(X_j)\}_{j=1}^{N_\eps}$ yields
\begin{align*}
\|v\|_{H^M(\tilde K)}^2\leq C_M\eps^{-2M}
 \sum_{j=1}^{N_\eps}\|v\|_{L^2(B_\eps(X_j))}^2
\leq C_M\eps^{-2M}\|v\|_{L^2(\TT^2\times [0,\infty))}^2
\leq C_M\eps^{-2M-1}
 \|B_0\|_{H^{-1/2}(\TT^2)}^2\,,
\end{align*}
concluding the proof by observing that multiplication by $\e^{\beta s}$ is bounded on $H^M(\tilde K)$.
\end{proof}
With this result established, we now define the operator $\mathcal{K}_{\lambda,\eps}$ via 
$$
\mathcal{K}_{\lambda,\eps}B_0=\mathrm{tr}(\mathcal{C}_{\lambda,\eps} B_0)|_{s=1} \,, \qquad \mathcal{K}^m_{\lambda,\eps}B_0=\mathrm{tr}(\mathcal{C}_{\lambda,\eps} B_0)|_{s=m}\,.
$$
Indeed, note that this identity is justified in view of Lemma \ref{lemma:notion of solution}, and the periodicity of the coefficients of \eqref{eq:elliptic evolution 2}.
The following result relates the condition that $1$ be an eigenvalue of $\mathcal{K}_{\lambda,\eps}$ to the condition that $\lambda$ be an eigenvalue of the kinematic dynamo operator.
\begin{corollary}
\label{cor:section eigenvalue implies dynamo}
Let $K \subset \CC$ be compact, and let $\lambda \in K$. Then, for any $m \in \NN$, there exists $C_m>0$ independent of $\eps$ and of $\lambda\in K$, so that 
\begin{equation}
\label{eq:regularisation}
\|\mathcal{K}^m_{\lambda,\eps} B_0\|_{H^1(\TT^2)} \leq C_m\eps^{-5m/2}\|B_0\|_{H^{-1/2}(\TT^2)}.
\end{equation}
Finally, if there exist $\lambda_0\in\CC$ with $\Re(\lambda_0)>0$ and a non-trivial $B_0\in H^{-1/2}(\TT^2)$ such that $\mathcal{K}_{\lambda_0,\eps}B_0=B_0$, then $\lambda_0$ is an eigenvalue of the elliptic operator 
$$
B \mapsto \eps \Delta B+B \cdot \nabla u-u \cdot \nabla B
$$
on $L^2(\TT^3)$, with associated divergence-free eigenfunction $\mathcal{C}_{\lambda_0,\eps}B_0 \in C^\infty (\TT^3)$.
\end{corollary}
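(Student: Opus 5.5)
The proof has two parts. The first is the quantitative smoothing bound \eqref{eq:regularisation}; the second builds a periodic eigenfunction out of a fixed point of $\mathcal K_{\lambda_0,\eps}$.

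\textbf{The bound \eqref{eq:regularisation}.} By the periodicity of the coefficients of \eqref{eq:elliptic evolution 2} and the uniqueness in Lemma \ref{lemma:notion of solution}, the solution restricted to $s\geq j$ and shifted back by $j$ is again the unique solution, with datum $\mathcal K^j_{\lambda,\eps}B_0$. Hence $\mathcal K^m_{\lambda,\eps}=(\mathcal K_{\lambda,\eps})^m$. It therefore suffices to prove the case $m=1$,
\[
\|\mathcal K_{\lambda,\eps}B_0\|_{H^1}\le C\eps^{-5/2}\|B_0\|_{H^{-1/2}},
\]
and then iterate, using $\|\cdot\|_{H^{-1/2}}\le\|\cdot\|_{H^1}$. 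For $m=1$ I would choose the compact set $\tilde K=\TT^2\times[3/4,5/4]$ and apply Lemma \ref{lemma:elliptic regularity} with $M=2$. This gives an $H^2(\tilde K)$ bound of order $\eps^{-5/2}\|B_0\|_{H^{-1/2}}$. The trace theorem then gives $\|\mathrm{tr}(\cdot)|_{s=1}\|_{H^{3/2}(\TT^2)}\lesssim\|\cdot\|_{H^2(\tilde K)}$, which dominates the $H^1$ norm. All constants are uniform in $\lambda\in K$, because Lemmas \ref{lemma:notion of solution} and \ref{lemma:elliptic regularity} are.

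\textbf{Periodic eigenfunction.} Suppose $\mathcal K_{\lambda_0,\eps}B_0=B_0$ with $B_0\neq0$, and set $B=\mathcal C_{\lambda_0,\eps}B_0$. By \eqref{eq:regularisation}, $B_0=\mathcal K B_0\in H^1$, and repeating the argument shows $B_0$ is smooth.

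\begin{enumerate}
\item The translate $\tilde B(a,s):=B(a,s+1)$ solves the same equation, satisfies the same growth bound, and has trace $\mathcal K B_0=B_0$ at $s=0$. Uniqueness in Lemma \ref{lemma:notion of solution} forces $\tilde B=B$, so $B$ is $1$-periodic in $s$.
\item By interior regularity near $s=0$ (the data is smooth and so are the coefficients), $B$ descends to a smooth field on $\TT^3$. Undoing the division by $u_3$, it solves $\eps\Delta B-u\cdot\nabla B+B\cdot\nabla u=\lambda_0B$.
\item $B$ is nonzero because its trace is $B_0\neq0$. It lies in $L^2(\TT^3)$ because it is periodic and smooth.
\end{enumerate}

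\textbf{Divergence-free property.} Taking the divergence of the eigen-equation, and using $\nabla\cdot u=0$ together with the identity $\nabla\cdot(B\cdot\nabla u-u\cdot\nabla B)=-u\cdot\nabla(\nabla\cdot B)$, gives
\[
\eps\Delta f-u\cdot\nabla f=\lambda_0 f,\qquad f:=\nabla\cdot B .
\]
Pair this with $\bar f$ in $L^2(\TT^3)$ and take real parts. The transport term is skew-adjoint, so
\[
-\eps\|\nabla f\|^2_{L^2}=\Re(\lambda_0)\|f\|^2_{L^2}.
\]
The left side is nonpositive and $\Re\lambda_0>0$, so $f=0$.

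The main obstacle is the periodicity step: one has to check carefully that the shifted solution lies in the uniqueness class of Lemma \ref{lemma:notion of solution}, i.e. that $\e^{-\beta s}\tilde B\in H^1$ near $s=0$. This holds because shifting only multiplies the weight by $\e^{\beta}$ and $B$ is $H^1$ away from the boundary. One also has to justify that the trace at $s=1$ is attained in the correct sense when $B_0$ is only known to lie in $H^{-1/2}$; both issues are settled by first bootstrapping $B_0$ to smoothness.
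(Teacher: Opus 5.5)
Your proposal is correct and follows essentially the same route as the paper. The smoothing bound comes from Lemma \ref{lemma:elliptic regularity} plus the trace theorem; your choice $M=2$ on a strip around $s=1$, iterated via $\mathcal K^m_{\lambda,\eps}=(\mathcal K_{\lambda,\eps})^m$, is exactly what produces the $\eps^{-5m/2}$ exponent. The eigenfunction part matches as well: $B_0\in H^1\subset H^{1/2}$, then the shift $\tilde B(a,s)=B(a,s+1)$ together with uniqueness from Lemma \ref{lemma:notion of solution} gives periodicity, and the energy identity for $\nabla\cdot B$ with $\Re\lambda_0>0$ gives the divergence-free property. The only difference is that you spell out a few justifications the paper leaves implicit, such as why $\tilde B$ lies in the weighted uniqueness class and why $B_0$ is smooth.
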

\begin{proof}
In view of Lemmas \ref{lemma:notion of solution} and \ref{lemma:elliptic regularity}, together with the trace theorem, estimate \eqref{eq:regularisation} follows immediately.
Suppose now that
$\mathcal K_{\lambda,\eps}B_0=B_0$,
and set $B=\mathcal C_{\lambda,\eps}B_0$. By \eqref{eq:regularisation} and the eigenfunction condition we have
$B_0\in H^1(\TT^2)\subset H^{1/2}(\TT^2)$,
and hence $B\in\e^{\beta s}H^1(\TT^2\times[0,\infty))$ by Lemma \ref{lemma:notion of solution}. Since $u$ is $1$-periodic in $s$, the shifted function $ \tilde B(a,s):=B(a,s+1) $ solves the same equation and belongs to the same weighted space. Moreover,
$$
\mathrm{tr}(\tilde B)|_{s=0} =\mathrm{tr}(B)|_{s=1} =\mathcal K_{\lambda,\eps}B_0 =B_0 \,.
$$
Then, by the uniqueness result of Lemma \ref{lemma:notion of solution} we have  that $\tilde B=B$. Thus $B$ is $1$-periodic in $s$ and defines an eigenfunction on $\TT^3$ with eigenvalue $\lambda_0$ that is smooth by elliptic regularity theory. For the divergence-free condition we take the divergence of the eigenvalue equation obtaining
$$
\eps\Delta (\nabla\cdot B)-u\cdot\nabla (\nabla\cdot B)=\lambda \nabla\cdot B\,.
$$
Testing with $\nabla\cdot B$ and then taking real parts yields $\nabla\cdot B =0$ whenever $\Re(\lambda_0)>0$.
\end{proof}

Finally, we need the following result.
\begin{lemma}
\label{lemma:holomorphic 2}
Let $K\subset \CC$ be a compact set. Then, there exists $\eps_0$ such that for all $\eps\in (0, \eps_0)$ the map 
$\lambda \mapsto \mathcal{K}_{\lambda,\eps}$ is holomorphic in an open neighbourhood of $K$ as a family of operators $\cK_{\lambda,\eps}:\mathcal{E}_u\mathcal X\to\mathcal{E}_u\mathcal X$.
\end{lemma}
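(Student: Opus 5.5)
The plan is to prove holomorphy first on the Hilbert-space level, using the explicit structure of the solution operator, and only afterwards transfer it to the anisotropic space $\mathcal E_u\mathcal X$. Fix $\beta$ and $\eps_0$ as in Lemma \ref{lemma:notion of solution}, applied to a compact neighbourhood $K'$ of $K$, so that coercivity holds uniformly for $\lambda\in K'$.

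First, I would show that $\lambda\mapsto \mathcal C_{\lambda,\eps}:H^{1/2}(\TT^2)\to \e^{\beta s}H^1(\TT^2\times[0,\infty))$ is holomorphic. Write $\mathcal C_{\lambda,\eps}B_0=\e^{\beta s}(v_0(\lambda)+\tilde B)$, where $\tilde B$ is the ($\lambda$-independent) trace extension of $B_0$. Here $v_0(\lambda)=-\cA_{\lambda,\eps,\beta}^{-1}\cA_{\lambda,\eps,\beta}\tilde B$, and $\cA_{\lambda,\eps,\beta}^{-1}:H^{-1}\to H^1_0$ is the Lax--Milgram inverse. Since $\cA_{\lambda,\eps,\beta}=\cA_{0,\eps,\beta}-\lambda\,\Id$ is affine in $\lambda$ as a map $H^1_0\to H^{-1}$, its inverse is holomorphic in $\lambda$. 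This follows from the resolvent identity
\[
\cA_{\lambda,\eps,\beta}^{-1}-\cA_{\mu,\eps,\beta}^{-1}=(\lambda-\mu)\,\cA_{\lambda,\eps,\beta}^{-1}\cA_{\mu,\eps,\beta}^{-1},
\]
together with the uniform bound on the inverse from coercivity. By the same duality argument as in Lemma \ref{lemma:notion of solution}, the adjoint solutions $w_F$ depend holomorphically on $\bar\lambda$ (antiholomorphically in the appropriate sense). Consequently $\mathcal C_{\lambda,\eps}:H^{-1/2}\to \e^{\beta s}L^2$ is holomorphic, being a locally uniform limit of holomorphic maps on a dense subspace with uniform bounds.

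Next, I would compose with the trace at $s=1$. By Lemma \ref{lemma:elliptic regularity} applied on a compact neighbourhood of $\{s=1\}$, interior regularity upgrades $L^2$-convergence to $H^M$-convergence. Concretely, the difference quotient $(\mathcal C_{\lambda,\eps}-\mathcal C_{\mu,\eps})/(\lambda-\mu)$ solves the same elliptic equation with source $\mathcal C_{\mu,\eps}B_0$. The $\eps$-rescaled interior estimate with a source term therefore shows that $\lambda\mapsto\mathcal K_{\lambda,\eps}:H^{-1/2}(\TT^2)\to H^{1}(\TT^2)$ is holomorphic in operator norm, with bounds as in Corollary \ref{cor:section eigenvalue implies dynamo}. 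For fixed $\eps>0$, constants depending on $\eps$ are harmless.

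Finally, I would transfer the result to $\mathcal E_u\mathcal X$. The space $\mathcal X$ embeds continuously into $H^{-r-2\delta}\subset H^{-1/2}$, since $r+2\delta<1/2$ by \eqref{parameter:r-delta}. Also, $H^1\hookrightarrow H^{r}$ continuously, so $H^1\hookrightarrow\mathcal X$. The operator $\mathcal E_u$ consists of composition with the smooth diffeomorphism $\kappa_u$ and multiplication by the smooth matrix $E_u$, so it is an isomorphism on every $H^s$, $|s|\le1$. Hence $\mathcal E_u\mathcal X\hookrightarrow H^{-1/2}$ and $H^1\hookrightarrow \mathcal E_u\mathcal X$ continuously, and holomorphy on $\mathcal E_u\mathcal X$ follows by composing with these fixed bounded inclusions.

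I expect the main obstacle to be the second step: making the interior regularity estimate quantitative for the $\lambda$-derivative, that is, including the inhomogeneous term in the rescaled estimate. The first thing to try there is simply to use Lemma \ref{lemma:elliptic regularity} with a source term, which should go through since $\eps$ is fixed.
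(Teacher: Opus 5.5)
Your proposal is correct and follows essentially the paper's argument. The key identity is that the $\lambda$-difference of solutions has zero trace at $s=0$ and solves $\cA_{\lambda,\eps,\beta}w=(\lambda-\mu)\e^{-\beta s}\mathcal C_{\mu,\eps}B_0$, after which one takes the trace at $s=1$ and composes with the fixed embeddings $H^{1/2}\hookrightarrow\mathcal E_u\mathcal X\hookrightarrow H^{-1/2}$.

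The paper's version is leaner: it skips your Step 1 and the interior-regularity upgrade. The bound $R^\beta_{\lambda,\eps}=\cA_{\lambda,\eps,\beta}^{-1}:L^2\to H^1_0$ already gives Lipschitz continuity of $\e^{-\beta s}\mathcal C_{\lambda,\eps}$ as a map $H^{-1/2}\to L^2$. Combined with the trace $H^1\to H^{1/2}\hookrightarrow\mathcal E_u\mathcal X$, this yields convergence of the difference quotients directly, so the obstacle you anticipate never arises.
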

\begin{proof}
Choose a compact set $K_1\subset\CC$ such that $ K\subset\operatorname{int}(K_1),$
and take $\beta>0$ and $\eps>0$ as in Lemma \ref{lemma:notion of solution}, applied to $K_1$. We recall the operator 
$$
\cA_{\lambda, \eps, \beta} : H^1_{0}(\TT^2 \times [0,\infty))  \cap H^2(\TT^2 \times [0,\infty)) \to L^2(\TT^2 \times [0,\infty))
$$ 
defined in \eqref{d:op-beta}, whose invertibility was established above. We denote by $R_{\lambda, \eps}^\beta$ its bounded inverse. By definition of the operator $\mathcal{C}_{\lambda, \eps}$ in Lemma \ref{lemma:notion of solution}, we have $\cA_{\lambda, \eps, \beta} (\e^{-\beta s}\mathcal{C}_{\lambda,\eps} B_0)=0$ and, by definition of the operator, we have $\cA_{\lambda, \eps, \beta} -\cA_{\mu, \eps, \beta}=(\mu-\lambda) \Id$. Therefore we deduce  that for $B_0 \in H^{1/2}(\TT^2)$ 
$$ \cA_{\lambda, \eps, \beta} (\e^{-\beta s}\mathcal{C}_{\lambda,\eps} B_0-\e^{-\beta s}\mathcal{C}_{\mu,\eps} B_0) = (\lambda-\mu) (\e^{-\beta s}\mathcal{C}_{\mu,\eps}B_0) \,.$$
Applying the inverse $R_{\lambda,\eps}^\beta$, noting that the trace at $s=0$ of $\e^{-\beta s}\mathcal{C}_{\lambda,\eps} B_0-\e^{-\beta s}\mathcal{C}_{\mu,\eps} B_0$ is zero, we obtain
\begin{equation}
\label{eq:continuity fractional map}
(\e^{-\beta s}\mathcal{C}_{\lambda,\eps}-\e^{-\beta s}\mathcal{C}_{\mu,\eps})B_0=(\lambda-\mu)R_{\lambda,\eps}^\beta (\e^{-\beta s}\mathcal{C}_{\mu,\eps} B_0)\,.
\end{equation}
Furthermore, since $\e^{-\beta s} \mathcal{C}_{\mu, \eps}$ maps $H^{-1/2} (\TT^2) $ to $L^2 (\TT^2 \times [0,\infty))$ boundedly, this identity extends to $B_0 \in H^{-1/2} (\TT^2)$ by continuity. We now take the trace at $s=1$ in \eqref{eq:continuity fractional map}. We recall that $\cK_{\lambda, \eps} = \mathrm{tr}(\mathcal{C}_{\lambda, \eps})|_{s=1}$. Then, we have
$$
\frac{\mathcal{K}_{\mu,\eps}-\mathcal{K}_{\lambda,\eps}}{\mu-\lambda}-\e^\beta\mathrm{tr} ( R_{\lambda,\eps}^\beta(\e^{-\beta s}\mathcal{C}_{\lambda,\eps}))|_{s=1}=\e^{\beta}\mathrm{tr} ( R_{\lambda,\eps}^\beta(\e^{-\beta s}\mathcal{C}_{\mu,\eps} -\e^{-\beta s}\mathcal{C}_{\lambda,\eps}))|_{s=1}.
$$
Using that $\mathrm{tr}(\cdot)|_{s=1} : H^1(\TT^2 \times [0,\infty)) \to H^{1/2}(\TT^2)$ is bounded, and $H^{1/2}(\TT^2) \hookrightarrow\mathcal E_u\mathcal X \hookrightarrow H^{-1/2}(\TT^2),$ we have 
\begin{align*}
    &\left \| \frac{\mathcal{K}_{\mu,\eps}-\mathcal{K}_{\lambda,\eps}}{\mu-\lambda}-\e^\beta\mathrm{tr}( R_{\lambda,\eps}^\beta(\e^{-\beta s}\mathcal{C}_{\lambda,\eps}))|_{s=1}\right \|_{\mathcal E_u\mathcal X \to \mathcal E_u\mathcal X} 
    \\
    &\qquad  \leq C\e^{\beta} \| \mathrm{tr}|_{s=1} \|_{H^1_0 \to H^{1/2}} \|R_{\lambda,\eps}^\beta \|_{L^2 \to H^{1}_0} \|(\e^{-\beta s}\mathcal{C}_{\mu,\eps} -\e^{-\beta s}\mathcal{C}_{\lambda,\eps})\|_{H^{-1/2} \to L^2}\,.
\end{align*}
Finally, taking norms in \eqref{eq:continuity fractional map}, we bound 
\begin{align*}
\| \e^{-\beta s}\mathcal{C}_{\mu,\eps}-\e^{-\beta s}\mathcal{C}_{\lambda,\eps} \|_{H^{-1/2} \to L^2} \leq |\lambda-\mu|\|R_{\lambda,\eps}^\beta \|_{L^2 \to L^2}\sup_{\mu \in K_1}\|\e^{-\beta s}\mathcal{C}_{\mu,\eps}\|_{H^{-1/2} \to L^2}\,,
\end{align*}
which converges to zero as $\mu \to \lambda$ by Lemma \ref{lemma:notion of solution}.  Thus $\lambda\mapsto\mathcal K_{\lambda,\eps}$ is complex differentiable in operator norm on $\operatorname{int}(K_1)$, and is therefore holomorphic.
\end{proof}

\subsection{The pseudodifferential estimates}

With $\mathcal{K}_{\lambda,\eps}$ rigorously defined as in the previous subsection, we now recall the factorisation $\mathcal{K}_{\lambda,\eps}^m =\mathcal{E}_u\mathcal{D}_{u,\lambda}^m\mathcal{U}_{\lambda,\eps}^{[m]}\mathcal{E}_u^{-1}:  \mathcal{E}_u \mathcal X \to \mathcal{E}_u \mathcal X$ given in Corollary \ref{cor:factorisation1}  and  
we aim to characterise its spectrum. This requires a careful analysis of the operator $\mathcal{U}_{\lambda,\eps}^{[m]} = (\mathcal{D}_{u, \lambda}^m)^{-1} \cE_u^{-1} \mathcal{K}_{\lambda,\eps}^m \cE_u$. This operator is the trace at $s=m$ of the unique solution given in Lemma \ref{lemma:notion of solution} up to the composition with the operators $\mathcal{D}_{u, \lambda}$ and $\cE_u$. We start by studying the PDE $\cL_{\lambda, \eps}B=0$ with an artificial final boundary condition $\mathrm{tr}( B)|_{s=T} = B_T$, where we recall that  $\cL_{\lambda, \eps}$ arises naturally  in \eqref{eq:abstract elliptic equation} for the definition of the operator $\cU_{\lambda, \eps} = \cU_{\lambda, \eps}^{[1]}$. 
More precisely, we study the  PDE 
\begin{align} \label{eq:pseudo-0-T}
    \cL_{\lambda, \eps} B=0\,, \qquad \mathrm{tr} (B)|_{s=0} = B_0 \in \mathcal{X} \,, \qquad \mathrm{tr} (B)|_{s=T} = B_T \in L^2
\end{align}
 where 
\begin{equation}
\label{eq:abstract operator statement}
\cL_{\lambda,\eps}=\partial_s-\eps (\alpha^{0,0}\partial_s^2+2\alpha^{0,j}\partial_s \partial_j +\alpha^{i,j}\partial_i \partial_j+H^0\partial_s+H^j \partial_j +H),
\end{equation}
Here, $i,j=1,2$, the derivatives $\partial_1,\partial_2$ act on $a\in\TT^2$, and $\partial_s$ denotes differentiation in $s\in[0,\infty)$.
The key ingredient is the following abstract result from pseudodifferential calculus that will be proved in Section~\ref{sec:pseudodifferential result}. We make the following assumptions on the coefficients of $\cL_{\lambda, \eps}$ that are satisfied by the operator obtained in  \eqref{eq:abstract elliptic equation}:
\begin{enumerate}
    \item The coefficients $\alpha^{i,j}$, $i,j=0,1,2$, are
real-valued scalar functions satisfying $\alpha^{i,j}=\alpha^{j,i}$.
They depend smoothly on $(a,s)\in\TT^2\times[0,T]$ and are
independent of $\eps$ and $\lambda$.
    \item There exists a constant $\Lambda_0>0$ independent of $a,s , \eps, \lambda$ so that the Hermitian matrix $A=(\alpha^{i,j})_{i,j}$ is bounded below by $\Lambda_0$.
    \item The coefficients $H^j$, $j=0,1,2$, and $H$ are matrix-valued and depend smoothly on $(a,s, \lambda) \in \TT^2 \times [0,T] \times \CC$.
\end{enumerate}

\begin{proposition}
\label{prop:pseudodifferential abstract}
We consider the differential operator $\cL_{\lambda,\eps}$ with coefficients satisfying the above assumptions. 
Then, for any compact $K\subset\CC$ and any fixed $0<m<T$,
there exist $\eps_0>0$ and $C_m>0$ such that, for every
$\lambda\in K$ and $0<\eps<\eps_0$, any solution $B$ of
\eqref{eq:pseudo-0-T} satisfies the following estimates:
\begin{align}
&\|\mathrm{tr} (B)|_{s=m}\|_{\mathcal X} \leq (1+C_m\eps^{\frac{1-2r-\delta}{2}})\|B_0\|_{\mathcal X}+C_m\|B_0\|_{\mathcal Y}+C\e^{-1/C\eps^{-1}}\|B_T\|_{L^2},\\
& \|\mathrm{tr} (B)|_{s=m} \|_{\mathcal Y} \leq C_m\|B_0\|_{\mathcal Y}+C_m\e^{-1/C\eps^{-1}}\|B_T\|_{L^2},\\
&\|\mathrm{tr} (B)|_{s=m} - B_0\|_{\mathcal Y} \leq C_m(\eps^{\frac{1-2r}{2}}+\eps^{\frac{\delta}{2}})\|B_0\|_{\mathcal X}+C_m\e^{-1/C\eps^{-1}}\|B_T\|_{L^2}.
\end{align}
\end{proposition}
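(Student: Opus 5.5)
The plan is to treat $\cL_{\lambda,\eps}B=0$ as a two-point boundary value problem and factor it, via parameter-dependent pseudodifferential calculus in the horizontal variables $a\in\TT^2$, into a forward parabolic-type (slow) evolution and a backward (fast) evolution. At the principal level the symbol in $\partial_s$ is the quadratic $-\eps\alpha^{0,0}q^2+(1-2i\eps\alpha^{0,j}\xi_j)q+\eps\alpha^{i,j}\xi_i\xi_j$. Its roots are a slow root $q_-(a,s,\xi)\approx-\eps\alpha^{i,j}\xi_i\xi_j/(1+\sqrt{1+4\eps^2\alpha^{0,0}\alpha^{i,j}\xi_i\xi_j})\cdot 2$, which is of size $-\min(\eps|\xi|^2,|\xi|)$, and a fast root $q_+\gtrsim \eps^{-1}+|\xi|$. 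Because the $\alpha$'s are scalars, both roots are scalar symbols times the identity, so they commute exactly with $\PP$ at principal order. I would construct (by the standard Nirenberg/Riccati-type factorisation) operators $Q_\pm(s)$ in an $\eps$-dependent symbol class with $\cL_{\lambda,\eps}=-\eps\alpha^{0,0}(\partial_s-Q_+)(\partial_s-Q_-)+\mathcal R$, where $\mathcal R$ is smoothing with $O(\eps^\infty)$ norms, and whose lower-order, matrix-valued corrections (coming from $H^0,H^j,H$) have size $O(\eps)$ times one order lower.

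Writing $B=B_-+B_+$, with $(\partial_s-Q_-)B_-=0$ forward from $s=0$ and $B_+$ the backward component determined from the datum at $s=T$, the boundary data couple through a $2\times2$ system whose off-diagonal terms involve $e^{\int_0^T Q_+}$-type operators. These are bounded by $e^{-T/(C\eps)}$, which produces the $C\e^{-1/C\eps^{-1}}\|B_T\|_{L^2}$ terms. At $s=m$, with $0<m<T$, the fast part contributes only exponentially small quantities. So everything reduces to estimating the slow propagator $\mathcal P(m,0)$ generated by $Q_-$ on $\mathcal X,\mathcal Y$.

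For the slow propagator I would do energy estimates in the anisotropic norms, conjugating by $\langle D\rangle^{\pm r}$ and by $\PP$, $\Id-\PP$. The principal part of $Q_-$ is a real, nonpositive, scalar symbol, so Gårding gives $\|\mathcal P\|\le 1+$ lower-order error on each Hodge block. The commutators $[\PP,Q_-]$, $[\langle D\rangle^{s},Q_-]$ and the $\eps H$ terms are of order one lower, i.e. $\eps\,\mathrm{Op}(|\xi|)$ restricted to the regime $|\xi|\lesssim\eps^{-1}$. The cross term from the solenoidal block ($H^{-r}$) into the gradient block ($H^r$) costs $2r$ derivatives. It is handled by interpolation against the dissipation $\eps|\xi|^2$: integrating $e^{-s\eps|\xi|^2}\eps|\xi|^{1+2r}$ in $s$ gives at most $\eps^{(1-2r)/2}$-type gains. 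Tracking the $\delta$-loss into the weak norm yields the strong constant $1+C\eps^{(1-2r-\delta)/2}$, with the remainder absorbed by $C\|B_0\|_{\mathcal Y}$. The $\mathcal Y\to\mathcal Y$ bound follows in the same way. For the difference estimate I would write $\mathcal P(m,0)B_0-B_0=\int_0^m Q_-\mathcal P\,ds$ and bound $\|(e^{-m\eps|\xi|^2}-1)\langle\xi\rangle^{-\delta}\|\lesssim\eps^{\delta/2}$ on the diagonal blocks, with the cross terms giving $\eps^{(1-2r)/2}$.

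The main obstacle is the uniform-in-$\eps$ symbol calculus for the factorisation: the roots change behaviour between $|\xi|\ll\eps^{-1}$ and $|\xi|\gg\eps^{-1}$, so the symbol classes must be semiclassical in $\eps\xi$. My first attempt would be to use weights $\langle\xi\rangle\langle\eps\xi\rangle$ and verify that the remainders are uniformly bounded with the stated $\eps$-gains. The other delicate point is ensuring that no $O(1)$ solenoidal-to-gradient coupling survives, which is exactly where scalarity of $\alpha^{i,j}$ is used.
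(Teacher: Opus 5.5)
Your plan matches the paper's proof in its core mechanism. Both use the same scalar fast/slow root factorisation: the paper's $F_J,S_J$ have orders $\rho_\eps\approx\eps^{-1}+\langle\xi\rangle$ and $\mu_\eps\approx\min(\eps\langle\xi\rangle^2,\langle\xi\rangle)$, with matrix-valued corrections of order $\nu_\eps=\langle\xi\rangle^{-1}\mu_\eps$. Both then use exponential decay and smoothing of the fast part away from $s=T$, and reduce to the slow propagator. The Hodge-block energy estimates have all cross couplings of order $\nu_\eps$ because the principal symbols are scalar; off-diagonal gains are interpolated against the dissipation; and $\mu_\eps\le\eps\langle\xi\rangle^2$ plus interpolation gives the $\eps^{\delta/2}$ weak--strong bound.

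Two small corrections. First, $q_s$ is complex because of the $\alpha^{0,j}$ terms; only $\Re(-q_s)\gtrsim\mu_\eps-\eps$ is needed. Second, the exponent $\frac{1-2r-\delta}{2}$ comes specifically from the vertical block, measured in $H^{-r-\delta}$, feeding the gradient block in $H^r$.

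The genuine difference is the two-point coupling. From the single factorisation $(\partial_s-Q_+)(\partial_s-Q_-)$ you do not directly get a split $B=B_-+B_+$ with $B_+$ solving a first-order fast equation. Setting $w=(\partial_s-Q_-)B$, the second piece of the general solution is $\int_0^sU_S(s,t)U_F(t,T)w(T)\,\dd t$, not a fast solution. Your boundary system is solvable by a Neumann series because one off-diagonal entry, $U_F(0,T)$, is exponentially small; the other, $U_S(T,0)$, is not. Even so, setting it up requires a second factorisation in the opposite order, or a block-diagonalisation of the first-order system for $(B,\partial_sB)$, with the same $\eps$-uniform symbol bounds.

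The paper keeps one factorisation and instead inverts the matching operator $M_{SF}=\int_0^TU_S(T,t)U_F(t,T)\,\dd t$ in $w(T)=M_{SF}^{-1}(B_T-U_S(T,0)B_0)$. This inversion is not an exponentially small perturbation. It rests on the identity $\partial_s(U_S(T,s)\cQ(s)U_F(s,T))=U_S(T,s)(\Id+\cR(s))U_F(s,T)$ with $\cQ=\Op((q_f-q_s)^{-1})$. That identity gives $\|M_{SF}-\cQ(T)\|_{H^r\to H^r_{\rho_\eps}}\leq C\eps$, which combines with a uniform parametrix for $\cQ(T)$. Your route trades that parametrix argument for a second symbolic construction.

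Finally, your sketch should say how the factorisation remainder is absorbed. The paper uses an $O(\eps)$ solution operator $\cG_\eps$ for the inhomogeneous factorised problem with zero boundary data, together with a Neumann series. This produces the smoothing error $\cE_2B_0$, which is controlled by the weak norm.
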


As an immediate corollary, we have the following. 

\begin{corollary}
\label{cor:uniform lasota yorke}
Fix $u \in \mathcal{D} $, $\eta>0$, and a compact set $K \subset \CC$ so that uniformly for $\lambda \in K$, $\mathcal{D}_{u,\lambda}$  has a Lasota--Yorke constant  $1-{\eta} \in (0,1)$ by Lemma \ref{lemma:D implies lasota yorke}. Then, there exist $C,M >1 $ and $\eps_0>0$  so that uniformly for $\lambda \in K$, and all $\eps\in [0, \eps_0)$, the operator $\mathcal{K}_{\lambda,\eps}$ satisfies the Lasota--Yorke estimates for all $m \in \NN$
\begin{align}
&\|\mathcal{K}^{m}_{\lambda,\eps} B_0\|_{\mathcal{E}_u \mathcal X} \leq C\left (1- \frac{\eta}{2} \right )^{m} \|B_0\|_{\mathcal{E}_u \mathcal X}+C M^m\|B_0\|_{\mathcal{E}_u \mathcal Y} \label{eq:first-estimate-cor}
\\
&\|\mathcal{K}^{m}_{\lambda,\eps} B_0\|_{\mathcal{E}_u \mathcal Y} \leq  M^m \|B_0\|_{\mathcal{E}_u \mathcal Y}\\
& \|\mathcal{K}_{\lambda,\eps}B_0-\mathcal{E}_u \mathcal{D}_{u,\lambda} \mathcal{E}_u^{-1} B_0\|_{\mathcal{E}_u \mathcal Y} \leq C(\eps^{\frac{1-2r}{2}}+\eps^{\frac{\delta}{2}})\|B_0\|_{\mathcal{E}_u \mathcal X}. \label{eq:third-estimate-cor}
\end{align}
\end{corollary}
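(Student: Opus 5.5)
The plan is to combine the factorisation of Corollary \ref{cor:factorisation1}, $\mathcal K^m_{\lambda,\eps}=\mathcal E_u\mathcal D^m_{u,\lambda}\mathcal U^{[m]}_{\lambda,\eps}\mathcal E_u^{-1}$, with the uniform Lasota--Yorke bounds of Lemma \ref{lemma:D implies lasota yorke} for $\mathcal D_{u,\lambda}$ and the near-identity estimates of Proposition \ref{prop:pseudodifferential abstract} for $\mathcal U_{\lambda,\eps}$. Since $\|B_0\|_{\mathcal E_u\mathcal X}=\|\mathcal E_u^{-1}B_0\|_{\mathcal X}$, all statements reduce to the corresponding ones for $\mathcal D^m_{u,\lambda}\mathcal U^{[m]}_{\lambda,\eps}$ on $\mathcal X,\mathcal Y$.

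\textbf{Step 1.} First dispose of the boundary term $B_T$. In the definition \eqref{d:PDE-U-lambda-eps}, the final datum at $s=T$ is $C^{-1}_{u,T,\lambda}$ applied to the true solution $B$, whose $L^2$ norm is controlled via Lemmas \ref{lemma:notion of solution} and \ref{lemma:elliptic regularity} by $C_T\eps^{-N}\|B_0\|_{H^{-1/2}}\lesssim \eps^{-N}\|v_0\|_{\mathcal Y}$. Here the exponent $N$ is polynomial in $\eps^{-1}$ and depends on $T$. This polynomial loss is absorbed by the factor $\e^{-1/(C\eps)}$ in Proposition \ref{prop:pseudodifferential abstract}. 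Consequently, for one step ($m=1$, fixed $T=2$) we obtain
\[
\|\mathcal U_{\lambda,\eps}v_0\|_{\mathcal X}\le(1+C\eps^{\frac{1-2r-\delta}{2}})\|v_0\|_{\mathcal X}+C\|v_0\|_{\mathcal Y},
\]
together with $\|\mathcal U_{\lambda,\eps}v_0\|_{\mathcal Y}\le C\|v_0\|_{\mathcal Y}$ and $\|\mathcal U_{\lambda,\eps}v_0-v_0\|_{\mathcal Y}\le C(\eps^{\frac{1-2r}{2}}+\eps^{\frac\delta2})\|v_0\|_{\mathcal X}$, all uniformly in $\lambda\in K$.

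\textbf{Step 2 (third estimate).} Write $\mathcal K_{\lambda,\eps}-\mathcal E_u\mathcal D_{u,\lambda}\mathcal E_u^{-1}=\mathcal E_u\mathcal D_{u,\lambda}(\mathcal U_{\lambda,\eps}-\Id)\mathcal E_u^{-1}$. Then use the $\mathcal Y\to\mathcal Y$ bound of $\mathcal D_{u,\lambda}$ from Corollary \ref{cor:factorisation1} together with the third estimate of Step 1.

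\textbf{Step 3 (iterated bounds).} Since $\mathcal K$ is the time-one map of an $s$-periodic problem, $\mathcal K^m=(\mathcal E_u\mathcal D\mathcal U\mathcal E_u^{-1})^m$, so it suffices to iterate $\mathcal G:=\mathcal D_{u,\lambda}\mathcal U_{\lambda,\eps}$.
\begin{itemize}
\item The $\mathcal Y$ bound $\|\mathcal G^m\|_{\mathcal Y\to\mathcal Y}\le M^m$ is immediate.
\item For $\mathcal X$, choose $N$ with $\|\mathcal D^N_{u,\lambda}B\|_{\mathcal X}\le\theta\|B\|_{\mathcal X}+K\|B\|_{\mathcal Y}$ and $\theta<1-\eta$ (from Lemma \ref{lemma:D implies lasota yorke}, possibly increasing $N$).
\item Expand $\mathcal G^N$ by telescoping: $\mathcal G^N-\mathcal D^N=\sum_j\mathcal D^{N-j}\mathcal D(\mathcal U-\Id)\mathcal G^{j-1}$.
\item Each telescoping term is controlled in $\mathcal X$ by the Step 1 strong bound for $\mathcal U$: the identity part contributes to the leading constant, and the correction has size $C\eps^{\frac{1-2r-\delta}{2}}$ times $\mathcal X$ norms plus $\mathcal Y$ norms.
\end{itemize}
This gives $\|\mathcal G^NB\|_{\mathcal X}\le(\theta+C_N\eps^{\frac{1-2r-\delta}{2}})\|B\|_{\mathcal X}+K'\|B\|_{\mathcal Y}$. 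For $\eps$ small the strong constant drops below $(1-\eta/2)^N$, and the standard iteration in the proof of Lemma \ref{lemma:D implies lasota yorke} then yields \eqref{eq:first-estimate-cor}.

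The main obstacle is Step 3. The strong bound for $\mathcal U$ has constant $1+C\eps^{\cdots}$, not $1$, and is not of the form ``identity plus small in $\mathcal X\to\mathcal X$''. The $\mathcal D^{N-j}$ factors must therefore be bounded in $\mathcal X$ (using the uniform $\max_\ell\|\mathcal D^\ell\|_{\mathcal X}\le K$) rather than contracting, and the $\eps$-dependent multiplicative losses over $N$ steps must remain $(1+C\eps^{\cdots})^N$, which is harmless since $N$ is fixed before $\eps\to0$. If the one-step strong estimate for $\mathcal U$ instead only holds with $\mathcal U^{[m]}$ over longer intervals, I would apply Proposition \ref{prop:pseudodifferential abstract} directly with $m=N$ and $T=N+1$, avoiding the telescoping altogether.
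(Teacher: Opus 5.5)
Your Steps 1 and 2, and the $\mathcal Y$-bound for $\mathcal G^m$, are correct and match the paper. The gap is the telescoping argument in Step 3. The one-step bound $\|\mathcal U_{\lambda,\eps}v\|_{\mathcal X}\le(1+C\eps^{\frac{1-2r-\delta}{2}})\|v\|_{\mathcal X}+C\|v\|_{\mathcal Y}$ says that $\mathcal U_{\lambda,\eps}$ is almost non-expanding on $\mathcal X$ up to a weak term. It does \emph{not} say that $\mathcal U_{\lambda,\eps}-\Id$ is small from $\mathcal X$ to $\mathcal X$; that difference is small only from $\mathcal X$ to $\mathcal Y$.

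Apply the Lasota--Yorke bound for $\mathcal D^{N-j+1}_{u,\lambda}$ to the term $\mathcal D^{N-j+1}_{u,\lambda}(\mathcal U_{\lambda,\eps}-\Id)\mathcal G^{j-1}B$. Its weak part is indeed $O(\eps^{\cdots})$. Its strong part, however, is $C(1-\eta)^{N-j+1}\|(\mathcal U_{\lambda,\eps}-\Id)\mathcal G^{j-1}B\|_{\mathcal X}$, which is of order $\|\mathcal G^{j-1}B\|_{\mathcal X}$ for $j$ near $N$. Bounding $\mathcal D^{N-j+1}_{u,\lambda}$ by $K$, as you suggest, is worse still. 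So $\|\mathcal G^NB\|_{\mathcal X}\le(\theta+C_N\eps^{\frac{1-2r-\delta}{2}})\|B\|_{\mathcal X}+K'\|B\|_{\mathcal Y}$ does not follow.

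Iterating one-step bounds for $\mathcal G$ does not rescue this. It produces a product of $N$ one-step strong constants $C_0\e^{-\Re(\lambda)\inf_a\tau_u(a)}\bigl(\|DP_u\|_{L^\infty}^{1-r}+\|DP_u\|_{L^\infty}^{r+\delta}\bigr)$, and these need not be less than one. Definition~\ref{def:D} only makes the $m$-step quantity, carrying a single factor $C_0$, less than one. The real obstruction is not the $(1+C\eps^{\cdots})^N$ bookkeeping. It is that the $\mathcal U$-factors sitting between consecutive $\mathcal D$-factors prevent you from ever using the $N$-step contraction of $\mathcal D^N_{u,\lambda}$.

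The missing idea is exactly your fallback, and it should be the main argument rather than a contingency. Corollary~\ref{cor:factorisation1} gives $\mathcal K^N_{\lambda,\eps}=\mathcal E_u\mathcal D^N_{u,\lambda}\mathcal U^{[N]}_{\lambda,\eps}\mathcal E_u^{-1}$, which groups all ideal factors into $\mathcal D^N_{u,\lambda}$ and all resistive effects into a single operator. Although $\mathcal D^N_{u,\lambda}\mathcal U^{[N]}_{\lambda,\eps}=(\mathcal D_{u,\lambda}\mathcal U_{\lambda,\eps})^N$, one has $\mathcal U^{[N]}_{\lambda,\eps}\neq\mathcal U^N_{\lambda,\eps}$; on smooth fields, for instance, $\mathcal U^{[2]}_{\lambda,\eps}=\mathcal D_{u,\lambda}^{-1}\mathcal U_{\lambda,\eps}\mathcal D_{u,\lambda}\mathcal U_{\lambda,\eps}$. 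Its near-isometry on $\mathcal X$ therefore cannot be derived from that of $\mathcal U_{\lambda,\eps}$.

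Instead, it follows directly from Proposition~\ref{prop:pseudodifferential abstract} with $m=N$ and $T=N+1$, absorbing $B_T$ exactly as in your Step 1. This works because $\mathcal U^{[N]}_{\lambda,\eps}$ is the trace at $s=N$ of one conjugated elliptic problem on $[0,T]$. The result is a strong constant $C(1-\eta)^N(1+C_N\eps^{\frac{1-2r-\delta}{2}})$ for $\mathcal K^N_{\lambda,\eps}$ on $\mathcal E_u\mathcal X$.

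To finish, choose $N$ with $2C(1-\eta)^N\le(1-\eta/2)^N$, then choose $\eps_0$ with $C_N\eps_0^{\frac{1-2r-\delta}{2}}\le1$. Iterating with $\|\mathcal K_{\lambda,\eps}\|_{\mathcal E_u\mathcal Y\to\mathcal E_u\mathcal Y}\le M$ then gives \eqref{eq:first-estimate-cor}. This is precisely the paper's proof.
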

\begin{proof}
We aim to apply Proposition \ref{prop:pseudodifferential abstract} by  imposing in \eqref{eq:pseudo-0-T} the final boundary condition at $s=T=m+1$ as $B_T=C_{u,T,\lambda}^{-1}(a)(\mathcal{C}_{\lambda,\eps}B_0)(\Phi^T_{\w}(\kappa_u(a)),T)$ so that the resulting solution $B$ satisfies  ${\rm tr}(B)|_{s=m} = \cU_{\lambda,\eps}^{[m]}\mathcal E_u^{-1}B_0 $, recalling the PDE \eqref{d:PDE-U-lambda-eps} used to define $\cU_{\lambda,\eps}^{[m]}$.   
In  particular, we can bound this final boundary condition in $L^2 (\TT^2)$ by 
    $$
    C_{u,T,K}\|\mathrm{tr}(\mathcal{C}_{\lambda,\eps} B_0)|_{s=T}\|_{L^2(\TT^2)}\leq C_{u,T,K}\eps^{-M}\|B_0\|_{H^{-1/2}},
    $$
    where the last inequality follows by first applying Lemma \ref{lemma:elliptic regularity}, and then the trace theorem. Thus, it follows that $\|B_T\|_{L^2(\TT^2)} \leq C_{u,T,K}\eps^{-M}\|B_0\|_{\mathcal{E}_u \mathcal Y}$, thanks to $r+2\delta <1/2$, and so all terms in $B_T$ may be absorbed into the Lasota--Yorke constants, due to the presence of the exponential term. More precisely we deduce the following 
    \begin{align*}  &\|\mathcal{U}^{[m]}_{\lambda,\eps}\mathcal{E}_u^{-1}B_0\|_{\mathcal X} \leq (1+C_m\eps^{\frac{1-2r-\delta}{2}})\|B_0\|_{\mathcal{E}_u \mathcal X}+C_m\|B_0\|_{\mathcal{E}_u \mathcal Y},\\
    & \|\mathcal{U}^{[m]}_{\lambda,\eps}\mathcal{E}_u^{-1}B_0\|_{\mathcal Y} \leq C_m\|B_0\|_{\mathcal{E}_u \mathcal Y},\\
    &\|\mathcal{U}_{\lambda,\eps}\mathcal{E}_u^{-1} B_0-\mathcal{E}_u^{-1}B_0\|_{\mathcal Y} \leq C(\eps^{\frac{1-2r}{2}}+\eps^{\frac{\delta}{2}})\|B_0\|_{\mathcal{E}_u \mathcal X}.
\end{align*}
The last estimate directly implies \eqref{eq:third-estimate-cor} thanks to the decomposition of $\cK_{\lambda, \eps}$ given by Corollary \ref{cor:factorisation1}. The second estimate with $m=1$ implies that $ \| \cK_{\lambda, \eps }\|_{\mathcal{E}_u \mathcal Y \to \mathcal{E}_u \mathcal Y} \leq M$ by the decomposition given in Corollary \ref{cor:factorisation1}. Hence, it remains to prove \eqref{eq:first-estimate-cor}.
Using the Lasota-Yorke inequality for $\mathcal{D}_{u, \lambda}^{m}$ in Lemma \ref{lemma:D implies lasota yorke}, $\| \cdot \|_{\cE_u \mathcal{X}} = \| \mathcal{E}_u^{-1} \cdot \|_{\mathcal X }$ and the decomposition of $\mathcal{K}^m_{\lambda,\eps}=\mathcal{E}_u \circ  ( \mathcal{D}^m_{u,\lambda}\mathcal{U}^{[m]}_{\lambda,\eps}  ) \circ\mathcal{E}_u^{-1}$ by Corollary \ref{cor:factorisation1}, we deduce that 
$$ \|\mathcal K_{\lambda,\eps}^{m}B_0\|_{\mathcal{E}_u \mathcal{X}}
\leq
C (1- \eta)^m(1+C_m\eps^{\frac{1-2r-\delta}{2}})\|B_0\|_{\mathcal{E}_u \mathcal{X}}
+C_m\|B_0\|_{\mathcal{E}_u \mathcal{Y}}.$$
We choose an integer $L\geq1$ so that $2C (1- \eta)^L \leq (1- \frac{\eta}{2})^L$ and we choose $\eps_0\in (0,1)$ small enough so that $C_L\eps_0^{\frac{1-2r-\delta}{2}} \leq 1$. Therefore, it follows that 
$$
\|\mathcal K_{\lambda,\eps}^{L}B_0\|_{\mathcal{E}_u \mathcal{X}}
\leq
\left (1- \frac{\eta}{2} \right )^L \|B_0\|_{\mathcal{E}_u \mathcal{X}}+\bar C_L\|B_0\|_{\mathcal{E}_u \mathcal{Y}}\,,
$$
uniformly for $\lambda\in K$ and $0\leq\eps<\eps_0$. Iterating this estimate with $ \|\mathcal K_{\lambda,\eps}\|_{\mathcal{E}_u \mathcal{Y}\to \mathcal{E}_u \mathcal{Y}} \leq M$ gives
\begin{align*}
\|\mathcal K^{jL}B_0\|_{\mathcal{E}_u \mathcal{X}}
&\leq
\left (1- \frac{\eta}{2} \right )^{jL}\|B_0\|_{\mathcal{E}_u \mathcal{X}}
+\bar{C}_L\sum_{k=0}^{j-1}
\left (1- \frac{\eta}{2} \right )^{L(j-1-k)}M^{kL}\|B_0\|_{\mathcal{E}_u \mathcal{Y}}
\\
&\leq
\left (1- \frac{\eta}{2} \right )^{jL}\|B_0\|_{\mathcal{E}_u \mathcal{X}}
+\frac{\bar C_L}{M^L -1}M^{jL}\|B_0\|_{\mathcal{E}_u \mathcal{Y}}.
\end{align*}
For $m=jL+\ell$, with $0\leq\ell<L$, we use
$\max_{0 \leq \ell < L}\|\mathcal K^\ell\|_{\mathcal{E}_u \mathcal{X} \to \mathcal{E}_u \mathcal{X}}\leq M$, after increasing $M$ if necessary. Absorbing the finitely many factors corresponding to $0\leq\ell<L$ into a constant $C$, we conclude the proof.
\end{proof}

\subsection{Proof of fast dynamo action}
Finally, we may now prove the following.
\begin{proposition}
Let $u \in \mathcal{D}$. Then, $u $ generates a fast dynamo.
\end{proposition}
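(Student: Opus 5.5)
The plan is to transfer the ideal spectral data of $\mathcal D_{u,\lambda}$, guaranteed by Definition~\ref{def:D} and Lemma~\ref{lemma:holomorphic 1}, to $\mathcal K_{\lambda,\eps}$ for all small $\eps>0$. This will be done with the same Keller--Liverani plus Rouch\'e argument used in the openness proof, replacing the velocity perturbation by the resistive one. Corollary~\ref{cor:section eigenvalue implies dynamo} then converts a fixed point of $\mathcal K_{\lambda_\eps,\eps}$ into a smooth divergence-free eigenfunction on $\TT^3$.

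\textbf{Step 1: ideal data.} Fix $u\in\mathcal D$. Lemma~\ref{lemma:holomorphic 1} provides $\lambda_0$ with $\Re\lambda_0>0$ and a holomorphic function $\zeta(\lambda)$, the simple eigenvalue of $\mathcal D_{u,\lambda}$ near $1$. Choose $R>0$ such that:
\begin{itemize}
\item $\lambda_0$ is the only zero of $\zeta-1$ in $\overline{D(\lambda_0;2R)}$;
\item $\Re\lambda\geq \Re\lambda_0/2$ on this disc;
\item the Lasota--Yorke constant $1-\eta$ of Lemma~\ref{lemma:D implies lasota yorke} holds uniformly on $K:=\overline{D(\lambda_0;2R)}$.
\end{itemize}
Since $\mathcal K_{\lambda,0}=\mathcal E_u\mathcal D_{u,\lambda}\mathcal E_u^{-1}$, the operator $\mathcal K_{\lambda,0}$ has the same spectrum on $\mathcal E_u\mathcal X$.

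\textbf{Step 2: uniform perturbation.} Corollary~\ref{cor:uniform lasota yorke} gives, uniformly in $\lambda\in K$ and $\eps\in[0,\eps_0)$:
\begin{itemize}
\item a uniform Lasota--Yorke inequality for $\mathcal K_{\lambda,\eps}$ on the pair $(\mathcal E_u\mathcal X,\mathcal E_u\mathcal Y)$;
\item a uniform bound on the weak space;
\item the triple-norm estimate $\|\mathcal K_{\lambda,\eps}-\mathcal K_{\lambda,0}\|_{\mathcal E_u\mathcal X\to\mathcal E_u\mathcal Y}\lesssim \eps^{(1-2r)/2}+\eps^{\delta/2}\to0$.
\end{itemize}
The compact embedding $\mathcal E_u\mathcal X\hookrightarrow\mathcal E_u\mathcal Y$ is inherited from Rellich through the bounded invertible change of frame. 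Theorem~\ref{thm:keller liverani} then yields, for $\eps$ small, an isolated algebraically simple eigenvalue $\zeta^\eps(\lambda)$ of $\mathcal K_{\lambda,\eps}$, with $\zeta^\eps\to\zeta$ uniformly on $K$. Its holomorphy in $\lambda$ follows from Lemma~\ref{lemma:holomorphic 2} together with the Riesz-projector trace formula, exactly as in the openness proof.

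\textbf{Step 3: Rouch\'e.} On $\Gamma=\partial D(\lambda_0;R)$ we have $\inf|\zeta-1|>0$, while $\sup_\Gamma|\zeta^\eps-\zeta|\to0$. Rouch\'e's theorem therefore gives $\lambda_\eps\in D(\lambda_0;R)$ with $\zeta^\eps(\lambda_\eps)=1$, so $\Re\lambda_\eps\geq\Re\lambda_0/2$. This produces a nontrivial $B_0\in\mathcal E_u\mathcal X\subset H^{-1/2}$ with $\mathcal K_{\lambda_\eps,\eps}B_0=B_0$.

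\textbf{Step 4: conclusion.} Corollary~\ref{cor:section eigenvalue implies dynamo} gives a smooth divergence-free eigenfunction $B^\eps$ on $\TT^3$ with eigenvalue $\lambda_\eps$. Then $B^\eps(t)=\e^{\lambda_\eps t}B^\eps$ solves \eqref{passive-vector}. We should double-check the sign convention in \eqref{eq:elliptic evolution}, where $\eps\Delta B-u\cdot\nabla B+B\cdot\nabla u=\lambda B$, which indeed matches $\partial_tB$. Hence \eqref{eq:expoGro} holds with $\gamma_0=\Re\lambda_0/2$ and $\eps$-independent $\eps_0$.

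\textbf{Main obstacle.} The delicate point is the uniformity in Step 2. We must check that the hypotheses of Keller--Liverani hold on the $\eps$-dependent family uniformly in $\lambda$ over the whole disc, including $\eps=0$, so that the convergence $\zeta^\eps\to\zeta$ is uniform on $\Gamma$. We also need the ideal eigenvalue to be isolated relative to the common essential radius $1-\eta/2$. I would first verify that the constants in Corollary~\ref{cor:uniform lasota yorke} are uniform on $K$. I would then invoke the parameter-uniform version of Theorem~\ref{thm:keller liverani}, with a fixed contour around $1$ of radius less than $\eta/4$.
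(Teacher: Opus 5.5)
Your proposal is correct and follows essentially the same route as the paper: the ideal holomorphic eigenvalue branch $\zeta$ from Lemma~\ref{lemma:holomorphic 1}, the $\eps$-uniform Lasota--Yorke and weak--strong estimates of Corollary~\ref{cor:uniform lasota yorke} fed into the parameter-uniform Keller--Liverani theorem on $(\mathcal E_u\mathcal X,\mathcal E_u\mathcal Y)$, holomorphy of $\zeta_\eps$ via Lemma~\ref{lemma:holomorphic 2}, Rouch\'e on a contour inside $\{\Re\lambda>\Re\lambda_0/2\}$, and finally Corollary~\ref{cor:section eigenvalue implies dynamo}. Your sign check and the explicit choice $\gamma_0=\Re\lambda_0/2$ spell out steps the paper leaves implicit.
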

\begin{proof}
    Since $u\in\mathcal D$, Definition \ref{def:D} gives
$\lambda_0\in\CC$, with $\Re\lambda_0>0$, such that $1$ is an algebraically simple eigenvalue of $\mathcal{D}_{u,\lambda_0}$. By Lemma \ref{lemma:holomorphic 1}, there exists, in a neighbourhood
of $\lambda_0$, a nonconstant holomorphic branch of algebraically
simple eigenvalues $\zeta(\lambda)$ of $\mathcal{E}_u \mathcal{D}_{u,\lambda} \mathcal{E}_u^{-1}$ satisfying
$ \zeta(\lambda_0)=1.$ Consequently, the zeros of $\zeta-1$ are isolated.
Let $\mathcal{V}$ be a neighbourhood of $\lambda_0$ on which $\zeta$ is defined and the uniform ideal Lasota--Yorke estimates hold with rate $1-\eta$.
We choose $\Gamma \subset \{ \lambda \in \CC: \Re(\lambda)>\frac{\Re(\lambda_0)}{2}\}$ so that $\zeta(\lambda)-1$ does not vanish on $\Gamma$, and $\lambda_0 \in \mathrm{int}(\Gamma)$, where the latter denotes the bounded open set whose boundary is $\Gamma$. We fix a compact set $K \subset \mathcal{V}$ containing $\mathrm{int}(\Gamma)$, and apply Theorem \ref{thm:keller liverani} with strong and weak
spaces $
\mathcal E_u\mathcal X$ and $ \mathcal E_u\mathcal Y, $
respectively. Their compact embedding follows from the compact embedding $\mathcal X\hookrightarrow\mathcal Y$. The first two
estimates of Corollary \ref{cor:uniform lasota yorke} give the required uniform bounds on all iterates uniformly in $\eps \in [0, \eps_0)$, while its final estimate gives the weak-strong convergence 
$$\| \cK_{\lambda , \eps} - \mathcal{E}_u \mathcal{D}_{u, \lambda} \mathcal{E}_u^{-1} \|_{\mathcal{E}_u \mathcal X\to \mathcal{E}_u \mathcal Y} \to 0 \,, \qquad \text{as }  \eps \to 0\,,
$$
uniformly for $\lambda\in K$.
It follows that, for all sufficiently small $\eps$,
$\mathcal K_{\lambda,\eps}$ admits an algebraically simple eigenvalue $\zeta_\eps(\lambda)$ satisfying
$$
\sup_{\lambda\in K}
|\zeta_\eps(\lambda)-\zeta(\lambda)|
\to 0
\qquad\text{as }\eps\to0 \,.
$$
By Lemma \ref{lemma:holomorphic 2} and analytic perturbation theory (see \cite{K76}*{Chapter VII, Theorem 1.8}), $\zeta_\eps$ is holomorphic on a neighbourhood of
$K$. Since $\zeta-1$ does not vanish on $\Gamma$, for all sufficiently small $\eps$, we have
$$
|\zeta_\eps(\lambda)-\zeta(\lambda)| < |\zeta(\lambda)-1|, \qquad \forall \lambda\in\Gamma \,.
$$
Rouch\'e's theorem therefore implies that $\zeta_\eps-1$ and $\zeta-1$ have the same number of zeros in $\mathrm{int} (\Gamma)$. Thus, $1$ is an eigenvalue of
$\mathcal K_{\lambda_\eps,\eps}$ and
$ \Re\lambda_\eps>\frac{\Re\lambda_0}{2}>0. $ Finally,
Corollary  \ref{cor:section eigenvalue implies dynamo} implies the existence of a non-zero smooth, divergence-free eigenfunction of the dynamo operator with eigenvalue $\lambda_\eps$. Since the lower bound $\Re\lambda_\eps>\Re\lambda_0/2$ is uniform for all sufficiently small $\eps$, the velocity field $u$ generates a fast dynamo.
\end{proof}

Thus, the proof is complete, conditional on Proposition \ref{prop:pseudodifferential abstract}. The following section is therefore devoted to the proof of that result.

\section{The pseudodifferential proposition}
\label{sec:pseudodifferential result}
This section is devoted to the proof of Proposition \ref{prop:pseudodifferential abstract} using the notation and classical results in pseudodifferential calculus  from  Appendix \ref{sec:pseudodifferential intro}.
In this section $s \in [0,\infty)$ is the evolution coordinate of the abstract boundary problem, rather than the physical time of the velocity field.  Recall that the operators under consideration, defined in \eqref{eq:abstract operator statement}, have the form
\begin{equation}
\label{eq:abstract operator}
\mathcal L_{\lambda,\eps}=\partial_s-\eps (\alpha^{0,0}\partial_s^2+2\alpha^{0,j}\partial_s \partial_j +\alpha^{i,j}\partial_i \partial_j+H^0\partial_s+H^j \partial_j +H),
\end{equation}
where $\partial_j , j =1,2$ denotes differentiation in $a \in \TT^2$, and the coefficients satisfy the assumptions in Proposition \ref{prop:pseudodifferential abstract}. We fix $\lambda $ to lie in a compact subset  $K \subset \CC$ and we study the following PDE 
$$ \cL_{\lambda, \eps} B=0\,, \qquad \mathrm{tr} (B)|_{s=0}  = B_0 \in \mathcal{X} \,, \qquad \mathrm{tr} ( B)|_{s=T} = B_T \in L^2 \,.$$

\subsection{Factorising the operator}
The first key result we shall establish about the operator \eqref{eq:abstract operator} roughly says that it may be approximately factored as
$$
\mathcal L_{\lambda,\eps} \approx -\eps \alpha^{0,0}(\partial_s-F_J(s))(\partial_s-S_J(s)),
$$
where $F_J,S_J$ are pseudodifferential operators that will be referred to, respectively, as the \emph{fast and slow} generators, and the factorisation can be done so that the error is of arbitrarily high negative order depending on $J \in \NN$.
For the remainder of the section, we  define  the following  functions
$$
\rho_\eps(\xi)=(\langle \xi \rangle^2+\eps^{-2})^{1/2}, \quad \mu_\eps(\xi)=\frac{\eps \langle \xi \rangle^2}{1+\eps \langle \xi \rangle}, \quad \nu_\eps(\xi)= \xiangle^{-1} \mu_\eps (\xi) \,.
$$
These will appear frequently throughout the proof. We write $f(\xi)\approx g(\xi)$ if there are constants $C,c>0$ independent of $\eps$ such that $g(\xi )c \leq f(\xi)\leq C g(\xi)$. We can define symbol classes associated to these functions using the notions of admissible order and scale functions introduced in Appendix \ref{sec:pseudodifferential intro} as follows. The proof of the following result  is immediate from their definitions.
\begin{lemma} \label{lemma:computations}
The functions $\rho_\eps,\mu_\eps, \nu_\eps$ are  admissible order functions with respect to the admissible scale function $\ell_\eps(\xi)=\langle \xi \rangle$ and the function  $\rho_\eps$ is an admissible order function with respect to the admissible scale $\ell_\eps(\xi)=\rho_\eps(\xi)$. Furthermore, for any $\eta>0$, we have
\begin{align*}
     \nu_\eps(\xi) \leq \eta \mu_\eps(\xi)+\eta^{-1}\eps\,,
     \\
     \rho_\eps  \approx \eps^{-1}+ \xiangle\,.
\end{align*}
\end{lemma}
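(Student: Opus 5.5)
The inequalities are elementary, and the admissibility claims reduce to uniform-in-$\eps$ symbol bounds. The key observation is that all three functions are built from $\xiangle$ and the single dimensionless quantity $t=\eps\xiangle$:
$$
\rho_\eps=\eps^{-1}\bigl(1+t^2\bigr)^{1/2},\qquad \mu_\eps=\xiangle\,\varphi(t),\qquad \nu_\eps=\varphi(t),\qquad \varphi(t):=\frac{t}{1+t}.
$$
I take the Appendix \ref{sec:pseudodifferential intro} definitions to be the standard ones, with all constants required uniform in $\eps$:
\begin{itemize}
\item an admissible scale $\ell$ satisfies $\ell\geq c$ and $|\partial^\alpha_\xi\ell|\lesssim_\alpha \ell^{1-|\alpha|}$, and is slowly varying;
\item an admissible order function $m$ with respect to $\ell$ satisfies $|\partial_\xi^\alpha m|\lesssim_\alpha m\,\ell^{-|\alpha|}$, together with a temperance bound $m(\xi)\leq Cm(\eta)\langle\xi-\eta\rangle^{N}$.
\end{itemize}
If the Appendix uses a slightly different but equivalent formulation, the same computations apply.

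\textbf{The two inequalities.} For the first, split according to the size of $\xiangle$.
\begin{itemize}
\item If $\xiangle\geq\eta^{-1}$, then $\nu_\eps=\xiangle^{-1}\mu_\eps\leq\eta\mu_\eps$.
\item If $\xiangle<\eta^{-1}$, then $\nu_\eps=\dfrac{\eps\xiangle}{1+\eps\xiangle}\leq\eps\xiangle<\eta^{-1}\eps$.
\end{itemize}
Adding the two cases gives $\nu_\eps\leq\eta\mu_\eps+\eta^{-1}\eps$. For the second, $\sqrt{a^2+b^2}$ lies between $(a+b)/\sqrt2$ and $a+b$ for $a,b\geq0$, which gives $\rho_\eps\approx\eps^{-1}+\xiangle$ with absolute constants.

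\textbf{Symbol bounds.}
\begin{itemize}
\item For $\rho_\eps$, write $\rho_\eps^2=\xiangle^2+\eps^{-2}$, a polynomial in $\xi$ with constant term $1+\eps^{-2}$. Induction on $|\alpha|$ gives $|\partial^\alpha\rho_\eps|\lesssim_\alpha\rho_\eps^{1-|\alpha|}$, as in the classical proof for $\xiangle$. Since $\rho_\eps\geq\xiangle$ and $\rho_\eps\geq 1$, this shows both that $\rho_\eps$ is an admissible scale and that it is an order function relative to either scale $\xiangle$ or $\rho_\eps$.
\item For $\mu_\eps$ and $\nu_\eps$, first note $|t^k\varphi^{(k)}(t)|\lesssim_k\varphi(t)$ for $t\geq0$, with constants independent of everything. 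By Faà di Bruno and $|\partial^\beta\xiangle|\lesssim\xiangle^{1-|\beta|}$, each $\xi$-derivative of $\varphi(\eps\xiangle)$ is a sum of terms $\varphi^{(k)}(t)\,\eps^k\prod_i\partial^{\beta_i}\xiangle$. Each such term is bounded by $|t^k\varphi^{(k)}(t)|\,\xiangle^{-|\alpha|}\lesssim\varphi(t)\xiangle^{-|\alpha|}$. The factor $\eps$ never appears unpaired with $\xiangle$, which is what makes the bound $\eps$-uniform. The Leibniz rule then handles $\mu_\eps=\xiangle\varphi$.
\end{itemize}

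\textbf{Temperance and the main obstacle.} Temperance is the step where uniformity in $\eps$ must be watched. For $t,s>0$, $\varphi(t)/\varphi(s)\leq\max(1,t/s)$, since $\varphi$ is increasing and $\varphi(t)/t$ is decreasing. Hence
$$
\frac{\nu_\eps(\xi)}{\nu_\eps(\eta)}\leq\max\Bigl(1,\frac{\xiangle}{\langle\eta\rangle}\Bigr)\leq\sqrt2\,\langle\xi-\eta\rangle
$$
by Peetre's inequality. The analogous bound for $\mu_\eps$ picks up one more Peetre factor. For $\rho_\eps$, Peetre applies directly to $\rho_\eps^2=\eps^{-2}+\xiangle^2$, since $\rho_\eps(\xi)^2\leq\eps^{-2}+2\langle\eta\rangle^2\langle\xi-\eta\rangle^2$; the relative version for the scale $\rho_\eps$ follows the same way.

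The only genuine care needed is to check that every constant above is independent of $\eps$. That is exactly what the rewriting through $t=\eps\xiangle$ and the scale invariance of $\varphi$ provide, so I expect no real difficulty beyond matching the Appendix's precise definitions.
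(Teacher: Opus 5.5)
Your verification is correct, and it is exactly the direct check that the paper compresses into ``immediate from their definitions''; rewriting everything through $t=\eps\xiangle$ is a clean way to make the $\eps$-uniformity visible. One adjustment is needed. The Appendix's temperance condition is $m_\eps(\xi)/m_\eps(\eta)\le C\bigl(1+|\xi-\eta|\,\ell_\eps(\eta)^{-1/2}\bigr)^M\bigl(1+|\xi-\eta|\,\ell_\eps(\xi)^{-1/2}\bigr)^M$. Your Peetre bound $\sqrt2\langle\xi-\eta\rangle$ does not give this, because the required right-hand side stays bounded when $|\xi-\eta|\lesssim\ell_\eps^{1/2}$. The fix is easy: your intermediate bound $\max(1,\xiangle/\langle\eta\rangle)$, combined with $\xiangle\le\langle\eta\rangle+|\xi-\eta|$, yields the required form directly. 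Likewise, the $1$-Lipschitz property of $\rho_\eps$ gives $\rho_\eps(\xi)/\rho_\eps(\eta)\le 1+|\xi-\eta|/\rho_\eps(\eta)$, which covers both scales $\xiangle$ and $\rho_\eps$.
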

We now have the following result.
\begin{lemma}
\label{lemma:factorisation}
For any $J \in \NN^+$, there exist pseudodifferential operators $F_J(s), S_J(s), R_J(s)$ so that
$$
\mathcal L_{\lambda,\eps}=-\eps \alpha^{0,0}(\partial_s -F_J(s))(\partial_s -S_J(s))+\eps \alpha^{0,0}R_J(s)\,.
$$ 
These operators satisfy the following properties:
\begin{enumerate}
\item For all $J \in \NN^+$, we have $F_J \in \Psi_\eps(\rho_\eps) \cap \Psi_\eps(\rho_\eps;\rho_\eps)$, $S_J \in \Psi_\eps(\mu_\eps) \cap \Psi_\eps(\rho_\eps;\rho_\eps)$, $R_J \in \Psi_\eps(\langle\xi \rangle^{1-J}) \cap \Psi_\eps(\rho_\eps^{1-J};\rho_\eps)$.
\item There exist \emph{scalar} operators $F_0=\Op_a(q_f),S_0=\Op_a(q_s)$, so that for any $J \in \NN^+$, we have
$$
F_J-F_0 \in \Psi_\eps(1) \cap \Psi_\eps(1;\rho_\eps), \quad S_J-S_0 \in \Psi_\eps(\nu_\eps) \cap \Psi_\eps(1;\rho_\eps).
$$
\end{enumerate}
\end{lemma}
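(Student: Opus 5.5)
The plan is to construct the factorisation symbolically, order by order, as in the classical approximate factorisation of a second-order elliptic operator into two first-order pseudodifferential factors. Divide \eqref{eq:abstract operator} by $-\eps\alpha^{0,0}$, which is scalar and bounded below by $\Lambda_0$. The operator takes the form
\[
\partial_s^2+(b^j\partial_j+\widetilde H^0-\eps^{-1}(\alpha^{0,0})^{-1})\partial_s+\widetilde\alpha^{i,j}\partial_i\partial_j+\widetilde H^j\partial_j+\widetilde H .
\]
Here $b^j=2\alpha^{0,j}/\alpha^{0,0}$ and $\widetilde\alpha^{i,j}=\alpha^{i,j}/\alpha^{0,0}$ are scalars; only the $\widetilde H$'s are matrices. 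Expanding $(\partial_s-F)(\partial_s-S)=\partial_s^2-(F+S)\partial_s-(\partial_sS)+FS$ and matching coefficients gives two equations:
\[
F+S=\eps^{-1}(\alpha^{0,0})^{-1}-\Op(i b^j\xi_j)-\widetilde H^0,
\qquad
FS-\partial_sS=\widetilde\alpha^{i,j}\partial_i\partial_j+\widetilde H^j\partial_j+\widetilde H .
\]
Eliminating $F$ gives a Riccati-type equation for $S$. I would solve it by an asymptotic expansion $S\sim\sum_k s_k$ in the calculus of Appendix \ref{sec:pseudodifferential intro}, using the composition formula in the horizontal variable $a$ and treating $s$ as a parameter.

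The principal part is a scalar quadratic. Write $A(a,s,\xi)=\widetilde\alpha^{i,j}\xi_i\xi_j\approx\xiangle^2$ and $c=\eps^{-1}(\alpha^{0,0})^{-1}-ib^j\xi_j$. Then $q_s,q_f$ are the two roots of $q^2-cq-A=0$ (with appropriate sign conventions), namely
\[
q_{f,s}=\tfrac12\bigl(c\pm\sqrt{c^2+4A}\bigr).
\]
The constant $\eps^{-1}(\alpha^{0,0})^{-1}$ dominates for $|\xi|\lesssim\eps^{-1}$, and ellipticity of the full matrix $(\alpha^{i,j})_{i,j=0,1,2}$ guarantees that $c^2+4A$ stays in a sector avoiding $(-\infty,0]$ uniformly. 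Consequently $\Re q_f\approx\rho_\eps$, $q_f\in S(\rho_\eps)$, and $q_s=-A/q_f$. From $|q_s|\approx\xiangle^2/\rho_\eps$ and Lemma \ref{lemma:computations} one gets $|q_s|\approx\mu_\eps$, since $\mu_\eps\approx\xiangle^2/(\eps^{-1}+\xiangle)$ after multiplying through by $\eps$. I would check that $q_f,q_s$ lie in both the $\langle\xi\rangle$-scale and $\rho_\eps$-scale classes claimed by differentiating the explicit formula. Set $F_0=\Op_a(q_f)$ and $S_0=\Op_a(q_s)$; these are scalar.

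The lower-order corrections come from an inductive transport step. At step $k$, the error from composition remainders, from $\partial_s S$, and from the matrix terms $\widetilde H^0,\widetilde H^j,\widetilde H$ is a symbol $e_k$ one order lower in $\xiangle$ (respectively $\rho_\eps$). I would remove it by setting $s_k=e_k/(q_f-q_s)$ and $f_k=-s_k$ up to lower order. Since $q_f-q_s\approx\rho_\eps$ is scalar and elliptic, this division is legitimate even though the $e_k$ are matrix-valued. After $J$ steps, $R_J$ is the leftover of order $\xiangle^{1-J}$ (respectively $\rho_\eps^{1-J}$), which gives property (1).

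Property (2) then needs two checks. First, $F_J-F_0$ consists of the first correction $-\widetilde H^0$ and similar terms of order zero, so it lies in $\Psi_\eps(1)$. Second, $S_J-S_0$ has leading correction of the form $(\widetilde H^j i\xi_j+\dots)/q_f$, which is of size $\xiangle/\rho_\eps\approx\nu_\eps$, and the higher terms are smaller. The step I expect to be the main obstacle is keeping these bounds uniform in $\eps$ in both scale regimes simultaneously. The difficulty is that derivatives in $\xi$ of $\rho_\eps$-order symbols gain $\rho_\eps^{-1}$ only in the second calculus, while the $\mu_\eps,\nu_\eps$ statements need the $\xiangle$-scale. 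I would handle this by verifying the symbol estimates directly on the explicit root formula and then propagating them through the induction using the composition theorems for each class separately.
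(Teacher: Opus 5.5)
Your proposal is correct and follows essentially the same route as the paper. You take $F_0=\Op_a(q_f)$ and $S_0=\Op_a(q_s)$ from the frozen-coefficient scalar quadratic, then make inductive corrections by dividing the matrix-valued error symbol by the scalar, elliptic $q_f-q_s\approx\rho_\eps$, with the bookkeeping done simultaneously in the $\langle\xi\rangle$- and $\rho_\eps$-scale calculi.

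Two small points should be tightened. First, $F+S$ must reproduce the $\partial_s$-coefficient exactly, because a leftover $G\,\partial_s$ cannot be absorbed into the pseudodifferential remainder $R_J$. The paper enforces this with $f_{J+1}+s_{J+1}=-L_J$, where $L_0=H^0/\alpha^{0,0}$ and $L_J=0$ for $J\geq1$, rather than your ``$f_k=-s_k$ up to lower order''. Second, only the upper bound $|q_s|\lesssim\mu_\eps$ holds, not two-sided comparability (e.g.\ $q_s=0$ at $\xi=0$), but the upper bound is all the lemma requires.
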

 We first split  
$$ \cL_{\lambda, \eps} = \cL_{\lambda, \eps, 1} + \cL_{\lambda, \eps ,2 }$$
where
$\cL_{\lambda, \eps, 1} = \partial_s-\eps (\alpha^{0,0}\partial_s^2+2\alpha^{0,j}\partial_s \partial_j +\alpha^{i,j}\partial_i \partial_j)$ is the scalar part of the operator and $\cL_{\lambda, \eps, 2} = -\eps (H^0\partial_s+H^j \partial_j +H)$ is the matrix part of the operator. As is typical in pseudodifferential calculus, we begin by freezing the coefficients in \eqref{eq:abstract operator}. Replacing $\partial_s$ by $q$ and $\partial_a$ by $i\xi$ in $\cL_{\lambda,\eps,1}$ gives the polynomial equation 
\begin{align} \label{eq:polynomial}
    q-\eps (\alpha^{0,0}(a,s)q^2+2i\alpha^{0,j}(a,s) \xi_j q-\alpha^{i,j}(a,s) \xi_i \xi_j)=0 
\end{align}
 whose roots will be particularly important. We denote them by $q_f(a,s,\xi)$ and $q_s(a,s,\xi)$; they are given, respectively, by  
\begin{equation}
\label{eq:fast root}
q_f(a,s,\xi)=\frac{1-2i\alpha^{0,j} \xi_j \eps+\sqrt{(1-2i \eps \alpha^{0,j}\xi_j)^2+4\eps^2 \alpha^{0,0}\alpha^{i,j}\xi_i \xi_j }}{2\eps \alpha^{0,0}}
\end{equation}
\begin{equation}
\label{eq:slow root}
q_s(a,s,\xi)=\frac{1-2i\alpha^{0,j} \xi_j \eps-\sqrt{(1-2i \eps \alpha^{0,j}\xi_j)^2+4\eps^2 \alpha^{0,0}\alpha^{i,j}\xi_i \xi_j }}{2\eps \alpha^{0,0}}.
\end{equation} 
Due to their importance, we shall first establish a number of useful properties of  $q_f, q_s$ before moving on to the proof of Lemma \ref{lemma:factorisation}. We recall that $S_\eps (\rho ; \ell)$ denotes the class of  pseudodifferential symbols with admissible order function $\rho$ and admissible scale function $\ell$ and $S_\eps (\rho) = S_\eps (\rho; \xiangle)$ .
\begin{lemma}\label{lemma:properties of the roots}
The following hold true
\begin{enumerate}
    \item $q_f$ and $q_s$ belong to the symbol classes $$
q_f\in S_\eps(\rho_\eps;\rho_\eps) \subset S_\eps (\rho_\eps),
\qquad
q_s\in S_\eps(\mu_\eps) \cap S_\eps(\rho_\eps; \rho_\eps) \subset S_\eps (\rho_\eps)\,.
$$
\item The two roots are uniformly separated and
\[
\frac{1}{q_f-q_s}
\in
S_\eps
\left(
\rho_\eps^{-1};
\rho_\eps
\right)
\subset
S_\eps
\left(
\rho_\eps^{-1}
\right).
\]
\item There exists a constant $c>0$ independent of
$0<\eps\leq1$, such that the following hold true for every $(a,s,\xi)$
\begin{equation}
\label{eq:qf-strict-accretivity}
\Re
\left(
q_f(a,s,\xi)
\right)
\geq
c\rho_\eps(\xi) \,, \qquad \Re(-q_s(a,s,\xi)) \geq  c\mu_\eps(\xi)-c \eps\,.
\end{equation}
\end{enumerate}
\end{lemma}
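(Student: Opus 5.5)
The plan is to reduce everything to the behaviour of the discriminant
$D(a,s,\xi):=(1-2i\eps\alpha^{0,j}\xi_j)^2+4\eps^2\alpha^{0,0}\alpha^{i,j}\xi_i\xi_j$ and of the two algebraic relations between the roots of \eqref{eq:polynomial}. Rewriting \eqref{eq:polynomial} as $-\eps\alpha^{0,0}q^2+(1-2i\eps\alpha^{0,j}\xi_j)q+\eps\alpha^{i,j}\xi_i\xi_j=0$, these relations are
\begin{equation*}
q_f-q_s=\frac{\sqrt{D}}{\eps\alpha^{0,0}},\qquad q_fq_s=-\frac{\alpha^{i,j}\xi_i\xi_j}{\alpha^{0,0}}.
\end{equation*}

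\textbf{Step 1: the discriminant lies in a sector.} With $\beta:=\alpha^{0,j}\xi_j$,
\begin{equation*}
\Re D=1+4\eps^2\bigl(\alpha^{0,0}\alpha^{i,j}\xi_i\xi_j-\beta^2\bigr).
\end{equation*}
The bracket is $\alpha^{0,0}$ times the Schur complement of $\alpha^{0,0}$ in $A$, evaluated at $\xi$. The uniform ellipticity $A\geq\Lambda_0$ therefore gives $\Re D\geq 1+c\eps^2|\xi|^2$. Since also $|D|\leq 1+C\eps^2|\xi|^2$, we get $\Re D\gtrsim |D|\approx\langle\eps\xi\rangle^2$. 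Thus $D$ lies in a fixed closed sector strictly inside the right half-plane. Consequently the principal branch $\sqrt{D}$ is smooth in $(a,s,\xi)$ and satisfies
\begin{equation*}
\Re\sqrt D\gtrsim|\sqrt D|\approx\langle\eps\xi\rangle,
\end{equation*}
uniformly in $\eps$.

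\textbf{Step 2: the lower bounds in item (3) and the separation.}
\begin{itemize}
\item From the first relation, $|q_f-q_s|\approx\eps^{-1}\langle\eps\xi\rangle\approx\rho_\eps$, using Lemma~\ref{lemma:computations}.
\item From \eqref{eq:fast root}, $\Re q_f\geq(1+\Re\sqrt D)/(2\eps\alpha^{0,0})\gtrsim\rho_\eps$, and $|q_f|\lesssim\rho_\eps$.
\item The product relation gives $-q_s=\alpha^{i,j}\xi_i\xi_j\,\overline{q_f}/(\alpha^{0,0}|q_f|^2)$. Hence $\Re(-q_s)\gtrsim|\xi|^2/\rho_\eps$ and $|q_s|\lesssim|\xi|^2/\rho_\eps$.
\item Since $\rho_\eps\approx\eps^{-1}(1+\eps\langle\xi\rangle)$, we have $|\xi|^2/\rho_\eps\approx\eps|\xi|^2/(1+\eps\langle\xi\rangle)$. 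Replacing $|\xi|^2$ by $\langle\xi\rangle^2$ costs at most $\eps/(1+\eps\langle\xi\rangle)\leq\eps$, which gives $\Re(-q_s)\geq c\mu_\eps-c\eps$.
\end{itemize}

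\textbf{Step 3: symbol estimates (items (1) and (2)).} I would introduce the rescaled variable $\eta=\eps\xi$. Then $\sqrt{D}$ is a classical symbol of order $1$ in $\eta$, smooth in $(a,s)$, with bounds uniform in $\eps$, because the sector property holds uniformly. Each $\partial_\xi=\eps\partial_\eta$ therefore gains a factor $\eps\langle\eps\xi\rangle^{-1}\approx\rho_\eps^{-1}$. This yields $q_f=(1-2i\eps\beta+\sqrt D)/(2\eps\alpha^{0,0})\in S_\eps(\rho_\eps;\rho_\eps)$. Since $\rho_\eps\geq\langle\xi\rangle$, the scale $\rho_\eps$ is stronger than $\langle\xi\rangle$, so the inclusion into $S_\eps(\rho_\eps)$ is immediate. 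The same reasoning applies to $q_f-q_s$. Its reciprocal is handled by Faà di Bruno together with the lower bound $|q_f-q_s|\gtrsim\rho_\eps$ from Step 2, giving $(q_f-q_s)^{-1}\in S_\eps(\rho_\eps^{-1};\rho_\eps)$. For $q_s$, the class $S_\eps(\rho_\eps;\rho_\eps)$ follows from $q_s=q_f-(q_f-q_s)$. For $q_s\in S_\eps(\mu_\eps)$ I would use the product formula
\begin{equation*}
q_s=-\frac{\alpha^{i,j}\xi_i\xi_j}{\alpha^{0,0}}\,q_f^{-1}.
\end{equation*}
Here $q_f^{-1}\in S_\eps(\rho_\eps^{-1};\rho_\eps)\subset S_\eps(\rho_\eps^{-1})$, and the quadratic polynomial lies in $S_\eps(\langle\xi\rangle^2)$. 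The product then lies in $S_\eps(\langle\xi\rangle^2\rho_\eps^{-1})$, and $\langle\xi\rangle^2\rho_\eps^{-1}\approx\mu_\eps$.

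\textbf{Main obstacle.} I expect the delicate point to be the uniformity of Step 3 across the two regimes $\eps|\xi|\ll1$ and $\eps|\xi|\gg1$, in particular keeping the $\eps$-dependence explicit when derivatives fall on the coefficients in $(a,s)$. The rescaling to $\eta=\eps\xi$, combined with the uniform sector bound from Step 1, is exactly what should make this work cleanly.
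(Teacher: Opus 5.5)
Your proposal is correct and follows essentially the same route as the paper. The shared steps are the Schur-complement lower bound on the discriminant (the paper's $\Delta_\eps$ is your $\eps^{-2}D$), the product relation $q_fq_s=-\alpha^{i,j}\xi_i\xi_j/\alpha^{0,0}$ used both for $q_s\in S_\eps(\mu_\eps)$ and for $\Re(-q_s)\gtrsim|\xi|^2/\rho_\eps$ (the paper's rationalised formula for $q_s$ is the same identity, since its denominator equals $2\alpha^{0,0}q_f$), and Fa\`a di Bruno for the reciprocals. Your rescaling $\eta=\eps\xi$, under which $D$ becomes an $\eps$-independent quadratic polynomial in $\eta$, is a cleaner, more explicit way of carrying out the ``direct differentiation'' the paper invokes for $\Delta_\eps\in S_\eps(\rho_\eps^2;\rho_\eps)$.
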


\begin{proof}
We begin by studying  
$$\Delta_\eps := (2i\alpha^{0,j}\xi_j-\eps^{-1})^2 +4\alpha^{0,0}\alpha^{i,j}\xi_i\xi_j\,,$$
so that 
$$ q_f = \frac{\eps^{-1} - 2i \alpha^{0,j} \xi_j + \sqrt{\Delta_\eps}}{2 \alpha^{0,0}} \,, \qquad q_s =\frac{\eps^{-1} - 2i \alpha^{0,j} \xi_j - \sqrt{\Delta_\eps}}{2 \alpha^{0,0}} \,. $$
The real part of $\Delta_\eps$ is  $
\eps^{-2}+4\alpha^{0,0}\alpha^{i,j}\xi_i \xi_j-4(\alpha^{0,j}\xi_j)^2$ and we first prove that there exists $c_0>0$ such that
$$
4\alpha^{0,0}\alpha^{i,j}\xi_i\xi_j -4(\alpha^{0,j}\xi_j)^2 \geq c_0|\xi|^2 \,.
$$
Indeed, since the matrix $(\alpha^{i,j})_{i,j=0}^2$ is uniformly elliptic we have
$$
\alpha^{0,0}q^2+2q\alpha^{0,j}\xi_j+\alpha^{i,j}\xi_i\xi_j
\geq c_0(q^2+|\xi|^2).
$$
Hence, choosing
$ q=-\frac{\alpha^{0,j}\xi_j}{\alpha^{0,0}},$
we obtain
$ -\frac{(\alpha^{0,j}\xi_j)^2}{\alpha^{0,0}} +\alpha^{i,j}\xi_i\xi_j \geq c_0|\xi|^2$ and therefore we obtain
\begin{align} \label{eq:uniformity-symbol}
    \Re\Delta_\eps \geq c\bigl(\eps^{-2}+\langle\xi\rangle^2\bigr) = c\rho_\eps^2 \,, \qquad |\Delta_\eps|\leq C\rho_\eps^2\,.
\end{align}
By direct differentiation, we further get 
$$
\Delta_\eps\in S_\eps(\rho_\eps^2;\rho_\eps).
$$
Therefore, by the Fa\`a di Bruno formula and \eqref{eq:uniformity-symbol} we obtain
$$
\sqrt{\Delta_\eps}\in S_\eps(\rho_\eps;\rho_\eps), \qquad \frac{1}{\sqrt{\Delta_\eps}} \in S_\eps(\rho_\eps^{-1};\rho_\eps)\,.
$$
Hence, using the formula for the fast root it is straightforward to conclude that 
$q_f\in S_\eps(\rho_\eps;\rho_\eps)
\subset S_\eps(\rho_\eps).$
Furthermore,
\begin{align} \label{eq:uniformity-q-f}
    \Re q_f = \frac{\eps^{-1}+\Re\sqrt{\Delta_\eps}}{2\alpha^{0,0}} \geq c\rho_\eps\,, \qquad |q_f|\leq C\rho_\eps \,.
\end{align}
Hence, again using the Fa\`a di Bruno formula and \eqref{eq:uniformity-q-f} we obtain 
$$
q_f^{-1} \in S_\eps(\rho_\eps^{-1};\rho_\eps) \subset S_\eps(\rho_\eps^{-1})\,.
$$
Using the factorization of the polynomial \eqref{eq:polynomial} as $-\eps \alpha^{0,0} (q- q_f) (q-q_s)$ we deduce that $q_fq_s = -\frac{\alpha^{i,j}\xi_i\xi_j}{\alpha^{0,0}},$ and hence by using again the Fa\`a di Bruno formula, \eqref{eq:uniformity-q-f} and Lemma \ref{lemma:computations} we have 
$$q_s = -q_f^{-1}\frac{\alpha^{i,j}\xi_i\xi_j}{\alpha^{0,0}} \in S_\eps\left( \frac{\langle\xi\rangle^2}{\rho_\eps} \right) = S_\eps(\mu_\eps)\,.
$$
On the other hand, from the factorization of the polynomial we also have
$ q_f+q_s = \frac{\eps^{-1}-2i\alpha^{0,j}\xi_j}{\alpha^{0,0}} \in S_\eps(\rho_\eps;\rho_\eps), $ so that $ q_s\in S_\eps(\rho_\eps;\rho_\eps).$

We next observe that
$ q_f-q_s = \frac{\sqrt{\Delta_\eps}}{\alpha^{0,0}}$ and then using again \eqref{eq:uniformity-symbol} and the Fa\`a di Bruno formula we have 
$$
\frac{1}{q_f-q_s} \in S_\eps(\rho_\eps^{-1};\rho_\eps) \subset S_\eps(\rho_\eps^{-1})\,.
$$
Finally, multiplying and dividing the expression for the slow root by $\eps^{-1}-2i\alpha^{0,j}\xi_j+\sqrt{\Delta_\eps}$, we obtain 
$$ q_s = \frac{(\eps^{-1} - 2 i \alpha^{0,j} \xi_j)^2 - \Delta_\eps}{2 \alpha^{0,0} (\eps^{-1} - 2 i \alpha^{0,j} \xi_j + \sqrt{\Delta_\eps} )} = - \frac{2\alpha^{i,j}\xi_i\xi_j}{\eps^{-1}-2i\alpha^{0,j}\xi_j+\sqrt{\Delta_\eps}}\,. $$
Since 
$ \Re ( \eps^{-1}-2i\alpha^{0,j}\xi_j+\sqrt{\Delta_\eps} ) \gtrsim\rho_\eps $ and $| \eps^{-1}-2i\alpha^{0,j}\xi_j+\sqrt{\Delta_\eps}| \lesssim \rho_\eps$,
uniform ellipticity of the matrix $\alpha^{i,j}$ gives
$$
\Re(-q_s) \gtrsim |\xi|^2 \frac{\Re (\eps^{-1}-2i\alpha^{0,j}\xi_j+\sqrt{\Delta_\eps})}{|\eps^{-1}-2i\alpha^{0,j}\xi_j+\sqrt{\Delta_\eps}|^2} \gtrsim \frac{|\xi|^2}{\rho_\eps}
\approx  \frac{\eps|\xi|^2}{1+\eps\langle\xi\rangle}. 
$$
Finally, using the definition of $\mu_\eps$ we have 
$ \frac{\eps|\xi|^2}{1+\eps\langle\xi\rangle} = \mu_\eps(\xi) - \frac{\eps}{1+\eps\langle\xi\rangle} \geq \mu_\eps(\xi)-\eps $ and then 
$$
\Re(-q_s) \geq c \frac{\eps|\xi|^2}{1+\eps\langle\xi\rangle} \geq c \mu_\eps(\xi)-c \eps $$
concluding the proof of the three properties.
\end{proof}

With this lemma established, we may now prove Lemma \ref{lemma:factorisation}.
\begin{proof}[Proof of Lemma \ref{lemma:factorisation}]

For convenience, we write $\alpha=\alpha^{0,0}$ and define
$$
F_0=\Op_a(q_f),\qquad S_0=\Op_a(q_s).
$$
We first compare the factorized operator with the scalar part
$\mathcal L_{\lambda,\eps,1}$. Expanding the product gives
$$
-\eps \alpha (\partial_s-F_0)(\partial_s-S_0) = -\eps \alpha \partial_s^2 +\eps \alpha(F_0+S_0)\partial_s +\eps \alpha \,[\partial_s , S_0] -\eps \alpha F_0S_0\,.
$$
Hence, we deduce that 
\begin{align}
    \mathcal L_{\lambda,\eps}
    & = -\eps \alpha (\partial_s-F_0)(\partial_s-S_0) +\eps \alpha \Op_a ( q_f\#q_s-q_fq_s-\partial_sq_s  )+ \cL_{\lambda, \eps, 2} 
    \\
    & 
    = -\eps \alpha (\partial_s-F_0)(\partial_s-S_0) +\eps \alpha R_0 -\eps H^0 \partial_s \,,
\end{align}
where  $ R_0=\Op_a(r_0), $\footnote{We highlight that we did not include $- \eps H^0 \partial_s$ in the error $R_0$ since it is not an operator given by a  pseudodifferential symbol due to the derivative in $s$.}  with
$$
r_0 = q_f\#q_s-q_fq_s-\partial_sq_s -\alpha^{-1}(iH^j\xi_j+H)\,.
$$
By Theorem~\ref{thm:composition} and Lemma~\ref{lemma:properties of the roots},
$$ R_0\in \Psi_\eps(\langle\xi\rangle) \cap \Psi_\eps(\rho_\eps;\rho_\eps). 
$$
We now argue inductively. Suppose that, for some $J\geq0$, we have constructed
$F_J,S_J,R_J, L_J$ such that
\begin{equation}\label{eq:factorisation-induction}
\mathcal L_{\lambda,\eps}
=-\eps \alpha (\partial_s-F_J)(\partial_s-S_J)
 +\eps \alpha R_J-\eps \alpha L_J\partial_s,
\end{equation}
where $L_J=0$ for $J\geq1$ and $L_0 = H^0/\alpha$, and
$$ R_J\in \Psi_\eps(\langle\xi\rangle^{1-J}) \cap \Psi_\eps(\rho_\eps^{1-J};\rho_\eps), $$
and, in addition,
$$
F_J-F_0\in\Psi_\eps(1)\cap\Psi_\eps(1;\rho_\eps),
\qquad
S_J-S_0\in\Psi_\eps(\nu_\eps)\cap\Psi_\eps(1;\rho_\eps)\,.
$$
We define 
$$ F_{J+1} = F_J + \Op_a (f_{J+1})\,, \qquad S_{J+1 }= S_J + \Op_a(s_{J+1})$$
for some symbols $s_{J+1}, f_{J+1}$ to be defined.
Expanding the product $(\partial_s -F_{J+1}) (\partial_s - S_{J+1})$ we get
\begin{align*}
    (\partial_s -F_{J+1}) (\partial_s - S_{J+1})&  = (\partial_s - F_J) (\partial_s - S_J) 
    \\
    & \quad + \Op_a (f_{J+1}) S_0 + F_0 \Op_a (s_{J+1})  - \Op_a (f_{J+1} +s_{J+1})
    \partial_s 
    \\
    & \quad - [\partial_s , \Op_a (s_{J+1})] + \Op_a (f_{J+1})\Op_a (s_{J+1})
    \\
    & \quad + \Op_a (f_{J+1}) (S_J - S_0) + (F_J - F_0) \Op_a (s_{J+1}) \,.
\end{align*}
We now use the second line to cancel the error terms $R_J - L_J \partial_s $. More precisely, we expand  the term
\begin{align*}
    \Op_a (f_{J+1}) S_0 + F_0 \Op_a (s_{J+1})&  = \Op_a (f_{J+1} q_s + q_f s_{J+1}) 
    \\
    & \quad + \Op_a (f_{J+1}\# q_s - f_{J+1} q_s)  + \Op_a ( q_f \# s_{J+1}  - q_f s_{J+1})\,, 
\end{align*}
and using the notation  $R_J= \Op_a (r_J)$ we define
$$
f_{J+1} =\frac{r_J-q_fL_J}{q_f-q_s}, \qquad s_{J+1} =-\frac{r_J-q_sL_J}{q_f-q_s}\,,
$$
so that 
$$
f_{J+1}+s_{J+1}=-L_J,
\qquad
r_J+q_fs_{J+1}+f_{J+1}q_s=0.
$$
Therefore, imposing \eqref{eq:factorisation-induction} for $J+1$ we deduce that
\begin{align*}
R_{J+1}
& =- [ \partial_s, \Op_a (s_{J+1}) ]+\Op_a(f_{J+1})\Op_a(s_{J+1})
\\
& \quad +(F_J-F_0)\Op_a(s_{J+1}) +\Op_a(f_{J+1})(S_J-S_0) \\
&\quad 
 +\Op_a(q_f\#s_{J+1}-q_fs_{J+1}) +\Op_a(f_{J+1} \# q_s-f_{J+1}q_s)   \,.
\end{align*}
For $J=0$, the separation estimate for the roots gives
$$
f_1\in S_\eps(1)\cap S_\eps(1;\rho_\eps), \qquad s_1\in S_\eps(\nu_\eps)\cap S_\eps(1;\rho_\eps).
$$
For $J\geq1$, since $L_J=0$, we have $f_{J+1}=-s_{J+1}$ and
\begin{align}
f_{J+1},s_{J+1}
\in S_\eps \left(\frac{\langle\xi\rangle^{1-J}}{\rho_\eps}\right) \cap S_\eps(\rho_\eps^{-J};\rho_\eps)
= S_\eps(\nu_\eps\langle\xi\rangle^{-J}) \cap S_\eps(\rho_\eps^{-J};\rho_\eps), \label{eq:f-J+1-s-J+1}
\end{align}
where we have used Lemma \ref{lemma:computations} and Lemma \ref{lemma:properties of the roots}.
Using these estimates and Theorem~\ref{thm:composition}, we obtain
$$ R_{J+1}\in \Psi_\eps(\langle\xi\rangle^{-J}) \cap \Psi_\eps(\rho_\eps^{-J};\rho_\eps)\,.$$
The correction estimates \eqref{eq:f-J+1-s-J+1} also preserve the asserted bounds for $F_{J+1}-F_0$ and $S_{J+1}-S_0$, concluding the proof.
\end{proof}

\subsection{Sobolev bounds on the propagators}
Using Lemma \ref{lemma:factorisation} we have 
$$
\mathcal L_{\lambda,\eps} = -\eps \alpha^{0,0} (\partial_s-F_J(s))(\partial_s-S_J(s)) + l.o.t.
$$

We now study the propagators associated to our factorised problems. We define the slow propagator $U_S(s,t)$ evolving forward for $s \geq t$ by 
$$
\partial_s U_S(s,t)=S_J(s)U_S(s,t), \quad U_S(t,t)=\Id\,,
$$
and the fast propagator $U_F(s,t)$ evolving backward  for $s \leq t$ by 
$$
\partial_s U_F(s,t)=F_J(s)U_F(s,t), \quad U_F(t,t)=\Id \,.
$$
The existence of such propagators may be justified using a standard Galerkin argument, and so we restrict ourselves to \emph{a priori} estimates. 
Throughout this section, we seek to analyse the boundedness properties of $U_F, U_S$ on  classical Sobolev spaces and weighted Sobolev spaces  induced by the norm 
$$\|B\|^2_{H_{\rho_\eps}^r }=\|\rho_\eps(D)B\|^2_{H^r}\,.$$
We begin with the following lemma.
\begin{lemma}
\label{lemma:sobolev bounds propagators}
Let $r \in \RR$. Then, uniformly for  $\lambda \in K\subset \CC$ in a compact set, there exists a constant $C_r$, so that the slow and fast propagators satisfy the following estimates for $ s \in [0,T]$
\begin{align*}
    \|U_S(s,0)B\|_{H^r} \leq (1+C_r\eps)\|B\|_{H^r}\,,  
\qquad
    \|U_F(s,T) B\|_{H^r} \leq C_r\e^{- c\eps^{-1}(T- s)}\|B\|_{H^r} \,.
\end{align*}
Furthermore, for any $q>0, r_1,r_2 \in \RR$, there exists $C_{r_1,r_2,q}>0$ so that if $ 0\leq s \leq T- q$, we have
$$
\|U_F(s,T)B\|_{H^{r_1}} \leq C_{r_1,r_2,q}\e^{-c\eps^{-1}(T-s)}\|B\|_{H^{r_2}}\,,
$$
and for any $r \in \RR$ there exists $C_r >0$ so that for $s \in [0,T]$ we have
$$
\|U_S(s,0) B\|_{H_{\rho_\eps}^r} \leq C_r \|B\|_{H_{\rho_\eps}^r}.
$$
\end{lemma}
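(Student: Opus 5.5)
The plan is an energy estimate in $H^r$ for each propagator, using the decomposition of Lemma \ref{lemma:factorisation}: $S_J = S_0 + (S_J - S_0)$ and $F_J = F_0 + (F_J - F_0)$. Here $S_0 = \Op_a(q_s)$ and $F_0=\Op_a(q_f)$ are scalar, and the corrections lie in $\Psi_\eps(\nu_\eps)$ and $\Psi_\eps(1)$ respectively.

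\textbf{Slow propagator.} Set $w(s) = U_S(s,0)B$ and compute
\[
\tfrac{\dd}{\dd s}\|\langle D\rangle^r w\|_{L^2}^2 = 2\Re\langle \langle D\rangle^r S_J w,\langle D\rangle^r w\rangle .
\]
\begin{enumerate}
\item Write $\langle D\rangle^r S_J\langle D\rangle^{-r} = S_0 + \widetilde R$. The remainder $\widetilde R$ collects $S_J-S_0$ and the commutator $[\langle D\rangle^r,S_0]\langle D\rangle^{-r}$. By Theorem~\ref{thm:composition}, $\widetilde R \in \Psi_\eps(\nu_\eps)$, since the commutator gains one power of $\langle\xi\rangle$ relative to $\mu_\eps$.
\item Apply a sharp G\aa rding inequality to the scalar symbol $\Re(-q_s)\ge c\mu_\eps - c\eps$ from Lemma~\ref{lemma:properties of the roots}. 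This gives $\Re\langle S_0 v,v\rangle \le -c\|\mu_\eps^{1/2}(D)v\|^2 + C\eps\|v\|^2$. The G\aa rding error is in $\Psi_\eps(\mu_\eps\langle\xi\rangle^{-1}) = \Psi_\eps(\nu_\eps)$.
\item Absorb every $\nu_\eps$-order term using Lemma~\ref{lemma:computations}: $\nu_\eps \le \eta\mu_\eps + \eta^{-1}\eps$. With $\eta$ small, the $\mu_\eps$ part is absorbed by the coercive term, leaving $\tfrac{\dd}{\dd s}\|w\|_{H^r}^2 \le C\eps\|w\|_{H^r}^2$.
\item Gronwall on $[0,T]$ then gives $(1+C_r\eps)$.
\end{enumerate}
The delicate point is that all remainders must carry a factor $\eps$ or $\nu_\eps$, never an $O(1)$ constant. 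This is where the $\Psi_\eps(\nu_\eps)$ membership of $S_J-S_0$ and the scalar nature of $S_0$ (commutators lose exactly one order) are essential. The weighted bound in $H^r_{\rho_\eps}$ follows the same route after conjugating by $\rho_\eps(D)\langle D\rangle^r$. Since $S_J\in\Psi_\eps(\rho_\eps;\rho_\eps)$, the commutator is in $\Psi_\eps(1;\rho_\eps)$, which is bounded. Gronwall with an $O(1)$ rate over a bounded interval gives $C_r$.

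\textbf{Fast propagator.} Evolve backward from $t=T$ and set $v(s) = U_F(s,T)B$. Then
\[
-\tfrac{\dd}{\dd s}\|v\|_{H^r}^2 = -2\Re\langle \langle D\rangle^r F_J\langle D\rangle^{-r}\tilde v,\tilde v\rangle, \qquad \tilde v = \langle D\rangle^r v .
\]
\begin{enumerate}
\item Using $\Re q_f\ge c\rho_\eps\ge c\eps^{-1}$ and G\aa rding, $\Re\langle F_0\tilde v,\tilde v\rangle \ge c\eps^{-1}\|\tilde v\|^2 - C\|\tilde v\|^2$.
\item The terms $F_J-F_0\in\Psi_\eps(1)$ and the commutator with $\langle D\rangle^r$ are $O(1)$ and are absorbed for $\eps$ small.
\item This gives $\|v(s)\|_{H^r}\le C_r \e^{-c\eps^{-1}(T-s)}\|B\|_{H^r}$.
\end{enumerate}
For the smoothing estimate with $T-s\ge q$, I would use the stronger accretivity $\Re q_f\ge c\rho_\eps \ge c\langle\xi\rangle$. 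A frequency-weighted energy estimate, with weight $\langle D\rangle^{r_2 + k(T-s)/q}$ or iterated over $k$ subintervals of length $q/k$, gains regularity at rate $\rho_\eps$. Alternatively, estimate $\langle D\rangle^{r_1-r_2}$ by $(c\rho_\eps)^{N}(T-s)^N$ multiplied by $\e^{-c\rho_\eps(T-s)}$, using half the decay to trade powers of $\langle\xi\rangle$ and keeping $\e^{-c\eps^{-1}(T-s)/2}$.

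\textbf{Main obstacle.} I expect the main difficulty to be the uniform-in-$\eps$ G\aa rding inequality for the mixed symbol classes $S_\eps(\mu_\eps)$ with scale $\langle\xi\rangle$. It must produce error exactly in $\Psi_\eps(\nu_\eps)$ with constants independent of $\eps$. I would derive it from the composition calculus of the Appendix by writing $-q_s - c\mu_\eps/2 = |p|^2 +$ lower order, with $p\in S_\eps(\mu_\eps^{1/2})$.
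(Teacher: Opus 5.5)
Your slow-propagator argument matches the paper's. You conjugate by $\langle D\rangle^r$, apply Theorem~\ref{thm:sharp garding} to $-q_s-c\mu_\eps+c\eps$, place $S_J-S_0$, the G\r{a}rding error and $[\langle D\rangle^r,S_J]\langle D\rangle^{-r}$ in $\Psi_\eps(\nu_\eps)$, and absorb via $\nu_\eps\le\eta\mu_\eps+\eta^{-1}\eps$. The uniform G\r{a}rding inequality you name as the main obstacle is already supplied in the appendix.

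The genuine gap is step~2 of your fast-propagator estimate: the commutator $[\langle D\rangle^r,F_0]\langle D\rangle^{-r}$ is \emph{not} $O(1)$. Up to a factor $-i$, its leading symbol is $\langle\xi\rangle^{-r}\nabla_\xi\langle\xi\rangle^r\cdot\nabla_a q_f$. For $\eps|\xi|\lesssim1$ one has $q_f=(\eps\alpha^{0,0}(a,s))^{-1}+O(\langle\xi\rangle)$, so $\nabla_a q_f\approx-\eps^{-1}(\alpha^{0,0})^{-2}\nabla_a\alpha^{0,0}$. The commutator is therefore of order $\rho_\eps\langle\xi\rangle^{-1}$, i.e.\ of size $\sim\eps^{-1}$ at bounded frequencies. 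That is the same order as the coercive term $c\rho_\eps\approx c\eps^{-1}$, with an $\eps$-independent ratio (depending on $r$ and on $a$-derivatives of $\alpha^{0,0}$) that need not be small. Shrinking $\eps$ does nothing. Passing to the scale $\rho_\eps$ does not help either, since $\langle\xi\rangle^r$ is not a symbol with scale $\rho_\eps$. This is exactly where the fast case differs from the slow one: for $S_J$ the commutator lands in $\Psi_\eps(\nu_\eps)$, which is small relative to $\mu_\eps$ up to $O(\eps)$, but nothing analogous holds for $F_J$.

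The missing idea is an energy in which lower derivatives carry small weights. The paper works with integer $m$ and proceeds as follows.
\begin{itemize}
\item It writes $[D^\alpha,F_J]$ exactly, by Leibniz, as a finite sum of terms $\Op_a(D_a^\beta Q_{f,J})D^{\alpha-\beta}$ with $0<\beta\le\alpha$.
\item It bounds each such term by $C\|\rho_\eps^{1/2}(D)D^{\alpha-\beta}B\|_{L^2}\|\rho_\eps^{1/2}(D)D^{\alpha}B\|_{L^2}$.
\item It estimates $\mathcal E_m=\sum_{|\alpha|\le m}\eta^{|\alpha|}\|D^\alpha B\|_{L^2}^2$ with $\eta\ll1$. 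The factor $\eta^{|\alpha|-|\alpha-\beta|}\le\eta$ makes these $O(\rho_\eps)$ cross terms absorbable into the dissipation $\sum_{|\alpha|\le m}\eta^{|\alpha|}\|\rho_\eps^{1/2}(D)D^\alpha B\|_{L^2}^2$.
\item Non-integer $r\ge0$ then follows by interpolation, and $r<0$ by duality for $U_F(s,T)^*$. The constant $C_r$ in the statement is the price of the equivalence between $\mathcal E_m$ and $\|\cdot\|_{H^m}^2$.
\end{itemize}
Conjugating by $(K^2+|D|^2)^{r/2}$ with $K$ large, so that each $\xi$-derivative of the weight gains $K^{-1}$, is an equivalent device.

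Your smoothing step has related problems. The time-dependent-weight argument needs the same repair. Your alternative bound via $(c\rho_\eps)^N(T-s)^N\e^{-c\rho_\eps(T-s)}$ treats $U_F$ as a Fourier multiplier, which it is not when $q_f$ depends on $a$. The paper instead integrates the dissipation, $\frac{c}{4}\int_0^s\|B(T-t)\|_{H^{m+1/2}}^2\,\dd t\le\|B(T)\|_{H^m}^2$. It then picks an intermediate time where the $H^{m+1/2}$ norm is controlled, propagates with the $H^{m+1/2}$ bound, and iterates over subintervals of $[0,q]$.
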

\begin{proof}
We first prove the result for the slow propagator. We begin by applying $\langle D \rangle^{r}$ to the equation, obtaining 
$$
\partial_s \langle D \rangle^{r} B=S_J(s)\langle D \rangle^{r}B+[\langle D \rangle^{r},S_J(s)]B.
$$
Taking the $L^2$ inner product with $\langle D \rangle^r B$, we obtain
\begin{align}
\label{eq:energy evolution slow sobolev}
\frac{1}{2}\partial_s \|\langle D \rangle^{r} B\|_{L^2}^2 & =\Re \langle S_J(s) \langle D \rangle^{r} B, \langle D \rangle^{r} B\rangle +\Re \langle [\langle D \rangle^{r}, S_J(s)]B, \langle D \rangle^{r} B\rangle
\\
& = \Re \langle \Op_a (q_s (s)) \langle D \rangle^{r} B, \langle D \rangle^{r} B\rangle \notag
\\
& \quad +  \Re \langle (S_J -\Op_a (q_s (s))) \langle D \rangle^{r} B, \langle D \rangle^{r} B\rangle  \notag
\\
& \quad + \Re \langle [\langle D \rangle^{r}, S_J(s)]B, \langle D \rangle^{r} B\rangle \notag
\end{align}
For the first term we use Lemma \ref{lemma:properties of the roots} and the sharp G\r{a}rding inequality (Theorem \ref{thm:sharp garding}), applied to $\Op_a (-q_s (s)) - c\Op_a (\mu_\eps (\xi)) + c\eps$ to deduce
\begin{align*}
    & \Re \langle \Op_a (-q_s (s)) \langle D \rangle^{r} B, \langle D \rangle^{r} B\rangle 
    \\
    & \qquad \geq c\|  \Op_a(\mu_\eps (\xi)^{1/2}) \langle D\rangle^r B\|_{L^2}^2  - c \eps \| \langle D\rangle^r B\|_{L^2}^2 - C \|\Op_a\left(\nu_\eps(\xi)^{1/2}\right)  \langle D\rangle^r B\|_{L^2}^2 \,. 
\end{align*}
For the second term we use Lemma \ref{lemma:factorisation} and we estimate
$$ |\Re \langle (S_J -\Op_a (q_s (s))) \langle D \rangle^{r} B, \langle D \rangle^{r} B\rangle | \leq C \|  \nu_\eps^{1/2}(D) \langle D\rangle^r B \|_{L^2}^2 \,.$$
For the third term we write
$$
\Re \langle [\langle D \rangle^{r}, S_J(s)]B, \langle D \rangle^{r} B\rangle=\Re \langle [\langle D \rangle^{r}, S_J(s)]\langle D \rangle^{-r} \langle D \rangle^{r} B, \langle D \rangle^{r} B\rangle
$$
and notice that $ [\langle D \rangle^{r},S_J(s)]\langle D \rangle^{-r} \in \Psi_\eps(\nu_\eps)$ to deduce that 
$$|\Re \langle [\langle D \rangle^{r}, S_J(s)]B, \langle D \rangle^{r} B\rangle| \leq C\| \Op_a\left(\nu_\eps(\xi)^{1/2}\right) \langle D \rangle^{r} B\|_{L^2}^2\,.$$
All in all we deduce that 
\begin{align*}
\frac{1}{2}\partial_s \|\langle D \rangle^{r} B\|_{L^2}^2 & \leq -c\|  \Op_a(\mu_\eps (\xi)^{1/2}) \langle D\rangle^r B\|_{L^2}^2  +C_1 \eps \| \langle D\rangle^r B\|_{L^2}^2 + C_2 \|\Op_a\left(\nu_\eps(\xi)^{1/2}\right)  \langle D\rangle^r B\|_{L^2}^2 
\\
& \leq  -\frac{c}{2}\|  \Op_a(\mu_\eps (\xi)^{1/2}) \langle D\rangle^r B\|_{L^2}^2  +C_3 \eps \| \langle D\rangle^r B\|_{L^2}^2 \,,
\end{align*}
where in the last inequality, we have used Lemma \ref{lemma:computations} to deduce that for any $\eta >0$, chosen to be sufficiently small depending on $c$, we have
$$ 
\|\nu_\eps^{1/2}(D) \langle D\rangle^r B\|_{L^2}^2 \leq \eta \|\mu_\eps^{1/2}(D) \langle D\rangle^rB\|_{L^2}^2 + \eta^{-1} \eps \|\langle D\rangle^r B\|_{L^2}^2 \,,
$$
yielding the desired Sobolev boundedness result. The bounds for $H^r_{\rho_\eps}$ follow by the same argument by replacing $\langle D \rangle^r$ with $\rho_\eps(D)\langle D \rangle^r$. 

Next, we move on to the fast propagator bounds.  We begin by computing the action of $D^\alpha, |\alpha|\leq m \in \NN$ on the evolution equation
$$
\partial_s D^\alpha B=F_J(s) D^\alpha B+[D^\alpha, F_J(s)]B.
$$
First, note that in view of Lemma \ref{lemma:factorisation}, Lemma \ref{lemma:properties of the roots},  and the sharp G\r{a}rding inequality (Theorem \ref{thm:sharp garding}) we deduce the existence of a constant $C>0$ so that 
$$
\Re \langle F_J(s) D^\alpha B, D^\alpha B\rangle \geq c\|\rho_\eps^{1/2}(D) D^\alpha B\|_{L^2}^2-C\|D^\alpha B\|_{L^2}^2.
$$
Since $\rho_\eps(\xi) \geq \eps^{-1}$, this implies for $\eps $ small enough that 
$$
\Re \langle F_J(s) D^\alpha B, D^\alpha B\rangle \geq \frac{c}{2}\|\rho_\eps^{1/2}(D) D^\alpha B\|_{L^2}^2.
$$
Thus, it remains to deal with the commutator term. Since $m \in \NN$, we may compute the commutator explicitly using the composition formula, giving 
$$
[ D^\alpha,F_J(s)]=\sum_{0<\beta \leq \alpha} \begin{pmatrix}
    \alpha
    \\
    \beta
\end{pmatrix} \Op_a(D_a^\beta Q_{f,J}(s))D^{\alpha-\beta},
$$
where $Q_{f,J}$ is the symbol of $F_J(s)$.  We note that $D_a^\beta Q_{f,J} \in S_\eps(\rho_\eps;\rho_\eps)$, for all $\beta$. Consequently, the composition formula (Theorem \ref{thm:composition}) and Lemma \ref{lemma:properties of the roots} imply that 
$$
\rho_\eps^{-1/2}(D) \Op_a(D_a^\beta Q_{f,J}(s)) \rho_\eps^{-1/2} (D) \in \Psi_\eps(1;\rho_\eps)\,.
$$
Hence, we may bound 
\begin{align*}
&|\langle \Op_a(D_a^\beta Q_{f,J}(s))D^{\alpha-\beta} B, D^\alpha B\rangle|\\
&\qquad =\left |\langle \left (\rho_\eps^{-1/2}(D) \Op_a(D_a^\beta Q_{f,J}(s)) \rho_\eps^{-1/2}(D) \right )\rho_\eps(D)^{1/2} D^{\alpha-\beta}B, \rho_\eps(D)^{1/2} D^\alpha B \rangle\right |\\
& \qquad \leq C \|\rho_\eps^{1/2}(D) D^{\alpha-\beta}B\| \|\rho_\eps^{1/2}(D) D^\alpha B\|_{L^2}\,.
\end{align*}
Set now $\mathcal{E}_m(s)=\sum_{0  \leq  |\alpha|\leq m}\eta^{|\alpha|} \|D^\alpha B(T-s)\|_{L^2}^2$, for a constant $\eta \ll 1$ to be picked later. Combining our previous bounds with Young's inequality yields the following energy inequality
\begin{align}
&\frac{\dd}{\dd s} \mathcal{E}_m(s)+\frac{c}{2} \sum_{0 \leq |\alpha|\leq m}\eta^{|\alpha|}\|\rho_\eps^{1/2}(D) D^{\alpha} B(T-s)\|_{L^2}^2\\
&\qquad\leq C\delta \sum_{0  \leq  |\alpha|\leq m} \eta^{|\alpha|}  \|\rho_\eps^{1/2}(D)D^\alpha B(T-s)\|_{L^2}^2 
\\
&\qquad \quad + \sum_{\substack{0\leq |\alpha|\leq m\\ 0<\beta\leq\alpha}} C(\delta) \eta^{|\alpha| } \|\rho_\eps^{1/2}(D)D^{\alpha-\beta} B(T-s)\|_{L^2}^2 \,.
\end{align}
We therefore choose $\delta >0$ sufficiently small depending on $c$  to absorb the first term on the right-hand side into the left-hand side, so that the following holds true
\begin{align*}
&\frac{\dd}{\dd s} \mathcal{E}_m(s)+\frac{c}{3} \sum_{0 \leq |\alpha|\leq m}\eta^{|\alpha|}\|\rho_\eps^{1/2}(D) D^{\alpha} B(T-s)\|_{L^2}^2 \leq C(\delta) \sum_{\substack{0\leq |\alpha|\leq m\\ 0<\beta\leq\alpha}} \eta^{|\alpha|} \|\rho_\eps^{1/2}(D)D^{\alpha-\beta} B(T-s)\|_{L^2}^2 \,,
\end{align*}
Furthermore, noting that the sum is non-trivial if $|\alpha|>0$, we deduce that 
\begin{align*}
\sum_{\substack{0\leq |\alpha|\leq m\\ 0<\beta\leq\alpha}} \eta^{|\alpha|}\|\rho_\eps^{1/2}(D) D^{\alpha-\beta} B(T-s)\|_{L^2}^2& =\sum_{\substack{0\leq |\alpha|\leq m\\ 0<\beta\leq\alpha}} \eta^{|\alpha|-|\alpha-\beta|}\eta^{|\alpha-\beta|} \|\rho_\eps^{1/2}(D) D^{\alpha-\beta} B(T-s)\|_{L^2}^2\\
& \leq C_m \eta \sum_{0\leq |\alpha|\leq m} \eta^{|\alpha|} \|\rho_\eps^{1/2}(D) D^\alpha B(T-s)\|_{L^2}^2\,,
\end{align*}
where we have used $\eta^{|\alpha|-|\alpha-\beta|} \leq \eta$ and replaced the sum over $\alpha- \beta$ by $\alpha$.
As such, taking $\eta$ small enough depending on $C_m$ and $C(\delta)$, we deduce the energy inequality
\begin{equation}
\label{eq:fast energy inequality}
\frac{\dd}{\dd s}\mathcal{E}_m(s) +\frac{c}{4} \sum_{0 \leq |\alpha|\leq m}\eta^{|\alpha|} \|\rho_\eps^{1/2}(D) D^\alpha B(T-s)\|_{L^2}^2 \leq 0.
\end{equation}
 Since $\rho_\eps^{1/2}\gtrsim\eps^{-1/2}$, integrating this inequality yields the $H^r \to H^r$ bound for the fast propagator, for $r=m \in \NN$. The result for any $r \geq 0$ follows by interpolation. We note that the same estimates hold true for the adjoint $U_F(s,T)^*$ and hence $\|U_F(s,T) B\|_{H^r} \leq C_r \e^{- c\eps^{-1}(T- s)}\|B\|_{H^r} $ holds true for any $r \in \RR$ by duality.

Thus, it remains to prove the general Sobolev bounds for the fast propagator. The key observation is this: integrating \eqref{eq:fast energy inequality} over $[0,s]$, we observe that 
$$
\frac{c}{4}\int_0^s \|B(T-t)\|_{H^{m+1/2}}^2 dt \leq \|B(T)\|^2_{H^m}.
$$
As such, there exists $t_0 \in [0,s]$ so that 
$ \|B(T-t_0)\|_{H^{m+1/2}}^2 \leq \frac{4}{cs} \|B(T)\|^2_{H^m}. $
Thus, by the $H^{m+1/2}\to H^{m+1/2}$ bound for the propagator and the inequality $T-s\leq T-t_0$, we deduce
$$
\|B(T-s)\|_{H^{m+1/2}}^2 \leq C\|B(T-t_0)\|_{H^{m+1/2}}^2 \leq \frac{4C}{cs}\|B(T)\|_{H^m}^2\,.
$$
Hence, given $0 \leq r_1<r_2 $, splitting the interval $[0,q]$ into $2(\lceil r_2 \rceil-\lfloor r_1 \rfloor)$ pieces of equal length, we deduce that 
$$
\|B(T-q)\|_{H^{r_2}} \leq C_{r_1,r_2}\left (\frac{\lceil r_2 \rceil-\lfloor r_1 \rfloor}{cq} \right )^{\lceil r_2 \rceil-\lfloor r_1 \rfloor}\|B(T)\|_{H^{r_1}} \,,
$$
yielding the desired bound. Once again, arguing for the adjoint proves the case when $r_1,r_2<0$. To prove the case $r_1>0$ and $r_2 <0$  we split the interval $[0,q]$ into two intervals of equal length and we consider the case $H^{r_2} \to L^2$ in the interval $[0,q/2]$ and $L^2 \to H^{r_1}$ in the interval $[q/2, q]$.
\end{proof}
\subsection{The homogeneous factorised problem}
We may now solve the approximate equation with boundary conditions 
\begin{align} \label{eq:equation-homogeneous-approx}
    \begin{cases}
        \eps \alpha^{0,0}(\partial_s-F_J(s))(\partial_s-S_{J}(s))B(s)=0
        \\
        B(0)=B_{0}\,,
        \\
        B(T)=B_T\,.
    \end{cases}
\end{align}
 We define $w(s):=(\partial_s-S_J(s))B(s)$. Then $ (\partial_s-F_J(s))w(s)=0.$
Since the fast propagator evolves backwards in time, we have
$$ w(s)=U_F(s,T)w(T),\qquad 0\leq s\leq T. $$
We now solve forward in time  from $s=0$ the equation $ (\partial_s-S_J(s))B(s)=w(s)$. By Duhamel's formula,
\begin{align*}
    B(s)& =U_S (s,0) B_{0} + \int_0^s U_S (s, t) w(t) \dd t\,,
    \\
    & = U_S (s,0) B_{0} + \int_0^s U_S (s, t) U_F (t,T) w(T) \dd t \,.
\end{align*}
Hence, imposing $B(T)=B_T$ we deduce 
\begin{align} \label{eq:comp-expl-form}
    \int_0^T U_S (T, t) U_F (t,T) w(T) \dd t = B_T - U_S (T,0) B_{0 } \,. 
\end{align}
We define the operator
\begin{align} \label{d:M-eps}
     M_{SF} =\int_0^T U_S(T,t)U_F(t,T) \dd t
\end{align}
and the following lemma proves that $M_{SF}:  H^r \to H^r_{\rho_\eps}$ is invertible so that we deduce
$$ w(T)= M_{SF}^{-1} (B_T- U_S (T,0)B_{0}) \,,$$
and therefore 
$$B(s) = U_S (s,0) B_{0} + \int_0^s U_S (s, t) U_F (t,T) M_{SF}^{-1}  (B_T - U_S(T,0) B_0) \dd t \,.
$$
The main proposition we aim to prove is the following.
\begin{proposition}
\label{prop:homogeneous decomposition}
Let $B_0\in H^{r_1}$, $B_T \in H^{r_2}$ with $r_1, r_2 \in \RR$ and $r \in \RR$. Then, the solution to \eqref{eq:equation-homogeneous-approx} is given by 
$$
B(s) = U_S (s,0) B_{0} + \int_0^s U_S (s, t) U_F (t,T) M_{SF}^{-1}  (B_T - U_S(T,0) B_0) \dd t \,.
$$
Furthermore, define $\|v\|_{\gamma,r}=\sup_{0\leq s\leq T}\|\e^{\gamma\eps^{-1}(T-s)}v\|_{H^r}$. For all sufficiently small $\eps>0$ and all $\gamma\geq0$ in a sufficiently small interval independent of $\eps$, the following estimates hold 
$$ \left \|\int_0^s U_S(s,t)U_F(t,T) M_{SF}^{-1} (B_T-U_S(T,0)B_0) \dd t \right \|_{\gamma,r}\leq C_r  \left ( \|B_T\|_{H^{r+1}}+\|B_0\|_{H^{r+1}} \right ) \,,$$
and for a fixed $\delta>0 $ the following holds  for all $s \leq T -\delta$
$$ \left \|\int_0^s U_S(s,t)U_F(t,T)M_{SF}^{-1}(B_T-U_S(T,0)B_0) \dd t \right \|_{H^{r}} \leq C_{r,r_1, r_2, \delta} \e^{-c\eps^{-1}(T-s)}\left (\|B_T\|_{H^{r_2}}+\|B_0\|_{H^{r_1}} \right )\,. $$
\end{proposition}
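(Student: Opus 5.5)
The plan is to study the operator $M_{SF}$ from \eqref{d:M-eps} first, and then read off the solution formula and the two estimates from the propagator bounds of Lemma \ref{lemma:sobolev bounds propagators}.

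\textbf{Step 1: the solution formula.} The formula is the Duhamel computation already carried out above. Given it, the only thing to justify is that $M_{SF}$ is invertible. Since the boundary data are prescribed at both ends, uniqueness of solutions to \eqref{eq:equation-homogeneous-approx} is then equivalent to injectivity of $M_{SF}$.

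\textbf{Step 2: $M_{SF}$ is approximately an inverse of $F_J$.} I would compare $M_{SF}$ with $-F_J(T)^{-1}$. This is what one expects from the frozen symbols: $\int_0^T \e^{q_s(T-t)}\e^{-q_f(T-t)}\,\dd t\approx (q_f-q_s)^{-1}$, and $q_f-q_s\approx\rho_\eps$ by Lemma \ref{lemma:properties of the roots}. Concretely, I would use $\partial_t U_F(t,T)=F_J(t)U_F(t,T)$ and $\partial_t U_S(T,t)=-U_S(T,t)S_J(t)$ and integrate by parts in
\[
\int_0^T U_S(T,t)\,F_J(t)\,U_F(t,T)\,\dd t .
\]
The boundary term at $t=T$ gives $\Id$. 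The boundary term at $t=0$ is $O(\e^{-c\eps^{-1}T})$ by the fast propagator bound. What remains involves $\int_0^T U_S(T,t)S_J(t)U_F(t,T)\,\dd t$ and commutators $[F_J(t),U_S(T,t)]$. I expect to show, using the parameter-dependent calculus of Appendix \ref{sec:pseudodifferential intro} and the $\Psi_\eps(\cdot;\rho_\eps)$ classes of Lemma \ref{lemma:factorisation}, that these remainders are small relative to $\rho_\eps$. The point is that $S_J$ has order $\mu_\eps\lesssim\rho_\eps$ but carries an extra gain in the scale $\rho_\eps$, and the integration kernel $\e^{-c\eps^{-1}(T-t)}$ restricts the effective time to $O(\eps)$.

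Combining this with the $H^r_{\rho_\eps}$ boundedness of $U_S$ yields
\[
\rho_\eps(D)M_{SF}=\Id+\mathcal R,\qquad \|\mathcal R\|_{H^r\to H^r}\lesssim \eps^{1/2},
\]
say. A Neumann series then gives $M_{SF}^{-1}:H^r\to H^r_{\rho_\eps}$ boundedly, so $\|M_{SF}^{-1}f\|_{H^{r}}\lesssim\|f\|_{H^{r+1}}+\eps^{-1}\|f\|_{H^r}$. \emph{This step is the main obstacle}, in particular keeping the commutator and non-scalar remainders uniform in $\eps$ across the two scales $\langle\xi\rangle$ and $\rho_\eps$.

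\textbf{Step 3: the weighted estimate.} Set $w(t)=U_F(t,T)M_{SF}^{-1}(B_T-U_S(T,0)B_0)$. By Lemma \ref{lemma:sobolev bounds propagators},
\[
\|w(t)\|_{H^r}\lesssim \e^{-c\eps^{-1}(T-t)}\,\|M_{SF}^{-1}(\cdot)\|_{H^r}.
\]
For $\gamma<c/2$ the factor $\e^{\gamma\eps^{-1}(T-s)}$ is absorbed, and $\int_0^s \e^{-(c-\gamma)\eps^{-1}(s-t)}\,\dd t\lesssim\eps$. This factor $\eps$ compensates the $\rho_\eps\sim\eps^{-1}+\langle\xi\rangle$ loss in $M_{SF}^{-1}$: the $\eps^{-1}$ part is cancelled by the time integral, and the $\langle\xi\rangle$ part costs one derivative. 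Together with the uniform bound on $U_S$, this gives the $H^{r+1}$ bound on the data, using $\|U_S(T,0)B_0\|_{H^{r+1}}\lesssim\|B_0\|_{H^{r+1}}$.

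\textbf{Step 4: the smoothing estimate.} For $s\le T-\delta$ every $t\in[0,s]$ satisfies $T-t\ge\delta$, so the smoothing part of Lemma \ref{lemma:sobolev bounds propagators} applies to $U_F(t,T)$. I would split $U_F(t,T)=U_F(t,T-\delta/2)\,U_F(T-\delta/2,T)$. The first factor carries the decay $\e^{-c\eps^{-1}(T-t)}$, and the second maps $H^{\min(r_1,r_2)-1}$ into $H^{r}$ with a $\delta$-dependent constant. After the $\eps$ gain from the time integral, this leaves an estimate in terms of $\|B_T\|_{H^{r_2}}+\|B_0\|_{H^{r_1}}$. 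Any polynomial losses in $\eps^{-1}$ coming from $M_{SF}^{-1}$ at low regularity are absorbed by halving the constant $c$ in the exponential.
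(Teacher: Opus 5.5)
Your Steps 1, 3 and 4 are fine and essentially match the paper. They all rest on the uniform bound $\|M_{SF}^{-1}\|_{H^r_{\rho_\eps}\to H^r}\le C_r$, however, and Step 2 as written does not deliver it.

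\textbf{The gap.} Your integration by parts gives the exact identity
\[
\int_0^T U_S(T,t)F_J(t)U_F(t,T)\,\dd t=\Id-U_S(T,0)U_F(0,T)+\int_0^T U_S(T,t)S_J(t)U_F(t,T)\,\dd t ,
\]
and you then treat the last integral as a small remainder. It is not small.

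\begin{itemize}
\item $S_J$ has order $\mu_\eps$, and $\mu_\eps\approx\rho_\eps\approx\langle\xi\rangle$ once $\langle\xi\rangle\gtrsim\eps^{-1}$. Lemma \ref{lemma:properties of the roots} also gives the lower bound $\Re(-q_s)\ge c\mu_\eps-c\eps$, so $S_J$ is genuinely that large.
\item In that regime $U_F$ decays on the time scale $\rho_\eps^{-1}$, not $\eps$. The size of $S_J$ times the effective time window is therefore of order one.
\item At the frozen-symbol level the integral has symbol $q_s/(q_f-q_s)$. Take $\alpha^{0,0}=1$, $\alpha^{0,j}=0$, $\alpha^{i,j}=\delta^{ij}$. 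Then $q_{f,s}=(1\pm\sqrt{1+4\eps^2|\xi|^2})/(2\eps)$, so $q_s/(q_f-q_s)\to-\tfrac12$ as $\eps|\xi|\to\infty$. The symbol of $\rho_\eps(D)M_{SF}$ correspondingly tends to $\tfrac12$.
\item Hence $\rho_\eps(D)M_{SF}=\Id+\mathcal R$ with $\|\mathcal R\|\lesssim\eps^{1/2}$ is false. For $\alpha^{0,0}\neq1$ it fails even at low frequency, where the symbol is $\approx\alpha^{0,0}$.
\item With $\alpha^{0,j}\neq0$ the high-frequency limit of $q_s/(q_f-q_s)$ is $-\tfrac12-\tfrac{ib}{2D}$, where $b=\alpha^{0,j}\xi_j$ and $D^2=\alpha^{0,0}\alpha^{i,j}\xi_i\xi_j-b^2$. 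Its modulus exceeds $1$ when $|b|>\sqrt3\,D$, which uniform ellipticity does not exclude. So $F_J(T)M_{SF}$ is not even a Neumann-invertible perturbation of $\Id$.
\item The sign in ``$-F_J(T)^{-1}$'' is also off: the symbol of $M_{SF}$, $\approx(q_f-q_s)^{-1}$, has positive real part.
\end{itemize}

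\textbf{The missing idea.} Keep $F_J-S_J$ together. The same integration by parts gives exactly
\[
\int_0^T U_S(T,t)\,(F_J-S_J)(t)\,U_F(t,T)\,\dd t=\Id-U_S(T,0)U_F(0,T),
\]
so the right comparison operator is $\cQ(T)=\Op_a\bigl((q_f-q_s)^{-1}\bigr)\in\Psi_\eps(\rho_\eps^{-1};\rho_\eps)$, not $F_J(T)^{-1}$ or $\rho_\eps(D)^{-1}$.

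The paper also avoids your commutators $[F_J(t),U_S(T,t)]$, which involve a propagator outside the symbol calculus. It differentiates $U_S(T,s)\cQ(s)U_F(s,T)$ directly, obtaining $U_S(T,s)(\Id+\cR(s))U_F(s,T)$ with
\[
\cR=\partial_s\cQ+[\cQ,S_J]+\cQ(F_J-S_J)-\Id .
\]
This involves only same-time compositions of pseudodifferential operators. It lies in $\Psi_\eps(\rho_\eps^{-1};\rho_\eps)$ because $\cQ$ is scalar and $\cQ\,\Op_a(q_f-q_s)-\Id$ is lower order.

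Integrating over $[0,T]$ and using $\|U_F(s,T)\|_{H^r\to H^r}\lesssim\e^{-c\eps^{-1}(T-s)}$ gives $\|M_{SF}-\cQ(T)\|_{H^r\to H^r_{\rho_\eps}}\le C\eps$. Finally, $\cQ(T)$ is inverted through the parametrix $\Op_a(q_f-q_s)$ up to $O(\eps)$ errors. This is Lemma \ref{lemma:boundary term bounds}, and it is exactly the input your Steps 3--4 need.
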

The following lemma is fundamental for the proof of the proposition.

\begin{lemma} \label{lemma:boundary term bounds}
The operator  $M_{SF}: H^r \to H^r_{\rho_\eps}$ is invertible for any $\eps$ sufficiently small and for any $r\in \RR$. Furthermore, for any $r\in \RR$ there exists a constant $C_r$ independent of $\eps$ so that
$$ \|M_{SF}^{-1}\|_{H^r_{\rho_\eps} \to H^r} \leq C_r\,. $$
\end{lemma}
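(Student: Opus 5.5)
We will show that $M_{SF}=\int_0^T U_S(T,t)U_F(t,T)\,\dd t$ is, up to a small error, the Fourier multiplier $\rho_\eps(D)^{-1}$ composed with an elliptic scalar operator. Inverting that model operator exactly and treating the rest by a Neumann series then gives the claimed invertibility and the uniform bound on $M_{SF}^{-1}$.

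\textbf{Heuristic.} The fast propagator decays like $\e^{-c\eps^{-1}(T-t)}$ by Lemma \ref{lemma:sobolev bounds propagators}. Hence only a layer of width $O(\eps)$ near $t=T$ contributes to the integral. On that layer the slow propagator satisfies $U_S(T,t)=\Id+O(\eps)$ on $H^r$, again by Lemma \ref{lemma:sobolev bounds propagators}. Freezing $s=T$, the fast propagator is approximately $\e^{-(T-t)F_J(T)}$. Integrating gives
\[
M_{SF}\approx \int_0^T \e^{-(T-t)F_J(T)}\,\dd t \approx F_J(T)^{-1}.
\]
By Lemma \ref{lemma:factorisation}, $F_J(T)^{-1}$ is an elliptic element of $\Psi_\eps(\rho_\eps^{-1})$ whose principal part is the scalar $\Op_a(q_f(T)^{-1})$.

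\textbf{Step 1: a parametrix for $F_J(T)$.}
\begin{itemize}
\item Using $q_f^{-1}\in S_\eps(\rho_\eps^{-1};\rho_\eps)$ from Lemma \ref{lemma:properties of the roots} and the composition theorem, build $G=\Op_a(q_f(T)^{-1})+\text{l.o.t.}$ with $GF_J(T)=\Id+E$.
\item In the scale $\rho_\eps$ the error satisfies $E\in\Psi_\eps(\rho_\eps^{-1};\rho_\eps)$. Its norm on $H^r$ is therefore $O(\eps)$, since $\rho_\eps\gtrsim\eps^{-1}$.
\item Consequently $F_J(T):H^r_{\rho_\eps}\to H^r$ is invertible for small $\eps$, with inverse bounded uniformly in $\eps$ as a map $H^r\to H^r_{\rho_\eps}$.
\end{itemize}

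\textbf{Step 2: comparing $M_{SF}$ with $F_J(T)^{-1}$.} Write
\[
F_J(T)M_{SF}=\int_0^T F_J(T)\,U_S(T,t)\,U_F(t,T)\,\dd t .
\]
\begin{itemize}
\item Use $\partial_t U_F(t,T)=F_J(t)U_F(t,T)$. Replace $F_J(T)$ by $F_J(t)$ and commute $U_S(T,t)$ past it; the integrand then becomes $U_S(T,t)\,\partial_tU_F(t,T)$ plus error terms.
\item Integrate by parts in $t$. The boundary term at $t=T$ gives $\Id$. The boundary term at $t=0$ is $U_S(T,0)U_F(0,T)$, which is $O(\e^{-c/\eps})$.
\item The other term produced by the integration by parts is $\int_0^T U_S(T,t)S_J(t)U_F(t,T)\,\dd t$. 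Here $S_J\in\Psi_\eps(\mu_\eps)$, and $\mu_\eps\le\eps\rho_\eps$ up to constants. So this term, and likewise the derivative in $t$ of $U_S(T,t)$, is bounded by
\[
\int_0^T \eps\,\rho_\eps\,\e^{-c\eps^{-1}(T-t)}\,\dd t\cdot\|\cdot\|_{H^r_{\rho_\eps}\to H^r}=O(\eps).
\]
\item The commutator $[F_J(\cdot),U_S]$ and the difference $F_J(T)-F_J(t)$ are $O(T-t)$ in $\Psi_\eps(\rho_\eps;\rho_\eps)$. Their total contribution is controlled by
\[
\int_0^T (T-t)\,\eps^{-1}\,\e^{-c\eps^{-1}(T-t)}\,\dd t=O(\eps).
\]
To make this rigorous, I would first conjugate by $\rho_\eps(D)^{1/2}$ and use the weighted $H^r_{\rho_\eps}$ bounds for $U_S$ from Lemma \ref{lemma:sobolev bounds propagators}.
\item The outcome is $F_J(T)M_{SF}=\Id+\mathcal R$ on $H^r$, with $\|\mathcal R\|_{H^r\to H^r}=O(\eps)$, for every $r\in\RR$. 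The same argument applied to $M_{SF}F_J(T)$ gives a right identity. Combining with Step 1 and a Neumann series yields
\[
M_{SF}^{-1}=(\Id+\mathcal R)^{-1}F_J(T),
\]
which is bounded $H^r_{\rho_\eps}\to H^r$ uniformly in $\eps$.
\end{itemize}

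\textbf{Main obstacle.} The hardest part is the commutator and error control in Step 2, uniformly in $\eps$. One must check that $F_J(T)U_S(T,t)-U_S(T,t)F_J(t)$ costs only a factor $\eps^{-1}(T-t)$ times the fast weight. My first approach would be a Duhamel formula in $t$ for this commutator, bounding $[F_J(\tau),S_J(\tau)]\in\Psi_\eps(\rho_\eps)$ via the composition theorem, since the principal symbols commute because they are scalar. I would then combine this with the exponential decay of $U_F$.
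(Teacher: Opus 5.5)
There is a genuine gap in Step 2. You declare $\int_0^T U_S(T,t)S_J(t)U_F(t,T)\,\dd t=O(\eps)$ using the inequality $\mu_\eps\lesssim\eps\rho_\eps$, and that inequality is false. For $\langle\xi\rangle\gtrsim\eps^{-1}$ one has $\mu_\eps\approx\langle\xi\rangle$ but $\eps\rho_\eps\approx\eps\langle\xi\rangle$; the inequality already fails once $\langle\xi\rangle\gg\eps^{-1/2}$.

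The slow-generator term is in fact of order one. Freezing coefficients makes this explicit: $U_S(T,t)U_F(t,T)=\e^{-(q_f-q_s)(T-t)}$, hence
\[
M_{SF}\approx\frac{1-\e^{-(q_f-q_s)T}}{q_f-q_s},\qquad F_J(T)M_{SF}-\Id\approx\frac{q_s}{q_f-q_s}.
\]
Since $q_fq_s=-\alpha^{i,j}\xi_i\xi_j/\alpha^{0,0}$ and $|q_f|\approx\rho_\eps$, both roots have size $|\xi|$ when $|\xi|\gg\eps^{-1}$. In the isotropic case the last symbol tends to $-\tfrac12$, and with $\alpha^{0,j}\neq0$ its modulus can exceed $1$. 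So your $\mathcal R$ is not small in $H^r\to H^r$, and the Neumann series does not close.

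The same error sits in your heuristic. Lemma~\ref{lemma:sobolev bounds propagators} says $\|U_S\|_{H^r\to H^r}\leq 1+C\eps$; it does not say $U_S(T,t)$ is $O(\eps)$-close to $\Id$. At frequency $\xi$ the fast layer has width $\rho_\eps(\xi)^{-1}$, not $\eps$. Across that layer $U_S$ acts roughly by $\e^{q_s/\rho_\eps}$, which is $O(1)$ away from $1$ when $|\xi|\gtrsim\eps^{-1}$. Thus $F_J(T)^{-1}\approx\Op_a(q_f^{-1})$ is the wrong model for $M_{SF}$: it differs from $M_{SF}$ by an $O(1)$ amount in $H^r\to H^r_{\rho_\eps}$. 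The errors you flagged as the main obstacle, $F_J(T)-F_J(t)$ and the commutator with $U_S$, are actually harmless.

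The remedy, which is what the paper does, is to take as model $\cQ(s)=\Op_a\bigl((q_f(s)-q_s(s))^{-1}\bigr)\in\Psi_\eps(\rho_\eps^{-1};\rho_\eps)$, which builds in the slow generator. Differentiating the sandwich gives
\[
\partial_s\bigl(U_S(T,s)\cQ(s)U_F(s,T)\bigr)=U_S(T,s)\bigl(\cQ(F_J-S_J)+[\cQ,S_J]+\partial_s\cQ\bigr)U_F(s,T)=U_S(T,s)\bigl(\Id+\cR(s)\bigr)U_F(s,T).
\]
Here $\cR(s)\in\Psi_\eps(\rho_\eps^{-1};\rho_\eps)$, because $F_J-S_J$ has scalar principal symbol $q_f-q_s$. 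Integrate over $[0,T]$. The boundary term at $s=0$ is $O(\e^{-cT/\eps})$, and the $\cR$-integral is $O(\eps)$ by the fast decay, so $\|M_{SF}-\cQ(T)\|_{H^r\to H^r_{\rho_\eps}}\leq C_r\eps$.

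Next, $\cQ(T)$ is inverted up to $O(\eps)$ errors by the parametrix $\Op_a(q_f-q_s)$. This is your Step 1, carried out around $q_f-q_s$ instead of $q_f$. A Neumann series then finishes.

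Alternatively, you could keep your route but treat $\int U_SS_JU_F$ as a main term, approximately $\Op_a\bigl(q_s/(q_f-q_s)\bigr)$. You would then invert the order-zero elliptic operator $\Op_a\bigl(q_f/(q_f-q_s)\bigr)$ by a parametrix, which amounts to the same thing.
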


Before starting the proof, we give a short description of the strategy. The key observation is that $M_{SF}$ behaves, at leading order, as  the pseudodifferential operator
$$
\cQ(s) := \Op_a \left(\frac{1}{q_f(s)-q_s(s)}\right).
$$
By Lemma \ref{lemma:properties of the roots}, we have
$$
\cQ (s)\in
\Psi_\eps(\rho_\eps^{-1};\rho_\eps) \,.
$$
We first observe that  by the propagator composition property we have
$$
\partial_s U_S(T,s)=\lim_{h \to 0} \frac{ U_S(T,s+h) -U_S(T,s)}{h}=\lim_{h \to 0}\frac{ U_S(T,s+h)\left (\Id-U_S(s+h,s) \right )}{h}=-U_S(T,s)  S_J(s) \,.
$$
Hence, using also that $\partial_s U_F (s,T) =F_J(s) U_F (s,T)$ we deduce 
\begin{align*}
    \partial_s (U_S(T,s)\cQ(s)U_F(s,T) ) & = U_S(T,s) ( -S_J(s)\cQ (s) +\partial_s\cQ (s) +\mathcal Q(s)F_J(s) ) U_F(s,T)
    \\
    & = U_S(T,s) ( \Id + \cR (s)) U_F(s,T)\,,
\end{align*}
where $\cR (s)$ is defined as 
$$\cR(s)= \partial_s\cQ (s) +[\cQ (s),S_J(s)] +\cQ (s) (F_J(s)-S_J(s) ) -\Id \,.$$
 Hence, integrating the previous equation over $s\in[0,T]$ and rearranging the terms, we obtain
\begin{align} \label{eq:difference-Q-M}
    \cQ(T) - M_{SF}  =  U_S(T,0)\cQ(0)U_F(0,T) + \int_0^T U_S(T,s) \cR(s)U_F(s,T)\,\dd s\,,
\end{align}
and we aim to prove that the terms on the right hand side are small and $\cQ(T)$ is invertible.

\begin{proof}[Proof of Lemma \ref{lemma:boundary term bounds}]
    We first study the reference operator $\cQ(s)$.
By Lemma \ref{lemma:properties of the roots} we have
$ \mathcal Q(s) \in \Psi_\eps(\rho_\eps^{-1};\rho_\eps) $ and
consequently, we deduce
\begin{equation} \label{eq:Q-boundedness}
\| \cQ(s)B\|_{H^r_{\rho_\eps}} \leq C_r\|B\|_{H^r} \,.
\end{equation}
We next show that $\mathcal Q(s)$ is a uniformly invertible map from $H^r $ to $H^r_{\rho_\eps}$. To this end, we define
$$
\cQ^\sharp(s) := \Op_a\bigl(q_f(s)-q_s(s)\bigr).
$$
Again by Lemma \ref{lemma:properties of the roots}, $ \cQ^\sharp(s) \in \Psi_\eps(\rho_\eps;\rho_\eps),$ and hence  $\| \cQ^\sharp(s)B\|_{H^r} \leq C_r\|B\|_{H^r_{\rho_\eps}}.$
The composition formula (Theorem \ref{thm:composition}) gives
\begin{align*}
\mathcal E_\ell(s) :=
\cQ^\sharp(s)\cQ(s)-\Id \in \Psi_\eps(\rho_\eps^{-1};\rho_\eps),
\qquad
\mathcal E_r(s) := \cQ(s)\cQ^\sharp(s)-\Id \in
\Psi_\eps(\rho_\eps^{-1};\rho_\eps).
\end{align*}
Since $\rho_\eps(\xi)^{-1}
\leq
\eps,$ Corollary \ref{corollary:sobolev boundedness} implies
\begin{align*}
\|\mathcal E_\ell(s)\|_{H^r\to H^r} + \|\mathcal E_r(s)\|_{H^r_{\rho_\eps}\to H^r_{\rho_\eps}} \leq C_r\eps.
\end{align*}
For $\eps>0$ sufficiently small, both $ \cQ^\sharp(s)\cQ(s) = \Id+\mathcal E_\ell(s) $
on $H^r$, and
$  \cQ(s)\cQ^\sharp(s) = \Id+\mathcal E_r(s) $
on $H^r_{\rho_\eps}$, are invertible by a Neumann-series argument. Therefore, $ (\cQ^\sharp(s)\cQ(s) )^{-1} \mathcal Q^\sharp(s) $ is a left inverse of $\cQ(s)$, while
$ \mathcal Q^\sharp(s) (\mathcal Q(s)\mathcal Q^\sharp(s) )^{-1} $ is a right inverse. Hence,
$ \mathcal Q(s):H^r \to H^r_{\rho_\eps} $
is a bounded isomorphism satisfying  
\begin{equation}
\label{eq:Q-inverse-bound}
\|\mathcal Q(s)^{-1}B\|_{H^r} \leq
C_r\|B\|_{H^r_{\rho_{\eps}}} \,,
\end{equation}
with a constant independent of $\eps$. We now recall $\cR(s)= \partial_s\cQ (s) +[\cQ (s),S_J(s)] +\cQ (s) (F_J(s)-S_J(s) ) -\Id \,.$ By Lemma \ref{lemma:properties of the roots} we have $\partial_s\mathcal Q(s) \in \Psi_\eps(\rho_\eps^{-1};\rho_\eps).$  

By Lemma \ref{lemma:factorisation}, $F_J(s)-S_J(s)-\mathcal Q^\sharp(s) \in \Psi_\eps(1;\rho_\eps)$.
Thus,
\begin{align*}
(F_J-S_J )\cQ -\Id &= (F_J-S_J- \cQ^\sharp )\cQ + (\cQ^\sharp\mathcal Q-\Id ) \in \Psi_\eps(\rho_\eps^{-1};\rho_\eps).
\end{align*}
Finally, since the relevant principal symbols are scalar, Theorem \ref{thm:composition} implies
$$
[\mathcal Q(s),F_J(s)] \in \Psi_\eps(\rho_\eps^{-1};\rho_\eps).
$$
Therefore, we deduce that $\mathcal R(s)
\in \Psi_\eps(\rho_\eps^{-1};\rho_\eps)$ and 
\begin{equation} \label{eq:R-H-Hrho}
\|\cR(s)B\|_{H^r_{\rho_\eps}} \leq C_r \|B\|_{H^r}\,.
\end{equation}
We now turn to proving that the terms on the right hand side of \eqref{eq:difference-Q-M} are small. In particular, using Lemma \ref{lemma:sobolev bounds propagators} and
\eqref{eq:Q-boundedness}, we obtain
\begin{align*}
\| U_S(T,0)\mathcal Q(0)U_F(0,T)B \|_{H^r_{\rho_\eps}} &\leq C_r \|\cQ(0)U_F(0,T)B\|_{H^r_{\rho_\eps}}
\\ 
&\leq C_r \|U_F(0,T)B\|_{H^r}
\\
&\leq C_r\e^{-cT/\eps}\|B\|_{H^r}.
\end{align*}
Similarly, using also \eqref{eq:R-H-Hrho},  we have
\begin{align*}
  \int_0^T \| U_S(T,s)\mathcal R(s)U_F(s,T)B \|_{H^r_{\rho_\eps}} \,\dd s &\leq
C_r \int_0^T \|\mathcal R(s)U_F(s,T)B\|_{H^r_{\rho_\eps}}\,\dd s \leq C_r \int_0^T \|U_F(s,T)B\|_{H^r}\,\dd s
\\
& \leq C_r \int_0^T \e^{-c(T-s)/\eps}\,\dd s \|B\|_{H^r}  \leq C_r\eps\|B\|_{H^r}.
\end{align*}
Thus, we have
\begin{align} \label{eq:estimate-M-Q}
\|M_{SF}-\mathcal Q(T)\|_{H^r\to H_{\rho_\eps}^r} \leq C_r\eps.
\end{align}
Therefore, by standard manipulation we get
$$
M_{SF} = \left( \Id+ (M_{SF}-\mathcal Q(T))\mathcal Q(T)^{-1} \right) \mathcal Q(T).
$$
Moreover, by \eqref{eq:Q-inverse-bound}, we have
\begin{align*}
\| (M_{SF}-\mathcal Q(T))\mathcal Q(T)^{-1}B \|_{H_{\rho_\eps}^r}  \leq C_r\eps \|\mathcal Q(T)^{-1}B\|_{H^r} \leq C_r\eps \|B\|_{H_{\rho_\eps}^r}.
\end{align*}
Hence,
$$
\|
(M_{SF}-\mathcal Q(T))\mathcal Q(T)^{-1}
\|_{H_{\rho_\eps}^r\to H_{\rho_\eps}^r}
\leq
C_r\eps.
$$
For $\eps>0$ sufficiently small, the operator
$ \Id+ (M_{SF}-\mathcal Q(T))\mathcal Q(T)^{-1} $
is therefore invertible on $H_{\rho_\eps}^r$ by a Neumann-series
argument. Consequently,
$$
M_{SF}^{-1} = \mathcal Q(T)^{-1} (\Id+ (M_{SF}-\mathcal Q(T))\mathcal Q(T)^{-1} )^{-1}
$$
is a bounded map from $H_{\rho_\eps}^r$ to $H^r$ with $ \|M_{SF}^{-1}\|_{H_{\rho_\eps}^r\to H^r} \leq C_r, $
with $C_r$ independent of $\eps$, concluding the proof.
\end{proof}

We are now ready to prove the proposition. 

\begin{proof}[Proof of Proposition \ref{prop:homogeneous decomposition}] 
By Lemma \ref{lemma:boundary term bounds} the formula is justified by the computations given in \eqref{eq:comp-expl-form}.
We now prove the second inequality.
    Fix $s \in [0,T-\delta]$, for $\delta>0$ fixed, and set $\sigma=\min\{r_1,r_2\}$. Then, by Lemma \ref{lemma:sobolev bounds propagators}, there holds 
$$
\|B_T-U_S(T,0)B_0\|_{H^\sigma} \leq C(\|B_T\|_{H^{r_2}}+\|B_0\|_{H^{r_1}}).
$$
Hence, for any $r\in\RR$, Lemma \ref{lemma:sobolev bounds propagators} gives the following bound, uniformly for $t\leq s\leq T-\delta$
$$
\|U_S(s,t)U_F(t,T)M_{SF}^{-1}B\|_{H^{r}} \leq C\|U_F(t,T)M_{SF}^{-1} B\|_{H^{r}} \leq C\e^{-c\eps^{-1}}\|M_{SF}^{-1} B\|_{H^{\sigma-1}} \leq C\e^{-c\eps^{-1}}\|B\|_{H_{\rho_\eps}^{\sigma-1}},
$$
where $C$ depends on $r,\sigma, \delta$. We note that $\|B\|_{H_{\rho_\eps}^{\sigma-1}} \leq C \eps^{-1}\|B\|_{H^{\sigma}}$, and so we conclude the proof of the second statement. Next, Lemma \ref{lemma:sobolev bounds propagators} and Lemma \ref{lemma:boundary term bounds} give
\begin{align*}
    \e^{\gamma \eps^{-1}(T-s)}\|U_S(s,t)U_F(t,T) M_{SF}^{-1} B\|_{H^r} & \leq  C\e^{-\eps^{-1}(c(T-t)-\gamma(T-s))}\|B\|_{H_{\rho_\eps}^r}\,.
\end{align*}
Hence, integrating this yields
$$
\|B\|_{H_{\rho_\eps}^r} \ \e^{\gamma \eps^{-1}(T-s)}\int_0^s \e^{-c\eps^{-1}(T-t)} \dd t  \leq C\|B\|_{H^{r+1}},
$$
completing the proof by choosing $\gamma< c$.
\end{proof}

\subsection{The inhomogeneous factorised problem}
In this section, we aim to solve the inhomogeneous PDE 
\begin{align} \label{eq:inhomogeneous}
\begin{cases}
    (\partial_s-F_J)(\partial_s-S_J)B=v\,,
    \\
     B(0)=B(T)=0\,.
\end{cases}
\end{align}
We define $w=(\partial_s -S_J )B$. Then, $(\partial_s-F_J)w=v$,  and so, using Duhamel's formula backward in time, we obtain
$$
w(s)=U_F(s,T)w(T)-\int_s^TU_F(s,t)v(t) \dd t.
$$
Thus, we have 
$$
B(s)=\int_0^sU_S(s,t)\left (U_F(t,T)w(T)-\int_t^TU_F(t,r)v(r) \dd r \right ) \dd t.
$$
Verifying the boundary conditions at $s=T$ is then equivalent to 
$$
0=\int_0^T U_S(T,t)U_F(t,T) w(T)\dd t -\int_0^T U_S(T,t)\int_t^TU_F(t,r)v(r) \dd r  \dd t.
$$
In other words, we have 
\begin{align} \label{eq:formula-wT}
    w(T)= M_{SF}^{-1}  \left (\int_0^T U_S(T,t)\int_t^TU_F(t,r)v(r) \dd r  \dd t \right ).
\end{align}
Hence, we deduce the following result.
\begin{proposition}
\label{prop:inhomogeneous}
Let $\cG_{\eps}: v \to B$ be the map defined so that $B$ is the solution to \eqref{eq:inhomogeneous}  with inhomogeneous datum $v$. Then, for any $r \in \RR$, there exists $C_r>0$ so that 
$$
\|\mathcal{G}_\eps v\|_{L^\infty([0,T];H^r)} \leq C_r \eps \|v\|_{L^\infty([0,T];H^r)}.
$$
Furthermore, define $\|v\|_{\gamma,r}=\sup_{0\leq s\leq T}\|\e^{\gamma\eps^{-1}(T-s)}v\|_{H^r}$. There exists a sufficiently small $\gamma>0$, independent of $\eps$, such that, for all sufficiently small $\eps>0$,
$$
\|\mathcal{G}_\eps v\|_{\gamma,r} \leq C_r \eps \|v\|_{\gamma,r}.
$$
\end{proposition}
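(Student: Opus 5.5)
We will work directly from the explicit representation just derived. Combining Duhamel's formula with \eqref{eq:formula-wT}, we write
\begin{align*}
\mathcal G_\eps v(s)&=I_1(s)-I_2(s),\\
I_1(s)&:=\int_0^s U_S(s,t)U_F(t,T)M_{SF}^{-1}\Big(\int_0^T U_S(T,t')\int_{t'}^T U_F(t',\varrho)v(\varrho)\,\dd \varrho\,\dd t'\Big)\dd t,\\
I_2(s)&:=\int_0^s U_S(s,t)\int_t^T U_F(t,\varrho)v(\varrho)\,\dd \varrho\,\dd t.
\end{align*}
The aim is to bound each piece by $C_r\eps\|v\|$ in both $L^\infty_sH^r$ and $\|\cdot\|_{\gamma,r}$. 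The only inputs will be Lemma \ref{lemma:sobolev bounds propagators} and Lemma \ref{lemma:boundary term bounds}. Lemma \ref{lemma:sobolev bounds propagators} gives $\|U_S(s,t)\|_{H^r\to H^r}\le C_r$ and $\|U_F(t,\varrho)\|_{H^r\to H^r}\le C_r\e^{-c\eps^{-1}(\varrho-t)}$, and the same bounds hold on $H^r_{\rho_\eps}$. Lemma \ref{lemma:boundary term bounds} gives $\|M_{SF}^{-1}\|_{H^r_{\rho_\eps}\to H^r}\le C_r$.

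\textbf{The direct term $I_2$.} For the $L^\infty H^r$ bound we use $\int_t^T \e^{-c\eps^{-1}(\varrho-t)}\,\dd\varrho\le \eps/c$ and then integrate over $t\in[0,s]$, which costs a factor $T$. This gives $\|I_2(s)\|_{H^r}\le C_rT\eps\|v\|_{L^\infty H^r}$. For the weighted norm we insert $\|v(\varrho)\|_{H^r}\le \e^{-\gamma\eps^{-1}(T-\varrho)}\|v\|_{\gamma,r}$ and must control
\[
\e^{\gamma\eps^{-1}(T-s)}\int_0^s\int_t^T \e^{-c\eps^{-1}(\varrho-t)}\e^{-\gamma\eps^{-1}(T-\varrho)}\,\dd\varrho\,\dd t.
\]
For $\gamma<c/2$ the inner integral is at most $C\eps\,\e^{-\gamma\eps^{-1}(T-t)}$. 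The remaining $t$-integral of $\e^{-\gamma\eps^{-1}(s-t)}$ over $[0,s]$ is then at most $\eps/\gamma$. Altogether this yields a bound $C\eps^2/\gamma\le C\eps\|v\|_{\gamma,r}$, which is even better than required.

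\textbf{The boundary-correction term $I_1$.} This is the step I expect to be the main obstacle, because $M_{SF}^{-1}$ only maps $H^r_{\rho_\eps}\to H^r$ and $\rho_\eps\sim\eps^{-1}$ at low frequencies. The plan is to bound the inner quantity $Z:=\int_0^T U_S(T,t')\int_{t'}^TU_F(t',\varrho)v(\varrho)\,\dd\varrho\,\dd t'$ in $H^r_{\rho_\eps}$ rather than in $H^r$. To do this, apply the $H^r_{\rho_\eps}$-boundedness of $U_S$ and $U_F$, which the energy estimates of Lemma \ref{lemma:sobolev bounds propagators} give after conjugating by $\rho_\eps(D)$. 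Together with $\|v\|_{H^r_{\rho_\eps}}\lesssim \eps^{-1}\|v\|_{H^r}+\|v\|_{H^{r+1}}$, this would only give $\|Z\|_{H^r_{\rho_\eps}}\lesssim \eps\cdot\eps^{-1}$ and would lose a derivative.

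To avoid the loss, I would instead use the smoothing of the fast propagator. The point is that $U_F(t',\varrho)$ gains $\rho_\eps^{1/2}$ in $L^2_{\varrho}$ by the dissipation inequality \eqref{eq:fast energy inequality}. Integrated in $\varrho$, this turns the $\eps$ from the exponential decay into a gain of one power of $\rho_\eps^{-1}$. Concretely, I would aim to show
\[
\Big\|\int_{t'}^T U_F(t',\varrho)v(\varrho)\,\dd\varrho\Big\|_{H^r_{\rho_\eps}}\le C_r\|v\|_{L^\infty H^r},
\]
by writing $\rho_\eps(D)U_F(t',\varrho)$ and using that $F_J\approx\Op_a(q_f)$ with $\Re q_f\gtrsim\rho_\eps$. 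The key identity is that $\int_{t'}^T U_F(t',\varrho)\,\dd\varrho\approx -F_J^{-1}$ up to lower-order terms. If this direct route proves awkward, the fallback is to apply the parametrix idea already used in \eqref{eq:difference-Q-M}: differentiate $U_F(t',\varrho)\,\Op_a(q_f^{-1})$ in $\varrho$ and absorb the resulting error, which lies in $\Psi_\eps(\rho_\eps^{-1};\rho_\eps)$.

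Once this bound on $Z$ is in hand, we conclude as follows. The outer time integral of $U_S$ costs only a factor $T$, and $M_{SF}^{-1}$ returns us to $H^r$ with constant $C_r$. Finally, the outer factor $\int_0^sU_S(s,t)U_F(t,T)\,\dd t$ contributes $C\eps\,\e^{-c\eps^{-1}(T-s)}$. This supplies both the required factor $\eps$ and, for $\gamma<c$, compatibility with the weight $\e^{\gamma\eps^{-1}(T-s)}$. In the weighted case we need the bound on $Z$ only in terms of $\|v\|_{\gamma,r}$, and this follows since the weight $\e^{-\gamma\eps^{-1}(T-\varrho)}\le1$.
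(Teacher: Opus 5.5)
\textbf{The gap: your pointwise bound on the inner integral is false.} Your handling of the direct term $I_2$ and of the outer factor $\int_0^s U_S(s,t)U_F(t,T)\,\dd t$ is fine. You also correctly identify that the difficulty is bounding $Z$ in $H^r_{\rho_\eps}$. The problem is the intermediate estimate you aim for, namely $\bigl\|\int_{t'}^T U_F(t',\varrho)v(\varrho)\,\dd\varrho\bigr\|_{H^r_{\rho_\eps}}\le C_r\|v\|_{L^\infty H^r}$ for each fixed $t'$. This is false for general $v\in L^\infty([0,T];H^r)$.

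To see this, take the scalar constant-coefficient model $U_F(t',\varrho)=\e^{-(\varrho-t')\rho_\eps(D)}$. Let $v(\varrho)$ be frequency-localised in the shell $\{\rho_\eps(\xi)\sim 2^k\}$ whenever $\varrho-t'\sim 2^{-k}$, with unit $H^r$-mass in each of $K$ shells. Then $\sup_\varrho\|v(\varrho)\|_{H^r}\approx 1$, while on each active shell the output is comparable to $v$ itself, so the left-hand side is $\gtrsim K^{1/2}$. This is the usual failure of maximal regularity at the $L^\infty_t$ endpoint. The dissipation inequality \eqref{eq:fast energy inequality} only controls $\rho_\eps^{1/2}U_F$ in $L^2_\varrho$, and upgrading this to a full power of $\rho_\eps$ against $L^\infty_\varrho$ data is exactly the step that fails.

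Both mechanisms you propose break down for the same reason. The identity $\int_{t'}^TU_F(t',\varrho)\,\dd\varrho\approx F_J^{-1}$ only applies when $v$ does not depend on $\varrho$. Differentiating $U_F(t',\varrho)\Op_a(q_f^{-1})$ in $\varrho$ and integrating by parts puts a $\partial_\varrho$ on $v$, which is not controlled. You cannot assume time regularity of $v$: in Lemma \ref{lemma:full decomposition} the proposition is applied to $v=R_J(B_1+B_2)$, where $B_2$ is produced by a Neumann series in $L^\infty H^r$.

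\textbf{The missing idea: swap the order of integration first.} You want the fast variable to be integrated against a \emph{fixed} $v(\varrho)$. Using $U_S(T,t')=U_S(T,\varrho)U_S(\varrho,t')$ for $t'\le\varrho$ and Fubini,
\[
Z=\int_0^T U_S(T,\varrho)\Bigl(\int_0^\varrho U_S(\varrho,t')U_F(t',\varrho)\,\dd t'\Bigr)v(\varrho)\,\dd\varrho .
\]
The inner operator is $M_{SF}$ with $T$ replaced by $\varrho$. The parametrix identity \eqref{eq:difference-Q-M}, which differentiates in the integration variable $t'$ as in the proof of Lemma \ref{lemma:boundary term bounds}, together with \eqref{eq:Q-boundedness}, gives an $H^r\to H^r_{\rho_\eps}$ bound uniform in $\varrho\in[0,T]$. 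For small $\varrho$ the boundary term $U_S(\varrho,0)\cQ(0)U_F(0,\varrho)$ is bounded, though not small, which is all you need.

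Since $U_S$ is bounded on $H^r_{\rho_\eps}$, this gives $\|Z\|_{H^r_{\rho_\eps}}\le C_rT\|v\|_{L^\infty H^r}$, and from there your remaining steps go through. This is the paper's argument. Your fallback parametrix idea is the right tool, but it has to be applied after this swap, in the variable $t'$ with $\varrho$ fixed, rather than in $\varrho$ with $t'$ fixed.
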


\begin{proof}
We undertake the proof directly with the exponential weight, since the case without the weight is identical.
We begin by noting that we may rewrite the integral as
$$
\int_0^T \int_t^T U_S(T,\tau) U_S(\tau,t)U_F(t,\tau)v(\tau) \dd \tau \dd t=\int_0^T U_S(T,\tau)\int_0^\tau U_S(\tau,t) U_F(t,\tau)  \dd t \ v(\tau) \dd \tau.
$$
By \eqref{eq:Q-boundedness} and following \eqref{eq:estimate-M-Q}, it holds that  for $\tau \in [0,T]$
$$
\left |\int_0^\tau U_S(\tau,t) U_F(t,\tau) \dd t  \right |_{H^r \to H_{\rho_\eps}^r} \leq C.
$$
Hence, by Lemma \ref{lemma:sobolev bounds propagators} we have
$$
\left |\int_0^T U_S(T,\tau)\left (\int_0^\tau U_S(\tau,t) U_F(t,\tau)  \dd t \right ) v(\tau) \dd \tau \right |_{H_{\rho_\eps}^r} \leq C |v|_{L^\infty([0,T];H^r)}.
$$
Thus, by formula \eqref{eq:formula-wT} and Lemma \ref{lemma:boundary term bounds} we may bound
$$
|w(T)|_{H^r} \leq C |v|_{L^\infty([0,T];H^r)}.
$$
Therefore, using also Lemma \ref{lemma:sobolev bounds propagators} we obtain the bound
\begin{align*}
&\sup_{s \leq T}\e^{\gamma \eps^{-1}(T-s)}\left |\int_0^s U_S(s,t)U_F(t,T)w(T) \dd t  \right |_{H^r} \\
&\qquad \leq C  |v|_{L^\infty([0,T];H^r)} \sup_{s \leq T}\e^{\gamma\eps^{-1}(T-s)}\int_0^s \e^{-c_0\eps^{-1}(T-t)} \dd t \leq C\eps|v|_{\gamma,r},
\end{align*}
so long as $\gamma < c_0$.
Similarly, we bound
\begin{align*}
\sup_{s \leq T} \e^{\gamma \eps^{-1}(T-s)}& \left | \int_0^s U_S(s,t)\int_t^TU_F(t,\tau) v(\tau)  \dd \tau \dd t\right |_{H^r} 
\\
& \leq C \sup_{s \leq T}\int_0^s\int_t^T \e^{-c\eps^{-1}(\tau-t)}\e^{-\gamma \eps^{-1}(s-\tau)} \dd \tau \dd t \ |v|_{\gamma,r}\\
& \leq C\eps |v|_{\gamma,r},
\end{align*}
so long as $\gamma<c$. Thus the proof is complete.
\end{proof}

\subsection{The solution decomposition}
Finally, we consider the full PDE
\begin{align}
\begin{cases} \label{eq:full equation}
\mathcal L_{\lambda,\eps} B=0, 
\\
B(0)=B_0 \in \mathcal{X} \,,
\\
B(T)=B_T \in L^2 (\TT^2)\,.
\end{cases}
\end{align}
From the previous results we make rigorous the following property 
$$ B(s) \approx U_S(s,0) B_0 \,.$$
More precisely, we prove the following.

\begin{lemma}
\label{lemma:full decomposition}
For any $\eps \in (0, \eps_0)$ there exists a unique  solution in $L^2 (\TT^2 \times (0,T))$ to \eqref{eq:full equation} that may be decomposed as 
$$
B(s)=U_S(s,0)B_0+\cE_1 B_T+\cE_2 B_0,
$$
where, for any $r\in\RR$ and $s\in[0,T-\delta]$, with $\delta>0$, the operators $\cE_1,\cE_2$ satisfy the following  estimates, with a constant $C_{r,\delta}$ depending on $r,\delta$
$$
\|\cE_2 B_0(s)\|_{H^r} \leq C_{r,\delta}\eps\|B_0\|_{H^{r+2-J}}, \quad \|\cE_1 B_T(s)\|_{H^r} \leq C_{r,\delta} \e^{-c\eps^{-1}}\|B_T\|_{H^{r+2-J}} \,.
$$
\end{lemma}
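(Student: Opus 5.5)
We will treat the full problem \eqref{eq:full equation} as a perturbation of the factorised problem. By Lemma \ref{lemma:factorisation}, $\mathcal L_{\lambda,\eps}B=0$ is equivalent to
$$
\eps\alpha^{0,0}(\partial_s-F_J)(\partial_s-S_J)B=\eps\alpha^{0,0}R_JB,
\qquad\text{i.e.}\qquad (\partial_s-F_J)(\partial_s-S_J)B=R_JB .
$$
We therefore split $B=B^{\rm hom}+B^{\rm inh}$. Here $B^{\rm hom}$ solves \eqref{eq:equation-homogeneous-approx} with data $(B_0,B_T)$, and is given by the explicit formula of Proposition \ref{prop:homogeneous decomposition}. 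The term $B^{\rm inh}$ solves \eqref{eq:inhomogeneous} with right-hand side $v=R_JB$, so $B^{\rm inh}=\mathcal G_\eps R_J B$. This turns the problem into the fixed-point equation
$$
(\Id-\mathcal G_\eps R_J)B=B^{\rm hom}.
$$

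\textbf{Step 1 (solving the fixed-point equation).} We invert $\Id-\mathcal G_\eps R_J$ by a Neumann series in $L^\infty([0,T];H^r)$, or in the weighted norm $\|\cdot\|_{\gamma,r}$. By Proposition \ref{prop:inhomogeneous}, $\mathcal G_\eps$ has norm $\le C_r\eps$ on $L^\infty H^r$. Since $R_J\in\Psi_\eps(\langle\xi\rangle^{1-J})$ is bounded $H^r\to H^{r+J-1}$, and in particular $H^r\to H^r$ for $J\ge1$, uniformly in $s$ and $\eps$, the composition $\mathcal G_\eps R_J$ has norm $\le C\eps<1$ for small $\eps$. This yields existence and uniqueness within $L^\infty([0,T];H^r)$.

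For uniqueness in $L^2(\TT^2\times(0,T))$ we would argue as follows. The difference of two solutions has zero boundary data, so it satisfies $w=\mathcal G_\eps R_J w$. Interior elliptic regularity, as in Lemma \ref{lemma:elliptic regularity}, places $w$ in $L^\infty H^r$ away from the boundary, and the contraction then forces $w=0$. The remaining technical point is justifying that $L^2$ solutions lie in the class where the factorisation is rigorous. We expect to handle it by density and the trace bounds of Lemma \ref{lemma:notion of solution}.

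\textbf{Step 2 (identifying the error operators).} We write $B^{\rm hom}=U_S(s,0)B_0+\mathcal I(B_T-U_S(T,0)B_0)$, where $\mathcal I$ denotes the integral term in Proposition \ref{prop:homogeneous decomposition}. We then define $\cE_1B_T=(\Id-\mathcal G_\eps R_J)^{-1}\mathcal I B_T$ and
$$
\cE_2B_0=(\Id-\mathcal G_\eps R_J)^{-1}\bigl[\mathcal G_\eps R_J U_S(\cdot,0)B_0-\mathcal I U_S(T,0)B_0\bigr].
$$
This uses $(\Id-\mathcal G_\eps R_J)^{-1}-\Id=(\Id-\mathcal G_\eps R_J)^{-1}\mathcal G_\eps R_J$.

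\textbf{Step 3 (the $\cE_2$ bound).} The main term is $\mathcal G_\eps R_J U_S B_0$. Lemma \ref{lemma:sobolev bounds propagators} bounds $U_S$ on $H^{r+1-J}$, $R_J$ gains $J-1$ derivatives, and $\mathcal G_\eps$ contributes the factor $\eps$. Together these give $C\eps\|B_0\|_{H^{r+1-J}}\le C\eps\|B_0\|_{H^{r+2-J}}$.

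The boundary-layer piece $\mathcal I U_S(T,0)B_0$ is the delicate part. For $s\le T-\delta$ the second estimate of Proposition \ref{prop:homogeneous decomposition} makes it $O(\e^{-c/\eps})$ in any norm. However, the Neumann inverse acts globally in $s$ and sees the layer near $s=T$. We plan to run the Neumann series in the weighted norm $\|\cdot\|_{\gamma,r}$. There $\mathcal I$ is bounded with one derivative loss by the first estimate of Proposition \ref{prop:homogeneous decomposition}, and $\mathcal G_\eps R_J$ is still a contraction by Proposition \ref{prop:inhomogeneous}. Evaluating at $s\le T-\delta$ then recovers a factor $\e^{-\gamma\delta/\eps}$. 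This handles the layer part of $\cE_2$, whose contribution is bounded by $\e^{-c/\eps}\|B_0\|_{H^{r+1}}$.

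We expect this step to be the main obstacle. That bound costs one derivative more than the $H^{r+2-J}$ the statement allows. We would try to repair this by using the smoothing second estimate of Proposition \ref{prop:homogeneous decomposition}, which allows arbitrary $r_1$, for the first application of $\mathcal I$. The weighted norm would then only be used for the propagated terms, which already carry $R_J$ smoothing.

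\textbf{Step 4 (the $\cE_1$ bound).} Here $B_T$ enters only through $\mathcal I$. The same weighted-norm argument, again using the smoothing estimate with arbitrary $r_2$, gives $\|\cE_1B_T(s)\|_{H^r}\le C_{r,\delta}\e^{-c/\eps}\|B_T\|_{H^{r+2-J}}$.
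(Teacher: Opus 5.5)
Your decomposition and estimates follow the paper's argument. You split off the homogeneous factorised solution, invert $\Id-\mathcal G_\eps R_J$ by a Neumann series via Proposition \ref{prop:inhomogeneous}, and combine the two estimates of Proposition \ref{prop:homogeneous decomposition} with the $J-1$ derivatives gained from $R_J$. The obstacle you anticipate in Step 3 is resolved by exactly the repair you propose:
\begin{itemize}
\item the bare term $\mathcal I U_S(T,0)B_0$ is $O(\e^{-c/\eps})$ in any norm on $[0,T-\delta]$;
\item every propagated term carries a factor $R_J$, so you only need $\|\mathcal I U_S(T,0)B_0\|_{L^\infty([0,T];H^{r+1-J})}\leq C\|B_0\|_{H^{r+2-J}}$, which is the $\gamma=0$ case of the first estimate.
\end{itemize}
This is precisely where the exponent $r+2-J$ comes from. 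Since only $O(\eps)$ is claimed for $\cE_2$, the paper uses the unweighted $L^\infty H^r$ bound there and keeps $\|\cdot\|_{\gamma,r}$ for $\cE_1$.

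The gap is uniqueness, for two reasons.
\begin{itemize}
\item Your contraction argument needs $w\in L^\infty([0,T];H^r)$ on the whole interval. Lemma \ref{lemma:elliptic regularity} only controls $w$ on compact sets away from $s=0$ and $s=T$.
\item The identity $w=\mathcal G_\eps R_J w$ already assumes uniqueness for the factorised problem with zero data. That uniqueness goes through the trace $(\partial_s-S_J)w|_{s=T}$ and the invertibility of $M_{SF}$. This needs regularity up to $s=T$ that an $L^2$ solution does not have, and density plus trace bounds do not obviously close the loop.
\end{itemize}

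The paper bypasses the factorisation here and uses the weighted coercivity of Lemma \ref{lemma:notion of solution}. Write the difference of two solutions as $w=\e^{\beta s}v$ with $v=0$ at $s=0,T$. Then:
\begin{itemize}
\item $\Re\langle\partial_s v,v\rangle=0$, and the term $\beta v$ contributes $\beta\|v\|_{L^2}^2$;
\item after integrating by parts, with no boundary terms, the principal part gives $\eps\Re\langle\alpha^{kl}\partial_l v,\partial_k v\rangle\geq\eps\Lambda_0\|\nabla v\|_{L^2}^2$;
\item everything else, including coefficient derivatives, is bounded by $C\eps(1+\beta)\|\nabla v\|_{L^2}\|v\|_{L^2}+C\eps(1+\beta^2)\|v\|_{L^2}^2$.
\end{itemize}
Hence, for $\beta=1$ and $\eps$ small,
\[
\Re\bigl\langle \e^{-\beta s}\mathcal L_{\lambda,\eps}(\e^{\beta s}v),v\bigr\rangle\geq \tfrac{\beta}{2}\|v\|_{L^2}^2+\tfrac{\eps\Lambda_0}{2}\|\nabla v\|_{L^2}^2,
\]
which forces $v=0$. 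For merely $L^2$ (very weak) solutions, apply the same estimate to the adjoint problem and argue by duality, as in the second half of the proof of Lemma \ref{lemma:notion of solution}.
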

\begin{proof}
Uniqueness follows from computations similar to those in the proof of Lemma \ref{lemma:notion of solution}, using  weighted coercivity and the ellipticity of the operator. It therefore remains to prove the estimates for the decomposition.
    By Lemma \ref{lemma:factorisation}, we may write 
$$
\mathcal L_{\lambda,\eps}=-\eps \alpha^{0,0} (\partial_s-F_J)(\partial_s-S_J)+\eps \alpha^{0,0}R_J,
$$
with $R_J \in \Psi_\eps(\langle \xi \rangle^{1-J})$, for $J$ large but fixed. First, let $B_1$ solve the homogeneous factorised problem 
$$
(\partial_s-F_J)(\partial_s-S_J)B=0, \quad B(0)=B_0, B(T)=B_T.
$$
Then, setting $B=B_1+B_2$, it follows that $B_2$ solves
$$
(\partial_s -F_J)(\partial_s-S_J)B_2=R_J(B_1+B_2), \quad B_2(0)=B_2(T)=0.
$$
Hence, it follows that we seek $B_2$ so that
$$
B_2=\mathcal{G}_\eps \left (R_J(B_1+B_2) \right ).
$$
However, note that since $R_J : H^r \to H^r$ is bounded uniformly, it follows that by Proposition \ref{prop:inhomogeneous}
$$
\|\mathcal{G}_\eps(R_J B_2)\|_{L^\infty([0,T];H^r)} \leq C \eps \|B_2\|_{L^\infty([0,T];H^r)}. 
$$
Hence, $B_2$ is simply given by 
\begin{equation}
\label{eq:neumann series}
B_2=(\Id-\mathcal{G}_\eps R_J)^{-1} \mathcal{G}_\eps R_J B_1=\sum_{n \geq 0}(\mathcal{G}_\eps R_J)^{n+1}  B_1\,.
\end{equation}
By Proposition \ref{prop:homogeneous decomposition}, it holds that $B_1=U_S(s,0)B_0+\cE_{2,1}B_0+\cE_{1,1} B_T$, where $\|\cE_{1,1}B_T(s)\|_{H^{r_1}} \leq C_s \e^{-c\eps^{-1}}\|B_T\|_{H^{r_2}}$,  $\|\cE_{2,1} B_0 (s)\|_{H^{r_1}} \leq C_s \e^{-c\eps^{-1}}\|B_0\|_{H^{r_2}}$, for any $r_1,r_2$. 
From the previous Neumann series expansion \eqref{eq:neumann series}, we deduce that 
$$
B_2(s)=\sum_{n \geq 0} (\mathcal{G}_\eps R_J)^{n+1} U_S(s,0)B_0+\sum_{n \geq 0} (\mathcal{G}_\eps R_J)^{n+1}\cE_{2,1}B_0+\sum_{n \geq 0} (\mathcal{G}_\eps R_J)^{n+1} \cE_{1,1}(B_T).
$$
We set $\cE_{2,2} B_0=\sum_{n \geq 0}(\mathcal{G}_\eps R_J)^{n+1}(U_S(s,0)B_0+\cE_{2,1}B_0)$, and $\cE_{2}=\cE_{2,1}+\cE_{2,2}$. From Proposition \ref{prop:inhomogeneous}, we deduce that 
$$
\|\cE_{2,2} B_0\|_{L^\infty([0,T-\delta];H^r)} \leq C\eps \|U_S(s,0)B_0+\cE_{2,1}B_0\|_{L^\infty([0,T];H^{r+1-J})}.
$$
Applying Lemma \ref{lemma:sobolev bounds propagators} and the bound on $\cE_{2,1}$ given by Proposition \ref{prop:homogeneous decomposition}, we further have 
$$
\|\cE_{2,2} B_0\|_{L^\infty([0,T-\delta];H^r)} \leq C \eps \|B_0\|_{H^{r+2-J}}.
$$
Finally, we set $\cE_{1,2}B_T=\sum_{n \geq 0} (\mathcal{G}_\eps R_J)^{n+1}\cE_{1,1}(B_T)$. Using Proposition \ref{prop:inhomogeneous}, we bound 
$$
\|\cE_{1,2} B_T\|_{\gamma,r} \leq C\eps \|R_J \cE_{1,1}B_T\|_{\gamma,r} \leq C\eps \|\cE_{1,1}B_T\|_{\gamma,r+1-J}.
$$
Finally, applying the decomposition from Proposition \ref{prop:homogeneous decomposition}, we bound this by
$C\eps  \|B_T\|_{H^{r+2-J}}$, completing the proof by setting $\cE_1=\cE_{1,1}+\cE_{1,2}$.
\end{proof}

\subsection{A proof of the perturbative Proposition \ref{prop:pseudodifferential abstract}}

Finally, we turn to the proof of our desired anisotropic estimates. By Lemma \ref{lemma:full decomposition}, it follows that upon taking $J$ large enough, we only need to study the behaviour of $U_S(s,0)$. We recall that $\Pi_{\rm div}$ denotes the Leray projector acting on two-dimensional vector fields, $\Pi_0$\footnote{We remark that $\Pi_0 \in \Psi_\eps (1)$, by writing its Fourier symbol as $\chi(\xi)$, with $\chi(\xi) \in C_c^\infty(\RR^2)$, equal to $1$ for $\xi=0$ and with support contained in a ball of radius $1/2$.} denotes the projection onto the mean of a vector field, $\Pi_h$ denotes the projection of a three-dimensional vector field onto its first two components, and $\Pi_v$ denotes the projection onto its last component. We also recall that we write
$ B=(b,b_v),$
where $b$ consists of the first two components of $B$, while $b_v$ denotes its last component. Finally,  with a slight abuse of notation, we extend the operators $\Pi_{\rm div}$ and $\Id-\Pi_{\rm div}$ to three-dimensional vectors by setting the last component of their outputs to zero. We shall now prove that for $r\in (0,1/2)$ and $ \delta>0$ sufficiently small so that $r+2 \delta <1/2$ the following estimates hold true
\begin{align}
    \|\PP U_S(s,0) \PP B\|_{H^{-r}}  &\leq (1+C\eps^{1/2})\|\PP B \|_{H^{-r}}\,, \\ 
    \|\PP U_S(s,0) (\Id-\PP) B\|_{H^{-r}} &\leq C\eps^{1/2}\|(\Id-\PP) B\|_{H^r}\,,
    \\
    \|(\Id-\PP) U_S(s,0) (\Id-\PP) B\|_{H^{r}}  &\leq (1+C\eps^{1/2})\|(\Id-\PP) B\|_{H^{r}}\,,\\ 
    \|(\Id-\PP) U_S(s,0) \PP B\|_{H^{r}} &\leq C\eps^{\frac{1-2r}{2}} \|\PP B\|_{H^{-r}}\,,
    \\
    \|\Pi_h U_S(s,0) \Pi_v B\|_{H^{r}}  &\leq C\eps^{\frac{1-2r-\delta}{2}}\|\Pi_v B\|_{H^{-r-\delta}}\,,
    \\
    \|\Pi_v U_S(s,0) \Pi_h B\|_{H^{-r-\delta}} &\leq C\eps^{1/2}\|\Pi_h B\|_{H^{-r}}\,,
    \\
    |\Av U_S(s,0)\Av B|  &\leq (1+C\eps^{1/2})|\Av B|\,,
    \\ 
    |\Av U_S(s,0)(\Id-\Av)B|&\leq C\eps^{\frac{1-r-\delta}{2}}\|B\|_{H^{-r-\delta}}\,,
    \\
    \|(\Id-\Av)U_S(s,0)\Av B\|_{H^r} &\leq C\eps^{\frac{1-r}{2}} |\Av B|\,.
\end{align}
These inequalities are a direct consequence of the following lemma.

\begin{lemma}
\label{lemma:slow propagator}
Let $\Pi_1(D), \Pi_2(D) \in \Psi_\eps(1) $ be $L^2$ self-adjoint, so that $\Pi_1\Pi_2=0$, $\Pi_i^2=\Pi_i$, $i=1,2$. The slow propagator $U_S$ satisfies the following estimates for any $r\in\RR$ and $i\neq j$
$$
\|\Pi_i U_S(s,0) \Pi_jB\|_{H^{r}} \leq C_r \eps^{1/2} \|\Pi_j B\|_{H^r}, \quad \|\Pi_i U_S(s,0) \Pi_jB\|_{H^{r+1}} \leq C_r \|B\|_{H^{r}},
$$
and for $i=j$
$$
\|\Pi_i U_S(s,0) \Pi_jB\|_{H^{r}} \leq (1+C_r\eps^{1/2})\|\Pi_j B\|_{H^r}\,.
$$
\end{lemma}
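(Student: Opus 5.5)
The strategy is to compare $U_S(s,0)$ with the scalar propagator generated by $S_0=\Op_a(q_s)$, and to use commutator estimates for the Fourier multipliers $\Pi_1,\Pi_2$ against the generator $S_J(s)$. Fix $B$ and write $B(s)=U_S(s,0)\Pi_jB$. Then $\Pi_iB(s)$ satisfies
\[
\partial_s\Pi_iB=S_J\Pi_iB+[\Pi_i,S_J]B .
\]
By Lemma~\ref{lemma:factorisation}, $S_J=S_0+(S_J-S_0)$, where $S_0$ has a scalar symbol $q_s$ and $S_J-S_0\in\Psi_\eps(\nu_\eps)$.

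The symbols of $\Pi_i$ depend only on $\xi$. The commutator $[\Pi_i,S_0]$ is therefore of the form $\Op(\partial_\xi\pi_i\cdot D_a q_s)+\dots$. Since $\partial_\xi\pi_i$ gains one power of $\langle\xi\rangle$ and $q_s\in S_\eps(\mu_\eps)$, this gives $[\Pi_i,S_0]\in\Psi_\eps(\nu_\eps)$. The remainder $[\Pi_i,S_J-S_0]$ lies in $\Psi_\eps(\nu_\eps)$ trivially. The key point is that the coupling operator lies in $\Psi_\eps(\nu_\eps)$ rather than $\Psi_\eps(\mu_\eps)$. This holds because the principal part is scalar, so its symbol commutes with the matrix symbol of $\Pi_i$.

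For the diagonal estimate ($i=j$), run the $H^r$ energy estimate of Lemma~\ref{lemma:sobolev bounds propagators} on $\Pi_iB(s)$. The sharp G\aa rding step gives dissipation $-c\|\mu_\eps^{1/2}(D)\langle D\rangle^r\Pi_iB\|^2$. The forcing is bounded by
\[
\|\nu_\eps^{1/2}\langle D\rangle^r\Pi_iB\|\,\|\nu_\eps^{1/2}\langle D\rangle^rB\|,
\]
using the splitting of Lemma~\ref{lemma:computations} with $\eta$ small. The full solution $B$ satisfies the integrated dissipation bound
\[
\int_0^s\|\mu_\eps^{1/2}\langle D\rangle^rB\|^2\lesssim\|\Pi_jB\|_{H^r}^2 .
\]
Absorbing the $\mu_\eps$ parts and keeping the $\eta^{-1}\eps$ remainder, Gronwall gives growth at most $1+C\eps^{1/2}$. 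If only $\eps$ times the square is lost, one even gets $1+C\eps$.

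For the off-diagonal estimates ($i\ne j$), the initial datum satisfies $\Pi_i\Pi_jB=0$. Duhamel gives
\[
\Pi_iB(s)=\int_0^sU_S(s,t)\Pi_i\,[\Pi_i,S_J]\,B(t)\,\dd t
\]
(after inserting $\Pi_i$ on the left). The first bound then comes from an $L^2_t$ pairing. Use the dual dissipation of $U_S(s,t)$, namely that $\int\|\mu_\eps^{1/2}\langle D\rangle^rU_S(s,t)^*\phi\|^2\,\dd t$ is bounded, together with the same integrated bound for $B$. The weight is $\nu_\eps\le \mu_\eps^{1/2}\cdot(\nu_\eps/\mu_\eps^{1/2})$, and
\[
\frac{\nu_\eps}{\mu_\eps}=\langle\xi\rangle^{-1}\le 1,\qquad \nu_\eps\lesssim \eps^{1/2}\mu_\eps^{1/2}\quad\text{on the relevant range.}
\]
One checks that $\nu_\eps^2/\mu_\eps=\eps\langle\xi\rangle^{-2}\langle\xi\rangle^2/(1+\eps\langle\xi\rangle)\le\eps$. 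Cauchy--Schwarz in $t$ then yields the factor $\eps^{1/2}$.

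For the smoothing bound $H^r\to H^{r+1}$, use instead that $\nu_\eps\langle\xi\rangle=\mu_\eps$. The coupling term gains one derivative against the dissipation:
\[
\|\langle D\rangle^{r+1}\Pi_iB(s)\|^2\lesssim\int_0^s\|\mu_\eps^{1/2}\langle D\rangle^{r}B\|^2\,\dd t\lesssim\|B\|_{H^r}^2 ,
\]
where the integrated dissipation of $\Pi_iB$ in $H^{r+1}$ is used on the left. Interpolating between these two bounds produces the intermediate powers $\eps^{(1-2r)/2}$ and so on listed before the lemma.

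The main obstacle is to make this dissipation bookkeeping rigorous, with an energy identity that controls $\int\|\mu_\eps^{1/2}\cdot\|^2$ for both $U_S$ and its adjoint uniformly in $\eps$. A second difficulty is justifying $[\Pi_i,S_J]\in\Psi_\eps(\nu_\eps)$ at the level of the full symbol, including the $\Psi_\eps(1;\rho_\eps)$ error parts of $S_J-S_0$. For the latter I would check that the symbol calculus of Theorem~\ref{thm:composition} gives a gain of one $\langle\xi\rangle^{-1}$ per $\xi$-derivative landing on $\pi_i$.
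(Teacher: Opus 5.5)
Your argument is correct, and for the diagonal bound and the derivative-gain bound it matches the paper's. The common steps are: apply $\langle D\rangle^r\Pi_i$; use $[\langle D\rangle^r\Pi_i,S_J]\langle D\rangle^{-r}\in\Psi_\eps(\nu_\eps)$ (which holds because $q_s$ is scalar and $S_J-\Op_a(q_s)\in\Psi_\eps(\nu_\eps)$); get dissipation from sharp G\r{a}rding; and feed in the integrated dissipation of the full solution from Lemma~\ref{lemma:sobolev bounds propagators}. For the gain of a derivative, the paper bounds the forcing by $\|\mu_\eps^{1/2}(D)B\|_{H^{r-1}}\|\mu_\eps^{1/2}(D)\Pi_iB\|_{H^r}$, exactly as you do via $\nu_\eps\langle\xi\rangle=\mu_\eps$.

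The real difference is the off-diagonal $\eps^{1/2}$ bound. You go through Duhamel and duality, which forces you to prove a separate dissipation estimate for the backward adjoint propagator $t\mapsto U_S(s,t)^*\phi$. The paper avoids this. In the same energy identity it uses the asymmetric bound you already found, $\nu_\eps\le\eps^{1/2}\mu_\eps^{1/2}$, and puts the whole factor $\mu_\eps^{1/2}$ on $\Pi_iB$. Young's inequality then gives
\[
\partial_s\|\langle D\rangle^r\Pi_iB\|_{L^2}^2+c\|\mu_\eps^{1/2}(D)\langle D\rangle^r\Pi_iB\|_{L^2}^2\le C\eps\|\langle D\rangle^rB\|_{L^2}^2 .
\]
Integrating from $\Pi_i\Pi_jB=0$ gives $\eps^{1/2}$ directly, and $1+C\eps$ when $i=j$.

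Your symmetric splitting $\nu_\eps\le\eta\mu_\eps+\eta^{-1}\eps$ has two drawbacks:
\begin{itemize}
\item On the diagonal it only yields $1+C\eps^{1/2}$ if you take $\eta\sim\eps^{1/2}$. A fixed $\eta$ leaves an $O(\eta)$ term coming from $\eta\int_0^s\|\mu_\eps^{1/2}\langle D\rangle^rB\|^2$.
\item Off the diagonal it would give only $\eps^{1/4}$.
\end{itemize}
The asymmetric splitting is what makes the one-line energy argument work, and the duality detour buys nothing here.

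One small slip: the Duhamel formula is $\Pi_iB(s)=\int_0^sU_S(s,t)[\Pi_i,S_J]B(t)\,\dd t$. You cannot insert $\Pi_i$ between $U_S(s,t)$ and the commutator, since $\Pi_i$ does not commute with $U_S$.
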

\begin{proof}
 We consider $r \in \ZZ$ and then we recover the estimates by interpolation.
    The key identity we need for this proof is the following symbol estimate on the commutator
$$
[\langle D \rangle^r \Pi_i, S_J]\langle D \rangle^{-r} \in \Psi_\eps(\nu_\eps).
$$
Indeed, this follows by recalling from Lemma \ref{lemma:factorisation} that $S_J -\Op_a(q_s) \in \Psi_\eps(\nu_\eps)$, and that $\Op_a(q_s) \in \Psi_\eps(\mu_\eps)$. Thus, the term $S_J - \Op_a (q_s)$  yields a commutator contribution in $\Psi_\eps(\nu_\eps)$ by the composition formula (Theorem \ref{thm:composition}). The term $\Op_a(q_s)$ is a scalar operator. Hence, the composition formula implies that 
$$
[\langle D \rangle^r \Pi_i, \Op_a(q_s)]\langle D \rangle^{-r} \in \Psi_\eps(\nu_\eps)\,.
$$
We first prove the results where the target and input regularities match. Applying $ \langle D \rangle ^r\Pi_i$ to the slow evolution equation yields 
\begin{equation}
\label{eq:slow evolution projected}
\partial_s \langle D \rangle ^r \Pi_i B=S_J \langle D \rangle ^r\Pi_iB +[\langle D \rangle ^r\Pi_i,S_J]B.
\end{equation}
Arguing exactly as in the proof of Lemma \ref{lemma:sobolev bounds propagators}, we thus deduce that 
$$
\partial_s \|\langle D \rangle ^r \Pi_i B\|_{L^2}^2+c\|\mu_\eps^{1/2}(D) \langle D \rangle ^r \Pi_i B\|_{L^2}^2\leq C \eps  \|  \langle D \rangle ^r B\|_{L^2}^2,
$$
so that for $\eps$ small, the results follow immediately. 

Next, we prove the result when $i \neq j$, and one seeks to gain a derivative.
Take the inner product of \eqref{eq:slow evolution projected} with $\langle D \rangle ^r  \Pi_i B$ to deduce 
$$
\partial_s \frac{1}{2} \|\langle D \rangle ^r \Pi_i B\|_{L^2}^2 -\Re \langle S_J \langle D \rangle^r \Pi_i B, \langle D\rangle^r \Pi_i B\rangle  \leq \left |\langle [\langle D \rangle^r \Pi_i, S_J]B, \langle D \rangle^r \Pi_i B  \rangle\right |.
$$
We estimate 
\begin{align*}
& \left |\langle [\langle D \rangle^r \Pi_i,S_J]B, \langle D \rangle^r \Pi_i B  \rangle\right | 
\\
&\qquad  =|\langle  \mu_\eps^{-1/2}(D)[\langle D \rangle^r \Pi_i ,S_J]\langle D \rangle^{-r+1} \mu_\eps^{-1/2}(D) \langle D \rangle^{r-1}\mu_\eps^{1/2}(D)B, \mu_\eps^{1/2}(D) \langle D \rangle^{r} \Pi_i B\rangle| \\
& \qquad  \leq C\|\mu_\eps^{1/2}(D)B\|_{H^{r-1}}\|\mu_\eps^{1/2}(D) \Pi_i B\|_{H^{r}},
\end{align*}
where we used that, by the composition formula, 
$$
 \mu_\eps^{-1/2}(D)[\langle D \rangle^r \Pi_i ,S_J]\langle D \rangle^{-r+1}\mu_\eps^{-1/2}(D) \in \Psi_\eps(1).
 $$
Hence, applying Young's inequality and the sharp G\r{a}rding inequality as in the proof of Lemma \ref{lemma:sobolev bounds propagators}, we deduce the existence of a constant $C>0$ so that 
\begin{equation}
\label{eq:projected energy inequality slow}
\partial_s \|\langle D \rangle^r \Pi_i B\|_{L^2}^2+\frac{1}{C}\|\mu_\eps^{1/2}(D) \langle D \rangle^r \Pi_i B\|_{L^2}^2 \leq C\eps \|\langle D \rangle^r \Pi_i B\|_{L^2}^2+C\|\mu_\eps^{1/2}(D) B\|_{H^{r-1}}^2.
\end{equation}
From Lemma \ref{lemma:sobolev bounds propagators}, we further know that 
$$
\int_0^s \|\mu_\eps^{1/2}(D) B(t)\|_{H^{r-1}}^2  \dd t\leq C \| \Pi_j B(0)\|_{H^{r-1}}^2.
$$
Hence, integrating \eqref{eq:projected energy inequality slow}, we deduce that 
$$
\| \Pi_i B (s)\|_{H^r} \leq C \|\Pi_j B(0)\|_{H^{r-1}}, 
$$
completing the proof. 
\end{proof}

Finally, we show the following result concerning weak-strong convergence of the slow propagator. 
\begin{lemma}
\label{lemma:weak-strong}
For any $r \in \RR$, $\delta \in (0,2]$, there holds the estimate
$$
\|(U_S(s,0)-\Id)B\|_{H^{r-\delta}} \leq C_{r,\delta}\eps^{\frac{\delta}{2}}\|B\|_{H^r}.
$$
\end{lemma}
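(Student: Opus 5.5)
We prove the estimate by interpolating between two endpoint bounds for $U_S(s,0)-\Id$, both read off from the evolution equation $\partial_s U_S(s,0)B=S_J(s)U_S(s,0)B$.

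\textbf{Endpoint $\delta=0$.} By Lemma \ref{lemma:sobolev bounds propagators}, we have $\|U_S(s,0)B\|_{H^r}\leq(1+C_r\eps)\|B\|_{H^r}$. Hence
$$
\|(U_S(s,0)-\Id)B\|_{H^r}\leq C_r\|B\|_{H^r}.
$$

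\textbf{Endpoint $\delta=2$.} We write the Duhamel identity
$$
(U_S(s,0)-\Id)B=\int_0^s S_J(t)U_S(t,0)B\,\dd t .
$$
By Lemma \ref{lemma:factorisation}, $S_J\in\Psi_\eps(\mu_\eps)$. Since $\mu_\eps(\xi)=\frac{\eps\langle\xi\rangle^2}{1+\eps\langle\xi\rangle}\leq\eps\langle\xi\rangle^2$, the Sobolev boundedness of $\Psi_\eps$ classes (Corollary \ref{corollary:sobolev boundedness}) gives
$$
\|S_J(t)f\|_{H^{r-2}}\leq C_r\eps\|f\|_{H^r},
$$
uniformly in $t\in[0,T]$. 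Combining this with the uniform $H^r$ bound on $U_S(t,0)$ yields
$$
\|(U_S(s,0)-\Id)B\|_{H^{r-2}}\leq C_rT\eps\|B\|_{H^r}.
$$

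\textbf{Interpolation.} Fix $r$ and set $A:=U_S(s,0)-\Id$, viewed as a map on $H^r$. We have shown
$$
\|A\|_{H^r\to H^r}\leq C,\qquad \|A\|_{H^r\to H^{r-2}}\leq C\eps .
$$
We interpolate in the target space only, using $[H^r,H^{r-2}]_{\theta}=H^{r-2\theta}$, or equivalently the elementary inequality $\|f\|_{H^{r-\delta}}\leq\|f\|_{H^r}^{1-\delta/2}\|f\|_{H^{r-2}}^{\delta/2}$ obtained from H\"older on the Fourier side. This gives
$$
\|AB\|_{H^{r-\delta}}\leq C\,\eps^{\delta/2}\|B\|_{H^r},
$$
which is the claimed estimate for every $\delta\in(0,2]$.

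\textbf{Main point to check.} The delicate step is the symbol bound $\mu_\eps\lesssim\eps\langle\xi\rangle^2$ with constants uniform in $\eps$. This holds because the classes $\Psi_\eps(\mu_\eps)$ are defined with admissible scale $\langle\xi\rangle$, so the Calderón–Vaillancourt-type bound has constants independent of $\eps$. One also needs $S_J(t)$ to be bounded uniformly in $t$, which follows from the smooth $s$-dependence of the coefficients. Both properties should follow directly from Lemma \ref{lemma:computations} and the appendix calculus. No cancellation argument is required, since the $\Psi_\eps(\nu_\eps)$ corrections in $S_J$ are also $O(\eps\langle\xi\rangle^2)$.
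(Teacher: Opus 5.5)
Your proposal is correct and follows the paper's proof essentially verbatim. The paper also uses the Duhamel identity $U_S(s,0)-\Id=\int_0^s S_J(t)U_S(t,0)\,\dd t$ and the bound $\mu_\eps\leq\eps\langle\xi\rangle^2$ to get the $H^r\to H^{r-2}$ estimate of order $\eps$. It then interpolates this against the uniform $H^r$ bound from Lemma~\ref{lemma:sobolev bounds propagators}.
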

\begin{proof}
We begin by noting the identity
$$
U_S(s,0)-\Id=\int_0^s S_J(t)U_S(t,0) \dd t.
$$
Furthermore, $S_J(t) \in \Psi_\eps(\mu_\eps(\xi))$, and since $\mu_\eps(\xi) \leq \eps \langle \xi \rangle^2$, it follows that 
$$
\|S_J(t) B\|_{H^{r-2}} \leq C \eps \|B\|_{H^r}. 
$$
Therefore, we bound 
$$
\|(U_S(s,0)-\Id)B\|_{H^{r-2}} \leq C\eps \|B\|_{H^r}.
$$
Since $\|(U_S(s,0)-\Id)B\|_{H^r}\leq C\|B\|_{H^r}$ by Lemma \ref{lemma:sobolev bounds propagators}, interpolation gives the desired result.
\end{proof}
We now have all the ingredients needed to prove the main result of this section.
\begin{proof}[Proof of Proposition \ref{prop:pseudodifferential abstract}]
In view of   Lemma \ref{lemma:full decomposition} and Lemma \ref{lemma:slow propagator}, it suffices to prove the weak-strong estimate. 
Furthermore, Lemma~\ref{lemma:slow propagator} shows that the off-diagonal terms gain powers of $\eps$, so it remains to estimate the diagonal terms. Lemma~\ref{lemma:weak-strong} gives the bound 
$$
\|\Pi (U_S(s,0)-\Id)\Pi B\|_{H^{r-\delta}} \leq \|(U_S(s,0)-\Id)\Pi B\|_{H^{r-\delta}} \leq C\eps^{\delta/2}\|\Pi B\|_{H^{r}},
$$
completing the proof.
\end{proof}

\appendix

\section{Preliminaries}
\subsection{The Moser lemmas}
The aim of this section is to prove the following two results concerning Moser's lemma, see \cite{Moser1965}.
\begin{lemma}
\label{lemma:quantitative-moser}
Let $d\geq1$, and suppose that
$\rho_0,\rho_1\in C^\infty(\mathbb T^d)$ are strictly positive and have the same mass
$
\int_{\mathbb T^d}\rho_0(x)\dd x = \int_{\mathbb T^d}\rho_1(x)\dd x.
$
Then, there exists a diffeomorphism $\kappa=\kappa(\rho_0,\rho_1):\TT^d \to \TT^d$ isotopic to the identity, so that 
\begin{equation}
\label{eq:moser-pullback}
\rho_1(\kappa(x))\det D\kappa(x)=\rho_0(x).
\end{equation}
Furthermore, we have the following quantitative estimate. For any $k\in \NN$ and $c,M >0$ there exists $C= C(d,k,c,M)>0$ such that for any $\rho_j,\widetilde\rho_j$ of equal mass satisfying
$$\rho_j,\widetilde\rho_j\geq c\,, \qquad \|\rho_j\|_{C^{k+2}}
+\|\widetilde\rho_j\|_{C^{k+2}}\leq M, 
\qquad j=0,1,$$ 
it holds that 
\begin{align*}
&\|\kappa(\rho_0,\rho_1)
-\kappa(\widetilde\rho_0,\widetilde\rho_1)\|_{C^k}+
\|\kappa(\rho_0,\rho_1)^{-1}
-\kappa(\widetilde\rho_0,\widetilde\rho_1)^{-1}\|_{C^k}
\leq
C\sum_{j=0}^1
\|\rho_j-\widetilde\rho_j\|_{C^{k+2}}\,.
\label{eq:moser-quantitative}
\end{align*}
\end{lemma}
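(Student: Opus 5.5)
We will prove the lemma by Moser's path method, keeping track of constants so that the construction depends in a Lipschitz way on the densities. Set $\rho_t:=(1-t)\rho_0+t\rho_1$ for $t\in[0,1]$. Since $\rho_0,\rho_1\geq c$, we have $\rho_t\geq c$, and $\|\rho_t\|_{C^{k+2}}\leq M$. Because the masses agree, $\dot\rho_t=\rho_1-\rho_0$ is mean-free. We therefore solve the Poisson equation $\Delta\phi=\rho_1-\rho_0$ on $\TT^d$ with $\phi$ mean-free, and set
\[
X_t:=-\frac{\nabla\phi}{\rho_t}.
\]
Then $\partial_t\rho_t+\nabla\cdot(\rho_tX_t)=0$. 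If $\Phi^t$ denotes the flow of the time-dependent field $X_t$, the transport identity gives $(\Phi^t)_\#(\rho_0\,\dd x)=\rho_t\,\dd x$. Equivalently, $\rho_t(\Phi^t(x))\det D\Phi^t(x)=\rho_0(x)$, which follows by differentiating in $t$ and using Liouville's formula, exactly as in the proof of Lemma \ref{lemma:properties autonomous 1}. Taking $\kappa:=\Phi^1$ gives \eqref{eq:moser-pullback}. The map $\kappa$ is isotopic to the identity through $\Phi^t$, and it is smooth because the data are smooth.

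For the quantitative part we first control the vector field. We will work with Schauder estimates, not Sobolev ones, because they lose no derivatives. On $\TT^d$ we have $\|\nabla\phi\|_{C^{k+1,\alpha}}\lesssim\|\rho_1-\rho_0\|_{C^{k,\alpha}}\lesssim\|\rho_1-\rho_0\|_{C^{k+1}}$. Combining this with $\rho_t\geq c$, the field satisfies $\|X_t\|_{C^{k+1}}\leq C(d,k,c,M)$. The same argument applies to the difference
\[
X_t-\widetilde X_t=-\frac{\nabla(\phi-\widetilde\phi)}{\rho_t}+\nabla\widetilde\phi\,\frac{\rho_t-\widetilde\rho_t}{\rho_t\widetilde\rho_t},
\]
and yields $\sup_t\|X_t-\widetilde X_t\|_{C^{k}}\leq C\sum_j\|\rho_j-\widetilde\rho_j\|_{C^{k+2}}$. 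The budget of two derivatives is generous; one derivative would be lost in the next step.

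Next we compare the flows. Grönwall's inequality gives uniform $C^{k}$ bounds on $\Phi^t$ and $(\Phi^t)^{-1}$ once $X$ is bounded in $C^{k}$; for this we use the Faà di Bruno formula together with the variational equations for $D^\beta\Phi^t$. For the difference we write
\[
\partial_t(\Phi^t-\widetilde\Phi^t)=\bigl(X_t(\Phi^t)-X_t(\widetilde\Phi^t)\bigr)+(X_t-\widetilde X_t)(\widetilde\Phi^t).
\]
The first term is bounded by $\|X_t\|_{C^{k+1}}$ times the $C^k$ norm of $\Phi^t-\widetilde\Phi^t$, using a mean-value expansion along the segment between the two maps, which is legitimate on the lift to $\RR^d$. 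The second term is bounded by $\|X_t-\widetilde X_t\|_{C^k}$ times a polynomial in $\|\widetilde\Phi^t\|_{C^k}$. Grönwall then gives $\|\kappa-\widetilde\kappa\|_{C^k}\leq C\sum_j\|\rho_j-\widetilde\rho_j\|_{C^{k+2}}$.

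For the inverses we note that $\kappa^{-1}$ is the time-$1$ map of the reversed field $-X_{1-t}$. Since that field satisfies the same estimates, the inverse bound follows from the same argument.

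The main obstacle is the derivative bookkeeping in this flow-difference estimate. Differentiating $X_t(\Phi^t)-X_t(\widetilde\Phi^t)$ $k$ times requires $X_t\in C^{k+1}$. This is exactly why the hypotheses ask for $C^{k+2}$ densities rather than $C^k$, and why we rely on the Schauder gain from the elliptic solve. The remaining steps, namely the existence of $\kappa$ and the pullback identity, are standard.
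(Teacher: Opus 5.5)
Your proposal is correct and matches the paper's proof: Moser's path method along $\rho_t=(1-t)\rho_0+t\rho_1$ with the field $\nabla\Delta^{-1}(\rho_0-\rho_1)/\rho_t$. Elliptic estimates then give a $C^{k+1}$ bound on this field and $C^k$-Lipschitz dependence on the densities, and $C^k$ stability of flows via Gr\"onwall gives the estimate. Your treatment of the inverse via the reversed field $-X_{1-t}$ is the paper's identity $\kappa(\rho_0,\rho_1)^{-1}=\kappa(\rho_1,\rho_0)$, because $-X_{1-t}$ is exactly the Moser field for the swapped pair.
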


\begin{proof}
By assumption, $\rho_0-\rho_1$ is mean-free on $\TT^d$. Hence, we may define $Z=\nabla \Delta^{-1}(\rho_0-\rho_1)$, and $\nabla \cdot Z=\rho_0-\rho_1$. Set $\rho(x,t)=(1-t)\rho_0 (x)+t \rho_1 (x)$. Then, $\rho$ is strictly positive, and so we may define $V(t,x)=\frac{Z(x)}{\rho(t,x)}$. Hence, a computation reveals that $\rho$ solves
$$
\partial_t \rho+\nabla \cdot (V \rho)=0,
$$
$\rho(\cdot,0)=\rho_0$. Hence, by uniqueness of solutions to the continuity equation with smooth coefficients, it follows that $\rho(X^t(x), t )\det DX^t(x)=\rho_0(x)$, where $X^t$ is the flow map generated by $V(t,x)$
$$
\frac{\dd}{\dd t} X^t(x)=V(t,X^t(x)), \quad X^0(x)=x\,.
$$
Since $\rho(\cdot, 1)=\rho_1 (\cdot )$, we simply take $\kappa(\rho_0,\rho_1)=X^1$.
We finally prove the quantitative statement. Let
$\widetilde V$ denote the vector field associated with
$(\widetilde\rho_0,\widetilde\rho_1)$. Standard elliptic estimates (see e.g. \cite{GilbargTrudinger2001}*{Chapter 9, \S~9.5, Theorem 9.11})
and the lower bound $\rho,\widetilde\rho\geq c$ immediately yield
$$
\sup_{t\in[0,1]}
\left(
\|V(t)\|_{C^{k+1}}
+
\|\widetilde V(t)\|_{C^{k+1}}
\right)
\leq C(d,k,c,M)
$$
and
$$
\sup_{t\in[0,1]}
\|V(t)-\widetilde V(t)\|_{C^k}
\leq
C(d,k,c,M)
\sum_{j=0}^1
\|\rho_j-\widetilde\rho_j\|_{C^{k+2}}.
$$
Hence, by a standard $C^k$ stability estimate for flows (see e.g.\ \cite{TaylorNonlinearODE}*{Proposition 2.2}) and by
Gr\"onwall's inequality, we obtain
\begin{align*}
\|X^1-\widetilde X^1\|_{C^k}
\leq
C(d,k,c,M)
\sum_{j=0}^1
\|\rho_j-\widetilde\rho_j\|_{C^{k+2}} \,.
\end{align*}
Applying the same estimate after exchanging the two densities and noting that $\kappa (\rho_0, \rho_1)^{-1} = \kappa (\rho_1, \rho_0)$ gives
the corresponding estimate for the inverse maps, concluding the proof. 
\end{proof}
As an immediate corollary, we obtain the following result.
\begin{corollary}
\label{cor:foliated-moser}
Let $d\geq2$, and suppose that
$
\rho\in C^\infty(\mathbb T^{d-1}\times\mathbb T),$ $
\int_{\mathbb T^{d-1}}
\rho(x,z) \dd x =1, \ \forall z\,,
$ and $\rho$ is strictly positive.
Then there exists a diffeomorphism $R:\TT^d \to \TT^d$ of the form
$
R(x,z)=\bigl(\kappa_{z}(x),z\bigr)
$
such that
\begin{equation}
\label{eq:foliated-moser-jacobian}
\det DR(x,z)=\rho(x,z).
\end{equation}
\end{corollary}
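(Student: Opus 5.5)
Corollary \ref{cor:foliated-moser} will follow from a fibrewise application of Lemma \ref{lemma:quantitative-moser} with $z$ as a smooth parameter. For each $z\in\TT$, set $\rho_0\equiv 1$ and $\rho_1=\rho(\cdot,z)$ on $\TT^{d-1}$. They have equal mass because $\int_{\TT^{d-1}}\rho(x,z)\,\dd x=1$. Both are strictly positive. One point needs care about which way the map goes. Lemma \ref{lemma:quantitative-moser} gives $\kappa$ with $\rho_1(\kappa(x))\det D\kappa(x)=\rho_0(x)$, but we need $\det D_x\kappa_z(x)=\rho(x,z)$. So we take instead $\rho_0=\rho(\cdot,z)$ and $\rho_1\equiv 1$. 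Then \eqref{eq:moser-pullback} reads $\det D\kappa_z(x)=\rho(x,z)$. Since $R(x,z)=(\kappa_z(x),z)$ has block-triangular Jacobian with bottom-right entry $1$, this gives $\det DR=\det D_x\kappa_z=\rho$, which is \eqref{eq:foliated-moser-jacobian}.

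Next we must check that $(x,z)\mapsto\kappa_z(x)$ is smooth jointly in $(x,z)$, and not just for each fixed $z$. The explicit construction in the proof of Lemma \ref{lemma:quantitative-moser} does this. Set $Z_z=\nabla_x\Delta_x^{-1}(\rho(\cdot,z)-1)$. This depends smoothly on $z$, because $\Delta_x^{-1}$ commutes with $\partial_z$ on mean-free functions and $\partial_z^k\rho(\cdot,z)$ is mean-free for $k\ge1$. Define $V_z(t,x)=Z_z(x)/\big((1-t)\rho(x,z)+t\big)$, whose denominator is bounded below uniformly. Then $\kappa_z=X_z^1$, where $X_z^t$ is the flow of $V_z$. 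Smooth dependence of ODE flows on parameters gives $X^1_z(x)$ smooth in $(x,z)$, and $z$-periodicity is inherited from $\rho$. Alternatively, one can iterate the quantitative estimate in the lemma on difference quotients in $z$, but direct parameter dependence of ODE flows is cleaner.

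It remains to show that $R$ is a diffeomorphism of $\TT^d$. Each $\kappa_z$ is a diffeomorphism of $\TT^{d-1}$ isotopic to the identity, so $R$ is a bijection with inverse $(y,z)\mapsto(\kappa_z^{-1}(y),z)$. Here $\kappa_z^{-1}$ is the time-one map of the reversed flow, hence also smooth in $(y,z)$. The Jacobian $\det DR=\rho>0$, so the inverse function theorem gives smoothness of $R^{-1}$ as well.

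The only genuine subtlety is the joint smoothness in $z$ together with the orientation convention. In the application (Lemma \ref{lemma:autonomous mimicking time-periodic}) one also wants $R$ to be the identity near $z=0,1$. This holds automatically wherever $\rho\equiv1$ in a $z$-neighbourhood, since then $Z_z=0$ and so $\kappa_z=\Id$.
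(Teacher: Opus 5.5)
Your proposal is correct and takes the same route as the paper: you apply Lemma \ref{lemma:quantitative-moser} fibrewise with $\rho_0=\rho(\cdot,z)$ and $\rho_1\equiv1$, and inherit smoothness and periodicity in $z$ from the explicit flow construction. You also supply details the paper leaves implicit, namely joint smoothness via parameter dependence of the flow, smoothness of $R^{-1}$, and the fact that $\kappa_z=\Id$ wherever $\rho(\cdot,z)\equiv1$ (the property actually used in Lemma \ref{lemma:autonomous mimicking time-periodic}).
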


\begin{proof}
For every fixed $z$, let $\rho_z:\TT^{d-1}\to \RR_+$ be given by $\rho_z(x)= \rho(x,z)$. Then, $\rho_z$ and the constant function $1$ have equal mass, and $z \mapsto \rho_z$ is smooth. Therefore, applying Lemma \ref{lemma:quantitative-moser}, we obtain a smooth family of diffeomorphisms $\kappa_z:\TT^{d-1} \to \TT^{d-1}$ so that $\det D_x \kappa_z(x)=\rho(x,z)$.  Furthermore, since $\rho(x,z+1)=\rho(x,z)$, it follows that $\kappa_z$ is periodic in $z$ and the result follows.
\end{proof}
\subsection{Keller--Liverani perturbation theory}
The aim of the present section is to introduce a slight generalisation of the classical Keller--Liverani perturbation result \cite{Keller_Liverani}, which explicitly records uniformity over multiple parameters. In order to prove the result, we must first record the following from the paper \cite{Keller_Liverani}. 
\begin{theorem}[Keller--Liverani, \cite{Keller_Liverani}]
\label{thm:keller-liverani 1}
Let $(X,\|\cdot \|_{X})$ be a Banach space, embedding compactly into the Banach space $(Y,\|\cdot \|_{Y})$, so that furthermore $\|\cdot \|_{Y} \leq \|\cdot \|_{X}$. 
Let $\{\cL_{\eps}\}_{\eps \in [0,1)}$ be a collection of bounded operators on $X$, which extend to bounded operators on $Y$. Assume furthermore that there exist constants $C,\alpha, M>0$, $\alpha <M$, such that the following estimates hold for all $\eps\in[0,1)$ and $m\in\NN$:
\begin{enumerate}
    \item $\|\cL_{\eps}^m B\|_{Y} \leq CM^m\|B\|_{Y}$
    \item $\|\cL_{\eps}^m B\|_{X} \leq C\alpha^m \|B\|_{X}+CM^m\|B\|_{Y}$
    \item $\|\cL_{\eps} B-\cL_{0} B\|_{Y} \leq \eta(\eps)\|B\|_{X} $, where $\eta: \RR_+ \to \RR_+$ is some monotone, upper-semicontinuous function that tends to zero as its argument tends to zero.
\end{enumerate}
Then, if $\cL_0$ has an algebraically simple, isolated eigenvalue $\lambda_0$ with $\alpha<|\lambda_0|$, then the same holds for $\cL_\eps$ for all $\eps$ small enough, and the corresponding eigenvalue $\lambda_\eps$ satisfies $\lambda_\eps \to \lambda_0$ as $\eps \to 0$. In fact, let $\Gamma\subset\CC$ be a positively oriented simple contour in the intersection of the resolvent set of $\mathcal{L}_0$ and $\{|z| >\alpha\}$, containing $\lambda_0$ in its interior, so that the associated Riesz projector

$$
\Pi=\frac{1}{2 \pi i}\int_{\Gamma}(z-\cL_0)^{-1} \dd z
$$
has rank one. Then, for all $\eps>0$ small enough, $\Gamma \subset \rho(\cL_\eps)$, and the diffusive Riesz projector 
$$
\Pi_\eps=\frac{1}{2 \pi i}\int_{\Gamma}(z-\cL_\eps)^{-1} \dd z
$$
also has rank $1$ for all $\eps$ small enough.
\end{theorem}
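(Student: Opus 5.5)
The plan is to follow the original argument of \cite{Keller_Liverani}: first prove uniform resolvent bounds for $\cL_\eps$, then prove that the resolvents of $\cL_\eps$ and $\cL_0$ are close in the $X\to Y$ norm, and finally pass both facts to the Riesz projectors by contour integration.

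\textbf{Step 1 (uniform resolvent bounds in $X$).} By hypotheses (1)--(2) and Proposition \ref{prop:hennion}, $r_{\mathrm{ess},X}(\cL_\eps)\leq\alpha$ for every $\eps$. So outside the disk of radius $\alpha$ the spectrum of each $\cL_\eps$ consists of isolated eigenvalues of finite multiplicity.

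Fix $z\in\Gamma$, so $|z|>\alpha$ and $z\in\rho(\cL_0)$. Choose $n$ so large that $C(\alpha/|z|)^n\leq\frac12$. I will build an approximate inverse of $z-\cL_\eps$ from the exact algebraic identity
\[
(z-\cL_\eps)\sum_{k=0}^{n-1}z^{-k-1}\cL_\eps^k=\Id-z^{-n}\cL_\eps^n.
\]
The piece $z^{-n}\cL_\eps^n$ is split with the Lasota--Yorke inequality (2): it equals a part of $X$-norm at most $\frac12$ plus a part controlled by the $Y$-norm. The $Y$-controlled part is handled through $(z-\cL_0)^{-1}$ together with the closeness estimate (3).

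This gives $z\in\rho(\cL_\eps)$ for all $\eps\leq\eps_0(\Gamma)$, with
\[
\sup_{z\in\Gamma}\|(z-\cL_\eps)^{-1}\|_{X\to X}\leq C_\Gamma
\quad\text{and}\quad
\sup_{z\in\Gamma}\|(z-\cL_\eps)^{-1}\|_{Y\to Y}\leq C_\Gamma.
\]
Compactness of $\Gamma$ makes all constants uniform along the contour.

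\textbf{Step 2 (closeness of resolvents).} Start from the resolvent identity
\[
(z-\cL_\eps)^{-1}-(z-\cL_0)^{-1}=(z-\cL_\eps)^{-1}(\cL_\eps-\cL_0)(z-\cL_0)^{-1}.
\]
Using (3), the $X$-boundedness of $(z-\cL_0)^{-1}$, and the $Y$-bound from Step 1, I get
\[
\sup_{z\in\Gamma}\|(z-\cL_\eps)^{-1}-(z-\cL_0)^{-1}\|_{X\to Y}\leq C_\Gamma\,\eta(\eps).
\]
If the $Y$-bound from Step 1 is not directly available, the fallback is Keller--Liverani's interpolation. There one balances $n$ iterates, using the $X$ contraction rate $\alpha/|z|$ against the $Y$ growth rate $M/|z|$. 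This yields the weaker bound $C\eta(\eps)^{\gamma}$ with $\gamma=\log(|z|/\alpha)/\log(M/\alpha)$, which still tends to zero.

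Integrating over $\Gamma$ then gives $\|\Pi_\eps-\Pi\|_{X\to Y}\to0$ and $\sup_\eps\|\Pi_\eps\|_{X\to X}<\infty$.

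\textbf{Step 3 (rank of $\Pi_\eps$).} For the lower bound, take $h_0\in\operatorname{ran}\Pi$ with $h_0\neq0$. Then $\Pi_\eps h_0\to h_0$ in $Y$, so $\Pi_\eps h_0\neq0$ for small $\eps$ and $\operatorname{rank}\Pi_\eps\geq1$.

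For the upper bound, I will show that on $\operatorname{ran}\Pi_\eps$ the $X$ and $Y$ norms are uniformly equivalent. Write $f=\Pi_\eps f=\frac{1}{2\pi i}\int_\Gamma(z-\cL_\eps)^{-1}f\,\dd z$ and iterate hypothesis (2) on this spectral subspace, where the eigenvalues have modulus larger than $\alpha$. This should give $\|f\|_X\leq C\|f\|_Y$.

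Now suppose $\operatorname{rank}\Pi_\eps\geq2$. Pick $f\in\operatorname{ran}\Pi_\eps\cap\ker\Pi$ with $\|f\|_X=1$. Then
\[
1\leq C\|f\|_Y=C\|(\Pi_\eps-\Pi)f\|_Y\to0,
\]
a contradiction. Hence $\operatorname{rank}\Pi_\eps=1$.

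A rank-one Riesz projector means there is exactly one eigenvalue $\lambda_\eps$ inside $\Gamma$, and it is algebraically simple. Applying the argument to arbitrarily small contours around $\lambda_0$ gives $\lambda_\eps\to\lambda_0$.

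\textbf{Main obstacle.} I expect Step 1 to be the hardest part. It needs uniform invertibility of $z-\cL_\eps$ on $X$, even though $\cL_\eps$ is close to $\cL_0$ only in the mixed $X\to Y$ norm. The Lasota--Yorke splitting has to be arranged with care so that the $Y$-error term is absorbed using only the smallness of $\eta(\eps)$.
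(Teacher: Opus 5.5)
You are reproducing the Keller--Liverani proof, which is all the paper relies on here: Theorem~\ref{thm:keller-liverani 1} is quoted from \cite{Keller_Liverani} and not reproved.

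\textbf{The false claim.} The one genuine error is the uniform bound $\sup_{z\in\Gamma}\|(z-\cL_\eps)^{-1}\|_{Y\to Y}\leq C_\Gamma$ asserted in Step~1. Hypotheses (1)--(3) say nothing about $\sigma_Y(\cL_\eps)$ beyond bounding its spectral radius by $M$, and $\Gamma$ typically meets it. Take the transfer operator of the doubling map with $X=BV$, $Y=L^1$, so $\alpha=\frac12$, $M=1$, $\lambda_0=1$. There every $|z|<1$ is an $L^1$-eigenvalue, so any small circle around $1$ crosses $\sigma_Y(\cL_0)$, and $z-\cL_0$ is not even invertible on $Y$ there.

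\textbf{Consequence for Step~2.} The resolvent-identity argument cannot be used, and your interpolation ``fallback'' is the proof, not an alternative. It needs only the $X\to X$ bound. Apply
\[
(z-\cL_\eps)^{-1}=\sum_{k=0}^{n-1}z^{-k-1}\cL_\eps^k+z^{-n}(z-\cL_\eps)^{-1}\cL_\eps^n
\]
to $g=(\cL_\eps-\cL_0)(z-\cL_0)^{-1}f$. Bound the sum in $Y$ by (1) together with $\|g\|_Y\leq C\eta(\eps)\|f\|_X$. Bound the last term using $\|\cdot\|_Y\leq\|\cdot\|_X$, $\|(z-\cL_\eps)^{-1}\|_{X\to X}\leq C_\Gamma$ and (2). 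Then take $n\approx\log(1/\eta(\eps))/\log(M/\alpha)$.

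\textbf{Consequence for Step~3.} Integrate the same identity over $\Gamma$. The contour does not wind around $0$: otherwise the region it encloses would contain the disc $\{|z|\leq\alpha\}$, hence essential spectrum, and $\Pi$ would have infinite rank. This gives $\Pi_\eps=\frac{1}{2\pi i}\int_\Gamma z^{-n}(z-\cL_\eps)^{-1}\cL_\eps^n\,\dd z$, and hence
\[
\|\Pi_\eps f\|_X\leq C_\Gamma\, r^{-n}\bigl(C\alpha^n\|f\|_X+CM^n\|f\|_Y\bigr),\qquad r=\min_\Gamma|z|>\alpha.
\]
Choosing $n$ large gives the uniform norm equivalence on $\operatorname{ran}\Pi_\eps$ that your Step~3 leaves at ``this should give''.

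\textbf{A smaller repair in Step~1.} The approximate inverse to use is $A_\eps(z)=\sum_{k<n}z^{-k-1}\cL_\eps^k+z^{-n}\cL_\eps^n(z-\cL_0)^{-1}$. For it,
\[
(z-\cL_\eps)A_\eps(z)=\Id+z^{-n}\cL_\eps^n(\cL_0-\cL_\eps)(z-\cL_0)^{-1}.
\]
The error is small in $X$ by (2) and then (3), choosing first $n$ and then $\eps$, uniformly on $\Gamma$. This only yields a bounded right inverse, i.e.\ surjectivity. To conclude $z\in\rho(\cL_\eps)$ you must add that $z-\cL_\eps$ is Fredholm of index zero for $|z|>\alpha$, which follows from your Hennion bound $r_{\mathrm{ess},X}(\cL_\eps)\leq\alpha$. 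This is exactly where the compactness of $X\hookrightarrow Y$ replaces Keller and Liverani's assumption that no $|z|>\alpha$ lies in the residual spectrum. With these repairs the argument is complete.
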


From this, we then deduce the following generalisation.
\begin{theorem}[\cite{Keller_Liverani}]
\label{thm:keller liverani}
Let $(X,\|\cdot \|_{X})$ be a Banach space, embedding compactly into the Banach space $(Y,\|\cdot \|_{Y})$, so that furthermore $\|\cdot \|_{Y} \leq \|\cdot \|_{X}$. Let $(\mathcal{F},d_\mathcal{F})$, $(\mathcal{G},d_{\mathcal{G}})$ be two metric spaces, so that $(\mathcal{G},d_{\mathcal{G}})$ is compact.
Let $\{\cL_{u,g}\}_{u \in \mathcal{F},g \in \mathcal{G}}$ be a collection of bounded operators on $X$, which extend to bounded operators on $Y$, and suppose that, for every fixed $u\in\mathcal{F}$, the map $g \mapsto \cL_{u,g}$ is continuous in the operator norm topology on $X$ and $Y$. Finally, assume that the $\cL_{u,g}$ satisfy the following: there exist constants $C,\alpha, M>0$, $\alpha <M$, such that the following estimates hold for all $u\in\mathcal{F}$, $g\in\mathcal{G}$, and $m\in\NN$:
\begin{enumerate}
    \item $\|\cL_{u,g}^m B\|_{Y} \leq CM^m\|B\|_{Y}$
    \item $\|\cL_{u,g}^m B\|_{X} \leq C\alpha^m \|B\|_{X}+CM^m\|B\|_{Y}$
    \item $\|\cL_{u,g} B-\cL_{v,g} B\|_{Y} \leq \eta(d_{\mathcal{F}}(u,v))\|B\|_{X} $, where $\eta: \RR_+ \to \RR_+$ is some monotone, upper-semicontinuous function that tends to zero as its argument tends to zero.
\end{enumerate}
Assume further that there exist $u\in\mathcal{F}$, $g\in\mathcal{G}$, and $\lambda_0\in\sigma_X(\cL_{u,g})$ corresponding to an algebraically simple eigenvalue, with $\alpha <|\lambda_0|<M$. Then, there exists an open set $G \subset \mathcal{G}$ containing $g$, so that for all $h \in G$, $\cL_{u,h}$ has an isolated, algebraically simple eigenvalue $\lambda_h \to \lambda_0$ as $h \to g$. Furthermore, there exists $\delta_{\mathcal F}>0$ so that for all $v \in B_{\delta_{\mathcal F}}(u)$, $\cL_{v,h}$ similarly has isolated, algebraically simple eigenvalues $\lambda_{h,v}$, and it holds \emph{uniformly} for $h \in G$ that $\lambda_{h,v} \to \lambda_h$, as $d_{\mathcal{F}}(u,v) \to 0$.
\end{theorem}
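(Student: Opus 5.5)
The plan is to reduce Theorem~\ref{thm:keller liverani} to repeated use of Theorem~\ref{thm:keller-liverani 1}, together with a compactness argument in $\mathcal G$.

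\emph{First conclusion: continuation in $g$ for fixed $u$.} Fix $u$. Since $g\mapsto\cL_{u,g}$ is operator-norm continuous on $X$, classical analytic perturbation theory \cite{K76} applies. Pick a small circle $\Gamma_0$ around $\lambda_0$ inside $\{\alpha<|z|<M\}$ that lies in the resolvent set of $\cL_{u,g}$ and encloses no other spectrum. This is possible because $\lambda_0$ is isolated; by Proposition~\ref{prop:hennion} the essential spectral radius is at most $\alpha$. Norm continuity then gives an open set $G\ni g$ with $\Gamma_0\subset\rho(\cL_{u,h})$ for $h\in G$. The Riesz projectors $\Pi_{u,h}$ are norm continuous in $h$, so their rank stays equal to one. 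This yields an algebraically simple eigenvalue $\lambda_h\to\lambda_0$. Shrinking $G$, I will assume $\overline G$ is compact and still enjoys these properties, with a common contour for all $h\in\overline G$.

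\emph{Second conclusion: uniformity in $u$.} For each $h\in\overline G$, apply Theorem~\ref{thm:keller-liverani 1} to the family $\cL_\eps:=\cL_{v,h}$, parametrised by $\eps=d_{\mathcal F}(u,v)$. Hypotheses (1)--(3) are exactly the assumed uniform bounds. The one caveat is that the map $\eps\mapsto v$ is not a genuine function, but the proof of Keller--Liverani only uses the estimates, so this is harmless. The quantitative form of Keller--Liverani gives more than existence. For $z$ on $\Gamma_0$ with $\mathrm{dist}(z,\sigma(\cL_{u,h}))\ge d$, the resolvent of the perturbed operator exists, and the error $\|\Pi_{v,h}-\Pi_{u,h}\|_{X\to Y}$ is bounded in terms of $\eta(d_{\mathcal F}(u,v))$. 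The constants depend only on $C,\alpha,M$, on $d$, and on $\sup_{z\in\Gamma_0}\|(z-\cL_{u,h})^{-1}\|_X$.

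\emph{The main obstacle is making these constants uniform in $h$.} Compactness of $\overline G$ and norm continuity of $h\mapsto(z-\cL_{u,h})^{-1}$ bound the resolvent norm uniformly on $\Gamma_0\times\overline G$. Hence a single $\delta_{\mathcal F}$ works for every $h$. For $v\in B_{\delta_{\mathcal F}}(u)$, the projector $\Pi_{v,h}$ has rank one, because Keller--Liverani gives $\mathrm{rank}\,\Pi_{v,h}=\mathrm{rank}\,\Pi_{u,h}$. This produces an isolated simple eigenvalue $\lambda_{h,v}$ inside $\Gamma_0$.

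For convergence, shrink the contour to a radius $r$ around $\lambda_h$. Its continuity in $h$ lets me use one $r$ for all $h$ near a given point, and compactness covers $\overline G$ by finitely many such balls. Applying the preceding argument with radius $r$ gives $|\lambda_{h,v}-\lambda_h|<r$ uniformly in $h$ once $d_{\mathcal F}(u,v)$ is small, which is the claimed uniform convergence. If I need a cleaner formula, I can write $\lambda_{h,v}=\mathrm{tr}(\cL_{v,h}\Pi_{v,h})$ and estimate it through the Keller--Liverani bound on $\|\Pi_{v,h}-\Pi_{u,h}\|_{X\to Y}$ together with the finite rank of the projectors.
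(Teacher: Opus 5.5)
Your argument is correct, but it obtains uniformity in $h$ by a different mechanism than the paper.

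\textbf{The paper's route.} The paper argues by contradiction and uses the Keller--Liverani theorem as a black box. It assumes sequences $v_n\to u$ and $h_n\in\overline G$ along which $\Pi_{v_n,h_n}$ fails to have rank one, or along which $|\lambda_{h_n,v_n}-\lambda_{h_n}|$ stays bounded below. Compactness of $\mathcal G$ gives $h_n\to h_\star$ after passing to a subsequence. It then applies the qualitative Theorem~\ref{thm:keller-liverani 1} to the single sequence $\cL_{v_n,h_n}$ with limit $\cL_{u,h_\star}$. The needed closeness comes from the triangle inequality $\|\cL_{v_n,h_n}-\cL_{u,h_\star}\|_{X\to Y}\le \eta(d_{\mathcal F}(v_n,u))+\|\cL_{u,h_n}-\cL_{u,h_\star}\|_{X\to Y}\to 0$.

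\textbf{Your route.} You use the explicit dependence of the Keller--Liverani thresholds on the Lasota--Yorke constants, on $\min_{z\in\Gamma_0}|z|>\alpha$, and on the resolvent bound $\sup_{z\in\Gamma_0}\|(z-\cL_{u,h})^{-1}\|_X$. This corresponds to the constant $H_{\delta,r}$ in the original paper, whose argument is pointwise in $z$. Compactness enters only to make this resolvent bound uniform on $\Gamma_0\times\overline G$.

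\textbf{What each route buys.}
\begin{itemize}
\item The paper's route needs only the qualitative statement recorded as Theorem~\ref{thm:keller-liverani 1}. It also makes your caveat that ``$\eps\mapsto v$ is not a genuine function'' unnecessary, since one works with an honest sequence.
\item Your route is constructive. It gives an explicit $\delta_{\mathcal F}$, and through the bound on $\|\Pi_{v,h}-\Pi_{u,h}\|_{X\to Y}$ it gives a uniform rate of the form $\eta(d_{\mathcal F}(u,v))^{\theta}$, with $\theta\in(0,1)$ depending on $\alpha$, $M$ and $\min_{\Gamma_0}|z|$. The cost is that you must cite the quantitative form in \cite{Keller_Liverani} rather than the black-box statement.
\end{itemize}

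Your contour-shrinking argument, with a finite cover of $\overline G$, already proves the uniform convergence $\lambda_{h,v}\to\lambda_h$. The fallback trace formula is unnecessary. It would also need extra care, because the trace is not directly controlled by $X\to Y$ closeness of the projectors.
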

\begin{proof}
Since for fixed $u$, the map $g \mapsto \cL_{u,g}$ is continuous, the existence of an open set $G \ni g$ so that uniformly for $h \in \overline G$, $\cL_{u,h}$ has an algebraically simple eigenvalue $\lambda_h$, uniformly larger in magnitude than $\alpha$, follows by classical perturbation theory, see e.g. \cite{K76}. Indeed, fix a contour $\Gamma \subset \CC$, and define 
$$
\Pi_{v,h}=\frac{1}{2 \pi i}\int_{\Gamma}(z-\cL_{v,h})^{-1} \dd z.
$$
Then, we can pick $G$ so that for all $h \in \overline G$, $\mathrm{Rank}(\Pi_{u,h})=1$. We now claim that there exists $\delta_{\mathcal F}>0$ so that for all $v \in \mathcal{F}$ with $d_\mathcal{F}(u,v)\leq \delta_{\mathcal F}$ we have $\mathrm{Rank}(\Pi_{v,h})=1$. Indeed, suppose not. Then, there would exist a sequence $v_n, h_n$ so that $d_\mathcal{F}(v_n,u)\leq \frac{1}{n}$, but $\mathrm{Rank}(\Pi_{v_n,h_n}) \neq 1$. By compactness, we may find a (non-relabeled) subsequence so that $h_n \to h_\star$. But now, the operators $\cL_n=\cL_{v_n,h_n}$, $\cL_\infty=\cL_{u,h_\star}$ satisfy the assumptions of Theorem \ref{thm:keller-liverani 1}, with $\eps=\frac{1}{n}$, and furthermore we have
$$
\|\cL_{v_n,h_n}-\cL_{u,h_\star}\|_{X \to Y} \leq \|\cL_{v_n,h_n}-\cL_{u,h_n}\|_{X \to Y}+\|\cL_{u,h_n}-\cL_{u,h_\star}\|_{X \to Y} \to 0,
$$
as $n \to \infty$.
Thus, for all $n$ large enough, $\mathrm{Rank}(\Pi_{v_n,h_n})=1$, yielding a contradiction. Hence, we have ensured the existence of $\lambda_{h,v}$. Assume for a contradiction that $\lambda_{h,v}$ does not converge to $\lambda_{h}$ uniformly as $d_{\mathcal{F}}(u,v)\to 0$. Then, we again have a sequence of $v_n \to u$ and $h_n \in \overline G$ so that $|\lambda_{h_n,v_n} -\lambda_{h_n,u}|\gtrsim 1$. Up to a subsequence, $h_n \to h_\star$. Then, applying Theorem \ref{thm:keller-liverani 1} to $\cL_n=\cL_{v_n,h_n}$, $\cL_\infty=\cL_{u,h_\star}$, we deduce that
$$
|\lambda_{h_n,v_n} -\lambda_{h_n,u}| \to 0,
$$
as $n \to \infty$, completing the proof.
\end{proof}

\section{Elements of the pseudodifferential calculus}
\label{sec:pseudodifferential intro}
We begin by introducing the following class of symbols adapted to scales. We remark that, while the results cited throughout this section are stated on $\RR^d$, they are equally valid on $\TT^d$ via a standard argument, see e.g.\ \cite{RuzhanskyTurunen2010Torus}. 
Throughout this section, unless explicitly stated otherwise, every constant is assumed \emph{independent}  of $\eps$ and of $\lambda\in K_\lambda$, where $K_\lambda \subset \CC$ is a compact set.

\begin{definition}
\label{def:scale function}
Let $\ell_\eps: \RR^d \to [1,\infty)$ be a smooth function possibly depending on $\eps$. We say it is an admissible scale if for any multi-index $\beta \in \NN^d$ there holds 
$$
|\partial_\xi^\beta \ell_\eps(\xi)| \leq C_\beta \ell_\eps(\xi)^{1-|\beta|}.
$$
Furthermore, we call a smooth positive function $m_\eps(\xi)$ an admissible order function if there are constants $C,M>0$ so that 
$$
\frac{m_\eps(\zeta)}{m_\eps(\xi)} \leq C\left (1+\frac{|\zeta-\xi|}{\ell_\eps(\xi)^{1/2}} \right )^M \left (1+\frac{|\zeta-\xi|}{\ell_\eps(\zeta)^{1/2}} \right )^M,
$$
and there holds the estimate
$$
|\partial_\xi^\beta m_\eps(\xi)|\leq C_\beta m_\eps(\xi) \ell_\eps(\xi)^{-|\beta|}.
$$
We write $a_\eps(x,t,\xi,\lambda) \in S_\eps(m_\eps;\ell_\eps)$ if 
$$
\| \partial_\xi^\beta \partial_x^\alpha \partial_t^\gamma a_\eps(x,t,\xi,\lambda)\|_{L^\infty} \leq C_{\beta,\alpha,\gamma}m_\eps(\xi)\ell_\eps(\xi)^{-|\beta|}, 
$$
and set the corresponding seminorm 
$$
|a_\eps|_{m,\ell,K}=\max_{|\alpha|+|\beta|+|\gamma| \leq K}  \sup_{\xi \in \RR^d, \lambda \in K_\lambda}\ell(\xi)^{|\beta|}m(\xi)^{-1}\| \partial_\xi^\beta \partial_x^\alpha \partial_t^\gamma a_\eps(x,t,\xi,\lambda)\|_{L^\infty}.
$$
In the special case where $\ell_\eps=\langle \xi \rangle$, we simply write $a_\eps \in S_\eps(m_\eps)$.
Given a symbol, we may quantise it using the usual Kohn-Nirenberg quantisation on the torus. Indeed, given $f \in C^\infty(\TT^d)$, we define 
\begin{equation}
\label{eq:kohn-nirenberg}
\Op_x(a_\eps) f=\sum_{k \in \ZZ^d}a_\eps(x,t,2 \pi k,\lambda)\e^{2 \pi ix\cdot k} \widehat f(k),
\end{equation}
where the frequency variable is $\xi=2 \pi k$.
\end{definition}
We remark that for $a(x,\xi)=\sum_{|\alpha|\leq M} a_\alpha(x) \xi^\alpha$, the Kohn-Nirenberg quantisation \eqref{eq:kohn-nirenberg} recovers the linear differential operator 
$$
\Op_x(a)=\sum_{|\alpha|\leq M} a_\alpha(x)  D_x^\alpha,
$$
where $D_{x_j}=-i \partial_{x_j}$.
 We call an operator of the form $\Op_x(a)$ a \emph{pseudodifferential operator}, and we call $a$ its symbol. Given a pseudodifferential operator $A$,  whose symbol lies in $S_\eps(m_\eps;\ell_\eps)$, we write 
$$
A \in \Psi_\eps(m_\eps;\ell_\eps).
$$
As before, if $\ell_\eps =\langle \xi \rangle$, we shorten this to $A \in \Psi_\eps(m_\eps)$.\\

We remark that our present framework is somewhat distinct from the classical Weyl--H\"ormander calculus found e.g. in \cite{Hormander2007VolIII}. Instead, our framework is based on the one in \cite{Lerner2010Metrics}, which we have simplified here in order to avoid introducing the geometric notions of \cite{Lerner2010Metrics}. We now verify that our results follow from those stated in \cite{Lerner2010Metrics}.
For $X=(x,\xi)$ and $T=(y,\eta)$, set
$$
  g_{\eps,X}(T)
  :=|y|^2+\ell_\eps(\xi)^{-2}|\eta|^2.
$$
Its symplectic dual metric and Planck function are
$$
  g_{\eps,X}^{\sigma}(T)
  =\ell_\eps(\xi)^2|y|^2+|\eta|^2,
  \qquad
  h_{g_\eps}(X)=\ell_\eps(\xi)^{-1}.
$$
 The assumptions in
Definition~\ref{def:scale function} imply, uniformly in $\eps$, that $g_\eps$ is a split, $\sigma$-temperate H\"ormander metric satisfying the uncertainty principle and that every admissible
$m_\eps$ is a $g_\eps$-admissible weight in the sense of
\cite{Lerner2010Metrics}*{Definition~2.2.15}. Indeed,
$|\nabla\ell_\eps|\leq C$ gives local comparability when
$|\zeta-\xi|\lesssim\ell_\eps(\xi)$ and global polynomial
comparison. The derivative estimates on $m_\eps$ give
$g_\eps$-continuity, while the condition on $\frac{m_\eps(\zeta)}{m_\eps(\xi)}$ in Definition \ref{def:scale function} gives
$\sigma,g_\eps$-temperateness. Consequently, for each fixed
$(t,\lambda)$, the $(x,\xi)$-symbol
$a_\eps(\,\cdot\,,t,\,\cdot\,,\lambda)$ lies uniformly in
$S(m_\eps,g_\eps)$, and the same is true after any of the
displayed $t$-derivatives.

Associated to this class of pseudodifferential operators is a calculus, which allows us to compute expansions for the composition of two operators. Indeed, it is clear from \eqref{eq:kohn-nirenberg} that in general the composition of two pseudodifferential operators will not simply be given by a product of their symbols. However, the following theorem makes the deviation explicit. 
\begin{theorem}[\cite{Lerner2010Metrics}*{Theorems~2.3.7 and~2.3.19}]
\label{thm:composition}
Let $a_\eps \in S_\eps(m_\eps;\ell_\eps), b_\eps \in S_\eps(n_\eps;\ell_\eps)$. Then, the operator $\Op_x(a_\eps) \Op_x(b_\eps)$ is given by $\Op_x(a_\eps \# b_\eps)$, where 
$$
a_\eps \# b_\eps \sim \sum_{\alpha}\frac{1}{\alpha !}\partial_\xi^\alpha a_\eps \partial_x^\alpha b_\eps i^{-|\alpha|},
$$
in the sense that, for any $N \in \NN$, we have
$$
r_N=a_\eps \# b_\eps -\sum_{|\alpha| \leq N-1}\frac{1}{\alpha !}\partial_\xi^\alpha a_\eps \partial_x^\alpha b_\eps i^{-|\alpha|} \in S_\eps(m_\eps n_\eps \ell_\eps^{-N};\ell_\eps).
$$
 Furthermore, there exist an explicit constant $C>0$, independent of $a,b,\eps,\lambda$, and integers $L_a,L_b$, depending only on $m_\eps,n_\eps,K,N,d$ and independent of $\eps$, such that the following error estimate holds:
$$
|r_N|_{m_\eps n_\eps \ell_\eps^{-N},\ell_\eps,K} \leq C|a_\eps|_{m_\eps, \ell_\eps, K+L_a}|b_\eps|_{n_\eps,\ell_\eps,K+L_b}.
$$
\end{theorem}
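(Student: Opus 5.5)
The plan is to derive the statement from the abstract composition theorem for H\"ormander metrics in \cite{Lerner2010Metrics}, using the dictionary set up just before it. Fix $\eps$, $t$ and $\lambda\in K_\lambda$, and regard $a_\eps(\cdot,t,\cdot,\lambda)$ and $b_\eps(\cdot,t,\cdot,\lambda)$ as $(x,\xi)$-symbols. The first step is to check that, uniformly in $\eps$, the metric $g_{\eps,X}(y,\eta)=|y|^2+\ell_\eps(\xi)^{-2}|\eta|^2$ is an admissible metric: slowly varying, $\sigma$-temperate, and satisfying the uncertainty principle $h_{g_\eps}=\ell_\eps^{-1}\le 1$.

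\begin{itemize}
\item \emph{Slow variation.} From $|\nabla\ell_\eps|\le C$ we get $\ell_\eps(\zeta)\approx\ell_\eps(\xi)$ whenever $|\zeta-\xi|\le c\,\ell_\eps(\xi)$.
\item \emph{Temperance.} Growth of $\ell_\eps(\zeta)/\ell_\eps(\xi)$ is at most polynomial in $1+|\zeta-\xi|$, since $\ell_\eps(\zeta)\le\ell_\eps(\xi)+C|\zeta-\xi|$.
\item \emph{Admissible weights.} The order functions $m_\eps,n_\eps$ are $g_\eps$-admissible in the sense of \cite{Lerner2010Metrics}*{Definition~2.2.15}. Local comparability comes from the derivative bounds $|\partial^\beta m_\eps|\le C_\beta m_\eps\ell_\eps^{-|\beta|}$, and temperance from the two-sided polynomial comparison assumed in Definition~\ref{def:scale function}.
\end{itemize}

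All structural constants of $g_\eps$, $m_\eps$, $n_\eps$ depend only on the constants in Definition~\ref{def:scale function}, so they are $\eps$-independent. With this in place, $S_\eps(m_\eps;\ell_\eps)$ coincides, uniformly, with Lerner's class $S(m_\eps,g_\eps)$. The seminorms $|\cdot|_{m,\ell,K}$ are equivalent to Lerner's seminorms with $\eps$-independent constants, after taking the supremum over $(t,\lambda)$.

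Next I would invoke \cite{Lerner2010Metrics}*{Theorem~2.3.7}, which gives $\Op(a)\Op(b)=\Op(a\#b)$ with $a\#b\in S(m n,g)$ and continuous bilinear dependence. Then I would invoke \cite{Lerner2010Metrics}*{Theorem~2.3.19} for the asymptotic expansion. Its remainder after $N$ terms lies in $S(mn\,h_g^N,g)=S_\eps(m_\eps n_\eps\ell_\eps^{-N};\ell_\eps)$. It also comes with a seminorm bound of exactly the stated form, where the losses $L_a,L_b$ depend only on the structural constants, $N$, $K$ and $d$. The $t$-derivatives commute with composition, so applying the same statement to $\partial_t^\gamma a$ and $\partial_t^{\gamma'} b$ (Leibniz rule) handles the $t$-derivatives in the seminorm. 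Holomorphic or continuous dependence on $\lambda$ is irrelevant, since only sup bounds over $K_\lambda$ are needed.

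The remaining issue is transferring the statement from $\RR^d$ to $\TT^d$ with the Kohn--Nirenberg quantisation \eqref{eq:kohn-nirenberg}. I would follow \cite{RuzhanskyTurunen2010Torus}. Extend each symbol in $\xi$ from $2\pi\ZZ^d$ to $\RR^d$; our symbols are already defined on $\RR^d$ and periodic in $x$. Then identify toroidal operators with periodised Euclidean ones: a $1$-periodic function on $\RR^d$ is acted on by $\Op(a)$ exactly as in \eqref{eq:kohn-nirenberg}. The composition formula on $\RR^d$ then restricts to periodic functions, and the remainder symbol remains $x$-periodic.

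The main obstacle I expect is ensuring that the constants in Lerner's theorems depend only on the structural constants of the metric and weights, not on $\eps$. This requires tracking that every constant in the admissibility verification is uniform, including the temperance exponents $M$ and the slow-variation radius. I would check this explicitly for the two scales actually used, $\ell_\eps=\langle\xi\rangle$ and $\ell_\eps=\rho_\eps$. For $\rho_\eps$ the key point is $|\nabla\rho_\eps|\le1$, independent of $\eps$.
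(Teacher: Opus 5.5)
Your proposal is correct and follows the same route as the paper. You check that $g_{\eps,X}(y,\eta)=|y|^2+\ell_\eps(\xi)^{-2}|\eta|^2$ is an admissible metric with Planck function $h_{g_\eps}=\ell_\eps^{-1}$ and that $m_\eps,n_\eps$ are $g_\eps$-weights, all uniformly in $\eps$. You then read off the expansion and remainder class $S_\eps(m_\eps n_\eps\ell_\eps^{-N};\ell_\eps)$ from Lerner's Theorem 2.3.19, take the finite-seminorm bound from the continuity in Theorem 2.3.7, and pass to $\TT^d$ via Ruzhansky--Turunen.

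The one hypothesis you leave implicit is that Lerner's standard-quantisation (Kohn--Nirenberg) results require the metric to be split, i.e.\ without $x$--$\xi$ cross terms; this holds immediately for the diagonal $g_\eps$.
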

\begin{proof}
For the metric $g_\eps$, Lerner's \cite{Lerner2010Metrics} reciprocal Planck weight is
$\lambda_{g_\eps}=\ell_\eps$. Hence
\cite{Lerner2010Metrics}*{Theorem~2.3.19} gives the displayed Kohn--Nirenberg expansion and the remainder
$(m_\eps n_\eps\ell_\eps^{-N},g_\eps)$, while
\cite{Lerner2010Metrics}*{Theorem~2.3.7} gives continuity in the
Fr\'echet symbol topologies, and therefore the stated finite-seminorm
estimate.
\end{proof}
Similarly, one may estimate the difference between the $L^2$ adjoint associated to a pseudodifferential operator and the operator obtained by quantising the pointwise adjoint. Indeed, we have the following result, which follows from \cite{Lerner2010Metrics}*{Theorem 2.3.18}, upon noting that for the Weyl quantisation, we have $(a^w)^\star=(a^\star)^w$. 
\begin{lemma}[\cite{Lerner2010Metrics}*{Theorem 2.3.18}]
\label{lemma:adjoint}
Let $a_\eps \in S_\eps(m_\eps;\ell_\eps)$. Then, 
$$
(\Op_x(a_\eps))^\star-\Op_x(a_\eps^\star) \in \Psi_\eps(m_\eps \ell_\eps^{-1};\ell_\eps),
$$
Furthermore, let $\tilde a_\eps$ denote the symbol of $(\Op_x(a_\eps))^\star-\Op_x(a_\eps^\star)$. There exist an explicit constant $C>0$, independent of $a,\eps,\lambda$, and an integer $L$, depending only on $m_\eps,K,d$ and independent of $\eps$, such that the following error estimate holds:
$$
|\tilde a_\eps|_{m_\eps \ell_\eps^{-1},\ell_\eps,K} \leq C|a_\eps|_{m_\eps, \ell_\eps, K+L}.
$$
\end{lemma}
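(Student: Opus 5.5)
This lemma is a direct consequence of the composition calculus for adjoints, applied to the metric $g_\eps$ introduced above. First, I would pass from the Kohn--Nirenberg quantisation $\Op_x$ to the Weyl quantisation, using the standard change-of-quantisation formula. For the metric $g_\eps$ with Planck weight $h_{g_\eps}=\ell_\eps^{-1}$, \cite{Lerner2010Metrics}*{Theorem~2.3.18} gives $\Op_x(a_\eps)=\mathrm{Op}^w(b_\eps)$ with
$$
b_\eps=\e^{-\frac{i}{2}\langle \partial_x,\partial_\xi\rangle}a_\eps\in S(m_\eps,g_\eps),
\qquad
b_\eps-a_\eps\in S(m_\eps\ell_\eps^{-1},g_\eps).
$$
The seminorms of $b_\eps-a_\eps$ are controlled by finitely many seminorms of $a_\eps$, because the exponential operator is continuous on these Fréchet symbol classes. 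Uniformity in $\eps$ and in $\lambda\in K_\lambda$ comes from the same source that gave uniformity in Theorem~\ref{thm:composition}: $g_\eps$ satisfies the Hörmander metric hypotheses with structure constants independent of $\eps$, and $m_\eps$ is $g_\eps$-admissible with uniform constants.

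Second, I would take adjoints on the Weyl side, where the identity $(\mathrm{Op}^w(b))^\star=\mathrm{Op}^w(b^\star)$ holds exactly. Here $b^\star$ is the pointwise (conjugate-transpose) adjoint, which makes sense for the matrix-valued symbols $H^j$ as well. Pointwise adjunction preserves $S(m_\eps,g_\eps)$, since $m_\eps$ is real and positive. Third, I would convert back: $\mathrm{Op}^w(b_\eps^\star)=\Op_x(c_\eps)$ with $c_\eps=\e^{\frac{i}{2}\langle\partial_x,\partial_\xi\rangle}b_\eps^\star$, so that $c_\eps-b_\eps^\star\in S(m_\eps\ell_\eps^{-1},g_\eps)$. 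Combining the three steps,
$$
\tilde a_\eps=c_\eps-a_\eps^\star=(c_\eps-b_\eps^\star)+(b_\eps-a_\eps)^\star\in S_\eps(m_\eps\ell_\eps^{-1};\ell_\eps),
$$
with seminorm bounds by finitely many seminorms of $a_\eps$, losing $L$ derivatives. The additional parameters $t$ and $\lambda$ cause no trouble: the symbols depend on them only as parameters, the quantisation does not act on them, and $\partial_t^\gamma$ commutes with all of the operations above. Finally, the transfer from $\RR^d$ to $\TT^d$ follows \cite{RuzhanskyTurunen2010Torus}, as stated at the start of the appendix.

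The main obstacle is the uniform-in-$\eps$ continuity of the exponential operators $\e^{\pm\frac{i}{2}\langle\partial_x,\partial_\xi\rangle}$ between the classes $S(m_\eps,g_\eps)$, together with the one-step gain $\ell_\eps^{-1}$. To handle it, I would rely on Lerner's theorem, which states this for admissible metrics with constants depending only on the structure constants. The remaining work is to check, as was already indicated for Theorem~\ref{thm:composition}, that these structure constants ($\sigma$-temperance, slow variation, and the uncertainty principle $h_{g_\eps}\le1$) are bounded independently of $\eps$. That check follows from $|\nabla\ell_\eps|\le C$, $\ell_\eps\ge1$, and the admissibility conditions on $m_\eps$ in Definition~\ref{def:scale function}.
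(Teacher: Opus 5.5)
Your proposal is correct and takes the paper's route. The paper also obtains the lemma by citing \cite{Lerner2010Metrics}*{Theorem 2.3.18} for the change between Kohn--Nirenberg and Weyl quantisation and then using the exact identity $(a^w)^\star=(a^\star)^w$, with $\eps$-uniformity coming from the $\eps$-uniform admissibility of $g_\eps$ and $m_\eps$ checked earlier in the appendix; you simply spell out the convert-adjoint-convert-back step and the splitting $\tilde a_\eps=(c_\eps-b_\eps^\star)+(b_\eps-a_\eps)^\star$ that the paper leaves implicit.
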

Finally, we shall need the following results, which show that operators in the symbol class $\Psi_\eps(1)$ are bounded on Sobolev spaces.
\begin{theorem}[\cite{RuzhanskyTurunen2010Torus}*{Theorem 9.1}]
\label{thm:L^2 boundedness}
Let $A=\Op_x(a)$, with $a \in S_\eps(1;\ell_\eps)$. Then, there exists a constant $C>0$ depending only on finitely many seminorms of $a$ so that 
$$
\|A u\|_{L^2} \leq C\|u\|_{L^2},
$$
for any $u \in C^\infty(\TT^d)$.
\end{theorem}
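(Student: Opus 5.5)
This is the Calderón--Vaillancourt-type $L^2$ bound for symbols in $S_\eps(1;\ell_\eps)$. The essential point is uniformity in $\eps$: the constant must depend only on finitely many seminorms of $a$, and the scale $\ell_\eps$ must not enter except through the admissibility constants of Definition~\ref{def:scale function}.

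\textbf{Step 1: reduce to the standard class.} Since $\ell_\eps\geq1$, the defining estimate $|\partial_\xi^\beta\partial_x^\alpha a|\leq C\,\ell_\eps^{-|\beta|}$ already gives $|\partial_\xi^\beta\partial_x^\alpha a|\leq C$. So $a$ lies in the H\"ormander class $S^0_{0,0}$ on $\TT^d\times\RR^d$, with seminorms controlled by finitely many of the $S_\eps(1;\ell_\eps)$ seminorms and independent of $\eps$. Here $t$ and $\lambda$ are frozen parameters, and the sup over $\lambda\in K_\lambda$ is built into the seminorms.

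\textbf{Step 2: $L^2$ bound on $\TT^d$.} Apply the toroidal Calderón--Vaillancourt theorem \cite{RuzhanskyTurunen2010Torus}*{Theorem 9.1}. In the discrete formulation it requires bounds on finitely many $x$-derivatives together with bounded discrete differences of the symbol restricted to $2\pi\ZZ^d$. The discrete differences are controlled by $\xi$-derivatives through the mean value theorem, so they are bounded as well. This yields
\[
\|Au\|_{L^2}\leq C\,|a|_{1,\ell_\eps,K}\,\|u\|_{L^2}
\]
for some fixed $K=K(d)$.

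\textbf{Alternative route via the paper's framework.} One can instead use the statement that $a$ lies uniformly in $S(1,g_\eps)$ for the admissible metric $g_\eps$, and invoke Lerner's $L^2$ continuity theorem for $S(1,g)$ (Theorem~2.5.1 there). Its constant depends only on the structure constants of $g_\eps$, which are uniform in $\eps$, and on finitely many seminorms. Transferring the result from $\RR^d$ to $\TT^d$ uses the standard periodisation argument cited in \cite{RuzhanskyTurunen2010Torus}.

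\textbf{Main obstacle.} The delicate point is tracking that the constant depends only on finitely many seminorms and not on $\eps$. Step~1 settles this, because the $S^0_{0,0}$ bound needs no decay in $\xi$ at all.
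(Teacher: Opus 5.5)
Your proposal is correct and takes essentially the paper's approach: the paper gives no separate proof and simply invokes \cite{RuzhanskyTurunen2010Torus}*{Theorem 9.1} for the restriction of $a$ to $\TT^d\times 2\pi\ZZ^d$, exactly as in your Step 2. One small correction is that this toroidal $L^2$ theorem needs only uniform bounds on finitely many $x$-derivatives of $a$, not bounded discrete $\xi$-differences, so uniformity in $\eps$ is immediate from the $\beta=0$ seminorms, which do not involve $\ell_\eps$ at all.
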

As an immediate corollary, we have the following.
\begin{corollary}
\label{corollary:sobolev boundedness}
Let $A=\Op_x(a)$, with $a \in S_\eps(m)$, where $m$ is a strictly positive order function. Then, for any $s \in \RR$, there exists $C_s$ depending only on finitely many seminorms of $a$ so that 
$$
\|m^{-1}(D)A u\|_{H^s} \leq C_s \|u\|_{H^s},
$$
for any $u \in C^\infty(\TT^d)$.
\end{corollary}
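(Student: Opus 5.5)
The statement to prove is Corollary~\ref{corollary:sobolev boundedness}. I will reduce it to the $L^2$ result, Theorem~\ref{thm:L^2 boundedness}, by conjugating with Fourier multipliers and invoking the composition calculus, Theorem~\ref{thm:composition}.

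Set $\Lambda_s:=\langle D\rangle^s$. Since $\|v\|_{H^s}=\|\Lambda_s v\|_{L^2}$, the claim is equivalent to
\[
\|\Lambda_s\, m^{-1}(D)\, A\, \Lambda_{-s} w\|_{L^2}\leq C_s\|w\|_{L^2}, \qquad w=\Lambda_s u.
\]
So it suffices to show that the operator $P:=\Lambda_s\, m^{-1}(D)\,A\,\Lambda_{-s}$ lies in $\Psi_\eps(1)$, with seminorms controlled by finitely many seminorms of $a$. Theorem~\ref{thm:L^2 boundedness} then gives the bound, with a constant depending only on finitely many seminorms of $a$.

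The three factors are handled as follows.

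First, the Fourier multipliers $\Lambda_{\pm s}$ and $m^{-1}(D)$ have $x$-independent symbols. Since $m$ is an admissible order function, $m^{-1}$ is one as well: the temperance inequality is symmetric in $\zeta,\xi$, and the derivative bounds for $m^{-1}$ follow from Fa\`a di Bruno. Hence $m^{-1}(D)\in\Psi_\eps(m^{-1})$ and $\Lambda_{\pm s}\in\Psi_\eps(\langle\xi\rangle^{\pm s})$.

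Second, the left factor $\Lambda_s m^{-1}(D)$ has $x$-independent symbol. In the Kohn--Nirenberg composition, a left Fourier multiplier composes exactly: $\Op(c(\xi))\Op(b)=\Op(c\,b)$, since the expansion in Theorem~\ref{thm:composition} involves $\partial_x$ of the right symbol only through the left symbol's $\xi$-derivatives, and here everything beyond the zeroth term cancels. Thus $\Lambda_s m^{-1}(D)A$ has symbol $\langle\xi\rangle^s m^{-1}(\xi)a(x,\xi)$, which lies in $S_\eps(\langle\xi\rangle^s)$ by the Leibniz rule.

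Third, composing on the right with $\Lambda_{-s}$ is also exact in Kohn--Nirenberg quantisation: the right symbol $\langle\xi\rangle^{-s}$ is $x$-independent, so all terms with $\partial_x^\alpha b$, $\alpha\neq0$, vanish and no remainder appears. This yields the symbol
\[
p(x,\xi)=m^{-1}(\xi)\,a(x,\xi),
\]
which lies in $S_\eps(1)$ with seminorms bounded by those of $a$.

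If one prefers not to rely on exactness, Theorem~\ref{thm:composition} can be applied with remainder estimates, and the resulting error lies in a lower class. The only genuine point to check is uniformity in $\eps$, meaning that the seminorms of $m^{-1}$ are $\eps$-independent. This is the main, though mild, obstacle, and it follows from the admissibility constants of $m$ being $\eps$-independent. Applying Theorem~\ref{thm:L^2 boundedness} to $P$ then completes the argument.
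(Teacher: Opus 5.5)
Your overall route is the paper's: write $\|m^{-1}(D)Au\|_{H^s}=\|\langle D\rangle^s m^{-1}(D)A\langle D\rangle^{-s}\langle D\rangle^s u\|_{L^2}$, show via Theorem~\ref{thm:composition} that the conjugated operator lies in $\Psi_\eps(1)$, and conclude with Theorem~\ref{thm:L^2 boundedness}.

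One step in your main line is false, however. Composition with a Fourier multiplier on the \emph{left} is not exact in the Kohn--Nirenberg quantisation. In the expansion of Theorem~\ref{thm:composition}, the correction terms are $\frac{1}{\alpha!}\partial_\xi^\alpha c\,\partial_x^\alpha a\, i^{-|\alpha|}$. For $c=\langle\xi\rangle^s m^{-1}(\xi)$ and an $x$-dependent $a$, these terms do not vanish. Already on $\TT$, $\Op(\xi)\Op(\beta(x))$ has symbol $\xi\beta(x)-i\beta'(x)$. Exactness holds only for Fourier multipliers on the \emph{right}, which is your third step and is correct, and for pure multiplication operators on the left.

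Consequently, the symbol of $P$ is not $m^{-1}(\xi)a(x,\xi)$. The correct statement is that $\langle D\rangle^s m^{-1}(D)A$ has symbol $\langle\xi\rangle^s m^{-1}a+r$, where $r\in S_\eps(\langle\xi\rangle^{s-1})$. This follows from Theorem~\ref{thm:composition} with $N=1$, whose seminorm estimate also bounds $r$ by finitely many seminorms of $a$. The exact right composition with $\langle D\rangle^{-s}$ then places $P$ in $\Psi_\eps(1)$.

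So your ``fallback'' remark is not optional; it is precisely the argument needed. With it, the proof is complete and coincides with the paper's.
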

\begin{proof}
We have that $\|m^{-1}(D)A u \|_{H^s}=\|\langle D \rangle^s m^{-1}(D)A u\|_{L^2}$. Furthermore, $\langle D \rangle^s  m^{-1}(D) A=\langle D \rangle^s  m^{-1}(D)A \langle D \rangle^{-s} \langle D \rangle^s$, and by means of the composition formula (Theorem \ref{thm:composition}) it follows that $\langle D \rangle^s  m^{-1}(D)A \langle D \rangle^{-s} \in \Psi_\eps(1)$. Hence, the result follows from Theorem \ref{thm:L^2 boundedness}.
\end{proof}
With these preliminary lemmas stated, we may now present the so-called sharp G\r{a}rding inequality, which gives a lower bound on expressions of the form 
$$
\langle \Op_x(a(x,\xi)) u, u \rangle_{L^2(\TT^d)},
$$
when $a(x,\xi)$ is a positive-definite Hermitian matrix. Indeed, if $a(x,\xi)=a(\xi)$, then it immediately follows by Plancherel that 
$$
\langle \Op_x(a(\xi)) u, u \rangle_{L^2(\TT^d)} \geq 0.
$$
The sharp G\r{a}rding inequality roughly states that the dependence of $a(x,\xi)$ on $x$ only introduces a lower-order error to this non-negative lower bound. Indeed, we have the following result, which is a variant of \cite{Lerner2010Metrics}*{Theorem 2.5.4}.
\begin{theorem}[sharp G\r{a}rding \cite{Lerner2010Metrics}*{Theorem 2.5.4}]
\label{thm:sharp garding}
Let $a_\eps \in S_\eps(m_\eps;\ell_\eps)$ be a matrix-valued symbol, so that its Hermitian part 
$$
\frac{a_\eps(x,\xi)+a_\eps(x,\xi)^\star}{2}
$$
is positive semi-definite. Then,  there exist a constant $C>0$, independent of $\eps$, and an integer $K\in\NN$ such that
$$
\Re \langle \Op_x(a_\eps) u, u \rangle_{L^2(\TT^d)} \geq -C|a_\eps |_{m_\eps,\ell_\eps,K}\|\left (\frac{m_\eps}{\ell_\eps}(D) \right )^{1/2} u\|_{L^2}^2.
$$
\end{theorem}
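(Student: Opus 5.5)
The plan is to reduce the statement to the scalar-metric sharp G\r{a}rding inequality of \cite{Lerner2010Metrics} for the metric $g_{\eps,X}(y,\eta)=|y|^2+\ell_\eps(\xi)^{-2}|\eta|^2$ introduced above. In that setting the Planck function is $h_{g_\eps}=\ell_\eps^{-1}$. The loss $m_\eps h_{g_\eps}=m_\eps/\ell_\eps$ is then exactly the weight appearing on the right-hand side.

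\textbf{Step 1: reduce to the Hermitian part.} Write
\[
\Re\langle \Op_x(a_\eps)u,u\rangle=\tfrac12\bigl\langle (\Op_x(a_\eps)+\Op_x(a_\eps)^\star)u,u\bigr\rangle .
\]
By Lemma~\ref{lemma:adjoint}, $\Op_x(a_\eps)^\star=\Op_x(a_\eps^\star)+E$ with $E\in\Psi_\eps(m_\eps\ell_\eps^{-1};\ell_\eps)$, and its seminorms are controlled by finitely many seminorms of $a_\eps$. Hence
\[
\Re\langle\Op_x(a_\eps)u,u\rangle=\langle\Op_x(h_\eps)u,u\rangle+\tfrac12\langle Eu,u\rangle,
\qquad h_\eps=\tfrac12(a_\eps+a_\eps^\star)\geq0 .
\]
To bound the error, write $\langle Eu,u\rangle=\langle w(D)^{-1}Ew(D)^{-1}\,w(D)u,\,w(D)u\rangle$ with $w=(m_\eps/\ell_\eps)^{1/2}$. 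The function $w$ is again an admissible order function, so by Theorem~\ref{thm:composition} the middle operator lies in $\Psi_\eps(1;\ell_\eps)$. Theorem~\ref{thm:L^2 boundedness} then gives $|\langle Eu,u\rangle|\leq C|a_\eps|_{m_\eps,\ell_\eps,K}\|w(D)u\|_{L^2}^2$.

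\textbf{Step 2: sharp G\r{a}rding for the positive semi-definite matrix symbol $h_\eps$.} First pass from the Kohn--Nirenberg to the Weyl quantisation. The difference $\Op_x(h_\eps)-h_\eps^w$ has symbol in $S_\eps(m_\eps\ell_\eps^{-1};\ell_\eps)$; this is the standard change of quantisation in \cite{Lerner2010Metrics}, with the same seminorm control. This difference is absorbed exactly as in Step~1.

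For the Weyl part, \cite{Lerner2010Metrics}*{Theorem~2.5.4} is stated for scalar symbols. For matrices I would reproduce its proof via the Wick (anti-Wick) quantisation adapted to $g_\eps$. Write $h_\eps^{\rm Wick}=\int h_\eps(Y)\,\Sigma_Y\,|g_\eps^{\sigma}|^{1/2}\dd Y$, where the $\Sigma_Y\geq0$ are rank-one projections onto $g_\eps$-adapted Gaussians. Since $h_\eps(Y)\geq0$ as a matrix, the operator $h_\eps^{\rm Wick}$ is manifestly non-negative on $L^2(\CC^n)$. Its Weyl symbol is $h_\eps$ convolved with a $g_\eps$-Gaussian. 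A second-order Taylor expansion gives $h_\eps^w-h_\eps^{\rm Wick}\in \mathrm{Op}^w S(m_\eps h_{g_\eps}^2,g_\eps)$; the first-order term vanishes by symmetry of the Gaussian. This is even better than needed, and it is bounded as in Step~1. All constants depend only on the structure constants of $g_\eps$ and $m_\eps$ (uniform in $\eps$ by Definition~\ref{def:scale function}) and on finitely many seminorms, linearly in $|a_\eps|$.

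\textbf{Step 3: transfer to the torus.} I expect this step, together with keeping every constant uniform in $\eps$, to be the main obstacle. I would follow \cite{RuzhanskyTurunen2010Torus}: a toroidal symbol on $\TT^d\times 2\pi\ZZ^d$ extends to a symbol in the same class on $\TT^d\times\RR^d$ with comparable seminorms, because $\ell_\eps$ varies slowly on unit frequency scales ($|\nabla\ell_\eps|\leq C$). The two quantisations then agree modulo $\Psi_\eps(m_\eps\ell_\eps^{-1};\ell_\eps)$.

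The positivity statement is then proved on $\RR^d$ for $\chi u$, with $u$ extended periodically and $\chi$ a partition of unity subordinate to translates of a fundamental domain. Commutators $[\Op(h_\eps),\chi]$ lose a full power of $\ell_\eps$ and are controlled like $E$ above. Summing over the finitely overlapping pieces yields the claimed bound with constant $C|a_\eps|_{m_\eps,\ell_\eps,K}$.
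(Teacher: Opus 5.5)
Your outline works, but it takes a different route from the paper's. The paper never reproves positivity. For Hermitian $a_\eps$ it sets $q_\eps=(m_\eps/\ell_\eps)^{1/2}$ and notes that $q_\eps^{-1}a_\eps q_\eps^{-1}\geq0$ lies in $S_\eps(\ell_\eps;\ell_\eps)$, i.e.\ in $S(h_{g_\eps}^{-1},g_\eps)$. This is exactly the normalised setting of \cite{Lerner2010Metrics}*{Theorem~2.5.4}, which gives $\Re\langle\Op_x(q_\eps^{-1}a_\eps q_\eps^{-1})v,v\rangle\geq -C\|v\|_{L^2}^2$. The paper then takes $v=q_\eps(D)u$, a real Fourier multiplier and hence self-adjoint. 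The composition formula gives $q_\eps(D)\Op_x(q_\eps^{-1}a_\eps q_\eps^{-1})q_\eps(D)-\Op_x(a_\eps)\in\Psi_\eps(m_\eps\ell_\eps^{-1};\ell_\eps)$, and this remainder is absorbed by the same $q_\eps(D)^{-1}(\cdot)\,q_\eps(D)^{-1}$ sandwich you use in Step~1. The non-Hermitian case is then your Step~1 verbatim.

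So the paper handles the weight by conjugation and leaves the matrix-valued, Kohn--Nirenberg and toroidal aspects to Lerner and to the remark opening the appendix. You instead redo positivity at arbitrary weight via Wick quantisation and treat the change of quantisation and the torus explicitly. Your version is more self-contained; in particular, positivity of the Wick operator is immediate for matrix symbols, which the paper takes for granted. It is also longer, and you could shorten Step~2 by first using the paper's conjugation to reduce to $m_\eps=\ell_\eps$.

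Two details of Step~2 need correcting, although neither breaks the argument.

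First, the Weyl--Wick discrepancy lies in $S(m_\eps h_{g_\eps},g_\eps)$, not $S(m_\eps h_{g_\eps}^2,g_\eps)$. Only the first-order Taylor term cancels. The second-order term $\tfrac12\int h_\eps''(X)(T,T)\,G(T)\,\dd T$ is generically nonzero and of size $m_\eps|T|_{g_\eps}^2\sim m_\eps\ell_\eps^{-1}$; already for $h_\eps=\langle\xi\rangle^2$ the Gaussian average adds a term of order $\ell_\eps$. This is exactly the admissible loss, so the conclusion survives, but the remainder is not ``better than needed''.

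Second, $g_\eps$ is not symplectic and varies with $\xi$. The wave packets must therefore be adapted to the symplectic intermediate metric $\Gamma_Y=\ell_\eps(\eta)|\dd y|^2+\ell_\eps(\eta)^{-1}|\dd\eta|^2$ at $Y=(y,\eta)$. They must be integrated against $\dd Y$, not $|g^\sigma_\eps|^{1/2}\dd Y=\ell_\eps^d\,\dd Y$.
\begin{itemize}
\item The Weyl symbol of $h_\eps^{\rm Wick}$ is then a variable-width Gaussian average, not a convolution.
\item The symmetry argument only yields a zeroth moment $1+O(\ell_\eps^{-1})$ and a first $\eta$-moment $O(1)$ instead of $0$. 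This uses $|\nabla\ell_\eps|\leq C$, $|\nabla^2\ell_\eps|\leq C\ell_\eps^{-1}$ and the oddness of the leading corrections.
\item After multiplication by $h_\eps$ and $\partial_\xi h_\eps$, these contribute only terms in $S(m_\eps h_{g_\eps},g_\eps)$, so the argument still closes.
\end{itemize}
Alternatively, follow Lerner's own proof, using a $g_\eps$-partition of unity with frozen-metric Wick quantisations.
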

\begin{proof}
We begin by assuming that $a_\eps(x,\xi)$ is Hermitian. Set $q_\eps=(\frac{m_\eps}{\ell_\eps})^{1/2}$. Then, $q_\eps^{-1} a_\eps q_\eps^{-1}(x,\xi)$ is  a non-negative Hermitian matrix. Applying Theorem 2.5.4 from \cite{Lerner2010Metrics}, we deduce that there exists a constant $C>0$ so that
$$
\Re \langle \Op_x(q_\eps^{-1} a_\eps q_\eps^{-1}) u, u \rangle_{L^2} \geq -C |q_\eps^{-1} a_\eps q_\eps^{-1}|_{\ell_\eps,\ell_\eps,K}\|u\|_{L^2}^2.
$$
Applying this to $\Op_x(q_\eps)u$, noting that $\Op_x(q_\eps)$ is self-adjoint on $L^2$, and that, by the composition formula, $\Op_x(q_\eps^{-1}a_\eps q_\eps^{-1})\in \Psi_\eps(\ell_\eps,\ell_\eps)$, we deduce that 
$$
\Re \langle \Op_x(q_\eps)\Op_x(q_\eps^{-1}a_\eps q_\eps^{-1})\Op_x(q_\eps) u, u \rangle_{L^2} \geq -C|q_\eps^{-1}a_\eps q_\eps^{-1}|_{\ell_\eps,\ell_\eps,L}\|\Op_x(q_\eps)u\|_{L^2}^2. 
$$
Furthermore, by the definition of $q_\eps$, we see that $|q_\eps^{-1}a_\eps q_\eps^{-1}|_{\ell_\eps,\ell_\eps,L}\leq C |a_\eps|_{m_\eps,\ell_\eps,K}$, for some $K$ depending on $L$.
But now, note that from the definition of $q_\eps$, it follows that it is an admissible order function with respect to $\ell_\eps$, and $q_\eps^{-1} \in S_\eps(q_\eps^{-1};\ell_\eps)$. Hence, the composition formula of Theorem \ref{thm:composition} implies that $\Op_x(q_\eps) \Op_x(q_\eps^{-1}a_\eps q_\eps^{-1})\Op_x(q_\eps)-\Op_x(a_\eps) \in \Psi(m_\eps \ell_\eps^{-1};\ell_\eps)$. Thus, write now $R_\eps=\Op_x(q_\eps) \Op_x(q_\eps^{-1}a_\eps q_\eps^{-1})\Op_x(q_\eps)-\Op_x(a_\eps)$. Then, we estimate 
$$
|\langle R_\eps u,u \rangle|=|\langle \Op_x(q_\eps^{-1})R_\eps \Op_x(q_\eps^{-1}) \Op_x(q_\eps) u, \Op_x(q_\eps) u \rangle|\leq C\|\Op_x(q_\eps) u\|_{L^2}^2,
$$
where we used that the composition formula implies that $\Op_x(q_\eps^{-1})R_\eps \Op_x(q_\eps^{-1}) \in \Psi_\eps(1;\ell_\eps)$, and consequently, Theorem \ref{thm:L^2 boundedness} gives the desired upper bound. We remark that once again, the constant $C$ is bounded by an $\eps$-independent constant, times the symbol seminorm $|a_\eps|_{m_\eps,\ell_\eps;K}$. Finally, for general non-Hermitian symbols $a_\eps$, we note that 
\begin{align*}
\Re \langle \Op_x(a_\eps) u, u \rangle_{L^2}&=\frac{1}{2}\Re \langle (\Op_x(a_\eps)+\Op_x(a_\eps)^\star)u, u \rangle_{L^2}\\
&=\frac{1}{2}\Re \langle (\Op_x(a_\eps)+\Op_x(a_\eps^\star))u,u \rangle_{L^2}+\frac{1}{2} \Re \langle (\Op_x(a_\eps)^\star-\Op_x(a_\eps^\star))u,u \rangle_{L^2}.
\end{align*}
By Lemma \ref{lemma:adjoint}, $\Op_x(a_\eps)^\star-\Op_x(a_\eps^\star) \in \Psi_\eps(m_\eps \ell_\eps^{-1};\ell_\eps)$, so it may be estimated exactly as the term $R_\eps$, and hence we conclude the proof.
\end{proof}

\section*{Acknowledgments}

The research of MCZ was partially supported by the Royal Society University Research Fellowship URF\textbackslash R1\textbackslash 191492 and by the ERC/EPSRC Horizon Europe Guarantee EP/X020886/1. MS acknowledges support from the Chapman Fellowship at Imperial College London. DV acknowledges support from an Imperial College President's PhD Scholarship.

\section*{Declaration of AI Use}

The authors used ChatGPT 5.5 Pro and ChatGPT 5.6 Sol during the
development and preparation of this paper. The present work combines
ideas that the authors had been developing over the preceding two
years with insights arising from several extended conversations with
these systems, each spanning dozens of prompts. Below we describe the AI contributions that we regard as most significant.

The idea for the anisotropic space $X$ arose while we were attempting to build microlocal anisotropic Banach spaces in the spirit of \cites{BaladiTsujii2007AnisotropicHolderSobolev,DZ16} in a simplified model of Theorem~\ref{thm:time-periodic}, where we only retained the top left entry of the derivative cocycle. At the end of a long chat, ChatGPT 5.5 Pro suggested using the full derivative cocycle and eventually proposed a microlocal symbol equivalent to the norm of $X$.\footnote{At the time, this suggestion came across as ``hallucination'', since it was initially unable to justify what made retaining the full derivative cocycle advantageous over simply retaining the dominant top left entry of order $O(N^2)$.} Further exchanges helped formulate the suspension construction summarized in Lemma~\ref{lemma:flexibility of section maps}, after the authors proposed lifting the time-periodic dynamics to an autonomous flow.

For the resistive perturbation, the authors proposed using the Keller--Liverani framework, while ChatGPT suggested the boundary calculus of Grubb \cite{Grubb1996} and an initial, more indirect
stochastic argument. After checking this argument, the authors replaced it with the deterministic formulation of Lemma~\ref{lemma:notion of solution}. ChatGPT 5.6 Sol also assisted
with the section-return computation leading to \eqref{eq:inviscid section map} and with preliminary checks of the perturbative analysis in Section~\ref{sec:openness}. It was used intermittently during drafting to check mathematical consistency and exposition.

All AI-generated suggestions, computations, and references were independently verified and, where necessary, corrected or reformulated by the authors. The resulting document is entirely human written.

 \bibliographystyle{abbrv}
 \bibliography{DynamoBiblio.bib}

\end{document}